\let\oldemptyset\emptyset
\numberwithin{equation}{section}
\newtheorem{numthm}{Theorem}
\newtheorem{thm}{Theorem}[section]
\newtheorem{lmm}[thm]{Lemma}
\newtheorem{prp}[thm]{Proposition}
\newtheorem{crl}[thm]{Corollary}
\theoremstyle{definition}
\newtheorem{dfn}[thm]{Definition}
\newtheorem{eg}[thm]{Example}
\def\BE#1{\begin{equation}\label{#1}}
\def\EE{\end{equation}}
\def\eref#1{(\ref{#1})}
\def\lra{\longrightarrow}
\def\lr#1{\langle{#1}\rangle}
\def\blr#1{\big\langle{#1}\big\rangle}
\def\ov#1{\overline{#1}}
\def\wt#1{\widetilde{#1}}
\def\sf#1{\textsf{#1}}
\def\sm#1{\begin{small}{#1}\end{small}}
\def\tn#1{\textnormal{#1}}
\def\wh#1{\widehat{#1}}
\def\wch#1{\widecheck{#1}}
\def\de{\delta}
\def\ep{\epsilon}
\def\io{\iota}
\def\la{\lambda}
\def\om{\omega}
\def\th{\theta}
\def\ve{\varepsilon}
\def\vp{\varpi}
\def\vr{\varrho}
\def\vph{\varphi}
\def\ze{\zeta}
\def\De{\Delta}
\def\La{\Lambda}
\def\Th{\Theta}
\def\bI{\mathbb I}
\def\C{\mathbb C}
\def\sC{\mathscr C}
\def\E{\textnormal{E}}
\def\bF{\mathbb F}
\def\bI{\mathbb I}
\def\cK{\mathcal K}
\def\cN{\mathcal N}
\def\cO{\mathcal O}
\def\P{\mathbb P}
\def\cP{\mathcal P}
\def\R{\mathbb R}
\def\T{\mathbb T}
\def\cU{\mathcal U}
\def\cW{\mathcal W}
\def\X{\mathbf X}
\def\fX{\mathfrak X}
\def\cZ{\mathcal Z}
\def\Z{\mathbb Z}
\def\sZ{\mathscr Z}
\def\fd{\mathfrak d}
\def\fh{\mathfrak h}
\def\fs{\mathfrak s}
\def\ft{\mathfrak t}
\def\ne{\textnormal{e}}
\def\fI{\mathfrak i}
\def\Ann{\textnormal{Ann}}
\def\id{\textnormal{id}}
\def\Dom{\textnormal{Dom}}
\def\Id{\tn{Id}}
\def\Im{\tn{Im}}
\def\nd{\textnormal{d}}
\def\Span{\textnormal{Span}}
\def\Symp{\textnormal{Symp}}
\def\i{\infty}
\def\w{\wedge}
\def\eset{\oldemptyset}
\def\prt{\partial}
\def\1{\mathbf 1}
\def\hb{\hbar}
\def\bu{\bullet}
\begin{document}

\title{Normal Crossings Degenerations of Symplectic Manifolds}
\author{Mohammad F.~Tehrani and 
Aleksey Zinger\thanks{Partially supported by NSF grant 1500875 and MPIM}}

\date{\today}
\maketitle

\begin{abstract}
\noindent
We use local Hamiltonian torus actions to degenerate 
a symplectic manifold to a normal crossings symplectic variety 
in a smooth one-parameter family.
This construction, motivated in part by the Gross-Siebert and B.~Parker's programs, 
contains a multifold version of the usual (two-fold) symplectic cut construction
and in particular splits a symplectic manifold into several symplectic manifolds
containing normal crossings symplectic divisors with shared irreducible components
in one~step.
\end{abstract}
\tableofcontents

\section{Introduction}
\label{intro_sec}

\noindent
Flat one-parameter families of degenerations are an important tool in algebraic geometry 
and raise considerable interest in related areas of symplectic topology.
The Gross-Siebert program~\cite{GS0} for a direct proof of mirror symmetry 
has highlighted in particular the significance of log smooth degenerations 
to log smooth algebraic varieties. 
A central part of this program is the study of Gromov-Witten invariants 
(which are fundamentally symplectic topology invariants) under such degenerations.
It is undertaken from an algebro-geometric perspective in \cite{AC,Ch,GS}.
The almost complex analogue of the log smooth category provided by 
the exploded manifold category of~\cite{Brett0} underlines 
a similar study of Gromov-Witten invariants in~\cite{Brett}.
Log smooth varieties include varieties with \sf{normal crossings} (or \sf{NC}) singularities,
i.e.~singularities of the form $z_1\!\ldots\!z_N\!=\!0$ in complex coordinates.
Purely symplectic topology notions of NC symplectic variety and of smooth one-parameter
family of degenerations to such a variety are introduced in \cite{SympDivConf,SympSumMulti}. 
The main construction of this paper uses a collection of compatible Hamiltonian torus actions,
i.e.~an \sf{$N$-fold cutting configuration}  in the sense of Definition~\ref{SympCut_dfn1},
to degenerate a symplectic manifold to an NC~symplectic variety 
in a smooth one-parameter family.\\

\noindent
The now classical symplectic cut construction of~\cite{L} decomposes 
a symplectic manifold~$(X,\om)$ into two symplectic manifolds,
$(X_-,\om_-)$ and~$(X_+,\om_+)$, using an  $S^1$-action with a Hamiltonian~$h$
on an~open subset~$W$ of~$X$.
This construction cuts~$X$ into closed subsets~$U^{\le}$ and~$U^{\ge}$ 
along a separating real hypersurface $\wt{V}\!=\!h^{-1}(a)$
and collapses their boundary~$\wt{V}$
to a smooth symplectic divisor $V\!=\!\wt{V}/S^1$ inside~$(X_-,\om_-)$ and~$(X_+,\om_+)$.
The associated ``wedge"  
$$X_{\eset}\equiv X_-\!\!\cup_{V}\!\!X_+$$
is  an \sf{SC~symplectic variety} associated with 
a 2-fold \sf{simple crossings} (or \sf{SC}) \sf{symplectic configuration} 
in the sense of Definition~\ref{SCC_dfn}.\\

\noindent
The main construction of this paper, described in Section~\ref{Thm12Pf_sec},
contains a multifold version of the construction of~\cite{L}.
We use an $N$-fold cutting configuration~$\sC$ in particular
to decompose a symplectic manifold~$(X,\om)$ into $N$~symplectic manifolds
$(X_i,\om_i)$ at~once.
This construction cuts~$X$ into closed subsets~$U_i^{\le}$ along separating real hypersurfaces 
$U_{ij}^{\le}\!=\!h_{ij}^{-1}(0)$ for  Hamiltonians~$h_{ij}$ generating 
$S^1$-actions on open subsets~$W_{ij}$ of~$X$.
These subsets~$U_i^{\le}$ have boundary and corners
$$ U_I^{\le} \equiv \bigcap_{j\in I-i}\!\!\!U_{ij}^{\le}\subset U_i^{\le}\,,
\qquad \{i\}\subsetneq I \subset \big\{1,\ldots,N\big\}.$$
We collapse these~$U_I^{\le}$ to 
symplectic submanifolds $X_I\!\subset\!X_i$ with $I\!\ni\!i$
which form an \sf{SC~symplectic divisor} in~$X_i$
in the sense of  Definition~\ref{SCD_dfn}. 
The entire collection $\{X_I\}$ determines an SC~symplectic variety~$X_{\eset}$.
This output of our main construction corresponds
to the two middle statements in Theorem~\ref{SympCut_thm12} in Section~\ref{SympCutThm_subs2}.
The output described by the remainder of Theorem~\ref{SympCut_thm12} and
by Theorem~\ref{SympCut_thm3} 
endows the basic output of Theorem~\ref{SympCut_thm12}
with a rich geometric structure desirable for a range of applications;
this structure is summarized~below.\\

\noindent
The normal bundles of the symplectic divisor~$V$ in the symplectic manifolds
$(X_-,\om_-)$ and~$(X_+,\om_+)$ arising from a symplectic cut construction of~\cite{L}
are canonically dual.
The symplectic sum construction of~\cite{Gf} thus determines a deformation equivalence class
of \sf{nearly regular symplectic fibrations} 
\BE{picZtoC_e} \big(\cZ,\om_{\cZ},\pi\!:\cZ\!\lra\!\C\big) 
\qquad\hbox{s.t.}\quad  \pi^{-1}(0)=X_{\eset}\subset\cZ\,;\EE
see Definition~\ref{SimpFibr_dfn}.
If~$\wt{V}$ is compact, the symplectic deformation equivalence class of 
a fiber \hbox{$\cZ_{\la}\!\equiv\!\pi^{-1}(\la)$ of~$\pi$} is independent of
the choice of $\la\!\in\!\C^*$ sufficiently small.
It is then called a \sf{symplectic sum} $X_-\!\#_V\!X_+$ of~$X_-$ and~$X_+$ and is 
symplectically deformation equivalent to~$(X,\om)$.
The constructions of~\cite{L} and~\cite{Gf} together thus provide 
a symplectic topology analogue of the algebro-geometric notion of 
smooth one-parameter family of degenerations of a smooth algebraic variety
to two smooth algebraic varieties joined along a smooth algebraic divisor.\\

\noindent
The main construction of this paper uses an $N$-fold cutting configuration~$\sC$ 
for~$(X,\om)$ to produce 
a symplectic manifold~$(\cZ,\om_{\cZ})$ which contains the tuple $(X_i,\om_i)_{i=1}^N$
of the symplectic manifolds cut out from~$X$ by~$\sC$ as an~SC symplectic divisor.
The cutting configuration~$\sC$ also determines an $N$-fold Hamiltonian configuration
for~$(\cZ,\om_{\cZ})$ in the sense of Definition~\ref{SympCut_dfn0} and 
a deformation equivalence class of maps as in~\eref{picZtoC_e}
that restrict to nearly regular symplectic fibrations on sufficiently small 
neighborhoods~$\cZ'$ of~$X_{\eset}$.
One implication for the well-known $N\!=\!2$ case is that the domain of 
the map~$\pi$ in~\eref{picZtoC_e} can be taken inside of a symplectic manifold
{\it completely} determined by the data $(W,\phi,h,a)$ of the symplectic cut construction
of~\cite{L}.
If~$X$ is compact, then the fiber of~$\pi|_{\cZ'}$ over every sufficiently small
value $\la\!\in\!\R^+$ is canonically isomorphic  to the original symplectic 
manifold~$(X,\om)$ with the cutting configuration~$\sC$; see Theorem~\ref{SympCut_thm3}.
The full output of the constructions of Sections~\ref{Thm12Pf_sec} and~\ref{SumCutSm_sec},
described in Theorems~\ref{SympCut_thm12} and~\ref{SympCut_thm3},
thus provides 
a symplectic topology analogue of the algebro-geometric notion of 
smooth one-parameter family of degenerations of a smooth algebraic variety
to an NC algebraic variety.\\

\noindent
By \cite[Proposition~5.1]{SympSumMulti}, a fibration $\pi\!:\cZ'\!\lra\!\C$
as above determines a homotopy class of trivializations of 
the \sf{normal bundle~$\cO_{X_{\prt}}(X_{\eset})$} of the singular locus~$X_{\prt}$ of~$X_{\eset}$.
We show in~\cite{SympSumCut} that 
the multifold symplectic sum/smoothing construction of~\cite{SympSumMulti} and
the multifold symplectic cut/degeneration construction 
of Sections~\ref{Thm12Pf_sec} and~\ref{SumCutSm_sec}
are mutual inverses as operations between  
the deformation equivalences classes  of compact SC~symplectic varieties with  
trivializations of the normal bundle of the singular locus
and of compact symplectic manifolds with  cutting configurations.
This can be seen explicitly in the basic local setting of Section~\ref{BasicEg_subs}.\\

\noindent
For $N\!\in\!\Z^+$, we define
$$[N]=\big\{1,\ldots,N\!\big\}, \quad 
\cP^*(N)=\big\{I\!\subset\![N]\!:I\!\neq\!\eset\big\},\quad
(S^1)^N_{\bu}=\bigg\{\!(\ne^{\fI\th_i})_{i\in[N]}\!\in\!(S^1)^N\!\!:
\prod_{i\in[N]}\!\!\!\ne^{\fI\th_i}\!=\!1\!\bigg\}.$$
The Lie algebra of the  codimension~1 subtorus $(S^1)^N_{\bu}\!\subset\!(S^1)^N$
and its dual are given~by
\BE{ftNnu_e}\ft_{N;\bu}=
\big\{(r_i)_{i\in[N]}\!\in\!\R^N\!:\sum_{i\in[N]}\!\!\!r_i\!=\!0\big\}
\quad\hbox{and}\quad
\ft_{N;\bu}^*= \R^N\big/\big\{(a,\ldots,a)\!\in\!\R^N\!:\,a\!\in\!\R\big\},\EE
respectively.
For $I\!\subset\![N]$, we identify $(S^1)^I$ with the subgroup
$$\big\{(\ne^{\fI\th_i})_{i\in[N]}\!\in\!(S^1)^N\!: \ne^{\fI\th_i}\!=\!1~\forall\,
i\!\in\![N]\!-\!I\big\}$$
of $(S^1)^{N}$ in the natural way and let
$$(S^1)^I_{\bu}\!\equiv\!(S^1)^N_{\bu}\!\cap\!(S^1)^I\,.$$
Denote by $\ft_{I;\bu}\!\subset\!\ft_{N;\bu}$ the Lie algebra of $(S^1)^I_{\bu}$
and by~$\ft_{I;\bu}^*$ its dual.
For $i,j\!\in\!I\!\subset\![N]$, the homomorphism
$$\ft_{I;\bu}^*\!=\!\R^I\big/\big\{(a,\ldots,a)\!\in\!\R^I\!:\,a\!\in\!\R\big\}\lra\R,
\qquad \eta\!\equiv\!\big[(a_k)_{k\in I}\big]\lra \eta_{ij}\!\equiv\!a_j\!-\!a_i\,,$$
is well-defined.
We write $(\eta)_i\!<\!(\eta)_j$ (resp.~$(\eta)_i\!\le\!(\eta)_j$, $(\eta)_i\!=\!(\eta)_j$) 
if $0\!<\!\eta_{ij}$  (resp.~$0\!\le\!\eta_{ij}$, $0\!=\!\eta_{ij}$).

\begin{dfn}\label{SympCut_dfn0}
Let $N\!\in\!\Z^+$ and $(X,\om)$ be a  symplectic manifold.
An \sf{$N$-fold Hamiltonian configuration for~$(X,\om)$} is a tuple 
\BE{SympCutDfn_e}
\sC\equiv\big(U_I,\phi_I,\mu_I\big)_{I\in\cP^*(N)},\EE
where $(U_I)_{I\in\cP^*(N)}$ is an open cover of~$X$ and $\phi_I$  
is a Hamiltonian $(S^1)^I_{\bu}$-action on~$U_I$ with moment map~$\mu_I$, such~that 
\begin{enumerate}[label=$(\alph*)$,leftmargin=*]

\item\label{IJinter_it} $U_I\!\cap\!U_J\!=\!\eset$ unless $I\!\subset\!J$ or $J\!\subset\!I$;

\item\label{phiIUJ_it} $\mu_J(x)|_{\ft_{I;\bu}}\!=\!\mu_I(x)$
for all $x\!\in\!U_I\!\cap\!U_J$ and $I\!\subset\!J\!\subset\![N]$;

\item\label{UIJpos_it} $(\mu_J(x))_i\!<\!(\mu_J(x))_j$ 
for all $x\!\in\!U_I\!\cap\!U_J$, $i\!\in\!I\!\subset\!J\!\subset\![N]$, 
and $j\!\in\!J\!-\!I$.
\end{enumerate}
\end{dfn}

\begin{dfn}\label{SympCut_dfn1}
An \sf{$N$-fold cutting configuration for~$(X,\om)$} is 
an  $N$-fold Hamiltonian configuration  as in~\eref{SympCutDfn_e}
such~that the restriction of the $(S^1)^I_{\bu}$-action~$\phi_I$ to $(S^1)^{I_0}_{\bu}$ 
is free on the preimage of $0\!\in\!\ft_{I_0;\bu}^*$ under the moment~map
\BE{hdfn_e} 
\mu_{I_0;I}\!: \big\{x\!\in\!U_I\!\!:
\big(\mu_I(x)\!\big)_{\!i}\!<\!\big(\mu_I(x)\!\big)_{\!j}
~\forall\,i\!\in\!I_0,\,j\!\in\!I\!-\!I_0\big\}\lra \ft_{I_0;\bu}^*,~~
\mu_{I_0;I}(x)\!=\!\mu_I(x)\big|_{\ft_{I_0;\bu}}\,,\EE
for all \hbox{$I_0\!\subset\!I\!\subset\![N]$} with $I_0\!\neq\!\eset$.
\end{dfn}

\noindent
We specify our conventions concerning moment maps for Hamiltonian actions on symplectic manifolds
and identifications of Lie algebras in Section~\ref{TorusAct_subs}. 
An $N$-fold Hamiltonian configuration is determined by the $2^N\!-\!N\!\!-\!1$ actions~$\phi_I$
by the non-trivial subtori $(S^1)^I_{\bu}\!\subset\!(S^1)^N$ and their moment maps~$\mu_I$
on open subsets~$U_I$ of~$X$.
As described in Section~\ref{SympCutThm_subs1}, such a collection can alternatively be
specified by $\binom{N}{2}$  Hamiltonian $S^1$-actions~$\phi_{ij}$ and their Hamiltonians~$h_{ij}$
on (generally) larger open subsets of~$X$.
The usual symplectic cut construction of~\cite{L} is the $N\!=\!2$ case of this alternative description,
which identifies $(S^1)^2_{\bu}$  with~$S^1$ by projection to one of the components of~$(S^1)^2$.
Simple examples of $N$-fold Hamiltonian and cutting configurations are described 
in Section~\ref{BasicEg_subs}.
The output of the constructions of Sections~\ref{Thm12Pf_sec} and~\ref{SumCutSm_sec}
for the cutting configurations of Section~\ref{BasicEg_subs} can be readily identified;
see Proposition~\ref{BasicEg_prp}.\\

\noindent
If the domain~$U_{[N]}$ of the action~$\phi_{[N]}$ by the largest subtorus
$(S^1)^N_{\bu}\!\subset\!(S^1)^N$ is the entire manifold~$X$,
the remaining actions~$\phi_I$ and moment maps~$\mu_I$ are the restrictions 
of~$\phi_{[N]}$ and~$\mu_{[N]}$, respectively, to~$U_I$.
As described in Section~\ref{HamSpaGen_subs},
the symplectic manifolds~$(\cZ,\om_{\cZ})$ and~$(X_i,\om_i)$ are then obtained
through a single application of the symplectic reduction of \cite{Meyer,MaW}.
There is also a natural  nearly regular fibration~$\pi$ as in~\eref{picZtoC_e} 
defined on the entire manifold~$\cZ$.
If in addition $X$ is compact, $(\cZ,\om_{\cZ},\pi)$ can be replaced by 
a compact symplectic manifold $(\wh\cZ_a,\om_{\wh\cZ;a})$ for each $a\!\in\!\R^+$ 
sufficiently large and an $S^1$-equivariant nearly regular symplectic fibration
\BE{piwhcZtoP1_e}\wh\pi\!:\wh\cZ_a\lra\P^1
\qquad\hbox{s.t.}\quad  \wh\pi^{-1}(0)=X_{\eset}\subset\wh\cZ_a\,;\EE
see Section~\ref{CmptcZ_subs}.
In Sections~\ref{MomPolytThm_subs} and~\ref{AdmissDecomp_subs}, 
we relate on the setup for and the output of the main constructions of this paper
to the moment polytopes arising from the Atiyah-Guillemin-Sternberg Convexity Theorem;
see Theorem~\ref{toriccut_thm} and Proposition~\ref{CombCond_prp}.\\

\noindent
The notions of normal crossings symplectic singularities
and of smoothings of such singularities introduced in~\cite{SympDivConf}
and~\cite{SympSumMulti}, respectively,
are recalled in Section~\ref{review_sec}.
Section~\ref{SympCutThm_subs2} contains the main statements of this paper,
Theorems~\ref{SympCut_thm12} and~\ref{SympCut_thm3};
they describe the output of our multifold symplectic cut/degeneration construction.
The SC symplectic configuration determined by a cutting configuration
and the symplectic manifold containing the associated SC~symplectic 
variety are constructed in Section~\ref{Thm12Pf_sec};
this establishes Theorem~\ref{SympCut_thm12}.
Section~\ref{SumCutSm_sec} endows a neighborhood of this symplectic variety
with the structure of a one-parameter family of smoothings
and establishes Theorem~\ref{SympCut_thm3}.\\

\noindent
We would like to thank  A.~Cannas da Silva, M.~McLean,
B.~Parker, and A.~Pires for related discussions
and E.~Lerman for pointing out related literature.

\section{Notation and terminology}
\label{review_sec}

\noindent
For $I\!\subset\![N]$, define 
$$\cP_I(N)=\big\{J\!\in\!\cP^*(N)\!:\,I\!\subset\!J\big\}.$$ 
For $i\!\in\![N]$, we write $\cP_{\{i\}}(N)$ as~$\cP_i(N)$.
For $i,j\!\in\![N]$ distinct, we  write~$\cP_{\{i,j\}}(N)$ as~$\cP_{ij}(N)$.

\subsection{Normal crossings symplectic varieties}
\label{SCdivconf_subs}

\noindent
We now recall the notions of simple crossings (or \sf{SC}) symplectic divisor
and variety introduced, described in more detail, and 
illustrated with examples in \cite[Section~2.1]{SympDivConf}.\\

\noindent
Let $X$ be a (smooth) manifold. 
For any submanifold $V\!\subset\!X$, let
$$\cN_XV\equiv \frac{TX|_V}{TV}\lra V$$
denote the normal bundle of~$V$ in~$X$.
For a collection $\{V_i\}_{i\in S}$ of submanifolds of~$X$ and $I\!\subset\!S$, let
$$V_I\equiv \bigcap_{i\in I}\!V_i\subset X\,.$$
Such a collection  is called \sf{transverse} if any subcollection $\{V_i\}_{i\in I}$ 
of these submanifolds intersects transversely, i.e.~the homomorphism
\BE{TransVerHom_e}
T_xX\oplus\bigoplus_{i\in I}T_xV_i\lra \bigoplus_{i\in I}T_xX, \qquad
\big(v,(v_i)_{i\in I}\big)\lra (v\!+\!v_i)_{i\in I}\,,\EE
is surjective for every $x\!\in\!V_I$. 
Each subspace $V_I\!\subset\!X$ is then a submanifold of~$X$.\\

\noindent
If $X$ is an oriented manifold,
a transverse collection $\{V_i\}_{i\in S}$ of oriented submanifolds of~$X$
of even codimensions  induces an orientation on each submanifold $V_I\!\subset\!X$
with $|I|\!\ge\!2$, which we call \sf{the intersection orientation of~$V_I$}.
If $V_I$ is zero-dimensional, it is a discrete collection of points in~$X$
and the homomorphism~\eref{TransVerHom_e} is an isomorphism at each point $x\!\in\!V_I$;
the intersection orientation of~$V_I$ at $x\!\in\!V_I$
then corresponds to a plus or minus sign, depending on whether this isomorphism
is orientation-preserving or orientation-reversing.
For convenience, we call the original orientations of 
$X\!=\!V_{\eset}$ and $V_i\!=\!V_{\{i\}}$ \sf{the intersection orientations}
of these submanifolds~$V_I$ of~$X$ with $|I|\!<\!2$.\\

\noindent
Suppose $(X,\om)$ is a symplectic manifold and $\{V_i\}_{i\in S}$ is a transverse collection 
of submanifolds of~$X$ such that each $V_I$ is a symplectic submanifold of~$(X,\om)$.
Each $V_I$ then carries an orientation induced by $\om|_{V_{I}}$,
which we  call the \sf{$\om$-orientation}.
If $V_I$ is zero-dimensional, it is automatically a symplectic submanifold of~$(X,\om)$;
the $\om$-orientation of~$V_I$ at each point $x\!\in\!V_I$ corresponds to the plus sign 
by definition.
By the previous paragraph, the $\om$-orientations of~$X$ and~$V_i$ with $i\!\in\!I$
also induce intersection orientations on all~$V_I$.
By definition, the intersection and symplectic orientations of~$V_I$ agree if $|I|\!<\!2$.

\begin{dfn}\label{SCD_dfn}
Let $(X,\om)$ be a symplectic manifold.
An  \sf{SC symplectic divisor} 
in~$(X,\om)$ is a finite transverse collection 
$\{V_i\}_{i\in S}$ of closed submanifolds of~$X$ of codimension~2 such that 
$V_I$ is a symplectic submanifold of~$(X,\om)$ for every $I\!\subset\!S$
and the intersection and $\om$-orientations of~$V_I$~agree.
\end{dfn}

\begin{dfn}\label{SCdivstr_dfn}
Let $X$ be a manifold and $\{V_i\}_{i\in S}$ be a finite transverse collection of 
closed submanifolds of~$X$ of codimension~2.
A \sf{symplectic structure on $\{V_i\}_{i\in S}$ in~$X$} is a symplectic form~$\om$ 
on~$X$ such that $V_I$ is a symplectic submanifold of $(X,\om)$ for all $I\!\subset\!S$.
\end{dfn}

\noindent
For $X$ and $\{V_i\}_{i\in S}$ as in Definition~\ref{SCdivstr_dfn}, 
we denote by $\Symp^+(X,\{V_i\}_{i\in S})$ the space of all symplectic structures~$\om$ 
on $\{V_i\}_{i\in S}$ in~$X$ such that $\{V_i\}_{i\in S}$
is an SC symplectic divisor in~$(X,\om)$.

\begin{dfn}\label{TransConf_dfn1}
Let $N\!\in\!\Z^+$.
An \sf{$N$-fold transverse configuration} is a tuple $\{X_I\}_{I\in\cP^*(N)}$
of manifolds such that $\{X_{ij}\}_{j\in[N]-i}$ is a transverse collection 
of submanifolds of~$X_i$ for each $i\!\in\![N]$ and
$$X_{\{ij_1,\ldots,ij_k\}}\equiv \bigcap_{m=1}^k\!\!X_{ij_m}
=X_{ij_1\ldots j_k}\qquad\forall~j_1,\ldots,j_k\in[N]\!-\!i.$$
\end{dfn}

\begin{dfn}\label{TransConf_dfn2}
Let $N\!\in\!\Z^+$ and $\X\!\equiv\!\{X_I\}_{I\in\cP^*(N)}$ be an $N$-fold transverse configuration
such that $X_{ij}$ is a closed submanifold of~$X_i$ of codimension~2
for all $i,j\!\in\![N]$ distinct.
A \sf{symplectic structure on~$\X$} is a~tuple 
$$(\om_i)_{i\in[N]}\in 
\prod_{i=1}^N\Symp\big(X_i,\{X_{ij}\}_{j\in[N]-i}\big)$$
such that $\om_{i_1}|_{X_{i_1i_2}}\!=\!\om_{i_2}|_{X_{i_1i_2}}$ for all $i_1,i_2\!\in\![N]$.
\end{dfn}

\noindent 
For $\X\!\equiv\!\{X_I\}_{I\in\cP^*(N)}$ as in Definition~\ref{TransConf_dfn1}, 
define
\begin{gather}
\label{Xesetdfn_e}
X_{\eset}=\bigg(\bigsqcup_{i=1}^NX_i\bigg)\bigg/\!\!\sim, \quad
X_i\ni x\sim x\in X_j~~\forall~x\in X_{ij}\subset X_i,X_j,~i\neq j\,,\\
\label{Xprtdfn_e}
X_{\prt}\equiv\bigcup_{\begin{subarray}{c}I\subset[N]\\ |I|=2\end{subarray}}
\!\!\!X_I\subset X_{\eset}\,.
\end{gather}
For $\X$ as in Definition~\ref{TransConf_dfn2}, 
denote by $\Symp^+(\X)$ the space of all symplectic structures $(\om_i)_{i\in[N]}$ 
on $\X$ such that 
$\{X_{ij}\}_{j\in[N]-i}$ is an SC symplectic divisor in $(X_i,\om_i)$
for each $i\!\in\![N]$.

\begin{dfn}\label{SCC_dfn}
Let $N\!\in\!\Z^+$.
An \sf{$N$-fold simple crossings} (or \sf{SC}) \sf{symplectic configuration} 
is a~tuple 
\BE{SCCdfn_e}\X=\big((X_I)_{I\in\cP^*(N)},(\om_i)_{i\in[N]}\big)\EE
such that $\{X_I\}_{I\in\cP^*(N)}$ is an $N$-fold transverse configuration,
$X_{ij}$ is a closed submanifold of~$X_i$ of codimension~2
for all $i,j\!\in\![N]$ distinct, and
$(\om_i)_{i\in[N]}\in\Symp^+(\X)$.
The \sf{SC symplectic variety associated~to} such a tuple~$\X$ 
is the pair~$(X_{\eset},(\om_i)_{i\in[N]})$.
\end{dfn}

\noindent
The basic local example of an SC symplectic variety is the union of the $N$~coordinate 
hyperplanes in~$\C^N$ with the restrictions of the standard symplectic form on~$\C^N$,
i.e.~the $\la\!=\!0$ case of the hypersurface~$X$ in~\eref{Xladfn_e}.
This is the SC symplectic variety associated to the SC symplectic configuration~\eref{BasicSCC_e}
and is central fiber of the  one-parameter family~\eref{BasicSCCsm_e}
 of degenerations
of the smooth symplectic manifolds~$X$ in~\eref{Xladfn_e} with $\la\!\neq\!0$ and 
arises from the basic local symplectic cutting configuration of Section~\ref{BasicEg_subs}.

\subsection{Divisors, line bundles, and smoothability}
\label{SympSmooth_subs}

\noindent
Suppose $(X,\om)$ is a symplectic manifold 
and $V\!\subset\!X$ is a smooth symplectic divisor, 
i.e.~$|S|\!=\!1$ in the notation of Definition~\ref{SCD_dfn}.
The normal bundle of~$V$ in~$X$,
\BE{cNXVsymp_e}
\cN_XV\equiv \frac{TX|_V}{TV}\approx TV^{\om}
\equiv \big\{v\!\in\!T_xX\!:\,x\!\in\!V,\,\om(v,w)\!=\!0~\forall\,w\!\in\!T_xV\big\}
\lra V,\EE
then inherits a fiberwise symplectic form~$\om|_{\cN_XV}$ from~$\om$.
The space of complex structures on the fibers of~\eref{cNXVsymp_e}
compatible with (resp.~tamed~by)  $\om|_{\cN_XV}$ is non-empty and contractible;
we call such complex structures \sf{$\om$-compatible} (resp.~\sf{$\om$-tame}).
Fix an identification~$\Psi$ 
of a tubular neighborhood~$D_X^{\ep}V$ of~$V$ in~$\cN_XV$ 
with a tubular neighborhood of~$V$ in~$X$ 
(i.e.~a \sf{regularization} of~$V$ in~$X$ in the sense of 
\cite[Definition~2.8]{SympDivConf}) and an
 $\om$-tame complex structure~$\fI$ on~$\cN_XV$.
Let
\begin{gather}
\label{cOXVdfn_e}
\cO_X(V)= \big(\Psi^{-1\,*}\pi_{\cN_XV}^*\cN_XV|_{\Psi(D_X^{\ep}V)} \sqcup 
(X\!-\!V)\!\times\!\C\big)\big/\!\!\sim,\\
\notag
\Psi^{-1\,*}\pi_{\cN_XV}^*\cN_XV|_{\Psi(D_X^{\ep}V)} 
  \ni\big(\Psi(v),v,cv\big)\sim\big(\Psi(v),c\big)\in (X\!-\!V)\!\times\!\C.
\end{gather}
This is a complex line bundle over~$X$.
The space of pairs~$(\Psi,\fI)$ involved in explicitly constructing this line bundle
is contractible.\\

\noindent
Suppose $\X$ is an SC symplectic configuration as in~\eref{SCCdfn_e}. 
If $i,j,k\!\in\![N]$ are distinct, the inclusion
$(X_{jk},X_{ijk})\!\lra\!(X_j,X_{ij})$ induces an isomorphism
$$\cN_{X_{jk}}X_{ijk}\equiv \frac{TX_{jk}|_{X_{ijk}}}{TX_{ijk}}
\lra \frac{TX_j|_{X_{ijk}}}{TX_{ij}|_{X_{ijk}}}
\equiv \cN_{X_j}X_{ij}\big|_{X_{ijk}}$$
of rank~2 oriented real vector bundles over~$X_{ijk}$;
see the last third of \cite[Section~2.1]{SympSumMulti}.
In particular, the rank~2 oriented real vector bundles $\cN_{X_j}X_{ij}|_{X_{ijk}}$ 
and $\cN_{X_k}X_{ik}|_{X_{ijk}}$ are canonically identified with~$\cN_{X_{jk}}X_{ijk}$.
We can thus choose a collection 
$$\Psi_{ij;j}\!: \cN_{ij;j}'\lra X_j, \qquad i,j\!\in\![N],\,i\!\neq\!j,$$
of identifications of tubular neighborhoods of~$X_{ij}$ in~$\cN_{X_j}X_{ij}$
and in~$X_j$ and a collection of $\om_j$-tame complex structures~$\fI_{ij;j}$ on
the vector bundles~$\cN_{ij;j}$ so~that
\BE{restrisom_e3}\Psi_{ij;j}\big|_{\cN_{ij;j}'\cap\cN_{X_{jk}}X_{ijk}}= 
\Psi_{ik;k}\big|_{\cN_{ik;k}'\cap\cN_{X_{jk}}X_{ijk}}
\quad\hbox{and}\quad 
\fI_{ij;j}\big|_{\cN_{X_{jk}}X_{ijk}}=\fI_{ik;k}\big|_{\cN_{X_{jk}}X_{ijk}}\EE
for all $i,j,k\!\in\![N]$ with $k,j\!\neq\!i$.\\

\noindent
For $i,j\!\in\![N]$ distinct, let $\cO_{X_j}(X_{ij})$ be the complex line bundle
over~$X_j$ constructed as in~\eref{cOXVdfn_e} using the identification~$\Psi_{ij;j}$
and the complex structure~$\fI_{ij;j}$.
By~\eref{restrisom_e3}, there are canonical identifications
%\BE{cOrestr_e}
$$\cO_{X_j}(X_{ij})\big|_{X_{ijk}}=\cO_{X_{jk}}(X_{ijk})=
\cO_{X_k}(X_{ik})\big|_{X_{ijk}}$$
for all $i,j,k\!\in\![N]$ with $j,k\!\neq\!i$.
For each $i\!\in\![N]$,
\begin{gather}
\label{cOXXidfn_e}
\cO_{X_i^c}(X_i)\equiv\bigg(\bigsqcup_{j\in[N]-\{i\}}\hspace{-.2in}\cO_{X_j}(X_{ij})\bigg)
\!\!\bigg/\!\!\!\sim\,\lra 
X_i^c\!\equiv\bigcup_{j\in[N]-\{i\}}\hspace{-.2in}X_j\subset X_{\eset}\,,\\
\notag
\cO_{X_j}(X_{ij})\big|_{X_{ijk}} \ni u\sim u\in  \cO_{X_k}(X_{ik})\big|_{X_{ijk}}
\quad\forall\,i,j,k\!\in\![N],\,j,k\!\neq\!i,
\end{gather}
is thus a well-defined complex line bundle.
Let $\cO_{X_{\prt}}(X_i)\!=\!\cO_{X_i^c}(X_i)|_{X_{\prt}}$.
We call the complex line bundle
\BE{PsiDfn_e}
\cO_{X_{\prt}}(X_{\eset})\equiv\bigotimes_{i=1}^N\cO_{X_{\prt}}(X_i)\EE
the \sf{normal bundle} of the singular locus~$X_{\prt}$ in~$X_{\eset}$.
The space of the collections of pairs~$(\Psi_{ij;j},\fI_{ij;j})$ 
involved in explicitly constructing this line bundle is contractible.\\

\noindent
The notions of smooth families of varieties and of smoothings of singular varieties
play important roles in algebraic geometry.
Analogues of these notions in the category of~SC symplectic varieties are introduced
in~\cite{SympSumMulti}.

\begin{dfn}[{\cite[Definition~2.6]{SympSumMulti}}]\label{SimpFibr_dfn}
If $(\cZ,\om_{\cZ})$ is a symplectic manifold and $\De\!\subset\!\C$
is a disk centered at the origin,
a smooth surjective map $\pi\!:\cZ\!\lra\!\De$ is a \textsf{nearly regular symplectic fibration}~if
\begin{enumerate}[label=$\bullet$,leftmargin=*]

\item $\cZ_0\!\equiv\!\pi^{-1}(0)=X_1\!\cup\!\ldots\!\cup\!X_N$ 
for some SC symplectic divisor $\{X_i\}_{i\in[N]}$ in~$(\cZ,\om_{\cZ})$, 

\item  $\pi$ is a submersion outside of the submanifolds $X_I$ with $|I|\!\ge\!2$,

\item for every $\la\!\in\!\De\!-\!\{0\}$,
the restriction~$\om_{\la}$ of~$\om_{\cZ}$ to $\cZ_{\la}\equiv\pi^{-1}(\la)$ is nondegenerate.

\end{enumerate} 
\end{dfn}

\vspace{.1in}

\noindent
We call a nearly regular symplectic fibration as in Definition~\ref{SimpFibr_dfn} 
a \sf{one-parameter family of smoothings} of the SC variety $(X_{\eset},(\om_i)_{i\in[N]})$
associated to the SC symplectic configuration~\eref{SCCdfn_e} with
$$X_I=\bigcap_{i\in I}X_i\subset\cZ_0\subset\cZ
\qquad \forall~I\!\in\!\cP^*(N)\,.$$
By \cite[Theorem~2.7]{SympSumMulti}, {\it some} SC~symplectic variety
$(X_{\eset},(\om_i')_{i\in[N]})$
deformation equivalent to\linebreak $(X_{\eset},(\om_i)_{i\in[N]})$ 
admits a one-parameter family of smoothings
if and only if the line bundle~\eref{PsiDfn_e} admits a trivialization
(or equivalently its Chern class vanishes).
Furthermore, the germ of the deformation equivalence class of such a smoothing 
is determined by a homotopy class of trivializations of~\eref{PsiDfn_e}.\\

\noindent
If the SC symplectic variety $X_{\eset}$ associated with
an SC symplectic configuration~$\X$ is compact,
we call the fiber of a one-parameter family of smoothings of~$X_{\eset}$ 
over a point in a small punctured disk~$\De^*$ around the origin a \sf{symplectic sum} for~$\X$.
Each such fiber comes with a natural cutting configuration.
The concern of Corollary~\ref{SympCut_crl} with the trivializations of the line 
bundle~\eref{PsiDfn_e} is in preparation for showing in~\cite{SympSumCut}
that the symplectic cut/degeneration construction of this paper and
the symplectic sum/smoothing construction of~\cite{SympSumMulti} are mutual inverses 
as morphisms between appropriate categories.

\subsection{Families of symplectic varieties}
\label{SympFamil_subs}

\noindent
We also need family versions of Definitions~\ref{TransConf_dfn2} and~\ref{SimpFibr_dfn}, 
especially over the interval $\bI\!\equiv\![0,1]$.

\begin{dfn}\label{Manfam_dfn}
Let $B$ be a manifold, possibly with boundary.
\begin{enumerate}[label=(\arabic*),leftmargin=*]

\item\label{Manfam_it} A smooth map $\pi_{\fX}\!:\fX\!\lra\!B$ is
a \sf{family of manifolds over~$B$} if $\prt\fX\!=\!\pi_{\fX}^{-1}(\prt B)$ and
 $\pi_{\fX}$ is a submersion.

\item\label{ManSymp_it} A \sf{family of symplectic manifolds over~$B$} is a tuple 
$(\fX,\om_{\fX},\pi_{\fX})$, where $\pi_{\fX}\!:\fX\!\lra\!B$ is
a family of manifolds over~$B$ and  $\om_{\fX}$ is a 2-form on~$\fX$ such~that
$$\big(\fX_t,\om_t\big)\equiv \big(\pi_{\fX}^{-1}(t),\om|_{\fX_t}\big)$$
is a symplectic manifold for every $t\!\in\!B$.

\end{enumerate}
\end{dfn}

\begin{dfn}\label{SCCfam_dfn}
Let $N\!\in\!\Z^+$ and $B$ be a manifold, possibly with boundary.
\begin{enumerate}[label=(\arabic*),leftmargin=*]

\item\label{TCfam_it} A \sf{family of $N$-fold transverse configurations over~$B$} is 
a tuple $\{\pi_I\!:\fX_I\!\lra\!B\}_{I\in\cP^*(N)}$,
where  $\{\fX_I\}_{I\in\cP^*(N)}$ is an $N$-fold transverse configuration
and $\pi_I\!:\fX_I\!\lra\!B$ is a family of manifolds over~$B$ for each $I\!\in\!\cP^*(N)$
such that $\pi_I\!=\!\pi_i|_{\fX_I}$ for all $i\!\in\!I\!\subset\![N]$.

\item\label{SCCfam_it} A \sf{family of $N$-fold SC symplectic configurations over~$B$} is 
a tuple 
$$\big((\pi_I\!:\fX_I\!\lra\!B)_{I\in\cP^*(N)},(\om_i)_{i\in[N]}\big),$$
where $\{\pi_I\!:\fX_I\!\lra\!B\}_{I\in\cP^*(N)}$ is a 
family of $N$-fold transverse configurations over~$B$ 
and $\om_i$ is a 2-form on~$\fX_i$ for each $i\!\in\![N]$ such that
$$\big((\fX_{t;I})_{I\in\cP^*(N)},(\om_{t;i})_{i\in[N]}\big)
\equiv \big( (\pi_I^{-1}(t))_{I\in\cP^*(N)},(\om_i)_{i\in[N]}\big)$$ 
is an  $N$-fold SC symplectic configuration for every $t\!\in\!B$.

\end{enumerate}
\end{dfn}

\noindent
We call a family $(\om_t)_{t\in B}$ of 2-forms on a manifold~$X$ \sf{smooth} if 
the \sf{induced 2-form on $B\!\times\!X$} given~by
$$\om_{t,x}(v,w)=\begin{cases}
\om_t|_x(v,w),&\hbox{if}~v,w\!\in\!T_xX;\\
0,&\hbox{if}~v\!\in\!T_tB;
\end{cases}$$
is smooth.
Let $\X\!\equiv\!\{X_I\}_{I\in\cP^*(N)}$ be an $N$-fold transverse configuration,
$$\big(\om_{t;i}\big)_{i\in[N]}\in\Symp^+(\X) \qquad\forall~t\!\in\!B,$$
and  $\om_i$ be the 2-form on $B\!\times\!X_i$ induced by the family
$\big(\om_{t;i}\big)_{t\in B}$.
If the family $(\om_{t;i})_{t\in B}$ of 2-forms on~$X_i$ is smooth for every $i\!\in\![N]$
and $\pi_I\!:B\!\times\!X_I\!\lra\!X_I$ is the component projection map, then 
the~tuple 
$$\big((\pi_I\!:\fX_I\!\equiv\!B\!\times\!X_I\lra B)_{I\in\cP^*(N)},(\om_i)_{i\in[N]}\big)$$
is a family of $N$-fold SC symplectic configurations over~$B$.\\

\noindent 
Suppose the fibers of the submersions $\pi_I$ in Definition~\ref{SCCfam_dfn}\ref{TCfam_it}
are compact.
These projections are then locally trivial, i.e.~for every $t_0\!\in\!B$
there exist a neighborhood~$U$ of~$t_0$ in~$B$ and a diffeomorphism
$$\Xi_I\!:\fX_I|_U\!\equiv\!\pi_I^{-1}(U)\lra U\!\times\!\pi_I^{-1}(t_0)$$
such that $\pi_I|_{\fX_I|_U}\!=\!\pi_U\!\circ\!\Xi_I$,
where 
$$\pi_U\!:U\!\times\!\pi_I^{-1}(t_0)\lra U$$
is the component projection map.
By the same reasoning as in the proof of \cite[Proposition~4.2]{SympDivConf},
the diffeomorphisms~$\Xi_I$ can be chosen so~that
$$ \Xi_I=\Xi_i|_{\pi_I^{-1}(U)} \qquad\forall~i\!\in\!I\!\subset\![N].$$
This implies that any family of compact $N$-fold transverse configurations is locally trivial.
Thus, we can  identify a family of $N$-fold SC symplectic configurations over~$\bI$
with a smooth path $(\om_{t;i})_{i\in[N]}$ in $\Symp^+(\X)$ for some 
$N$-fold transverse configuration~$\X$.\\

\noindent
Given a family  $\{\pi_I\!:\fX_I\!\lra\!B\}_{I\in\cP^*(N)}$ of transverse configurations,
we define fiber bundles
$$\pi_{\eset}\!:\fX_{\eset}\lra B \qquad\hbox{and}\qquad
\pi_{\prt}\!:\fX_{\prt}\lra B$$
similarly to~\eref{Xesetdfn_e} and~\eref{Xprtdfn_e}.
For each $t\!\in\!B$, let 
$$\fX_{t;\eset}=\pi_{\eset}^{-1}(t) \qquad\hbox{and}\qquad
\fX_{t;\prt}=\pi_{\prt}^{-1}(t).$$
The precise definition of the total space of the complex line bundle
$\cO_{X_{\prt}}(X_{\eset})$ in~\eref{PsiDfn_e} depends
on the choices of identifications~$\Psi_{ij;j}$ of neighborhoods of~$X_{ij}$
in~$X_j$ and in~$X_j$ and of the $\om_j$-tame complex structures~$\fI_{ij;j}$
on (the fibers of) $\cN_{X_j}X_{ij}$ that satisfy~\eref{restrisom_e3}, respectively. 
For a family of SC symplectic configurations as in Definition~\ref{SCCfam_dfn}\ref{SCCfam_it},
such choices can be made continuously with respect to $t\!\in\!B$.
We then obtain a complex line~bundle 
\BE{BprtO_e}\cO_{\fX_{\prt}}(\fX_{\eset})\lra\fX_{\prt} \qquad\hbox{s.t.}\qquad
\cO_{\fX_{\prt}}(\fX_{\eset})\big|_{\fX_{t;\prt}}= \cO_{\fX_{t;\prt}}(\fX_{t;\eset})
\quad\forall~t\!\in\!B\,. \EE
We call a family $(\hb_t)_{t\in B}$ of homotopy classes of trivializations of 
$\cO_{\fX_{t;\prt}}(\fX_{t;\eset})$ \sf{continuous} if for each $t_0\!\in\!B$ 
there exist a neighborhood~$U$ of~$t_0$ in~$B$ and a trivialization~$\Phi$ of 
$\cO_{\fX_{\prt}}(\fX_{\eset})|_{\pi_{\prt}^{-1}(U)}$ such that 
$\Phi|_{\fX_{t;\prt}}\!\in\!\hb_t$ for every $t\!\in\!U$.\\

\noindent
Suppose $\X$ is an $N$-fold transverse configuration as in 
Definition~\ref{TransConf_dfn2},
$$\big(\om_{0;i}\big)_{i\in[N]},\big(\om_{1;i}\big)_{i\in[N]} \in \Symp^+(\X)$$
lie in the same topological component of $\Symp^+(\X)$, 
and the associated line bundle~\eref{PsiDfn_e} admits a trivialization.
Let $\hb_0$ and $\hb_1$ be homotopy classes of trivializations of 
 the line bundles~$\cO_{X_{\prt}}(X_{\eset})$ corresponding to 
the SC symplectic configurations determined by 
$(\om_{0;i})_{i\in[N]}$ and $(\om_{1;i})_{i\in[N]}$, respectively.
We define $\hb_0\!=\!\hb_1$ if there exist a smooth
path $(\om_{t;i})_{i\in[N]}$ in $\Symp^+(\X)$
and a continuous family $(\hb_t)_{t\in B}$ of homotopy classes of trivializations of 
the corresponding line bundles $\cO_{\fX_{t;\prt}}(\fX_{t;\eset})$.

\begin{dfn}\label{SympFibrfam_dfn}
Let $B$ be a manifold, possibly with boundary.
\begin{enumerate}[label=(\arabic*),leftmargin=*]

\item\label{SympFibrFam_it} 
A \sf{family of nearly regular symplectic fibrations over~$B$} 
is a tuple $(\sZ,\om_{\sZ},\pi,\pi_{\sZ})$, where\linebreak $(\sZ,\om_{\sZ},\pi_{\sZ})$ is a family
of symplectic manifolds over~$B$ and $\pi\!:\sZ\!\lra\!\C$ is a smooth map such~that 
\BE{SympFibrFam_e}\big(\sZ_t,\om_{\sZ;t},\pi_t\big)\equiv
\big(\pi_{\sZ}^{-1}(t),\om_{\sZ}|_{\sZ_t},\pi|_{\sZ_t}\big)\EE
is a nearly regular symplectic fibration for every $t\!\in\!B$.

\item\label{SympSmoothFam_it}  Let $(\fX_{t;\eset})_{t\in B}$ be the family  of 
SC symplectic varieties associated with a family of $N$-fold SC symplectic configurations over~$B$
as in Definition~\ref{SCCfam_dfn}\ref{SCCfam_it}.
A \sf{family of one-parameter families of smoothings} of $(\fX_{t;\eset})_{t\in B}$
is a family $(\sZ,\om_{\sZ},\pi,\pi_{\sZ})$
of nearly regular symplectic fibrations over~$B$ such~that
\eref{SympFibrFam_e} is a one-parameter family of smoothings of~$\fX_{t;\eset}$ 
for every $t\!\in\!B$.\\

\end{enumerate}
\end{dfn}

\noindent
Suppose $(\sZ,\om_{\sZ},\pi_{\sZ})$ is a family of symplectic manifolds over~$B$, 
$(\fX_{t;\eset})_{t\in B}$ is as in Definition~\ref{SympFibrfam_dfn}\ref{SympSmoothFam_it},
and $\fX_{t;\eset}\!\subset\!\sZ_t$ is an SC symplectic divisor for every $t\!\in\!B$.
We  call a family $(\fh_t)_{t\in B}$ of germs of homotopy classes of 
one-parameter families $(\sZ'_t,\om_{\sZ}|_{\sZ'_t},\pi_t)$ 
of smoothings of~$\fX_{t;\eset}$ with $\sZ_t'\!\subset\!\sZ_t$ 
\sf{continuous} if for each $t_0\!\in\!B$ 
there exist a neighborhood~$U$ of~$t_0$ in~$B$ and 
a family $(\sZ',\om_{\sZ}|_{\sZ'},\pi,\pi_{\sZ}|_{\sZ'})$ of one-parameter 
families of smoothings  of $(\fX_{t;\eset})_{t\in U}$ with $\sZ'\!\subset\!\sZ$
such~that  $(\sZ_t',\om_{\sZ}|_{\cZ'_t},\pi_t)\!\in\!\fh_t$
for every $t\!\in\!B$.

\section{Main setup and output}
\label{MainStat_sec}

\noindent
We review basic notions from Hamiltonian symplectic geometry,
specifying our conventions, in Section~\ref{TorusAct_subs}.
In Section~\ref{SympCutThm_subs1}, we study the notions 
of Hamiltonian and cutting configurations of Definitions~\ref{SympCut_dfn0} 
and~\ref{SympCut_dfn1}.
Simple local examples of such configurations  are described 
in Section~\ref{BasicEg_subs}.
Section~\ref{MomPolytThm_subs} provides a plethora of 
Hamiltonian and cutting configurations for
symplectic manifolds with Hamiltonian torus actions.
Theorems~\ref{SympCut_thm12} and~\ref{SympCut_thm3}, 
stated in Section~\ref{SympCutThm_subs2} and proved in Sections~\ref{Thm12Pf_sec} 
and~\ref{SumCutSm_sec}, 
describe the output determined by a cutting configuration via 
the multifold symplectic cut/degeneration construction of this paper.

\subsection{Torus actions and moment maps}
\label{TorusAct_subs}

\noindent
\sf{The characteristic vector field of} a (smooth) $S^1$-action 
$$\phi\!:S^1\!\times\!X\lra X$$
on a manifold~$X$ is the vector field~$\ze_{\phi}$ on~$X$ given~by 
$$\ze_{\phi}(x)=\frac{\nd}{\nd\th}\phi(\ne^{\fI\th};x\big)\bigg|_{\th=0}\in T_xX
\qquad\forall~x\!\in\!X\,.$$
If $(X,\om)$ is a symplectic manifold,
a \sf{Hamiltonian for}  an $S^1$-action~$\phi$ on~$X$ as above
is a smooth map \hbox{$h\!:X\!\lra\!\R$} such~that 
$$-\nd h= \io_{\ze_{\phi}}\om\equiv \om(\ze_{\phi},\cdot);$$
such a function~$h$ is $S^1$-invariant.
For example, a Hamiltonian for the $S^1$-action
$$\phi\!:S^1\!\times\!\C^N\lra \C^N, \qquad
\phi(\ne^{\fI\th};z_1,\ldots,z_N\big)
=\big(z_1,\ldots,z_{i-1},\ne^{\fI\th}z_i,z_{i+1},\ldots,z_N\big),$$
on~$\C^N$ with the standard symplectic form 
$$\om_{\C^N}\equiv\nd x_1\!\w\!\nd y_1+\ldots+\nd x_N\!\w\!\nd y_N$$
is given~by 
$$h\!:\C^N\lra\R, \qquad h(z_1,\ldots,z_N)\!=\!\frac12|z_i|^2\,.$$
An $S^1$-action~$\phi$ on $(X,\om)$ is called \sf{Hamiltonian} if 
a Hamiltonian~$h$ for~$\phi$ exists.
In such a case, $h$ is well-defined up to a constant (on each connected component of~$X$).\\

\noindent
For a $k$-torus $\T\!\approx\!(S^1)^k$, we denote by $\ft$ its Lie algebra 
and by~$\ft^*$ the dual of~$\ft$.
A smooth~map
\BE{phiTdfn_e}\phi\!:\T\!\times\!X\lra X\EE
and an element $\xi\!\in\!\ft$ determine a vector field~$\xi_{\phi}$ on~$X$ by
$$\xi_{\phi}(x)=\nd_{(\id,x)}\phi(\xi,0)\in T_xX \qquad\forall~x\!\in\!X.$$
If $(X,\om)$ is a symplectic manifold,
a \sf{moment map} for a $\T$-action~$\phi$ on~$X$ as in~\eref{phiTdfn_e}  
is a $\T$-invariant smooth~map 
$$\mu\!:X\lra\ft^* \qquad\hbox{s.t.}\quad
-\nd\blr{\mu(\cdot),\xi}=\io_{\xi_{\phi}}\om~~\forall\,\xi\!\in\!\ft\,.$$
An $\T$-action~$\phi$ on $(X,\om)$ is called \sf{Hamiltonian} if 
a moment map~$\mu$ for~$\phi$ exists.
In such a case, $\mu$ is well-defined up to a constant (on each connected component of~$X$).
A \sf{Hamiltonian $\T$-pair for a symplectic manifold}~$(X,\om)$ is a pair~$(\phi,\mu)$ 
consisting of a Hamiltonian action~$\phi$ of a torus~$\T$
on~$(X,\om)$ and a~moment map~$\mu$ for
this action.\\

\noindent
We identify the Lie algebra~$\ft_1$ of~$S^1$ and its dual~$\ft_1^*$ with~$\R$ by 
the dual homomorphisms
\BE{Rt1iden_e}\R\lra\ft_1, \quad r\lra 
 \xi_r\!\equiv\!\frac{\nd}{\nd\th}\big(\ne^{\fI r\th}\big)\bigg|_{\th=0}\in T_{\id}S^1,
\qquad \ft_1^*\lra\R, \quad \eta\lra\blr{\eta,\xi_1}\in\R.\EE 
In particular, $\ze_{\phi}\!=\!(\xi_1)_{\phi}$.
The composition of the second homomorphism above with a moment map~$\mu$  
for an~$S^1$-action~$\phi$ on~$(X,\om)$ is a Hamiltonian for this action.
An isomorphism $\T\!\approx\!(S^1)^k$ and the identifications~\eref{Rt1iden_e}
determine identifications
$$ \ft\approx\R^k \qquad\hbox{and}\qquad \ft^*\approx\R^k \,.$$
A $\T$-action~$\phi$ on~$X$ then corresponds to a tuple $(\phi_i)_{i\in[k]}$
of commuting $S^1$-actions on~$X$.
A~moment map~$\mu$ for a $\T$-action~$\phi$ on~$(X,\om)$ corresponds
to a tuple $(h_i)_{i\in[k]}$ of Hamiltonians for the \hbox{$S^1$-actions~$\phi_i$}
 preserved by all \hbox{$S^1$-actions}, i.e.
$$h_i\big(\phi_j(\ne^{\fI\th};x)\big)=h_i(x)
\qquad\forall~(\ne^{\fI\th};x)\in S^1\!\times\!X,~i,j\!\in\![k].$$

\subsection{Hamiltonian and cutting configurations}
\label{SympCutThm_subs1}

\noindent
A Hamiltonian configuration~\eref{SympCutDfn_e} can alternatively be described
in terms of compatible $S^1$-actions as follows.
For $i,j\!\in\!\Z$, define
$$
\fs_{ij}=\begin{cases}1,&\hbox{if}~i\!>\!j;\\
0,&\hbox{if}~i\!=\!j;\\
-1,&\hbox{if}~i\!<\!j.\end{cases}$$
For $i\!\in\!I\!\subset\![N]$, the homomorphism
\BE{vrIidfn_e}
\vr_{I;i}\!:(S^1)^{I-i}\lra(S^1)^I, \qquad
\big(\vr_{I;i}\big((\ne^{\fI\th_j})_{j\in I-i}\big)\!\big)_k=
\begin{cases}
\prod\limits_{j\in I-i}\!\!\!\!\ne^{-\fI\fs_{ij}\th_j},&\hbox{if}~k\!=\!i;\\
\ne^{\fI\fs_{ik}\th_k},&\hbox{if}~k\!\in\!I\!-\!i;
\end{cases}\EE
is an isomorphism onto $(S^1)^I_{\bu}$.
The induced homomorphism on the duals of the Lie algebras,
\BE{vrIidfn_e2} 
\big\{\nd_{\id}\vr_{I;i}\big\}^*\!\!: \R^I\lra \R^{I-i}, \qquad 
(a_j)_{j\in I}\lra \big(\fs_{ij}(a_j\!-\!a_i)\big)_{j\in I},\EE
descends to an isomorphism from $\ft_{I;\bu}^*$ to~$\R^{I-i}$.
Via the isomorphisms~\eref{vrIidfn_e} and~\eref{vrIidfn_e2},
the tuple $(\phi_I,\mu_I)_{I\in\cP^*(N)}$ in~\eref{SympCutDfn_e}
corresponds to smooth~maps
\BE{phiijhij_e}
\phi_{ij}\!=\!\phi_{ji}\!:S^1\!\times\!
\bigcup_{I\in\cP_{ij}(N)}\!\!\!\!\!\!\!U_I\lra \bigcup_{I\in\cP_{ij}(N)}\!\!\!\!\!\!U_I
\quad\hbox{and}\quad
h_{ij}\!=\!h_{ji}\!: \bigcup_{I\in\cP_{ij}(N)}\!\!\!\!\!\!U_I\lra\R\EE
with $i,j\!\in\![N]$ distinct so that $\phi_{ij}$ are commuting $S^1$-actions
with Hamiltonians~$h_{ij}$ preserved by all these actions and satisfying
$$\fs_{ij}h_{ij}|_{U_I}+\fs_{jk}h_{jk}|_{U_I}=\fs_{ik}h_{ik}|_{U_I}$$
for all $i,j,k\!\in\!I\!\subset\![N]$.\\

\noindent
Since $(S^1)^I_{\bu}$ is a torus of dimension $|I|\!-\!1$,
the ``actions" $\phi_{\{i\}}$ with $i\!\in\![N]$ in~\eref{SympCutDfn_e}
are trivial. 
In the $N\!=\!1$ case of Definition~\ref{SympCut_dfn1}, 
the open cover consists of the single set $U_1\!=\!X$.
There are no torus actions then;
the output of Sections~\ref{Thm12Pf_sec} and~\ref{SumCutSm_sec} is then simply
$$(X_1,\om_1)=(X,\om), \quad 
(\cZ,\om_{\cZ})=\big(X\!\times\!\C,\pi_1^*\om\!+\!\pi_2^*\om_{\C}\big), \quad
\pi\!=\!\pi_2\!:\cZ\!\lra\C\,.$$
The $N\!=\!2$ case corresponds to the setting in the symplectic cut
construction of~\cite{L} with a separating hypersurface 
$$\wt{V}= U_{12;12}\equiv \mu_{12}^{-1}(0)\subset U_{12}\,.$$
In this case, $U_{12}$ is an open neighborhood of $\wt{V}$ with 
an action of~$(S^1)^2_{\bu}\!\approx\!S^1$.
The open subsets $U_1,U_2\!\subset\!X$ can be taken to be the unions of 
the topological components of $X\!-\!\wt{V}$ so~that 
$$(\mu_{12}(x))_1\!<\!(\mu_{12}(x))_2~~\forall\,x\!\in\!U_1\!\cap\!U_{12} \qquad\hbox{and}\qquad
(\mu_{12}(x))_2\!<\!(\mu_{12}(x))_1~~\forall\,x\!\in\!U_2\!\cap\!U_{12};$$
see Definition~\ref{SympCut_dfn0}\ref{UIJpos_it}.
This implies that~$\wt{V}$ is closed in~$X$ (and not just in~$U_{12}$).\\

\noindent
Let $\sC$ be an $N$-fold Hamiltonian configuration for~$(X,\om)$ as in~\eref{SympCutDfn_e}.
We call a subset $Y\!\subset\!X$ \sf{$\sC$-invariant}~if 
$$\phi_I\big((S^1)^I_{\bu}\!\times\!(Y\!\cap\!U_I)\big)=Y\!\cap\!U_I
\qquad\forall~I\!\in\!\cP^*(N).$$
For such a subset, let 
$$\sC|_Y = \big(Y\!\cap\!U_I,\phi_I|_{(S^1)^I_{\bu}\times(Y\cap U_I)},
\mu_I|_{Y\cap U_I}\big)_{I\in\cP^*(N)}.$$
If in addition $Y$ is a symplectic submanifold of~$(X,\om)$,
then $\sC|_Y$ is an $N$-fold Hamiltonian configuration for~$(Y,\om|_Y)$;
we call it the \sf{restriction of~$\sC$ to~$Y$}.
If  $\sC$ is a cutting configuration, then so is~$\sC|_Y$.\\

\noindent
If $(U_I)_{I\in\cP^*(N)}$ is a collection of subsets of~$X$,
another collection $(U_I')_{I\in\cP^*(N)}$ of subsets of~$X$ \sf{refines} 
$(U_I)_{I\in\cP^*(N)}$ if $U_I'\!\subset\!U_I$  for all $I\!\in\!\cP^*(N)$.
We  call such a refinement \sf{proper} if $\ov{U_I'}\!\subset\!U_I$
for all $I\!\in\!\cP^*(N)$.
If $\sC$ is an  $N$-fold Hamiltonian configuration for~$(X,\om)$ as in~\eref{SympCutDfn_e},
we  call  a collection $(U_I')_{I\in\cP^*(N)}$ of subsets of~$X$ refining~$(U_I)_{I\in\cP^*(N)}$
\sf{$\sC$-invariant}~if 
$$\phi_I\big((S^1)^I_{\bu}\!\times\!U_I'\big)=U_I' \qquad\forall~I\!\in\cP^*(N).$$
If in addition each $U_I'$ is open and the union of these subsets is $X'\!\subset\!X$, then
\BE{sCprdfn_e}\sC'\equiv
\big(U_I',\phi_I|_{(S^1)^I_{\bu}\times U_I'},\mu_I|_{U_I'}\big)_{I\in\cP^*(N)}\EE
is  an  $N$-fold Hamiltonian configuration for~$(X',\om|_{X'})$;
we  call it the \sf{restriction of~$\sC$ to $(U_I')_{I\in\cP^*(N)}$}.
If $\sC$ is a cutting configuration, then so is~$\sC'$.
Restricting to an open refinement covering~$X$ has no effect 
on the cut symplectic manifolds $(X_i,\om_i)$,
their submanifolds~$(X_I,\om_I)$, the symplectic manifold~$(\cZ,\om_{\cZ})$,
or the deformation equivalence class of the fibration $\pi\!:\cZ'\!\lra\!\C$
constructed in Sections~\ref{Thm12Pf_sec} and~\ref{SumCutSm_sec}, but
refines the open cover $\{X_I^{\circ}\}_{I\in\cP_i(N)}$ of each~$X_i$
and the open cover $\{\cZ_I^{\circ}\}_{I\in\cP^*(N)}$ of~$\cZ$ determined by~$\sC$.\\

\noindent
According to Definition~\ref{SympCut_dfn0}\ref{UIJpos_it}, 
$$(\mu_J(x))_i\!<\!(\mu_J(x))_j \qquad\forall~x\!\in\!U_I\!\cap\!U_J,~
i\!\in\!I\!\subset\!J\!\subset\![N],~j\!\in\!J\!-\!I.$$
We call an $N$-fold Hamiltonian configuration as in~\eref{SympCutDfn_e} 
\sf{maximal}~if
\BE{IJpos_e}\big\{x\!\in\!U_J\!:\,(\mu_J(x))_i\!<\!(\mu_J(x))_j
~\forall\,i\!\in\!I,~j\!\in\!J\!-\!I\big\}
\subset U_I\EE
whenever $I,J\!\in\!\cP^*(N)$ and $I\!\subset\!J$.
The condition~\eref{IJpos_e} is automatically satisfied if $I\!=\!J$.
The \hbox{$N\!=\!1,2$} configurations described above
and the configuration~\eref{THamConf_e} with a Hamiltonian $(S^1)^N_{\bu}$-action
on the entire symplectic manifold $(X,\om)$ are maximal.
Special cases of the latter include 
the basic local $N$-fold configuration~\eref{BasicEgConf_e}
and the configuration~\eref{toriccut_e} for a symplectic manifold with 
a Hamiltonian action of an abstract torus.
For a maximal Hamiltonian configuration, the set of conditions of Definition~\ref{SympCut_dfn1}
indexed by the pairs $I_0\!\subset\!I$ reduces to its subset with $I_0\!=\!I$.
In light of the second statement in Lemma~\ref{sCcover_lmm},
maximal cutting configurations give rise to open covers $\{X_I^{\circ}\}_{I\in\cP_i(N)}$ of 
the cut symplectic manifolds~$X_i$ in the spirit of toric and tropical geometries 
(i.e.~similar to Zariski open sets in algebraic geometry).
For such configurations, each $X_I^{\circ}$ contains the submanifold~$X_I$ outside of 
the submanifolds~$X_J$ with $J\!\supsetneq\!I$ (the real codimension of~$X_J$ in~$X_I$ is 
$2(|J|\!-\!|I|)$).\\

\noindent
A~maximal $N$-fold Hamiltonian configuration can be obtained from
any given $N$-fold Hamiltonian configuration~\eref{SympCutDfn_e} 
by taking
\begin{gather*}
U_I'=U_I\cup \bigcup_{J\in\cP_I(N)}\!\!\!\!\!\! 
\big\{x\!\in\!U_J\!:\,(\mu_J(x))_i\!<\!(\mu_J(x))_j
~\forall\,i\!\in\!I,~j\!\in\!J\!-\!I\big\},\\
\phi_I'(g;x)=\phi_J(g;x), \quad \mu_I'(x)=\mu_J(x)\big|_{\ft_{I;\bu}}
\qquad\forall~g\!\in\!(S^1)^I_{\bu},\,x\!\in\!U_I'\!\cap\!U_J,\,
J\!\in\!\cP_I(N).
\end{gather*}
By~\ref{IJinter_it} and~\ref{phiIUJ_it} in Definition~\ref{SympCut_dfn0},
$\phi_I'(g;x)$ and $\mu_I'(x)$ are independent of the possible choices of~$J$.
Since the moment map~$\mu_J$ is $(S^1)^I_{\bu}$-invariant whenever $J\!\in\cP_I(N)$,
the image of~$\phi_I'$ is contained in~$U_I'$.
Thus, $\phi_I'$ is an $(S^1)^I_{\bu}$-action on~$U_I'$ with moment map~$\mu_I'$.
It is immediate that the new collection
$$\sC'\equiv \big(U_I',\phi_I',\mu_I'\big)_{I\in\cP^*(N)}$$
satisfies Definition~\ref{SympCut_dfn0}\ref{phiIUJ_it};
verifying \ref{IJinter_it} and \ref{UIJpos_it} in Definition~\ref{SympCut_dfn0} and~\eref{IJpos_e} 
is a bit tedious, but straightforward.
If $\sC$ is a cutting configuration, then so is~$\sC'$.
Thus, every Hamiltonian (resp.~cutting) configuration is a restriction of 
a maximal Hamiltonian (resp.~cutting) configuration.
We also note that if all torus actions~$\phi_I$ are free, 
then so are the torus actions~$\phi_I'$.\\

\noindent
The open sets~$U_I$ in a maximal cutting configuration~$\sC$ can be inductively shrunk
so that the restricted actions in~\eref{sCprdfn_e} are free.
Suppose $I^*\!\in\!\cP^*(N)$, 
the actions~$\phi_I$ are free for all $I\!\supsetneq\!I^*$,
and~\eref{IJpos_e} holds for all $I\!\subsetneq\!I^*$.
Since the $\phi_{I^*}$-action on \hbox{$\mu_{I^*}^{-1}(0)\!=\!\mu_{I^*;I^*}^{-1}(0)$} is free,
it is also free on a $\phi_{I^*}$-invariant neighborhood~$U_{I^*}'$ 
of~$\mu_{I^*}^{-1}(0)$ in~$U_{I^*}$.
By the inductive assumption regarding~\eref{IJpos_e},
$$U_{I^*}-U_{I^*}'\subset \bigcup_{\eset\neq I\subsetneq I^*}\!\!\!\!U_I\,.$$
Thus, replacing $U_{I^*}$ with~$U_{I^*}'$ accomplishes the inductive step.
Proceeding in this way, we obtain a $\sC$-invariant open cover $(U_I')_{I\in\cP^*(N)}$
of~$X$ refining $(U_I)_{I\in\cP^*(N)}$ so that the actions~$\phi_I$ are free on~$U_I'$.\\

\noindent
In summary, any $N$-fold cutting configuration~$\sC$ for~$(X,\om)$ can replaced
by a maximal $N$-fold cutting configuration~$\sC'$ with free torus 
actions without any effect on the output of the symplectic cut construction:
the symplectic manifolds $(X_i,\om_i)$ with $i\!\in\![N]$,
their submanifolds~$(X_I,\om_I)$ with $I\!\in\!\cP_i(N)$,
the symplectic manifold~$(\cZ,\om_{\cZ})$,
or the deformation equivalence class of the fibration $\pi\!:\cZ'\!\lra\!\C$.

\subsection{Induced degenerations}
\label{SympCutThm_subs2}

\noindent
Suppose $\pi_{\fX}\!:\fX\!\lra\!B$ is a family of manifolds as in 
Definition~\ref{Manfam_dfn}\ref{Manfam_it}, 
$U_t\!\subset\!\fX_t$ is an open subset for each $t\!\in\!B$,
and $Y$ is a smooth manifold.
We  call a family \hbox{$(\phi_t\!:U_t\!\lra\!Y)_{t\in B}$} 
\sf{smooth}~if 
$$\bigcup_{t\in B}U_t\lra Y, \qquad (t,x)\lra \phi_t(x),$$
is a smooth map from an open subspace of~$\fX$.

\begin{dfn}\label{SympCutFam_dfn}
Let $N\!\in\!\Z^+$, $B$ be a manifold, possibly with boundary,
and $(\fX,\om_{\fX},\pi_{\fX})$ be a family of symplectic manifolds over~$B$
as in Definition~\ref{Manfam_dfn}\ref{ManSymp_it}.
A  \sf{family of $N$-fold Hamiltonian} (resp.~\sf{cutting}) 
\sf{configurations for~$(\fX,\om_{\fX},\pi_{\fX})$} is a~tuple 
\BE{SympCutFam_e}
(\sC_t)_{t\in B}
\equiv\big(U_{t;I},\phi_{t;I},\mu_{t;I}\big)_{I\in\cP^*(N),t\in B}\EE
such that $\sC_t$ is an $N$-fold Hamiltonian (resp.~cutting) configuration 
for~$(\fX_t,\om_t)$ for each $t\!\in\!B$ and the families of~maps
$$ \bigg(\phi_{t;I}\!: (S^1)^I_{\bu}\!\times\!U_{t;I}\lra \fX\bigg)_{\!t\in B}
\quad\hbox{and}\quad
 \bigg(\mu_{t;I}\!:U_{t;I}\lra\ft_{I;\bu}^*\bigg)_{\!t\in B}$$
are smooth for all $I\!\in\!\cP^*(N)$.
\end{dfn}

\noindent
As described in Section~\ref{SympFamil_subs}, a smooth path $(\om_t)_{t\in B}$
of symplectic forms on a manifold~$X$ determines a family $(\fX,\om_{\fX},\pi_{\fX})$ of 
symplectic manifolds over~$B$.
We call a  family of $N$-fold Hamiltonian (resp.~cutting) configurations for such 
a tuple $(\fX,\om_{\fX},\pi_{\fX})$
an \sf{$(\om_t)_{t\in B}$-family of $N$-fold  Hamiltonian (resp.~cutting) configurations for~$X$}.
We call $N$-fold Hamiltonian (resp.~cutting) configurations $\sC_0$ for $(X,\om_0)$ and
$\sC_1$ for $(X,\om_1)$ \sf{deformation equivalent} if 
there are a path $(\om_t)_{t\in\bI}$ of symplectic forms on~$X$
and an $(\om_t)_{t\in\bI}$-family $(\sC_t)_{t\in\bI}$ of $N$-fold 
Hamiltonian (resp.~cutting) configurations for~$X$.
For example, the restriction~$\sC'$ of a Hamiltonian or 
cutting configuration~$\sC$ as in~\eref{SympCutDfn_e} 
to a $\sC$-invariant open cover $(U_I')_{I\in\cP^*(N)}$ of~$X$ refining $(U_I)_{I\in\cP^*(N)}$
is deformation equivalent to~$\sC$.
In this case, we can define $U_{t;I}\!\subset\!\{t\}\!\times\!X$~by
$$\bigcup_{t\in\bI}U_{t;I} \equiv 
\big[0,1/2)\!\times\!U_I \cup [0,1]\!\times\!U_I' \subset \bI\!\times\!U_I$$
and take $\phi_{t;I}$ and $\mu_{t;I}$ to be the restrictions of~$\phi_I$
and~$\mu_I$, respectively.\\

\noindent
Let $\sC$ be an  $N$-fold cutting configuration for~$(X,\om)$ as in~\eref{SympCutDfn_e}.
For $i\!\in\!I\!\subset\![N]$, we~define
\BE{UiIdfn_e} U_{i;I}^{\le} = 
\big\{x\!\in\!U_I\!:\,(\mu_I(x))_i\!\le\!(\mu_I(x))_j~\forall\,j\!\in\!I\big\}; \EE
in particular, $U_{i;i}^{\le}\!=\!U_i$.
Since the $\phi_I$-action is Hamiltonian, the subset
$U_{i;I}^{\le}\!\subset\!U_I$ is $\phi_I$-invariant.
Let  
\begin{gather}
\label{ovUi_e}
U_i^{\le}= \bigcup_{I\in\cP_i(N)}\!\!\!\!\!\!U_{i;I}^{\le}\,,\\
\notag
\prt U_i^{\le}=\big\{x\!\in\!U_i^{\le}\!\!:\,
(\mu_I(x))_i\!=\!(\mu_I(x))_j~\hbox{for some}~I\!\in\!\cP_i(N),\,j\!\in\!I\!-\!i
~\hbox{s.t.}~x\!\in\!U_{i;I}^{\le}\big\}\,.
\end{gather}
By Lemmas~\ref{Xitopol_lmm1} and~\ref{Xitopol_lmm2}, 
the subsets $U_i^{\le},\prt U_i^{\le}\!\subset\!X$ are closed.
Since the sets~$U_{i;I}^{\le}$ with $i\!\in\!I$ cover~$U_I$
and the sets~$U_I$ with $I\!\in\!\cP^*(N)$ cover~$X$,
the collection $(U_i^{\le})_{i\in[N]}$ covers~$X$.\\

\noindent
In the $N\!=\!2$ case of \cite[Section~1.1]{L},
a cutting configuration for a symplectic manifold~$(X,\om)$ produces 
two symplectic manifolds, $(X_1,\om_1)$ and $(X_2,\om_2)$, with 
a common symplectic divisor~$X_{12}$.
They are obtained by cutting~$X$ into the closed subsets~$U_1^{\le}$ 
and~$U_2^{\le}$
along the zero set of the only (non-trivial) moment map~$\mu_{12}$
 and collapsing their boundary 
$$\prt U_1^{\le}=\prt U_2^{\le}=U_{12;12}$$ 
by the single $S^1$-action~$\phi_{12}$.
We show in Section~\ref{Thm12Pf_sec} that this construction extends
to an arbitrary $N$-fold cutting configuration in the sense of Definition~\ref{SympCut_dfn1}
and produces $N$~symplectic manifolds~$(X_i,\om_i)$ with common symplectic divisors~$X_{ij}$
which together form an SC symplectic divisor inside of another symplectic manifold~$(\cZ,\om_{\cZ})$.
The symplectic manifolds~$(X_i,\om_i)$ are  obtained  
by cutting~$X$ into the closed subspaces~$U_i^{\le}$
along the zero sets of the moment maps~$\mu_I$
and collapsing their boundaries and corners~$\prt U_i^{\le}$ 
by the $(S^1)^I_{\bu}$-actions~$\phi_I$.

\begin{numthm}\label{SympCut_thm12}
Suppose $N\!\in\!\Z^+$, $B$ is a  manifold, possibly with boundary, and
$(\fX,\om_{\fX},\pi_{\fX})$ is a family of symplectic manifolds over~$B$
as in Definition~\ref{Manfam_dfn}\ref{ManSymp_it}.
A family $(\sC_t)_{t\in B}$ of (fiberwise) $N$-fold cutting configurations 
for $(\fX,\om_{\fX},\pi_{\fX})$ as in~\eref{SympCutFam_e} determines
\begin{enumerate}[label=(\arabic*),leftmargin=*]

\item a family $(\sZ,\om_{\sZ},\pi_{\sZ})$ of symplectic manifolds over~$B$,

\item a family  $(\pi_I\!:\fX_I\!\lra\!B,\om_I)_{I\in\cP^*(N)}$
of $N$-fold SC symplectic configurations over~$B$ 
as in Definition~\ref{SCCfam_dfn} such that $\fX_{\eset}\!\subset\!\sZ$ and 
$\fX_{t;\eset}\!\subset\!\sZ_t$ is an SC symplectic divisor
for every $t\!\in\!B$,

\item a continuous map $q_{\eset}\!:\fX\!\lra\!\fX_{\eset}$ such that 
$q_{\eset}(\prt U_{t;i}^{\le})\!=\!\fX_{t;i}\!\cap\!\fX_{t;\prt}$ and 
$$q_{\eset}\!: 
\big( U_{t;i}^{\le}\!-\!\prt U_{t;i}^{\le},\om_t|_{U_{t;i}^{\le}-\prt U_{t;i}^{\le}}\big)\lra 
\big(\fX_{t;i}\!-\!\fX_{t;\prt},\om_{\sZ}|_{\fX_{t;i}-\fX_{t;\prt}}\big)$$
is a symplectomorphism for every $t\!\in\!B$ and $i\!\in\![N]$,

\item a family
$$(\sC_{\sZ;t})_{t\in B}\equiv 
\big(U_{\cZ;t;I},\phi_{\cZ;t;I},\mu_{\cZ;t;I}\big)_{I\in\cP^*(N),t\in B}$$ 
of (fiberwise) $N$-fold Hamiltonian configurations 
for $(\sZ,\om_{\sZ},\pi_{\sZ})$
which restricts to a family of cutting configurations over $\sZ\!-\!\fX_{\prt}$.
\end{enumerate}
If $\fX_t$ is compact, then so is~$\fX_{t;I}$ for every $I\!\in\!\cP^*(N)$.
\end{numthm}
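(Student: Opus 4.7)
The plan is to build $(\sZ, \om_\sZ, \pi_\sZ)$ and the cut configuration $(\fX_I, \om_I)_{I \in \cP^*(N)}$ fiberwise by combining local symplectic reductions, generalizing the $N=2$ construction of~\cite{L}, and then extract smoothness in $t$ from the smoothness of the data. Because Definition~\ref{SympCutFam_dfn} requires all $\phi_{t;I}$ and $\mu_{t;I}$ to depend smoothly on $t$, and because symplectic reduction by free actions depends smoothly on parameters, the family versions of all assertions will follow from their fiberwise counterparts. I therefore suppress $t$ and argue for a single cutting configuration $\sC$.

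The local model for each $I \in \cP^*(N)$ is the product symplectic manifold $U_I \times \C^I$ equipped with the $(S^1)^I_\bu$-action whose $U_I$-factor is $\phi_I$ and whose $\C^I$-factor is the inverse of the standard coordinatewise $(S^1)^I$-action, with moment map
\[
\tilde\mu_I(x, z) \;=\; \mu_I(x) \;-\; \bigl(\tfrac12|z_j|^2\bigr)_{j \in I}\big|_{\ft_{I;\bu}} \;\in\; \ft_{I;\bu}^*.
\]
After the shrinking procedure at the end of Section~\ref{SympCutThm_subs1} (which has no effect on the output), Definition~\ref{SympCut_dfn1} guarantees that this $(S^1)^I_\bu$-action is free on $\tilde\mu_I^{-1}(0)$, so the reduction $\sZ^{(I)} \equiv \tilde\mu_I^{-1}(0)/(S^1)^I_\bu$ is a smooth symplectic manifold. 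The $(S^1)^I_\bu$-invariant product $\prod_{j \in I} z_j$ descends to a smooth map $\sZ^{(I)} \to \C$ that locally models $\pi$. Inside $\sZ^{(I)}$, the locus $\{z_j = 0 : j \in J\}$ descends to the local model of $\fX_J$ for each $J \subset I$; the cut manifold $X_i$ corresponds to the stratum $\{z_j = 0 : j \in I \setminus \{i\}\}$, and on the minimal patch $I = \{i\}$ collapses to $U_i$ itself.

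The compatibility conditions of Definition~\ref{SympCut_dfn0} provide the gluing. Condition~(a) restricts pairwise overlaps to nested chains $I \subset J$; condition~(b) identifies $\sZ^{(I)}|_{U_I \cap U_J}$ with the open locus of $\sZ^{(J)}$ on which the $\C^{J \setminus I}$-coordinates are nonzero (on which the extra components of $\tilde\mu_J$ automatically vanish); condition~(c) makes this compatible with the signed ordering defining $U_i^{\le}$. The collapsing map $q_\eset$ sends $x \in U_i^{\le} \cap U_I$ to the class of $(x, z(x))$ with $|z_j(x)|^2 = 2((\mu_I(x))_j - (\mu_I(x))_i)$ for $j \in I \setminus \{i\}$ and arbitrary phase (absorbed by the quotient); the $N=2$ analysis of~\cite{L} then shows this is a symplectomorphism on $U_i^{\le} \setminus \prt U_i^{\le}$ and collapses $\prt U_i^{\le}$ onto $\fX_i \cap \fX_\prt$. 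The Hamiltonian configuration $\sC_\sZ$ on $\sZ$ is assembled from the residual actions of $(S^1)^I / (S^1)^I_\bu$ on the patches $\sZ^{(I)}$ together with the descended actions of the relevant subtori, and restricts to a cutting configuration on $\sZ \setminus \fX_\prt$ since away from the SC divisor these residual actions are free.

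The main technical obstacle will be verifying the cocycle condition for the gluings on triple overlaps $U_I \cap U_J \cap U_K$ (restricted to nested chains by Definition~\ref{SympCut_dfn0}(a)) and confirming that $\{\fX_I\}$ satisfies all clauses of Definition~\ref{SCD_dfn} inside each $\sZ_t$: transversality is immediate from the local coordinate description, while agreement of intersection and $\om$-orientations requires a careful linearization of the $(S^1)^I_\bu$-action near its fixed stratum, following the toric model. The equality $\om_{i_1}|_{X_{i_1 i_2}} = \om_{i_2}|_{X_{i_1 i_2}}$ required by Definition~\ref{TransConf_dfn2} is automatic, as both restrict to $\om_\sZ|_{X_{i_1 i_2}}$. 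Finally, compactness of each $\fX_{t;I}$ when $\fX_t$ is compact is visible from the $q_\eset$-description: $\fX_{t;I}$ is the image of the closed (hence compact) subset $\bigcap_{i \in I} U_{t;i}^{\le} \subset \fX_t$ under the continuous map $q_\eset$.
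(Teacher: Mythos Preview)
Your overall approach is the paper's: build local symplectic reductions $\cZ_I^\circ=\wt\mu_I^{-1}(0)/(S^1)^I_\bu$ of $U_I\times\C^I$, glue them via the transition maps dictated by Definition~\ref{SympCut_dfn0}, and read off the SC divisor from the coordinate loci $\{z_j=0:j\in I_0\}$. The paper does exactly this in Sections~\ref{SympRed_subs}--\ref{cZSympCutConf_subs}, with the explicit gluing maps~\eref{whThiIJdfn_e} and cocycle condition~\eref{ThCoc_e}.

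Two points need correction. First, your description of $X_i$ is internally inconsistent: you say (correctly) that $\fX_J$ is the locus $\{z_j=0:j\in J\}$, which for $J=\{i\}$ gives $\{z_i=0\}$; but you then write that $X_i$ corresponds to $\{z_j=0:j\in I\setminus\{i\}\}$, which has the wrong codimension for $|I|\ge3$ and on the minimal patch $I=\{i\}$ yields all of $U_i\times\C$ rather than $U_i$. The correct statement, matching the paper's~\eref{Xidfn_e} and your own formula for $q_\eset$ (which implicitly sets $z_i=0$), is $X_i\cap\cZ_I^\circ=\{z_i=0\}$.

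Second, and more substantively, your construction of $\sC_\cZ$ does not produce the right object. The ``residual action of $(S^1)^I/(S^1)^I_\bu$'' is an action of a single $S^1$, not of the $(S^1)^I_\bu$ required by Definition~\ref{SympCut_dfn0}, and in any case there is no $(S^1)^I$-action on $U_I$ to take a residual of. What the paper does in Section~\ref{cZSympCutConf_subs} is lift the \emph{original} $(S^1)^I_\bu$-action $\phi_I$ to $U_I\times\C^I$ by acting trivially on the $\C^I$-factor, i.e.\ $\wt\phi_{\cZ;I}(g;x,z)=(\phi_I(g;x),z)$; this commutes with the reducing action $\wt\phi_I$ and descends to the required $(S^1)^I_\bu$-action on $\cZ_I^\circ$. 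The verification that one obtains a cutting configuration away from $X_\prt$ (Corollary~\ref{cZHamConf_crl}) then comes down to the freeness of the combined $(S^1)^{I_0}_\bu\times(S^1)^I_\bu$-action established in Lemma~\ref{wtcZiIreg_lmm2}, not to freeness of any residual circle.
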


\begin{numthm}\label{SympCut_thm3}
Let $N$, $B$, $(\fX,\om_{\fX},\pi_{\fX})$, $(\sC_t)_{t\in B}$, $(\fX_{t;\eset})_{t\in B}$,
$(\sZ,\om_{\sZ},\pi_{\sZ})$, and $(\sC_{\sZ;t})_{t\in B}$
 be as in Theorem~\ref{SympCut_thm12}.
The  family  $(\sC_t)_{t\in B}$  determines 
a continuous family $(\fh_t)_{t\in B}$ of (fiberwise) germs of deformation equivalence classes of 
 one-parameter families
$(\sZ'_t,\om_{\sZ}|_{\sZ'_t},\pi_t)$ 
of smoothings of $\fX_{t;\eset}$ with $\sZ_t'\!\subset\!\sZ_t$.
For every such germ~$\fh_t$, there are a representative $\pi_t\!:\sZ_t'\!\lra\!\C$ 
and a $\sC_{\sZ;t}$-invariant open cover $(U_{\sZ;t;I}')_{I\in\cP^*(N)}$ of~$\sZ_t'$
refining $(U_{\sZ;t;I})_{I\in\cP^*(N)}$ such that 
for every $\la\!\in\!\C$
the restriction~$\sC_{\sZ;t}'$ of~$\sC_{\sZ;t}$ to $(U_{\sZ;t;I}')_{I\in\cP^*(N)}$
restricts to  a Hamiltonian configuration~$\sC_{\sZ;t}'|_{\sZ_{t;\la}}$ for 
$$\big(\sZ_{t;\la},\om_{\sZ;t;\la}\big)\equiv 
\big(\pi_t^{-1}(\la),\om_{\sZ}|_{\pi_t^{-1}(\la)}\big);$$ 
this restriction is a cutting configuration if $\la\!\neq\!0$.
If  $\fX_t$ is compact, such a germ can be chosen so that 
for every \hbox{$\la\!\in\!\R^+$} sufficiently small
$\sC_{\sZ;t}'|_{\sZ_{t;\la}}$
is canonically isomorphic to the restriction of the original cutting configuration~$\sC_t$ 
for $(\fX_t,\om_t)$
to an open cover $(U_{t;I}')_{I\in\cP^*(N)}$ of~$\fX_t$
refining $(U_{t;I})_{I\in\cP^*(N)}$.
\end{numthm}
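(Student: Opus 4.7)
The plan is to construct the smoothing fibration $\pi_t$ on a neighborhood of $\fX_{t;\eset}$ in~$\sZ_t$ from local toric normal forms, patch these local models using a canonical trivialization of $\cO_{\fX_{t;\prt}}(\fX_{t;\eset})$ supplied by~$\sC_t$, and then transfer the Hamiltonian and cutting structures to the fibers of~$\pi_t$. I would fix a single $t\!\in\!B$ for most of the argument, drop the subscript~$t$, and at the end argue that every step can be made smoothly in~$t$ by a family version of the equivariant symplectic neighborhood theorem.

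\emph{Local model and global $\pi$.}
The symplectic reduction construction of~$\sZ$ in Section~\ref{Thm12Pf_sec} together with the normal-bundle identifications $\Psi_{ij;j}$ and complex structures $\fI_{ij;j}$ underlying~\eref{PsiDfn_e} should supply, near each point of $\fX_I\!\subset\!\sZ$, a local chart $V\!\times\!\C^I$ on which $\fX_i\!\cap\!U_{\sZ;I}\!=\!\{z_i\!=\!0\}$ for $i\!\in\!I$, the action $\phi_{\sZ;I}$ rotates $(z_i)_{i\in I}$ by $(S^1)^I_{\bu}$, and $\om_{\sZ}=\pi_V^*\om_V\!\oplus\!\om_{\C^I}$ up to an $(S^1)^I_{\bu}$-equivariant deformation. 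In such a chart I would set $\pi\!=\!z_I\!\equiv\!\prod_{i\in I}z_i$. Two local models differ by an $(S^1)^I_{\bu}$-invariant change of transverse coordinates, so the resulting monomials differ by multiplication by a phase; overlapping charts along $\fX_{\prt}$ are compared by exactly the line bundle $\cO_{\fX_{\prt}}(\fX_{\eset})$ of~\eref{PsiDfn_e}, whose fibers record such phases. A key step is to show that~$\sC$ singles out a canonical homotopy class of trivializations of this line bundle, so that the local monomials can be patched by a partition-of-unity argument in the phase direction to produce a smooth $\pi\!:\sZ'\!\lra\!\C$ on some $\sC_{\sZ}$-invariant neighborhood~$\sZ'$ of~$\fX_{\eset}$.

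\emph{Nearly regular fibration and configurations on the fibers.}
The three bullets of Definition~\ref{SimpFibr_dfn} are verified chart by chart in the model $V\!\times\!\C^I$: the zero fiber is the coordinate arrangement, $\pi$ is a submersion on $\{z_i\!\neq\!0\,\forall\,i\}$, and $\om|_{\pi^{-1}(\la)}$ is non-degenerate for $\la\!\neq\!0$. Since $z_I$ is $(S^1)^I_{\bu}$-invariant, each action $\phi_{\sZ;I}$ preserves every fiber $\sZ_\la$ and restricts to a Hamiltonian action with moment map $\mu_{\sZ;I}|_{\sZ_\la}$; the combinatorial axioms~\ref{IJinter_it}--\ref{UIJpos_it} of Definition~\ref{SympCut_dfn0} pass to the restriction, so $\sC_{\sZ}|_{\sZ_\la}$ is always a Hamiltonian configuration. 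For $\la\!\neq\!0$ the fibers avoid $\fX_{\prt}$, so the inductive shrinking argument of Section~\ref{SympCutThm_subs1} yields a $\sC_{\sZ}$-invariant refinement $(U_{\sZ;I}')_{I\in\cP^*(N)}$ on which the restricted actions are free, upgrading $\sC_{\sZ}'|_{\sZ_\la}$ to a cutting configuration.

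\emph{Canonical identification for compact $\fX_t$ and continuity.}
For $\fX$ compact and $\la\!\in\!\R^+$ small, the slice $\pi^{-1}(\la)\!\cap\!\sZ'$ pulls back through $q_{\eset}$ to a $\sC$-equivariant graph over $\bigcup_i U_i^{\le}$, cut out in the local model by $|z_1|^2\!\cdots\!|z_N|^2\!=\!\la^2$. I would upgrade this diffeomorphism $\fX\!\lra\!\sZ_\la$ to a $\sC$-equivariant symplectomorphism carrying~$\sC$ to $\sC_{\sZ}'|_{\sZ_\la}$ by a Moser interpolation supported near~$\fX_{\prt}$, using that $q_{\eset}$ is already a symplectomorphism away from~$\prt U_i^{\le}$ by Theorem~\ref{SympCut_thm12}(3). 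The principal obstacle is controlling this Moser argument uniformly as $\la\!\to\!0^+$, since the natural primitive of $\om_{\sZ}|_{\sZ_\la}\!-\!\om$ blows up near the singular locus; I would handle this with an explicit cutoff scaled by a power of~$\la$, chosen so that the interpolated 2-form remains symplectic on the entire shrinking collar. Finally, all the data used above --- the product charts, the trivialization of $\cO_{\fX_{\prt}}(\fX_{\eset})$, the patching partition of unity, the refined cover $(U_{\sZ;I}')$, and the Moser isotopy --- can be chosen smoothly in~$t$ by the family equivariant symplectic neighborhood theorem, yielding continuity of the germs~$\fh_t$ and hence the required family $(\fh_t)_{t\in B}$.
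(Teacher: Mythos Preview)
Your approach diverges substantially from the paper's, and while the broad outline is plausible, there are two genuine problems.

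\textbf{The patching is by positive real scalings, not phases.} You propose to build $\pi$ from local monomials $\prod_{i\in I}z_i$ and reconcile them on overlaps via a trivialization of $\cO_{\fX_{\prt}}(\fX_{\eset})$, arguing that the local models ``differ by multiplication by a phase.'' But the charts $\cZ_I^{\circ}$ already produced in Section~\ref{Thm12Pf_sec} carry intrinsic fiber coordinates $(z_i)_{i\in I}$, and the transition maps $\Th_{J,I}$ of~\eref{whThiIJdfn_e} send the extra coordinates $z_j$, $j\!\in\!J\!-\!I$, to \emph{positive reals} $\sqrt{2(\mu_J(x))_{ij}+|z_i|^2}$. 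Hence $\prod_{j\in J}z_j$ and $\prod_{i\in I}z_i$ differ by a positive real function, not a unit-modulus phase. The paper therefore patches by constructing a tuple $(f_I\!:\cZ_I'^{\circ}\!\to\!\R^+)$ of $\phi_{\cZ;I}^{\circ}$-invariant positive functions satisfying the multiplicative cocycle~\eref{fIcoll_e} (Lemma~\ref{fIcoll_lmm}), and sets $\pi=f_I\cdot\prod_{i\in I}z_i$; the induction is over $|I|$ using proper refinements of the cover (Lemma~\ref{sCref_lmm}). No trivialization of $\cO_{\fX_{\prt}}(\fX_{\eset})$ is used as \emph{input}; rather, that trivialization is an \emph{output} of the construction, via \cite[Proposition~5.1]{SympSumMulti}. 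Your ``partition of unity in the phase direction'' does not match the actual transition data.

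\textbf{The Moser argument is unnecessary and the obstacle you flag is real.} For the compact case you propose a Moser isotopy and worry that the primitive blows up as $\la\!\to\!0^+$. The paper avoids this entirely: since the slice $\wt\cZ_{I;i}^>$ of~\eref{wtcZiIRdfn_e} has all $z_j$ with $j\!\neq\!i$ real and positive, the pullback of $\om_{\C^I}$ along the graph map $x\mapsto(x,g_{\la;I;i}(x))$ vanishes identically (see~\eref{fIgraph_e17}), so $f_\la^*\om_{\cZ}=\om$ \emph{exactly}, with no deformation required (Lemma~\ref{cZfamcmpt_lmm}). The same slice argument gives Corollary~\ref{fIgraph_crl} (non-degeneracy on fibers) and the explicit identification of $\sC_{\cZ}'|_{\sZ_\la}$ with~$\sC''$. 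Your proposed cutoff-by-a-power-of-$\la$ fix would need a delicate estimate that the paper's approach simply does not require.

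In short: you are reaching for abstract normal-form theorems and a Moser argument where the paper exploits that the construction of~$\cZ$ in Section~\ref{Thm12Pf_sec} already supplies explicit global coordinates on each $\cZ_I^{\circ}$; the whole proof of Theorem~\ref{SympCut_thm3} is a direct computation in those coordinates, with the only genuinely non-canonical choice being the positive functions~$f_I$.
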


\noindent
By the first statement in Theorem~\ref{SympCut_thm3} and 
the family analogue of \cite[Proposition~5.1]{SympSumMulti}, 
a family  $(\sC_t)_{t\in B}$ of $N$-fold cutting configurations determines 
a homotopy class of trivializations of the complex line bundle~\eref{BprtO_e}
and thus
a continuous family $(\hb_t)_{t\in B}$ of homotopy classes of trivializations of 
the normal bundles~$\cO_{\fX_{t;\prt}}(\fX_{t;\eset})$ of the singular locus~$\fX_{t;\prt}$ 
of~$\fX_{t;\eset}$.
Given a single cutting configuration~$\sC$ (i.e.~$B$ is a point 
in Theorems~\ref{SympCut_thm12} and~\ref{SympCut_thm3}), 
we denote by~$\X(\sC)$ the associated $N$-fold SC symplectic configuration
and by~$\hb(\sC)$ the corresponding homotopy class of trivializations
of the line bundle $\cO_{X_{\prt}}(X_{\eset})$.

\begin{crl}\label{SympCut_crl}
Let $N\!\in\!\Z^+$, $\sC_0$ be an $N$-fold cutting configuration  for $(X,\om_0)$,
and $\sC_1$ be an $N$-fold cutting configuration for $(X,\om_1)$.
If~$\sC_0$ and~$\sC_1$ are deformation equivalent, 
then the $N$-fold SC symplectic configurations~$\X(\sC_0)$ and~$\X(\sC_1)$ lie
in the same path component
of $\Symp^+(\X)$ for some $N$-fold transverse configuration~$\X$
and $\hb(\sC_0)\!=\!\hb(\sC_1)$.
\end{crl}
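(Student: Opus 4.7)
The plan is to feed the deformation equivalence between $\sC_0$ and $\sC_1$ directly into Theorems~\ref{SympCut_thm12} and~\ref{SympCut_thm3}, and read off the two conclusions from the resulting families. By the definition of deformation equivalence recalled in Section~\ref{SympCutThm_subs2}, there exist a smooth path $(\om_t)_{t\in\bI}$ of symplectic forms on~$X$ and an $(\om_t)_{t\in\bI}$-family $(\sC_t)_{t\in\bI}$ of $N$-fold cutting configurations for~$X$ with~$\sC_0$ and~$\sC_1$ at the endpoints. Applying Theorem~\ref{SympCut_thm12} to this family over $B\!=\!\bI$ produces a family $(\pi_I\!:\fX_I\!\to\!\bI,\om_I)_{I\in\cP^*(N)}$ of $N$-fold SC~symplectic configurations over~$\bI$ whose fiber over each $t\!\in\!\bI$ is the SC~symplectic configuration $\X(\sC_t)$ assigned to~$\sC_t$ by the construction of Section~\ref{Thm12Pf_sec}.

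To extract the first conclusion, I would invoke the local triviality of families of SC~symplectic configurations with compact fibers discussed after Definition~\ref{SCCfam_dfn} and based on the proof of \cite[Proposition~4.2]{SympDivConf}. Since $\bI$ is compact, a finite trivializing cover can be assembled, via compatibility of the $\Xi_I$ on overlaps, into a global identification of the total family with $\bI\!\times\!\X$ for a single $N$-fold transverse configuration $\X\!\equiv\!\{X_I\}_{I\in\cP^*(N)}$. Under this identification the $\om_I$'s pull back to a smooth path $(\om_{t;i})_{i\in[N]}$ in $\Symp^+(\X)$ joining $\X(\sC_0)$ to $\X(\sC_1)$, which is precisely the first claim.

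For the trivialization statement, I would apply Theorem~\ref{SympCut_thm3} to the same family. This yields a continuous family $(\fh_t)_{t\in\bI}$ of germs of deformation equivalence classes of one-parameter families of smoothings $(\sZ_t',\om_{\sZ}|_{\sZ_t'},\pi_t)$ of~$\fX_{t;\eset}$. The family analogue of \cite[Proposition~5.1]{SympSumMulti}, quoted in the paragraph preceding this corollary, converts such a continuous family of smoothing germs into a continuous family $(\hb_t)_{t\in\bI}$ of homotopy classes of trivializations of the normal bundles $\cO_{\fX_{t;\prt}}(\fX_{t;\eset})$ of the singular loci, with $\hb_t\!=\!\hb(\sC_t)$ at every~$t$. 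Combined with the smooth path in $\Symp^+(\X)$ built in the previous paragraph, this is exactly the data needed in the definition of equality of trivialization classes given just before Definition~\ref{SympFibrfam_dfn}, and therefore delivers $\hb(\sC_0)\!=\!\hb(\sC_1)$.

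The main obstacle I anticipate is the compatibility check: the trivialization class $\hb(\sC_t)$ defined by performing the construction at a single~$t$ must genuinely coincide with the fiber at~$t$ of the continuous family of trivialization classes produced globally over $\bI$. This is the content of the naturality of Theorem~\ref{SympCut_thm3} and of the family version of \cite[Proposition~5.1]{SympSumMulti}, but verifying it on the nose requires tracing the construction of the line bundle~\eref{PsiDfn_e} and the identifications~\eref{restrisom_e3} through the $\bI$-parameter, and checking that the choices of~$\Psi_{ij;j}$ and~$\fI_{ij;j}$ produced by the family construction restrict at each~$t$ to those used in the single-fiber construction up to homotopy. A minor secondary point is that local triviality as stated uses compact fibers; if $X$ is non-compact, one first restricts to a $\sC$-invariant open neighborhood of the support of the cuts, where the relevant pieces~$\fX_I$ are compact, and observes that the cut output and trivialization class are unchanged under such a restriction by the discussion at the end of Section~\ref{SympCutThm_subs1}.
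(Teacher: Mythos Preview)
Your proposal is correct and follows essentially the same route as the paper: apply Theorems~\ref{SympCut_thm12} and~\ref{SympCut_thm3} over $B=\bI$, invoke the triviality of families of SC symplectic configurations over~$\bI$ from Section~\ref{SympFamil_subs}, and use the family analogue of \cite[Proposition~5.1]{SympSumMulti}. The paper compresses this into a single sentence, but your unpacking matches it step for step.

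Two small remarks on your anticipated obstacles. The compatibility check you worry about is absorbed into the statements of the theorems themselves: Theorem~\ref{SympCut_thm3} asserts that the family $(\sC_t)_{t\in B}$ \emph{determines} the continuous family $(\fh_t)_{t\in B}$, so restricting the $B=\bI$ family to a point recovers the single-fiber output by construction; the paper does not argue this separately. On compactness, your proposed workaround via the end of Section~\ref{SympCutThm_subs1} is not quite apt---that discussion concerns refining open covers and freeing the torus actions, not producing compact pieces---and the paper's own one-line proof simply cites the triviality of families over~$\bI$ without addressing the hypothesis that the fibers~$\fX_I$ be compact. So this is a gap the paper shares, not a defect particular to your argument.
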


\noindent
This corollary follows immediately from Theorems~\ref{SympCut_thm12} and~\ref{SympCut_thm3},
the family analogue of \cite[Proposition~5.1]{SympSumMulti}, and 
the triviality of families of SC symplectic configurations over~$\bI$
(see Section~\ref{SympFamil_subs}).
Corollary~\ref{SympCut_crl} implies that a deformation equivalence class of cutting configurations
determines a deformation equivalence class of SC symplectic varieties~$X_{\eset}$
with a homotopy class of trivializations of the normal bundle
$\cO_{X_{\prt}}(X_{\eset})$ in~\eref{PsiDfn_e}
of the singular locus $X_{\prt}\!\subset\!X_{\eset}$.

\section{Proof of Theorem~\ref{SympCut_thm12}}
\label{Thm12Pf_sec}

\noindent
Let $\sC$ be an $N$-fold cutting configuration on~$(X,\om)$ as in Definition~\ref{SympCut_dfn1}.
Section~\ref{TopolPrelim_subs} provides basic topological characterizations
of natural spaces determined by~$\sC$ that appear throughout the rest of this paper.
In Section~\ref{SympRed_subs}, we use the symplectic reduction technique of
\cite{Meyer,MaW}
to construct symplectic manifolds~$(\cZ_I^{\circ},\vp_I^{\circ})$ out of
the open subsets~$U_I$ of~$X$ so that
\BE{dimcZX_e} \dim_{\R}\cZ_I^{\circ}=\dim_{\R}X+2\,.\EE
We glue these global quotients in Section~\ref{NatCutSmoothSp_subs}
into a single symplectic manifold~$(\cZ,\om_{\cZ})$. 
As shown in Section~\ref{SCsympdiv_subs}, $(\cZ,\om_{\cZ})$ contains 
closed symplectic submanifolds $(X_I\!=\!\cZ_I,\om_I)$ with $I\!\in\!\cP^*(N)$ so~that 
$$\X(\sC)\equiv \big((X_I)_{I\in\cP^*(N)},(\om_i)_{i\in[N]}\big)$$
is an $N$-fold SC symplectic configuration in the sense of Definition~\ref{SCC_dfn}
and the corresponding SC symplectic variety~$X_{\eset}$ is embedded into~$(\cZ,\om_{\cZ})$
as an SC symplectic divisor.
A natural $N$-fold Hamiltonian configuration~$\sC_{\cZ}$ for~$(\cZ,\om_{\cZ})$,
which restricts to a cutting configuration  on the complement of the singular locus~$X_{\prt}$
of~$X_{\eset}$,
is constructed in Section~\ref{cZSympCutConf_subs}.\\

\noindent
As the constructions of Sections~\ref{SympRed_subs}-\ref{cZSympCutConf_subs}
involve no choices, they produce 
a family 
$$\big((\pi_I\!:\fX_I\!\lra\!B)_{I\in\cP^*(N)},(\om_i)_{i\in[N]}\big)$$
of $N$-fold SC symplectic configurations over~$B$  and
a family~$(\sC_{\sZ;t})_{t\in B}$ of $N$-fold Hamiltonian configurations 
for a family $(\sZ,\om_{\sZ},\pi_{\sZ})$ of symplectic manifolds over~$B$
when applied  to a family of $N$-fold cutting configurations. 
This establishes Theorem~\ref{SympCut_thm12}.

\subsection{Topological preliminaries}
\label{TopolPrelim_subs}

\noindent
The topological observations of Lemmas~\ref{sCcover_lmm}-\ref{cZtopol_lmm2} below
concern subspaces of~$X$, including 
$U_{i;I}^{\le}$ and~$U_i^{\le}$
defined in~\eref{UiIdfn_e} and~\eref{ovUi_e}, respectively,
as well as subspaces of~$X\!\times\!\C$.
For $I_0\!\in\!\cP^*(N)$ and $I\!\in\!\cP_{I_0}(N)$, let 
\begin{equation*}\begin{split}
U_{I_0;I}^< =\mu_{I_0;I}^{-1}(0)
=\big\{x\!\in\!U_I\!:\,&(\mu_I(x))_i\!=\!(\mu_I(x))_j~\forall\,i,j\!\in\!I_0,\\
~&(\mu_I(x))_i\!<\!(\mu_I(x))_j~\forall\,i\!\in\!I_0,\,j\!\in\!I\!-\!I_0\big\}.
\end{split}\end{equation*}
For $i\!\in\!I\!\subset\![N]$, let
\BE{cUiIdfn_e} \cU_{i;I}^{\le} = 
\big\{(x,z)\!\in\!U_I\!\times\!\C\!:\,
(\mu_I(x))_i\!-\!\frac12|z|^2\!\le\!(\mu_I(x))_j~\forall\,j\!\in\!I\big\}. \EE
While the components $(\mu_I(x))_i\!\in\!\R$ of $\mu_I(x)\!\in\!\ft_{I;\bu}^*$
are not well-defined, the numbers
\BE{hijdfn_e}
\big(\mu_I(x)\big)_{\!ij}\equiv\big(\mu_I(x)\!\big)_{\!j}\!-\!\big(\mu_I(x)\!\big)_{\!i}\in\R\EE
are well-defined; thus, so is the condition in~\eref{cUiIdfn_e}.
For $i\!\in\![N]$, let 
$$\cU_i^{\le}= \bigcup_{I\in\cP_i(N)}\!\!\!\!\!\cU_{i;I}^{\le}
\subset X\!\times\!\C\,.$$
The proofs of Lemmas~\ref{sCcover_lmm}-\ref{cZtopol_lmm2}
make no use of the properties
of Definition~\ref{SympCut_dfn1} and thus apply to any Hamiltonian configuration~$\sC$
of Definition~\ref{SympCut_dfn0}.

\begin{lmm}\label{sCcover_lmm}
The collection $\{U_{i;I}^{\le}\}_{i\in I\subset[N]}$ covers~$X$.
If $\sC$ is a maximal cutting configuration, 
then the collection $\{U_{I;I}^<\}_{I\in\cP^*(N)}$ covers~$X$.
\end{lmm}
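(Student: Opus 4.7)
The plan is to handle the two statements in order, with the second building on the first by localizing to the minimum locus of the moment map.

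For the first statement, the setup essentially does the work: the collection $(U_I)_{I\in\cP^*(N)}$ covers $X$ by Definition~\ref{SympCut_dfn0}, so every $x\!\in\!X$ lies in some $U_I$. Although $\mu_I(x)$ only represents an equivalence class in $\ft_{I;\bu}^*$, the pairwise differences $(\mu_I(x))_{ij}$ defined in~\eref{hijdfn_e} are well-defined real numbers, so it makes sense to seek $i\!\in\!I$ minimizing $(\mu_I(x))_j$ over $j\!\in\!I$; such an $i$ exists by finiteness of $I$ and, by the definition of $U_{i;I}^{\le}$, witnesses $x\!\in\!U_{i;I}^{\le}$.

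For the second statement, given $x\!\in\!X$ I would start from a pair $(i,I)$ produced by the first statement and form the set of true minimizers
$$I_0\equiv\big\{j\!\in\!I:(\mu_I(x))_{ij}\!=\!0\big\}\subset I,$$
which is non-empty since $i\!\in\!I_0$. By construction $(\mu_I(x))_k\!=\!(\mu_I(x))_\ell$ for all $k,\ell\!\in\!I_0$ and $(\mu_I(x))_k\!<\!(\mu_I(x))_j$ for every $k\!\in\!I_0$ and $j\!\in\!I\!-\!I_0$. If $I_0\!=\!I$, then $\mu_I(x)\!=\!0$ in $\ft_{I;\bu}^*$ and hence $x\!\in\!\mu_I^{-1}(0)\!=\!U_{I;I}^<$, as required. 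Otherwise $I_0\!\subsetneq\!I$, and the maximality condition~\eref{IJpos_e} applied to the inclusion $I_0\!\subset\!I$ places $x$ inside $U_{I_0}$; property~\ref{phiIUJ_it} of Definition~\ref{SympCut_dfn0} then identifies $\mu_{I_0}(x)$ with $\mu_I(x)|_{\ft_{I_0;\bu}}$, and this element vanishes because the $I_0$-components of $\mu_I(x)$ all agree, so $x\!\in\!\mu_{I_0}^{-1}(0)\!=\!U_{I_0;I_0}^<$.

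The only non-routine step is the appeal to maximality to descend from $U_I$ down to $U_{I_0}$: without~\eref{IJpos_e} one could only conclude that $x$ lies in $\mu_{I_0;I}^{-1}(0)\!\subset\!U_I$, which is generally not of the form $U_{J;J}^<$ for any single $J\!\in\!\cP^*(N)$, so the second assertion would fail. Everything else is finite bookkeeping and the standard compatibility of moment maps under restriction to subtori.
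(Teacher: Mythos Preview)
Your proof is correct and follows essentially the same approach as the paper's: pick any $I$ with $x\in U_I$, choose a minimal index $i\in I$ to get $x\in U_{i;I}^{\le}$, form the set $I_0$ of all minimizers, and use maximality~\eref{IJpos_e} to descend to $U_{I_0}$, where the moment map vanishes. Your case split $I_0=I$ versus $I_0\subsetneq I$ is harmless but unnecessary (the maximality condition~\eref{IJpos_e} is vacuous when $I_0=I$), and you make explicit the invocation of Definition~\ref{SympCut_dfn0}\ref{phiIUJ_it} that the paper leaves implicit in its final sentence.
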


\begin{proof} Let $x\!\in\!X$ and $I\!\in\!\cP^*(N)$ be such that $x\!\in\!U_I$.
The condition $(\mu_I(x))_i\!\le\!(\mu_I(x))_j$ defines a reflexive transitive relation~$\le$ 
on~$I$ such that either $i\!\le\!j$ or $j\!\ge\!i$ for all $i,j\!\in\!I$.
Thus, there exists $i\!\in\!I$ such that $(\mu_I(x))_i\!\le\!(\mu_I(x))_j$ for all
$j\!\in\!I$
and so $x\!\in\!U_{i;I}^{\le}$.
Let
$$I_0=\big\{i_0\!\in\!I\!:\,(\mu_I(x))_i\!=\!(\mu_I(x))_{i_0}\big\}\ni i.$$
By the choice of~$i\!\in\!I$, $(\mu_I(x))_i\!<\!(\mu_I(x))_j$ 
for all $j\!\in\!I\!-\!I_0$ and~so
$$x\in \big\{x'\!\in\!U_I\!:\,(\mu_I(x))_{i_0}\!<\!(\mu_I(x))_j~\forall\,i_0\!\in\!I_0,\,
j\!\in\!I\!-\!I_0\big\}.$$
If $\sC$ is maximal, \eref{IJpos_e} with $(I,J)$ replaced by $(I_0,I)$
then implies that $x\!\in\!U_{I_0;I_0}^<$.
\end{proof}

\begin{lmm}\label{Xitopol_lmm1}
For every $i\!\in\![N]$, the subspace $U_i^{\le}\!\subset\!X$  is closed.
%For all $i\!\in\!I\!\subset\![N]$, the subspace $U_{i;I}^{\ge}\!\subset\!U_i^{\ge}$ 
%is open in~$U_i^{\ge}$.
\end{lmm}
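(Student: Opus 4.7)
The plan is to establish closedness by a sequential argument, using that $X$ is a manifold (hence first countable) and exploiting the compatibility conditions (a)--(c) of Definition~\ref{SympCut_dfn0}. Suppose $x\!\in\!\ov{U_i^{\le}}$, and pick a sequence $x_n\!\to\!x$ with $x_n\!\in\!U_{i;I_n}^{\le}$ for some $I_n\!\in\!\cP_i(N)$. Since $\cP_i(N)$ is finite, I pass to a subsequence so that $I_n\!=\!I$ for a single fixed index set $I\!\ni\!i$. Next, select $J\!\in\!\cP^*(N)$ with $x\!\in\!U_J$; by the openness of $U_J$, eventually $x_n\!\in\!U_I\!\cap\!U_J$, so Definition~\ref{SympCut_dfn0}\ref{IJinter_it} forces $I\!\subset\!J$ or $J\!\subset\!I$. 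The goal is to produce $K\!\in\!\cP_i(N)$ with $x\!\in\!U_{i;K}^{\le}$ in each case.

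If $I\!\subset\!J$, property~\ref{phiIUJ_it} implies $(\mu_J(x_n))_{ij}\!=\!(\mu_I(x_n))_{ij}\!\ge\!0$ for every $j\!\in\!I$, while property~\ref{UIJpos_it} (applied with the pair $I\!\subset\!J$) gives $(\mu_J(x_n))_{ij}\!>\!0$ for every $j\!\in\!J\!-\!I$. Passing to the limit $n\!\to\!\infty$ by continuity of $\mu_J$ produces $(\mu_J(x))_i\!\le\!(\mu_J(x))_j$ for every $j\!\in\!J$, so $x\!\in\!U_{i;J}^{\le}\!\subset\!U_i^{\le}$.

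The more delicate case, and the main obstacle, is $J\!\subset\!I$: here one must first rule out $i\!\in\!I\!-\!J$. If $i$ were in $I\!-\!J$, then Definition~\ref{SympCut_dfn0}\ref{UIJpos_it} applied with the pair $J\!\subset\!I$ (with $J$ in the role of the smaller set) would yield $(\mu_I(x_n))_{ki}\!>\!0$ for every $k\!\in\!J$; on the other hand $x_n\!\in\!U_{i;I}^{\le}$ gives $(\mu_I(x_n))_{ik}\!\ge\!0$ for every $k\!\in\!I$, so taking any $k\!\in\!J\!\subset\!I$ produces the contradiction $0\!<\!(\mu_I(x_n))_{ki}\!=\!-(\mu_I(x_n))_{ik}\!\le\!0$. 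Hence $i\!\in\!J$, and property~\ref{phiIUJ_it} reduces the inequalities $(\mu_I(x_n))_{ij}\!\ge\!0$ for $j\!\in\!J\!\subset\!I$ to $(\mu_J(x_n))_{ij}\!\ge\!0$, so passage to the limit yields $x\!\in\!U_{i;J}^{\le}\!\subset\!U_i^{\le}$, completing the proof.
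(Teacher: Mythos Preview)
Your proof is correct and follows essentially the same sequential argument as the paper's, invoking properties \ref{IJinter_it}--\ref{UIJpos_it} of Definition~\ref{SympCut_dfn0} in the same way. The only cosmetic difference is that the paper chooses~$I$ and~$J$ to be \emph{maximal} among the candidates and then eliminates the case $I\!\subsetneq\!J$ by contradicting the maximality of~$I$, whereas you handle that case directly by passing to the limit in~$U_J$; your route is slightly more economical since it avoids the maximality bookkeeping.
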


\begin{proof}
%We begin by establishing the first claim.
Let $(x_k)_{k=1}^{\i}$ be a sequence in $U_i^{\le}$ converging to some $x\!\in\!X$.
Since $X$ is first countable, it is sufficient to show that $x\!\in\!U_i^{\le}$.
By Definition~\ref{SympCut_dfn0}\ref{IJinter_it}, for each $k\!\in\!\Z^+$
there is a unique maximal $I_k\!\in\!\cP_i(N)$ such that $x_k\!\in\!U_{i;I_k}^{\le}$. 
Passing to a subsequence, we can assume that $I_k\!\equiv\!I$ for all $k\!\in\!\Z^+$
and  for some fixed $I\!\in\!\cP_i(N)$.
Thus,
\BE{Xitopol1_e1}
\big(\mu_I(x_k)\!\big)_{\!ij} \ge0\qquad \forall\,k\!\in\!\Z^+,\,j\!\in\!I.\EE
Let $J$ be the maximal element of $\cP^*(N)$ such that $x\!\in\!U_J$. 
Since $U_J$ is open, 
$x_k\!\in\!U_I\!\cap\!U_J$ for all $k\!\in\!\Z^+$ sufficiently large.
By Definition~\ref{SympCut_dfn0}\ref{IJinter_it}, either $I\!\subsetneq\!J$ or $J\!\subset\!I$.\\

\noindent
Suppose first $I\!\subsetneq\!J$.
By~\ref{phiIUJ_it} and~\ref{UIJpos_it} in Definition~\ref{SympCut_dfn0}
and~\eref{Xitopol1_e1},
\BE{Xitopol1_e2ab}
\big(\mu_J(x_k)\!\big)_{\!ij}=\big(\mu_I(x_k)\!\big)_{\!ij} \ge0
\quad\forall\,j\!\in\!I, \qquad
\big(\mu_J(x_k)\!\big)_{\!ij}>0 \quad\forall\,j\!\in\!J\!-\!I\EE
for all $k\!\in\!\Z^+$  sufficiently large.
Thus, $x_k\!\in\!U_{i;J}^{\le}$;
this contradicts the maximality assumption on~$I$ above.\\

\noindent 
Suppose $J\!\subset\!I$ instead.
If $i\!\not\in\!J$, then $i\!\in\!I\!-\!J$ and
$$\big(\mu_I(x_k)\!\big)_{\!j}<\big(\mu_I(x_k)\!\big)_{\!i} \qquad\forall~j\!\in\!J$$
for all $k$ sufficiently large
by Definition~\ref{SympCut_dfn0}\ref{UIJpos_it}
with the roles of $(i,I)$ and $(j,J)$ interchanged;
this contradicts~\eref{Xitopol1_e1}.
By  Definition~\ref{SympCut_dfn0}\ref{phiIUJ_it} and~\eref{Xitopol1_e1}, 
$$ \big(\mu_J(x_k)\!\big)_{\!ij}= \big(\mu_I(x_k)\!\big)_{\!ij}\ge0 
\quad \forall\,x_k\!\in\!U_I\!\cap\!U_J,\,j\!\in\!J\,.$$
By the continuity of the functions~$(\mu_J(\cdot))_{ij}$, this implies that
$$\big(\mu_J(x)\!\big)_{\!ij}=
\lim_{k\lra\i}\big(\mu_J(x_k)\!\big)_{\!ij}\ge0 \qquad\forall~j\!\in\!J.$$
We conclude that $x\!\in\!U_{i;J}^{\le}\!\subset\!U_i^{\le}$.
\end{proof}

\noindent
For $I_0\!\in\!\cP^*(N)$ and $I\!\in\!\cP_{I_0}(N)$, define
\BE{UI0I_e} 
U_{I_0;I}^{\le}
= \big\{x\!\in\!U_I\!:
(\mu_I(x))_i\!\le\!(\mu_I(x))_j \,\forall\,i\!\in\!I_0,\,j\!\in\!I\big\}.\EE
By~\eref{UiIdfn_e},
$$U_{I_0;I}^{\le} = 
\big\{x\!\in\!U_{i;I}^{\le}\!:(\mu_I(x))_i\!=\!(\mu_I(x))_j\,\forall\,j\!\in\!I_0\big\}$$
for any $i\!\in\!I_0$.

\begin{lmm}\label{Xitopol_lmm2}
For every $I_0\!\in\!\cP^*(N)$, the subspace
$$U_{I_0}^{\le}\equiv 
\bigcup_{I\in\cP_{I_0}(N)}\!\!\!\!\!\!\!U_{I_0;I}^{\le}\subset X$$
is closed in~$X$.
For all $I_0,J_0\!\in\!\cP^*(N)$, $U_{I_0}^{\le}\!\cap\!U_{J_0}^{\le}\!=\!U_{I_0\cup J_0}^{\le}$.
\end{lmm}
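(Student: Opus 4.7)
The first assertion extends the proof of Lemma~\ref{Xitopol_lmm1}. Given a sequence $(x_k)$ in $U_{I_0}^{\le}$ converging to $x\!\in\!X$, axiom~\ref{IJinter_it} of Definition~\ref{SympCut_dfn0} makes the collection $\{I\!\in\!\cP^*(N)\!:x_k\!\in\!U_I\}$ totally ordered by inclusion, so it has a unique maximal element~$I_k$; the assumption $x_k\!\in\!U_{I_0}^{\le}$ forces $I_k\!\supset\!I_0$ and $x_k\!\in\!U_{I_0;I_k}^{\le}$. After passing to a subsequence, one may assume $I_k\!\equiv\!I^*$ for all~$k$. Let $J$ be the maximal element of $\cP^*(N)$ with $x\!\in\!U_J$; openness of~$U_J$ gives $x_k\!\in\!U_J$ for all $k$ sufficiently large, and maximality of~$I^*$ then forces $J\!\subset\!I^*$.

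The key step is to upgrade this to the containment $I_0\!\subset\!J$. Otherwise some $i_0\!\in\!I_0$ lies in $I^*\!-\!J$, and axiom~\ref{UIJpos_it} applied to the pair $(J,I^*)$ yields $(\mu_{I^*}(x_k))_j\!<\!(\mu_{I^*}(x_k))_{i_0}$ for every $j\!\in\!J$, contradicting the defining inequality of $U_{I_0;I^*}^{\le}$. With $I_0\!\subset\!J$ in hand, axiom~\ref{phiIUJ_it} converts the inequalities $(\mu_{I^*}(x_k))_{ij}\!\ge\!0$ with $i\!\in\!I_0$ and $j\!\in\!J$ into the identical inequalities for~$\mu_J$, and taking limits by continuity places $x\!\in\!U_{I_0;J}^{\le}\!\subset\!U_{I_0}^{\le}$.

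For the second assertion, the inclusion $U_{I_0\cup J_0}^{\le}\!\subset\!U_{I_0}^{\le}\!\cap\!U_{J_0}^{\le}$ is immediate from~\eref{UI0I_e}, since the defining condition for $U_{I_0\cup J_0;I}^{\le}$ is stronger than either of those for $U_{I_0;I}^{\le}$ and $U_{J_0;I}^{\le}$ whenever $I\!\supset\!I_0\!\cup\!J_0$. For the reverse inclusion, I would pick $I\!\supset\!I_0$ and $J\!\supset\!J_0$ with $x\!\in\!U_{I_0;I}^{\le}\!\cap\!U_{J_0;J}^{\le}$, invoke axiom~\ref{IJinter_it} to force $I$ and $J$ to be comparable, and assume $I\!\subset\!J$ by symmetry. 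It then remains to verify $(\mu_J(x))_i\!\le\!(\mu_J(x))_j$ for every $i\!\in\!I_0\!\cup\!J_0$ and $j\!\in\!J$: the case $i\!\in\!J_0$ is immediate from $x\!\in\!U_{J_0;J}^{\le}$; the subcase $i\!\in\!I_0$, $j\!\in\!I$ follows from~\ref{phiIUJ_it} together with $x\!\in\!U_{I_0;I}^{\le}$; and the subcase $i\!\in\!I_0$, $j\!\in\!J\!-\!I$ follows from~\ref{UIJpos_it} applied to $(I,J)$ since $I_0\!\subset\!I$.

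The principal obstacle throughout is bookkeeping---keeping careful track of which pair of index sets each of the three axioms of Definition~\ref{SympCut_dfn0} is applied to, and translating cleanly between $\mu_I$ and $\mu_J$ under the restriction relation in~\ref{phiIUJ_it}. Once the total ordering supplied by~\ref{IJinter_it} reduces each subargument to a comparable pair of index sets, the remaining steps are routine applications of~\ref{phiIUJ_it}, \ref{UIJpos_it}, openness of the~$U_I$, and continuity of the~$\mu_I$.
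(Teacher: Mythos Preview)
Your proof is correct. For the second assertion your argument matches the paper's essentially line for line.

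For the first assertion you take a somewhat different route from the paper. The paper observes that, for any $i\!\in\!I_0$,
\[
U_{I_0}^{\le}=\cU_{i,I_0}^{\le}\cap\big(X\!\times\!\{0\}\big)
\qquad\hbox{and}\qquad
U_i^{\le}=\cU_i^{\le}\cap\big(X\!\times\!\{0\}\big),
\]
and then deduces closedness of $U_{I_0}^{\le}$ in~$X$ from Lemma~\ref{Xitopol_lmm1} (closedness of $U_i^{\le}$ in~$X$) together with Lemma~\ref{cZtopol_lmm2} (closedness of $\cU_{i,I_0}^{\le}$ in~$\cU_i^{\le}$). Your direct sequential argument is in effect the $z\!=\!0$ specialization of the proof of Lemma~\ref{cZtopol_lmm2}, merged with the maximality bookkeeping of Lemma~\ref{Xitopol_lmm1}. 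The paper's reduction is shorter on the page and avoids duplicating that bookkeeping, while your self-contained argument makes this lemma independent of Lemma~\ref{cZtopol_lmm2}. One small point: your claim that $x_k\!\in\!U_{I_0;I_k}^{\le}$ (not just in $U_{I_0;I}^{\le}$ for the original~$I$) requires a brief invocation of~\ref{phiIUJ_it} and~\ref{UIJpos_it} to transport the inequalities from~$I$ to the larger~$I_k$; this is routine and of the same flavor as the steps you do spell out later.
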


\begin{proof} Let $i\!\in\!I_0$. Since 
$$U_{I_0}^{\le} = \cU_{i,I_0}^{\le}\cap\big(X\!\times\!\{0\}\big)
\qquad\hbox{and}\qquad
U_i^{\le} = \cU_i^{\le}\cap\big(X\!\times\!\{0\}\big),$$
the first claim follows from Lemmas~\ref{Xitopol_lmm1} and~\ref{cZtopol_lmm2}.\\

\noindent
It is immediate that $U_{I_0}^{\le},U_{J_0}^{\le}\!\supset\!U_{I_0\cup J_0}^{\le}$.
Suppose $x\!\in\!U_{I_0;I}^{\le}\!\cap\!U_{J_0;J}^{\le}$.
Thus, $I_0\!\subset\!I$, $J_0\!\subset\!J$, 
\BE{Xitopol2_e2}
\big(\mu_I(x)\!\big)_{\!i}\!\le\!\big(\mu_I(x)\!\big)_{\!j}
~~\forall\,i\!\in\!I_0,\,j\!\in\!I, \quad
\big(\mu_J(x)\!\big)_{\!i}\!\le\!\big(\mu_J(x)\!\big)_{\!j}
~~\forall\,i\!\in\!J_0,\,j\!\in\!J,\EE
and either $I\!\subset\!J$ or $I\!\supset\!J$ by Definition~\ref{SympCut_dfn0}\ref{IJinter_it}.
We can assume that $I\!\subset\!J$. 
By~\ref{phiIUJ_it} and~\ref{UIJpos_it} in  Definition~\ref{SympCut_dfn0}
and the first statement in~\eref{Xitopol2_e2}, 
$$\big(\mu_J(x)\!\big)_{\!i}\!\le\!\big(\mu_J(x)\!\big)_{\!j}
~~\forall\,i\!\in\!I_0,\,j\!\in\!J.$$
Combining this with  the second statement in~\eref{Xitopol2_e2}, we obtain
$$\big(\mu_J(x)\!\big)_{\!i}\!\le\!\big(\mu_J(x)\!\big)_{\!j}
~~\forall\,i\!\in\!I_0\!\cup\!J_0,\,j\!\in\!J.$$
Thus, $x\!\in\!U_{I_0\cup J_0;J}^{\le}\subset U_{I_0\cup J_0}^{\le}$.
\end{proof}

\begin{lmm}\label{cZtopol_lmm1} 
For all $i\!\in\!I\!\subset\![N]$, the subspace $\cU_{i;I}^{\le}\!\subset\!\cU_i^{\le}$ 
is open in~$\cU_i^{\le}$ and
$$\cU_i^{\le}\cap \big(U_I\!\times\!\C\big) =  \cU_{i;I}^{\le}\,.$$
\end{lmm}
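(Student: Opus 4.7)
The plan is to first establish the displayed identity $\cU_i^{\le}\cap(U_I\!\times\!\C)=\cU_{i;I}^{\le}$, from which the openness claim follows immediately: since $U_I$ is open in $X$, the set $U_I\!\times\!\C$ is open in $X\!\times\!\C$, so $\cU_{i;I}^{\le}$ is the intersection of $\cU_i^{\le}$ with an open subset of the ambient space, hence open in $\cU_i^{\le}$.

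For the identity, the inclusion $\cU_{i;I}^{\le}\subset\cU_i^{\le}\cap(U_I\!\times\!\C)$ is immediate from the definitions \eref{cUiIdfn_e} and of $\cU_i^{\le}$. For the reverse inclusion, I would pick $(x,z)\!\in\!\cU_i^{\le}\cap(U_I\!\times\!\C)$ and choose some $J\!\in\!\cP_i(N)$ with $(x,z)\!\in\!\cU_{i;J}^{\le}$, so that in particular $x\!\in\!U_I\!\cap\!U_J$. By Definition~\ref{SympCut_dfn0}\ref{IJinter_it}, one of $I\!\subset\!J$ or $J\!\subset\!I$ must hold; I treat the two cases separately.

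If $I\!\subset\!J$, property~\ref{phiIUJ_it} gives $\mu_I(x)\!=\!\mu_J(x)|_{\ft_{I;\bu}}$, so the differences $(\mu_I(x))_{ij}$ with $i,j\!\in\!I$ agree with $(\mu_J(x))_{ij}$, and the inequality $(\mu_I(x))_i\!-\!\tfrac12|z|^2\!\le\!(\mu_I(x))_j$ for $j\!\in\!I$ is directly inherited from the corresponding inequality defining $\cU_{i;J}^{\le}$. If instead $J\!\subsetneq\!I$, I again use~\ref{phiIUJ_it} (now with the roles of $I$ and $J$ in Definition~\ref{SympCut_dfn0} swapped, since $J\!\subset\!I$) to transfer the inequality for indices $j\!\in\!J$; for $j\!\in\!I\!-\!J$, property~\ref{UIJpos_it} supplies the strict inequality $(\mu_I(x))_i\!<\!(\mu_I(x))_j$ (as $i\!\in\!J$ and $j\!\in\!I\!-\!J$), which together with $\tfrac12|z|^2\!\ge\!0$ yields $(\mu_I(x))_i\!-\!\tfrac12|z|^2\!<\!(\mu_I(x))_j$. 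Combining all $j\!\in\!I$, we conclude $(x,z)\!\in\!\cU_{i;I}^{\le}$.

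There is no serious obstacle here; the argument is essentially a bookkeeping exercise that leverages the compatibility \ref{phiIUJ_it} of moment maps and the strict positivity \ref{UIJpos_it} from Definition~\ref{SympCut_dfn0}. The only subtlety worth flagging is that the quantities $(\mu_I(x))_i$ and $(\mu_J(x))_i$ individually are not well-defined real numbers (only their pairwise differences \eref{hijdfn_e} are), so each inequality must be read as a statement about the differences $(\mu_I(x))_{ij}$; once this is kept in mind, the reduction to \ref{phiIUJ_it} and \ref{UIJpos_it} proceeds cleanly.
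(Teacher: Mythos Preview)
Your proposal is correct and follows essentially the same approach as the paper's own proof: both deduce the openness claim from the displayed identity, and both establish the non-trivial inclusion by taking $(x,z)\in\cU_{i;J}^{\le}\cap(U_I\times\C)$, invoking Definition~\ref{SympCut_dfn0}\ref{IJinter_it} to split into the cases $I\subset J$ and $J\subset I$, and then using~\ref{phiIUJ_it} alone in the first case and~\ref{phiIUJ_it} together with~\ref{UIJpos_it} in the second. Your remark about reading all inequalities in terms of the well-defined differences~$(\mu_I(x))_{ij}$ is a worthwhile clarification.
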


\begin{proof}
The first claim follows from the second. Suppose 
$$(x,z)\in \cU_{i;J}^{\le} \cap \big(U_I\!\times\!\C\big)$$
for some $J\!\in\!\cP_i(N)$ and thus  
\BE{cZtopol1_e1a}
\big(\mu_J(x)\!\big)_{\!ij}+\frac12|z|^2 \ge0\qquad
\forall\,j\!\in\!J.\EE
By Definition~\ref{SympCut_dfn0}\ref{IJinter_it}, either 
$I\!\subset\!J$ or $J\!\subset\!I$.
If $I\!\subset\!J$, Definition~\ref{SympCut_dfn0}\ref{phiIUJ_it} implies
that~\eref{cZtopol1_e1a} holds with~$J$ replaced by~$I$ and so $(x,z)\!\in\!\cU_{i;I}^{\le}$.
If $J\!\subset\!I$, \ref{phiIUJ_it} and~\ref{UIJpos_it} in Definition~\ref{SympCut_dfn0}
imply that~\eref{Xitopol1_e2ab} holds for $J\!=\!I_0$ and $x_k\!=\!x$.
From this, we again conclude that~\eref{cZtopol1_e1a} holds with~$I$ and by~$J$ interchanged 
and so $(x,z)\!\in\!\cU_{i;I}^{\le}$.
This establishes the second claim of the lemma.
\end{proof}

\noindent
For all $i\!\in\!I\!\subset[N]$ and $I_0\!\subset\!I$, define
\BE{cUiI0I_e}
\cU_{i,I_0;I}= 
\big\{(x,z)\!\in\!U_I\!\times\!\C\!:\,
(\mu_I(x))_i\!-\!\frac12|z|^2\!=\!(\mu_I(x))_j
\,\forall\,j\!\in\!I_0\big\},\quad
\cU_{i,I_0;I}^{\le}= \cU_{i,I_0;I}\!\cap \cU_{i;I}^{\le}\,.\EE

\begin{lmm}\label{cZtopol_lmm2}
For all $i\!\in\![N]$ and  $I_0\!\subset\![N]$, the subspace
$$\cU_{i,I_0}^{\le}\equiv
\bigcup_{I\in\cP_i(N)\cap\cP_{I_0}(N)}
\!\!\!\!\!\!\!\!\!\!\!\!\!\!\cU_{i,I_0;I}^{\le} \subset \cU_i^{\le}$$
is closed.
\end{lmm}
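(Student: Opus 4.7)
My approach is to adapt the template of the proof of Lemma~\ref{Xitopol_lmm1}, now working on $X\!\times\!\C$ and making essential use of the hypothesis that the limit belongs to $\cU_i^\le$ (the assertion being that $\cU_{i,I_0}^\le$ is closed \emph{in $\cU_i^\le$}, in parallel with the ``open in $\cU_i^\le$'' statement of Lemma~\ref{cZtopol_lmm1}). The new ingredient will be that the ambient containment $(x,z)\!\in\!\cU_i^\le$ at the limit forces, via Lemma~\ref{cZtopol_lmm1}, that $i$ lies in the maximal chart at~$x$; this is what will allow the strict inequality of Definition~\ref{SympCut_dfn0}\ref{UIJpos_it} to collide with the defining equality of $\cU_{i,I_0;I}^\le$.

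\textbf{Setup.} Given a convergent sequence $(x_k,z_k)\!\in\!\cU_{i,I_0}^\le$ with limit $(x,z)\!\in\!\cU_i^\le$, I will let $J^*\!\in\!\cP^*(N)$ be the unique maximal element with $x\!\in\!U_{J^*}$. By Lemma~\ref{cZtopol_lmm1}, $(x,z)\!\in\!\cU_{i;J^*}^\le$, and in particular $i\!\in\!J^*$. For each~$k$, Definition~\ref{SympCut_dfn0}\ref{IJinter_it}--\ref{phiIUJ_it} provide a unique maximal $I_k\!\in\!\cP_i(N)\!\cap\!\cP_{I_0}(N)$ with $(x_k,z_k)\!\in\!\cU_{i,I_0;I_k}^\le$; after passing to a subsequence, $I_k\!\equiv\!I$ is constant and satisfies $\{i\}\!\cup\!I_0\!\subset\!I$. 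Since $U_{J^*}$ is open, $x_k\!\in\!U_I\!\cap\!U_{J^*}$ for all large~$k$, whence Definition~\ref{SympCut_dfn0}\ref{IJinter_it} yields either $I\!\subset\!J^*$ or $J^*\!\subset\!I$.

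\textbf{Case analysis.} If $I\!\subsetneq\!J^*$, Definition~\ref{SympCut_dfn0}\ref{phiIUJ_it} transports the defining (in)equalities of $\cU_{i,I_0;I}^\le$ to $\mu_{J^*}$, while Definition~\ref{SympCut_dfn0}\ref{UIJpos_it} furnishes $(\mu_{J^*}(x_k))_{ij}\!>\!0\!\ge\!-\tfrac12|z_k|^2$ for $j\!\in\!J^*\!-\!I$; this would place $(x_k,z_k)\!\in\!\cU_{i,I_0;J^*}^\le$, contradicting the maximality of~$I_k$. If $I\!=\!J^*$, continuity of $\mu_{J^*}$ on $U_{J^*}$ simply carries every condition to the limit. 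The main case is $J^*\!\subsetneq\!I$, where I first show $\{i\}\!\cup\!I_0\!\subset\!J^*$: if some $j_0\!\in\!I_0\!\setminus\!J^*\!\subset\!I\!-\!J^*$ existed, then Definition~\ref{SympCut_dfn0}\ref{UIJpos_it} applied with $i\!\in\!J^*$ and $j_0\!\in\!I\!-\!J^*$ would give $(\mu_I(x_k))_{ij_0}\!>\!0$, contradicting the equality $(\mu_I(x_k))_{ij_0}\!=\!-\tfrac12|z_k|^2\!\le\!0$ built into $\cU_{i,I_0;I}^\le$. With $\{i\}\!\cup\!I_0\!\subset\!J^*$ in hand, Definition~\ref{SympCut_dfn0}\ref{phiIUJ_it} reduces all the $\mu_I$-conditions to $\mu_{J^*}$-conditions, and continuity on $U_{J^*}$ will then deliver $(x,z)\!\in\!\cU_{i,I_0;J^*}^\le\!\subset\!\cU_{i,I_0}^\le$.

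\textbf{Main obstacle.} The most delicate point is recognizing the role of the ambient hypothesis $(x,z)\!\in\!\cU_i^\le$. Without it, one cannot force $i\!\in\!J^*$ in the subcase $J^*\!\subsetneq\!I$, and the argument breaks; in fact a direct check in the basic 2-fold example shows that $\cU_i^\le$ need not be closed in $X\!\times\!\C$. Using it, the strict positivity in Definition~\ref{SympCut_dfn0}\ref{UIJpos_it} becomes available at the index~$i$ and the combinatorial case check closes up cleanly, paralleling the corresponding step in Lemma~\ref{Xitopol_lmm1}.
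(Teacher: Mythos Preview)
Your proposal is correct and follows essentially the same argument as the paper's proof: pass to a subsequence lying in a single $\cU_{i,I_0;I}^{\le}$ with $I$ maximal, take $J$ maximal with $x\in U_J$, rule out $I\subsetneq J$ by maximality of~$I$, then in the case $J\subset I$ use Definition~\ref{SympCut_dfn0}\ref{UIJpos_it} together with the equality $(\mu_I(x_k))_{ij_0}=-\tfrac12|z_k|^2$ to force $I_0\subset J$, and conclude by continuity. One small expository point: your sentence ``By Lemma~\ref{cZtopol_lmm1}, $(x,z)\in\cU_{i;J^*}^{\le}$, and in particular $i\in J^*$'' reads as if $i\in J^*$ is a consequence of that lemma, but Lemma~\ref{cZtopol_lmm1} is stated for $i\in I$ and so requires $i\in J^*$ as a hypothesis; the membership $i\in J^*$ should instead be obtained first from $(x,z)\in\cU_i^{\le}$ (any $J'\in\cP_i(N)$ with $(x,z)\in\cU_{i;J'}^{\le}$ satisfies $J'\subset J^*$ by maximality), after which Lemma~\ref{cZtopol_lmm1} applies.
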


\begin{proof} 
Let $(x_k,z_k)_{k=1}^{\i}$ be a sequence in $\cU_{i,I_0}^{\le}$ converging to 
some $(x,z)\!\in\!\cU_i^{\le}$.
Similarly to the proof of Lemma~\ref{Xitopol_lmm1},  we can assume 
$(x_k,z_k)_{k=1}^{\i}\!\in\!\cU_{i,I_0;I}^{\le}$  for all $k\!\in\!\Z^+$
and for some fixed $I\!\in\!\cP_i(N)\!\cap\!\cP_{I_0}(N)$.
Thus,
\BE{cZtopol12_e1}
\big(\mu_I(x_k)\!\big)_{\!ij}+\frac12|z_k|^2\ge0 \quad \forall\,j\!\in\!I, \qquad
\big(\mu_I(x_k)\!\big)_{\!ij}+\frac12|z_k|^2=0 \quad \forall\,j\!\in\!I_0.\EE
We also assume that $I\!\in\!\cP_i(N)\!\cap\!\cP_{I_0}(N)$ is the maximal element
with this property.\\  

\noindent
Let $J$ be the maximal element of $\cP_i(N)$ such that $x\!\in\!U_J$. 
By the same reasoning as in the second paragraph of the proof of Lemma~\ref{Xitopol_lmm1}, 
$J\!\subset\!I$.
Thus, $i\!\in\!J\!\subset\!I$ and $x_k\!\in\!U_J$ for all~$k$ sufficiently large.
By Definition~\ref{SympCut_dfn0}\ref{phiIUJ_it} and the first statement in~\eref{cZtopol12_e1}, 
$$\big(\mu_J(x_k)\!\big)_{\!ij}+\frac12|z_k|^2
=\big(\mu_I(x_k)\!\big)_{\!ij}+\frac12|z_k|^2\ge0
\quad \forall\,k\!\in\!\Z^+,\,j\!\in\!J.$$
By the continuity of the functions~$(\mu_J(\cdot))_{ij}$, 
this statement implies that $(x,z)\!\in\!\cU_{i;J}^{\le}$.
If $I_0\!\not\subset\!J$, then
$$(\mu_I(x_k))_i<(\mu_I(x_k))_{i_0} \qquad\forall~
i_0\!\in\!I_0\!-\!J\subset I\!-\!J$$
for all $k$ sufficiently large by Definition~\ref{SympCut_dfn0}\ref{UIJpos_it};
this contradicts the second statement in~\eref{cZtopol12_e1}. 
Thus,  $I_0\!\subset\!J$ and
$$\big(\mu_J(x_k)\!\big)_{\!ij}+\frac12|z_k|^2
=\big(\mu_I(x_k)\!\big)_{\!ij}+\frac12|z_k|^2
=0 \quad \forall\,k\!\in\!\Z^+,\,j\!\in\!I_0.$$
By the continuity of the functions~$(\mu_J(\cdot))_{ij}$, 
this statement implies that $(x,z)\!\in\!\cU_{i,I_0;J}^{\le}\!\subset\!\cU_{i,I_0}^{\le}$.
\end{proof}

\subsection{Symplectic reduction}
\label{SympRed_subs}

\noindent
For  $I_0\!\subset\!I\!\subset\![N]$ and $F\!=\!\R,\C$, we identify 
$$F^N_{I_0}\equiv\big\{(z_i)_{i\in[N]}\!\in\!F^N\!\!:z_i\!=\!0~\forall\,i\!\in\!I_0\big\}
\quad\hbox{and}\quad
F^I_{I_0}\equiv\big\{(z_i)_{i\in I}\!\in\!F^I\!\!:z_i\!=\!0~\forall\,i\!\in\!I_0\big\}$$
with $F^{[N]-I_0}$ and $F^{I-I_0}$, respectively.
For $i\!\in\!I\!\subset\![N]$, let
$$\C^N_i=\C^N_{\{i\}} \qquad\hbox{and}\qquad \C^I_i=\C^I_{\{i\}}.$$
We  denote~by  $\om_{\C^I}$ and $\om_{\C_{I_0}^I}$
the standard symplectic forms on~$\C^I$ and~$\C_{I_0}^I$, respectively.
Thus,  
$$\om_{\C^{I-I_0}}=\om_{\C^I}|_{\C^I_{I_0}}=\om_{\C^N}\big|_{\C^{I-I_0}}$$
under the above identifications.\\

\noindent
The standard $(S^1)^N$-action on $\C^N$ and a moment map for this action are given~by
\BE{phimustan_e}\begin{aligned}
\phi_{\C^N}\!: (S^1)^N\!\times\!\C^N&\lra\C^N, &\qquad
\phi_{\C^N}\big((\ne^{\fI\th_i})_{i\in[N]};(z_i)_{i\in[N]}\big)
&=\big(\ne^{\fI\th_i}z_i\big)_{i\in[N]},\\
\mu_{\C^N}\!: \C^N&\lra\R^N, &\qquad
\mu_{\C^N}\big((z_i)_{i\in[N]}\big)&=\frac12\big(|z_i|^2\big)_{i\in[N]}\,.
\end{aligned}\EE
For $I\!\in\!\cP^*(N)$, let
$$\phi_{\C^I;\bu}\!: (S^1)^I_{\bu}\!\times\!\C^I\lra\C^I
\qquad\hbox{and}\qquad \mu_{\C^I;\bu}\!: \C^I\lra\ft_{I;\bu}^*$$
be the restriction of this action and the induced moment map, respectively.\\

\noindent 
With notation as in~\eref{SympCutDfn_e}, let 
$$\pi_1,\pi_2\!:U_I\!\times\!\C^I\lra U_I,\C^I$$
be the projection maps and
$$\wt\om_I=\pi_1^*\om+\pi_2^*\om_{\C^I}$$
be the product symplectic form.
We lift $(\phi_I,\mu_I)$ to a Hamiltonian $(S^1)^I_{\bu}$-pair for $(U_I\!\times\!\C^I,\wt\om_I)$~by
\begin{alignat}{2}
\label{wtphiIdfn_e}
\wt\phi_I\!:(S^1)^I_{\bu}\times \big(U_I\!\times\!\C^I\big) &\lra U_I\!\times\!\C^I, 
&\qquad \wt\phi_I(g;x,z)&=\big(\phi_I(g;x),\phi_{\C^I;\bu}(g^{-1};z)\big),\\
\label{wtmuIdfn_e}
\wt\mu_I\!:U_I\!\times\!\C^I&\lra\ft_{I;\bu}^*, &\qquad
\wt\mu_I(x,z)&=\mu_I(x)\!-\!\mu_{\C^I;\bu}(z).
\end{alignat}
The action~$\wt\phi_I$ then preserves the symplectic submanifolds 
$U\!\times\!\C^I_{I_0}\!\subset\!U\!\times\!\C^I$
with $I_0\!\subset\!I$.
Let
\BE{wtcZIdfn_e} 
\wt\cZ_I^{\circ}\equiv\wt\mu_I^{-1}(0)
=\big\{\big(x,(z_i)_{i\in I}\big)\!\in\!U_I\!\times\!\C^I\!\!:
(\mu_I(x))_i\!-\!\frac12|z_i|^2\!=\!(\mu_I(x))_j\!-\!\frac12|z_j|^2
~\forall\,i,j\!\in\!I\big\}.\EE
Similarly to the situation at the beginning of Section~\ref{TopolPrelim_subs},
the condition on the elements of $U_I\!\times\!\C^I$ above is independent of 
the choice of representative for $\mu_I(x)$ in~$\R^I$ and is thus well-defined.

\begin{lmm}\label{wtcZreg_lmm}
Let $\sC$ be a cutting configuration as in~\eref{SympCutDfn_e}.
For all $I\!\in\!\cP^*(N)$ and $I_0\!\subset\!I$,
$0\!\in\!\ft_{I;\bu}^*$ is a regular value of
the restriction of~$\wt\mu_I$ to $U_I\!\times\!\C^I_{I_0}$.
For all $I\!\in\!\cP^*(N)$,
the restriction of the $\wt\phi_I$-action to~$\wt\cZ_I^{\circ}$ is free.
\end{lmm}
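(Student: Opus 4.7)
The plan is to deduce both claims from the standard moment-map identity, which for a Hamiltonian $T$-action with moment map $\mu$ on a symplectic manifold gives $\Im(\nd\mu_p)\!=\!\Ann(\ft_p)$, where $\ft_p\!\subset\!\ft$ is the Lie algebra of the stabilizer of~$p$.  In particular, $\nd\mu_p$ is surjective precisely when the action is locally free at~$p$, so $0$ is a regular value of a moment map whenever the associated action is locally free on its zero fiber.  Since a free action is \emph{a fortiori} locally free, the second assertion is the stronger: applied to the restricted Hamiltonian $(S^1)^I_{\bu}$-action on the $\wt\phi_I$-invariant symplectic submanifold $U_I\!\times\!\C^I_{I_0}$ (with moment map $\wt\mu_I|_{U_I\times\C^I_{I_0}}$), freeness of $\wt\phi_I$ on $\wt\cZ_I^{\circ}\!\cap\!(U_I\!\times\!\C^I_{I_0})$ will yield the regular value statement at once.

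To prove freeness, I would fix $(x,z)\!\in\!\wt\cZ_I^{\circ}$, take $g\!=\!(\ne^{\fI\th_j})_{j\in I}\!\in\!(S^1)^I_{\bu}$ with $\wt\phi_I(g;x,z)\!=\!(x,z)$, and set $I_0'\!=\!\{j\!\in\!I\!:z_j\!=\!0\}$.  The second-coordinate equation $\phi_{\C^I;\bu}(g^{-1};z)\!=\!z$ reads $\ne^{-\fI\th_j}z_j\!=\!z_j$ for every $j\!\in\!I$, so $\th_j\!\in\!2\pi\Z$ for every $j\!\in\!I\!-\!I_0'$; hence $g$ lies in the subtorus $(S^1)^{I_0'}_{\bu}\!\subset\!(S^1)^I_{\bu}$.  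If $I_0'\!=\!\eset$, then $g\!=\!1$ already.  Otherwise, the condition $\wt\mu_I(x,z)\!=\!0$ translates, in the difference notation~\eref{hijdfn_e}, to
$$\big(\mu_I(x)\!\big)_{\!ij}=\frac12\big(|z_j|^2\!-\!|z_i|^2\big)
\qquad\forall~i,j\!\in\!I,$$
from which $(\mu_I(x))_i\!=\!(\mu_I(x))_j$ for all $i,j\!\in\!I_0'$ and $(\mu_I(x))_i\!<\!(\mu_I(x))_j$ whenever $i\!\in\!I_0'$ and $j\!\in\!I\!-\!I_0'$.  Therefore $x$ lies in the domain of~$\mu_{I_0';I}$ appearing in~\eref{hdfn_e}, and $\mu_{I_0';I}(x)\!=\!\mu_I(x)|_{\ft_{I_0';\bu}}\!=\!0$.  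The remaining equation $\phi_I(g;x)\!=\!x$ then says that $g\!\in\!(S^1)^{I_0'}_{\bu}$ stabilizes~$x$ under the restricted $(S^1)^{I_0'}_{\bu}$-action on $\mu_{I_0';I}^{-1}(0)$, and the freeness hypothesis in Definition~\ref{SympCut_dfn1} forces $g\!=\!1$.

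The argument is not hard conceptually; the principal bookkeeping hurdle is that the components $(\mu_I(x))_j$ and $(\mu_{\C^I;\bu}(z))_j$ are well-defined only modulo the diagonal $\R\!\cdot\!(1,\ldots,1)\!\subset\!\R^I$, so every component-by-component statement must be phrased via the differences~\eref{hijdfn_e}.  Once this is accommodated and one observes that $\C^I_{I_0}$ is a $\wt\phi_I$-invariant complex (hence symplectic) subspace of~$\C^I$, both parts of the lemma reduce directly to the freeness clause of Definition~\ref{SympCut_dfn1}.
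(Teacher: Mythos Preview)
Your proof is correct and follows essentially the same approach as the paper: both deduce the regular-value claim from freeness via the standard moment-map identity (the paper cites \cite[Section~23.2.1]{daSilva}), and both prove freeness by using the $\C^I$-coordinates to force a stabilizer $g$ into the subtorus $(S^1)^{I_0'}_{\bu}$ indexed by the vanishing coordinates, then invoking the moment-map equation $\wt\mu_I(x,z)=0$ to place~$x$ in $\mu_{I_0';I}^{-1}(0)$ and applying the freeness clause of Definition~\ref{SympCut_dfn1}. The only cosmetic difference is that the paper separates out the case $|I_0'|\le1$ (where $(S^1)^{I_0'}_{\bu}$ is already trivial) rather than just $I_0'=\eset$, but your argument covers that case as well since the freeness hypothesis is vacuous there.
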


\begin{proof} 
Since $(\wt\phi_I,\wt\mu_I)$ restricts to a Hamiltonian pair on $U_I\!\times\!\C^I_{I_0}$,
the first claim is implied by the second; see \cite[Section~23.2.1]{daSilva}.
Let $\wt{x}\!=\!(x,(z_i)_{i\in I})$ be an element of $U_I\!\times\!\C^I$ 
such that $z_i\!=\!0$ if and only~if $i\!\in\!I_0$.
If
$$\wt\phi_I\big((\ne^{\fI\th_i})_{i\in I};x,(z_i)_{i\in I}\big)
=\big(x,(z_i)_{i\in I}\big),$$
then $\ne^{\fI\th_i}\!=\!1$ for every $i\!\in\!I\!-\!I_0$.
Thus,  
\BE{wtcZreg_e7b}
\big(\ne^{\fI\th_i}\big)_{i\in [N]}\in(S^1)^{I_0}_{\bu}, \qquad
\phi_I\big((\ne^{\fI\th_i})_{i\in I};x\big)=x.\EE
If $|I_0|\!\le\!1$, 
$\ne^{\fI\th_i}\!=\!1$ for all $i\!\in\![N]$ by the first statement in~\eref{wtcZreg_e7b}.
If $I_0\!\neq\!\eset$, then $x\!\in\!\mu_{I_0;I}^{-1}(0)$ by~\eref{wtcZIdfn_e} and~\eref{hdfn_e}.
By~\eref{wtcZreg_e7b} and the assumption of Definition~\ref{SympCut_dfn1},
this implies that $\ne^{\fI\th_i}\!=\!1$ for all $i\!\in\![N]$ and
establishes the second claim.
\end{proof}

\noindent
For $I\!\in\!\cP^*(N)$, define
\BE{wtcZiIdfn_e} 
\qquad \cZ_I^{\circ}=\wt\cZ_I^{\circ}\big/(S^1)^I_{\bu}\,.\EE
For example, $\cZ_{\{i\}}^{\circ}\!=\!U_{\{i\}}\!\times\!\C$.
Let 
$$ q_{\cZ;I}\!:\wt\cZ_I^{\circ}\lra \cZ_I^{\circ}$$
be the projection~map.
By Lemma~\ref{wtcZreg_lmm} and the Symplectic Reduction Theorem \cite[Theorem~23.1]{daSilva},
$\wt\cZ_I^{\circ}$ is a smooth submanifold of~$U_I\!\times\!\C^I$,
$\cZ_I^{\circ}$ is a smooth manifold, and
there is a unique symplectic form~$\vp_I$ on~$\cZ_I^{\circ}$ such~that
\BE{SympRed_e4} q_{\cZ;I}^{\,*}\vp_I=\wt\om_I\big|_{\wt\cZ_I^{\circ}}\,.\EE
Thus, $(\cZ_I^{\circ},\vp_I)$ is a symplectic manifold satisfying~\eref{dimcZX_e}.\\

\noindent
For $I\!\in\!\cP^*(N)$ and $I_0\!\subset\!I$, the subspace
$$\wt\cZ_{I_0;I}^{\circ}\equiv\wt\cZ_I^{\circ}\cap \big(U_I\!\times\!\C^I_{I_0}\big)
\subset U_I\!\times\!\C^I$$
is preserved by the $\wt\phi_I$-action.
Let 
\BE{SympCutSubm_e2} 
\cZ_{I_0;I}^{\circ}=\wt\cZ_{I_0;I}^{\circ}\big/(S^1)^I_{\bu}=
q_{\cZ;I}\big(\wt\cZ_{I_0;I}^{\circ}\big)\subset \cZ_I^{\circ},\quad
q_{\cZ;I_0;I}\!=\!q_{\cZ;I}|_{\wt\cZ_{I_0;I}^{\circ}}\!\!:
\wt\cZ_{I_0;I}^{\circ}\lra\cZ_{I_0;I}^{\circ}\,.\EE
By Lemma~\ref{wtcZreg_lmm} and the Symplectic Reduction Theorem,
$\wt\cZ_{I_0;I}^{\circ}$ is a smooth submanifold of~$U_I\!\times\!\C^I_{I_0}$,
$\cZ_{I_0;I}^{\circ}$ is a smooth manifold, and
there is a unique symplectic form~$\vp_{I_0;I}$ on~$\cZ_{I_0;I}^{\circ}$ such~that
\BE{SympRed_e5} q_{\cZ;I_0;I}^{\,*}\vp_{I_0;I}=\wt\om_I\big|_{\wt\cZ_{I_0;I}^{\circ}}\,.\EE
Thus, $(\cZ_{I_0;I}^{\circ},\vp_{I_0;I})$ is a symplectic manifold of real 
dimension~$\dim_{\R}\!X\!+\!2\!-\!2|I_0|$.\\  

\noindent
For $I\!\in\!\cP^*(N)$  and $I_0'\!\subset\!I_0\subset\!I$, let
\begin{alignat}{1}
\label{wtcZcNo_e}
\wt\pi_{I_0;I_0';I}^{\circ}\!: \wt\cN_{I_0;I_0';I}^{\circ}= 
\wt\cZ_{I_0;I}^{\circ}\!\times\!\C^{I_0}_{I_0'}&\lra \wt\cZ_{I_0;I}^{\circ},\\
\label{cZcNo_e}
\pi_{I_0;I_0';I}^{\circ}\!:
\cN_{I_0;I_0';I}^{\circ}=\wt\cN_{I_0;I_0';I}^{\circ}\big/(S^1)^I_{\bu}
&\lra  \wt\cZ_{I_0;I}^{\circ}\big/(S^1)^I_{\bu}=\cZ_{I_0;I}^{\circ}\,,
\end{alignat}
with the quotients taken by the restriction of the $(S^1)^I_{\bu}$-action~$\wt\phi_I$ to 
$$\wt\cZ_{I_0;I}^{\circ}\!\times\!\C^{I_0}_{I_0'} \subset U_I\!\times\!\C^I\,.$$
Since the $\wt\phi_I$-action on $\wt\cZ_{I_0;I}^{\circ}$ is free, 
\eref{cZcNo_e} is a complex vector bundle.

\begin{lmm}\label{SympNB_lmm0}
Suppose  $\sC$ is a cutting configuration as in~\eref{SympCutDfn_e} and $I\!\in\!\cP^*(N)$.
For all \hbox{$I_0'\!\subset\!I_0\!\subset\!I$},
$(\cZ_{I_0;I}^{\circ},\vp_{I_0;I})$ is a symplectic submanifold of 
$(\cZ_{I_0';I}^{\circ},\vp_{I_0';I})$
with the oriented normal bundle 
canonically isomorphic to~\eref{cZcNo_e}.
\end{lmm}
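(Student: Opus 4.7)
The plan is to transfer the question to the Marsden--Weinstein level sets upstairs, produce a canonical $(S^1)^I_{\bu}$-equivariant trivialization of the upstairs normal bundle by $\C^{I_0}_{I_0'}$, and descend via the free torus actions supplied by Lemma~\ref{wtcZreg_lmm}. For the symplectic-submanifold claim, let $\iota\!:\wt\cZ_{I_0;I}^{\circ}\!\lra\!\wt\cZ_{I_0';I}^{\circ}$ and $\bar\iota\!:\cZ_{I_0;I}^{\circ}\!\lra\!\cZ_{I_0';I}^{\circ}$ be the inclusions. Since $q_{\cZ;I_0';I}\!\circ\!\iota\!=\!\bar\iota\!\circ\!q_{\cZ;I_0;I}$, applying~\eref{SympRed_e5} for both pairs yields $q_{\cZ;I_0;I}^{\,*}\bar\iota^*\vp_{I_0';I}\!=\!\wt\om_I|_{\wt\cZ_{I_0;I}^{\circ}}\!=\!q_{\cZ;I_0;I}^{\,*}\vp_{I_0;I}$, and surjectivity of the submersion $q_{\cZ;I_0;I}$ forces $\bar\iota^*\vp_{I_0';I}\!=\!\vp_{I_0;I}$. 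Nondegeneracy of $\vp_{I_0;I}$ then makes $\cZ_{I_0;I}^{\circ}$ a symplectic submanifold of $\cZ_{I_0';I}^{\circ}$.

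Next I would build the trivialization upstairs. Fix $\wt x\!=\!(x,z)\!\in\!\wt\cZ_{I_0;I}^{\circ}$, so $z_i\!=\!0$ for $i\!\in\!I_0$. Viewing $\C^{I_0}_{I_0'}$ as the subspace of $\C^I$ supported on $I_0\!-\!I_0'$, the copy $\{0\}\!\oplus\!\C^{I_0}_{I_0'}$ is a linear complement to $T_{\wt x}(U_I\!\times\!\C^I_{I_0})$ inside $T_{\wt x}(U_I\!\times\!\C^I_{I_0'})$. The derivative $d\mu_{\C^I;\bu}|_z$ sends $w$ to $(\tn{Re}(\bar z_iw_i))_{i\in I}$ modulo the diagonal, which vanishes on this subspace because $z_i\!=\!0$ whenever $w_i\!\neq\!0$. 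Hence $\{0\}\!\oplus\!\C^{I_0}_{I_0'}\!\subset\!\ker d\wt\mu_I|_{\wt x}$, so it lies in $T_{\wt x}\wt\cZ_{I_0';I}^{\circ}$ and is a linear complement to $T_{\wt x}\wt\cZ_{I_0;I}^{\circ}$ inside it, exhibiting a canonical trivialization
\[\wt\cZ_{I_0;I}^{\circ}\!\times\!\C^{I_0}_{I_0'}\lra \cN_{\wt\cZ_{I_0';I}^{\circ}}\wt\cZ_{I_0;I}^{\circ}\,.\]
This map is $(S^1)^I_{\bu}$-equivariant because $\C^{I_0}_{I_0'}\!\subset\!\C^I$ is preserved by the standard torus action and~\eref{wtphiIdfn_e} linearizes along $\wt\cZ_{I_0;I}^{\circ}$ to the induced action on the normal bundle. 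Since the $\wt\phi_I$-orbits through $\wt\cZ_{I_0;I}^{\circ}$ remain in $\wt\cZ_{I_0;I}^{\circ}$ and $\wt\phi_I$ is free on $\wt\cZ_{I_0';I}^{\circ}$, taking quotients gives the desired isomorphism with the vector bundle~\eref{cZcNo_e}.

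For orientations, the complex structure on $\C^{I_0}_{I_0'}$ orients~\eref{cZcNo_e}, whereas the symplectic orientation on $\cN_{\cZ_{I_0';I}^{\circ}}\cZ_{I_0;I}^{\circ}$ is that of the $\vp_{I_0';I}$-orthogonal complement of $T\cZ_{I_0;I}^{\circ}$ in $T\cZ_{I_0';I}^{\circ}$ with the induced form. Any $v\!\in\!T_{\wt x}\wt\cZ_{I_0;I}^{\circ}$ has $\C^I$-component in $\C^I_{I_0}$, whose support is disjoint from that of $\C^{I_0}_{I_0'}$; combined with $\wt\om_I\!=\!\pi_1^*\om\!+\!\pi_2^*\om_{\C^I}$ and the vanishing $U_I$-component of $(0,w)$, this gives $\wt\om_I(v,(0,w))\!=\!0$, so the trivialization descends into the symplectic complement. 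There $\vp_{I_0';I}$ pulls back to $\om_{\C^{I_0-I_0'}}$, which is positive on the standard complex basis. The main technical point is the three-way interplay between $\C^I_{I_0}$, $\C^I_{I_0'}$, and $\C^{I_0}_{I_0'}$ inside $\C^I$ and the vanishing of $d\wt\mu_I$ on the chosen complement; once the supports are tracked, everything follows mechanically from symplectic reduction.
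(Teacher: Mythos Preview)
Your proof is correct and follows essentially the same approach as the paper's. The only cosmetic difference is that the paper defines the normal-bundle identification via the projection $\C^I_{I_0'}\!\lra\!\C^{I_0}_{I_0'}$ and then invokes Lemma~\ref{wtcZreg_lmm} for surjectivity, whereas you construct the inverse splitting directly by verifying $\{0\}\!\oplus\!\C^{I_0}_{I_0'}\!\subset\!\ker d\wt\mu_I|_{\wt x}$; your explicit check of $\wt\om_I$-orthogonality for the orientation comparison is likewise just a more detailed version of the paper's one-line remark.
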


\begin{proof}
By~\eref{SympRed_e4}-\eref{SympRed_e5},
\BE{SympNB0_e1}q_{\cZ;I_0;I}^{\,*}\big(\vp_I|_{\cZ_{I_0;I}^{\circ}}\big)
=\big(q_{\cZ;I}^{\,*}\vp_I\big)\big|_{\wt\cZ_{I_0;I}^{\circ}}
=\wt\om_I\big|_{\wt\cZ_{I_0;I}^{\circ}}
=q_{\cZ;I_0;I}^{\,*}\vp_{I_0;I}\,.\EE
By the uniqueness part of \cite[Theorem~23.1]{daSilva} and~\eref{SympNB0_e1},
 $\vp_{I_0;I}\!=\!\vp_I|_{\cZ_{I_0;I}^{\circ}}$.
Thus, $(\cZ_{I_0;I}^{\circ},\vp_{I_0;I})$
is a symplectic submanifold of $(\cZ_I^{\circ},\vp_I)$.
Since $\cZ_{I_0;I}^{\circ}\!\subset\!\cZ_{I_0';I}^{\circ}$ whenever 
$I_0\!\supset\!I_0'$, 
it follows that $(\cZ_{I_0;I}^{\circ},\vp_{I_0;I})$ is a symplectic submanifold of 
$(\cZ_{I_0';I}^{\circ},\vp_{I_0';I})$.\\

\noindent
For all $I_0'\!\subset\!I_0\!\subset\!I$,
$\wt\cZ_{I_0';I}^{\circ}\!\subset\!U_I\!\times\!\C^I_{I_0'}$.
The projection $\C^I_{I_0'}\!\lra\!\C^{I_0}_{I_0'}$ induces
a vector bundle homomorphism
\BE{cNisom_e}\cN_{\wt\cZ_{I_0';I}^{\circ}}\wt\cZ_{I_0;I}^{\circ}\equiv 
\frac{T\wt\cZ_{I_0';I}^{\circ}|_{\wt\cZ_{I_0;I}^{\circ}}}{T\wt\cZ_{I_0;I}^{\circ}}
\lra \wt\cZ_{I_0;I}^{\circ}\!\times\!\C^{I_0}_{I_0'}\equiv\wt\cN_{I_0;I_0';I}^{\circ}\,.\EE
By Lemma~\ref{wtcZreg_lmm},
the homomorphism~\eref{cNisom_e} is surjective and thus an isomorphism for dimensional
reasons.
Since it is $\wt\phi_I$-equivariant, it descends to a vector bundle isomorphism
\BE{cNisom_e2}
\cN_{\cZ_{I_0';I}^{\circ}}\cZ_{I_0;I}^{\circ}\equiv 
\frac{T\cZ_{I_0';I}^{\circ}|_{\cZ_{I_0;I}^{\circ}}}{T\cZ_{I_0;I}^{\circ}}
\lra  \cN_{I_0;I_0';I}^{\circ}\EE
over~$\cZ_{I_0;I}^{\circ}$.
The fiberwise symplectic form on the left-hand side of~\eref{cNisom_e2} 
corresponds
to the symplectic form on the right-hand side of~\eref{cNisom_e2}
induced by the standard symplectic form on~$\C^{I_0}_{I_0'}$;
the latter is compatible with the complex orientation of~$\C^{I_0}_{I_0'}$.
Thus, the isomorphism~\eref{cNisom_e2} is orientation-preserving. 
\end{proof}

\subsection{The ambient space}
\label{NatCutSmoothSp_subs}

\noindent
We will glue the symplectic manifolds $(\cZ_I^{\circ},\vp_I)$ with 
$I\!\in\!\cP^*(N)$ into a single symplectic manifold~$(\cZ,\om_{\cZ})$.
%For $i,j\!\in\!I\!\subset\![N]$, we denote by
%$$\si_{ij}\!:(S^1)^I_{\bu}\lra (S^1)^I_{\bu}$$
%the isomorphism interchanging the $i$-th and $j$-th components.
For $i\!\in\!I,J\!\subset\![N]$, define
\BE{S1IJhomom_e}
\vr_{i;J,I}\!:(S^1)^I_{\bu}\lra(S^1)^J_{\bu}, \quad
\big(\vr_{i;J,I}\big((\ne^{\fI\th_k})_{k\in I}\big)\!\big)_l
=\begin{cases}
\ne^{\fI\th_i}\!\!\!\!\!\prod\limits_{k\in I-J}\!\!\!\!\!\ne^{\fI\th_k},&\hbox{if}~l\!=\!i;\\
\ne^{\fI\th_l},&\hbox{if}~l\!\in\!I\!\cap\!J\!-\!i;\\
1,&\hbox{if}~l\!\in\!J\!-\!I.
\end{cases}\EE  
For $I,J\!\in\!\cP^*(N)$,  let
\BE{whThiIJdfn_e0}
\wt\cZ_{I,J}^{\circ}=\wt\cZ_I^{\circ}\cap\big((U_I\!\cap\!U_J)\!\times\!\C^I\big)\,.\EE
By Definition~\ref{SympCut_dfn0}\ref{IJinter_it}, $\wt\cZ_{I,J}^{\circ}\!=\!\eset$
unless $I\!\subset\!J$ or $I\!\supset\!J$. 
By Definition~\ref{SympCut_dfn0}\ref{UIJpos_it} and~\eref{wtcZIdfn_e},
\begin{alignat}{2}
\label{IsubJ_e}
\big(\mu_J(x)\big)_{\!ij}\!\equiv\!\big(\mu_J(x)\big)_{\!j}&\!-\!\big(\mu_J(x)\big)_{\!i}>0
&\qquad&\forall~\big(x,(z_k)_{k\in I}\big)\!\in\!\wt\cZ_{I,J}^{\circ},\,
i\!\in\!I\!\subset\!J,\,j\!\in\!J\!-\!I;\\
\label{IsupJ_e}
z_j&\neq0 &\qquad&\forall~\big(x,(z_k)_{k\in I}\big)\!\in\!\wt\cZ_{I,J}^{\circ},\,
\eset\!\neq\!J\!\subset\!I,\,j\!\in\!I\!-\!J.
\end{alignat}
If $i\!\in\!J\!\subset\!I\!\subset\![N]$, define 
$$\vph_{I,J;i}\!:\wt\cZ_{I,J}^{\circ}\lra(S^1)^I_{\bu}, \quad
\big(\vph_{I,J;i}\big(x,(z_k)_{k\in I}\big)\!\big)_{\!l}
=\begin{cases}
\prod\limits_{k\in I-J}\!\frac{\ov{z_k}}{|z_k|},&\hbox{if}~l\!=\!i;\\
1,&\hbox{if}~l\!\in\!J\!-\!i;\\
\frac{z_l}{|z_l|},&\hbox{if}~l\!\in\!I\!-\!J.
\end{cases}$$
If $J\!\subset\!I\!\subset\![N]$ and $i,j\!\in\!J$ are distinct, define 
$$\vph_{I,J;i,j}\!:\wt\cZ_{I,J}^{\circ}\lra(S^1)^J_{\bu}, \quad
\big(\vph_{I,J;i,j}\big(x,(z_k)_{k\in I}\big)\!\big)_{\!l}
=\begin{cases}
\prod\limits_{k\in I-J}\!\frac{\ov{z_k}}{|z_k|},&\hbox{if}~l\!=\!i;\\
\prod\limits_{k\in I-J}\!\frac{z_k}{|z_k|},&\hbox{if}~l\!=\!j;\\
1,&\hbox{if}~l\!\in\!J\!-\!\{i,j\}.
\end{cases}$$
By~\eref{IsupJ_e}, both maps are well-defined ($z_k\!\neq\!0$ for all $k\!\in\!I\!-\!J$).
We take $\vph_{I,J;i,i}$ to be the map taking $\wt\cZ_{I,J}^{\circ}$
to the identity in~$(S^1)^J_{\bu}$.\\

\noindent
For $i\!\in\!I,J\!\subset\![N]$,  we define
\BE{whThiIJdfn_e}\begin{split}
\wt\Th_{J,I;i}\!: \wt\cZ_{I,J}^{\circ}&\lra\wt\cZ_{J,I}^{\circ}, \\
\wt\Th_{J,I;i}\big(x,(z_k)_{k\in I}\big) &=
\begin{cases}
\big(x,(z_k)_{k\in I},\big(\sqrt{2(\mu_J(x))_{ik}\!+\!|z_i|^2}\big)_{k\in J-I}\big),&
\hbox{if}~I\!\subset\!J;\\
\wt\phi_I\big(\vph_{I,J;i}\big(x,(z_k)_{k\in I}\big);x,(z_k)_{k\in J}\big),&
\hbox{if}~I\!\supset\!J.
\end{cases}
\end{split}\EE
By~\eref{wtcZIdfn_e} and~\eref{IsubJ_e},
this map is well-defined in either case.
It is $(\wt\phi_J,\wt\phi_I)$-equivariant with respect to the homomorphism~\eref{S1IJhomom_e}
and 
\BE{whThiIJprp_e0}
\wt\Th_{J,I;i}^{\,*}\big(\wt\om_J\big|_{\wt\cZ_{J,I}^{\circ}}\big)
=\wt\om_I\big|_{\wt\cZ_{I,J}^{\circ}}.\EE
Furthermore,
\BE{whThiIJprp_e12}\begin{split}
\wt\Th_{I,J;i}\big(\wt\Th_{J,I;i}(x,z)\big)&=
\begin{cases}
(x,z),&\hbox{if}~I\!\subset\!J;\\
\wt\phi_I\big(\vph_{I,J;i}(x,z);x,z\big)
,&\hbox{if}~I\!\supset\!J;
\end{cases}\\
\wt\Th_{K,I;i}\big|_{\wt\cZ_{I,J}^{\circ}\cap\wt\cZ_{I,K}^{\circ}}
&=\wt\Th_{K,J;i}\!\circ\!\wt\Th_{J,I;i}\big|_{\wt\cZ_{I,J}^{\circ}\cap\wt\cZ_{I,K}^{\circ}}
\quad\forall~i\!\in\!I\!\subset\!J\!\subset\!K\!\subset\![N]\,.
\end{split}\EE
For $i,j\!\in\!I,J$,
\BE{whThiIJprp_e3}
\wt\Th_{J,I;i}(x,z)=\begin{cases}
\wt\Th_{J,I;j}(x,z),&\hbox{if}~I\!\subset\!J;\\
\wt\phi_J\big(\vph_{I,J;i,j}(x,z);  
\wt\Th_{J,I;j}(x,z)\big),&\hbox{if}~I\!\supset\!J;
\end{cases}\EE
the claim in the first case above follows from~\eref{wtcZIdfn_e}.\\

\noindent
For $I,J\!\in\!\cP^*(N)$, let
\BE{cZiIJdfn_e}\cZ_{I,J}^{\circ}=q_{\cZ;I}\big(\wt\cZ_{I,J}^{\circ}\big)
\subset \cZ_I^{\circ}.\EE
By the $(\wt\phi_J,\wt\phi_I)$-equivariance of the smooth map~$\wt\Th_{J,I;i}$
with $i\!\in\!I,J$,
it descends to a smooth~map  
\BE{ThiJIdfn_e2}\Th_{J,I}\!:\cZ_{I,J}^{\circ}\lra \cZ_{J,I}^{\circ}\,.\EE
By~\eref{whThiIJprp_e3}, $\Th_{J,I}$ is independent of the choice of $i\!\in\!I,J$.
By~\eref{whThiIJprp_e12},
\BE{ThCoc_e}
\Th_{K,I}\big|_{\cZ_{I,J}^{\circ}\cap\cZ_{I,K}^{\circ}}
=\Th_{K,J}\circ \Th_{J,I}\big|_{\cZ_{I,J}^{\circ}\cap\cZ_{I,K}^{\circ}}\EE
whenever $I\!\subset\!J\!\subset\!K$ or some permutation of this relation holds.\\

\noindent
We define
\BE{cZdfn_e}\cZ=\bigg(\bigsqcup_{I\in\cP^*(N)}\!\!\!\!\!\!\cZ_I^{\circ}\bigg)\Big/\!\!\sim,
\qquad
\cZ_{I,J}^{\circ}\ni x\sim \Th_{J,I}(x)\in \cZ_{J,I}^{\circ}
\quad\forall\,I,J\!\in\!\cP^*(N).\EE
For $i\!\in\![N]$, let
\BE{cZidfn_e}
\cZ_i^{\star}=\bigg(\bigsqcup_{I\in\cP_i(N)}\!\!\!\!\!\!\cZ_I^{\circ}\bigg)\Big/\!\!\sim,
\qquad
\cZ_{I,J}^{\circ}\ni x\sim \Th_{J,I}(x)\in \cZ_{J,I}^{\circ}
\quad\forall\,I,J\!\in\!\cP_i(N).\EE

\begin{lmm}\label{cZtopol_lmm3}
The quotient projection~map
\BE{cZproj_e}  q_{\cZ}\!:\bigsqcup_{I\in\cP^*(N)}\!\!\!\!\!\!\cZ_I^{\circ}\lra \cZ \EE
is open and its restriction to each subspace~$\cZ_I^{\circ}$ is injective.
For each $i\!\in\![N]$, $\cZ_i^{\star}$ is an open subspace of~$\cZ$.
\end{lmm}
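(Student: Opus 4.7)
The plan is to analyze the equivalence relation on $\bigsqcup_{I\in\cP^*(N)}\cZ_I^{\circ}$ defining $\cZ$ by exploiting the involution identity \eref{whThiIJprp_e12} and the cocycle relation \eref{ThCoc_e} for the transition maps $\Th_{J,I}$. As a preliminary step, each subspace $\cZ_{I,J}^{\circ}\!\subset\!\cZ_I^{\circ}$ is open, because $\wt\cZ_{I,J}^{\circ}$ is the intersection of $\wt\cZ_I^{\circ}$ with the open subset $(U_I\!\cap\!U_J)\!\times\!\C^I$ of $U_I\!\times\!\C^I$ and the quotient map $q_{\cZ;I}$ is open (as a projection by the free $(S^1)^I_{\bu}$-action of Lemma~\ref{wtcZreg_lmm}). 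Descending \eref{whThiIJprp_e12} to the quotients yields $\Th_{I,J}\!\circ\!\Th_{J,I}\!=\!\id_{\cZ_{I,J}^{\circ}}$, so $\Th_{J,I}\!:\!\cZ_{I,J}^{\circ}\!\lra\!\cZ_{J,I}^{\circ}$ is a homeomorphism between open subsets.

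Openness of $q_{\cZ}$ then follows quickly: the saturation of any open $V\!\subset\!\bigsqcup_{I}\!\cZ_I^{\circ}$ is obtained by iteratively applying the homeomorphisms $\Th_{J,I}$ to open subsets of $V$, producing a union of open sets. Openness of $\cZ_i^{\star}\!\subset\!\cZ$ is a direct consequence of this argument: for each $J\!\in\!\cP^*(N)$, the intersection of $q_{\cZ}^{-1}(q_{\cZ}(\cZ_i^{\star}))$ with $\cZ_J^{\circ}$ equals $\bigcup_{I\in\cP_i(N),\,J\subset I}\cZ_{J,I}^{\circ}$, a union of open subsets of $\cZ_J^{\circ}$.

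For the injectivity of $q_{\cZ}|_{\cZ_I^{\circ}}$, suppose $p,q\!\in\!\cZ_I^{\circ}$ are equivalent, and fix a chain $p\!=\!p_0\!\sim\!p_1\!\sim\!\cdots\!\sim\!p_n\!=\!q$ with $p_k\!\in\!\cZ_{I_k}^{\circ}$, $I_0\!=\!I_n\!=\!I$, and $p_{k+1}\!=\!\Th_{I_{k+1},I_k}(p_k)$. By Def~\ref{SympCut_dfn0}\ref{IJinter_it}, each pair $(I_k,I_{k+1})$ is comparable under inclusion. I would induct on $n$, contracting the chain as follows: when three consecutive indices form a chain $I_{k-1}\!\subseteq\!I_k\!\subseteq\!I_{k+1}$ (or the reverse), the cocycle \eref{ThCoc_e} collapses two transitions into one; when $I_k$ is a local extremum of the chain ($I_{k-1},I_{k+1}\!\subsetneq\!I_k$ or both $\!\supsetneq\!I_k$), the involution identity together with comparability of $I_{k-1},I_{k+1}$---which follows from Def~\ref{SympCut_dfn0}\ref{IJinter_it} applied to a common lift of $p_k$ after a suitable $(S^1)^{I_k}_{\bu}$-rotation---enables a similar reduction. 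When the chain has been reduced to length zero, $p\!=\!p_0\!=\!p_n\!=\!q$.

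The main obstacle is the local-extremum case of the injectivity argument: verifying that a zig-zag segment can always be contracted requires producing a single $X$-representative of $p_k$ lying in the triple intersection $U_{I_{k-1}}\!\cap\!U_{I_k}\!\cap\!U_{I_{k+1}}$, so that the cocycle applies to the corresponding lifts. This step combines the torus-equivariance built into Def~\ref{SympCut_dfn0}\ref{phiIUJ_it} with the intersection constraints from Def~\ref{SympCut_dfn0}\ref{IJinter_it}.
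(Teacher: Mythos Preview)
Your approach matches the paper's. The paper compresses the injectivity argument into two sentences: it asserts that the cocycle \eref{ThCoc_e} makes $\sim$ in \eref{cZdfn_e} an equivalence relation outright, so that two equivalent points in the same $\cZ_I^{\circ}$ are related by $\Th_{I,I}=\id$; your chain-reduction unpacks exactly this assertion, and your flagged obstacle (the local-extremum segment) is precisely what the paper's one-line appeal to \eref{ThCoc_e} skips over.

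The ingredient that closes the gap is this. For a \emph{maximal} configuration, whenever $I\subset J$ the set $U_I\cap U_J$ equals, by \eref{IJpos_e} and Definition~\ref{SympCut_dfn0}\ref{UIJpos_it}, the subset of $U_J$ cut out by the inequalities $(\mu_J)_i<(\mu_J)_j$ for $i\in I$ and $j\in J\!-\!I$; since $\mu_J$ is $\phi_J$-invariant, so is $U_I\cap U_J$. Hence in the local-extremum case the entire $\phi_{I_k}$-orbit of the $X$-coordinate of any lift of $p_k$ lies in both $U_{I_{k-1}}$ and $U_{I_{k+1}}$, producing your triple-intersection representative and forcing $I_{k-1},I_{k+1}$ to be comparable via Definition~\ref{SympCut_dfn0}\ref{IJinter_it}; the cocycle for the resulting chain among $I_{k-1},I_k,I_{k+1}$ then contracts the segment. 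For a non-maximal configuration, pass to its maximal extension as in Section~\ref{SympCutThm_subs1}: the charts $\cZ_I^{\circ}$ for the given configuration sit as open subsets of those for the maximal one and the maps $\Th_{J,I}$ are restrictions, so injectivity on each $\cZ_I^{\circ}$ is inherited.
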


\begin{proof}
By~\eref{ThCoc_e}, the relations~$\sim$ in~\eref{cZdfn_e} and~\eref{cZidfn_e}
are equivalence relations and thus each $\cZ_i^{\star}$ is a subspace of~$\cZ$.
Since $\Th_{I,I}$ is the identity on $\cZ_{I,I}^{\circ}\!=\!\cZ_I^{\circ}$,
\eref{cZdfn_e} implies that the map~\eref{cZproj_e}
is injective on each subspace~$\cZ_I^{\circ}$.\\

\noindent
Since the maps~\eref{ThiJIdfn_e2} are homeomorphisms between open subsets of
the domain of~\eref{cZproj_e}, the latter map is open.
Since the preimage of~$\cZ_i^{\star}$ under the restriction of~\eref{cZproj_e} 
to each~$\cZ_I^{\circ}$ is the union of the open subsets
$\cZ_{I,J}^{\circ}\!\subset\!\cZ_I^{\circ}$ with 
$J\!\!\in\!\cP_i(N)\!\cap\!\cP_I(N)$, 
$\cZ_i^{\star}\!\subset\!\cZ$ is open. 
\end{proof}

\noindent
For $I,J\!\in\!\cP^*(N)$ and $I_0\!\subset\!I,J$, the overlap map~\eref{ThiJIdfn_e2} 
takes $\cZ_{I_0;I}^{\circ}\!\cap\!\cZ_{I,J}^{\circ}$ to 
$\cZ_{I_0;J}^{\circ}\!\cap\!\cZ_{J,I}^{\circ}$.
Let 
\BE{cZI0dfn_e}
\cZ_{I_0}=
\bigg(\bigsqcup_{I\in\cP^*(N)\cap\cP_{I_0}(N)}
\!\!\!\!\!\!\!\!\!\!\!\cZ_{I_0;I}^{\circ}\bigg)\Big/\!\!\sim,\qquad
\cZ_{I_0;I}^{\circ}\!\cap\!\cZ_{I,J}^{\circ}\ni x\sim 
\Th_{J,I}(x)\in \cZ_{I_0;J}^{\circ}\!\cap\!\cZ_{J,I}^{\circ}.\EE
For $i\!\in\![N]$, let 
$$\cZ_{i,I_0}^{\star}=\bigg(\bigsqcup_{I\in\cP_i(N)\cap\cP_{I_0}(N)}
\!\!\!\!\!\!\!\!\!\!\!\cZ_{I_0;I}^{\circ}\bigg)\Big/\!\!\sim,\qquad
\cZ_{I_0;I}^{\circ}\!\cap\!\cZ_{I,J}^{\circ}\ni x\sim 
\Th_{J,I}(x)\in \cZ_{I_0;J}^{\circ}\!\cap\!\cZ_{J,I}^{\circ}.$$
Since the relations~$\sim$ in~\eref{cZdfn_e} and~\eref{cZidfn_e} are equivalence relations,
$\cZ_{I_0}$ is a subspace of~$\cZ$ and 
$\cZ_{i,I_0}^{\star}$ is a subspace of~$\cZ_i^{\star}$.
Furthermore,
\BE{cZI0cZi_e}\cZ_{\eset}=\cZ, \quad \cZ_{i,\eset}^{\star}=\cZ_i^{\star}~~\forall\,i\!\in\![N],
\quad
\cZ_{i,I_0}^{\star}=\cZ_{I_0}\!\cap\!\cZ_i^{\star}\subset \cZ
~~\forall\,i\!\in\![N],\,I_0\!\subset\![N].
\EE
For $i\!\in\![N]$ and $I_0\!\in\!\cP^*(N)$, let 
\BE{Xidfn_e} X_i=\cZ_{\{i\}}, \qquad X_{I_0}=\cZ_{I_0}\,.\EE
By~\eref{cZI0dfn_e} and~\eref{cZidfn_e}, $X_i\!\subset\!\cZ_i^{\star}$.
If $i\!\in\!I_0$, then $X_{I_0}\!\subset\!X_i$.\\

\noindent
For $i\!\in\!I\!\subset\![N]$, let $\cU_{i;I}^{\le}\!\subset\!U_I\!\times\!\C$
be as in~\eref{cUiIdfn_e} and define
$$g_{i;I}\!:\cU_{i;I}^{\le}\lra\C^I \quad\hbox{by}\quad
\big(g_{i;I}(x,z)\!\big)_{\!j}=
\begin{cases}
z,&\hbox{if}~j\!=\!i;\\
\sqrt{2(\mu_I(x))_{ij}\!+\!|z|^2},&\hbox{if}~j\!\in\!I\!-\!i.
\end{cases}$$
Thus, the map
\BE{giIgraphdfn_e}
\cU_{i;I}^{\le}\lra\wt\cZ_{I;i}^{\ge}
\equiv\big\{\big(x,(z_j)_{j\in I}\big)\!\in\!\wt\cZ_I^{\circ}\!:
z_j\!\in\!\R^{\ge0}~\forall\,j\!\in\!I\!-\!i\big\}, \quad
(x,z)\lra \big(x,g_{i;I}(x,z)\big),\EE
is a homeomorphism.
If $i\!\in\!I_0\!\subset\!I$, the restriction 
\begin{gather*}
g_{I_0;I}\!: U_{I_0;I}^{\le}\!=\big\{(x,0)\!\in\!\cU_{i;I}^{\le}:
(\mu_I(x))_i\!=\!(\mu_I(x))_j~\forall\,j\!\in\!I_0\big\}\lra
\big(\R^{\ge0}\big)^{I-I_0}\!\subset\!\C^I, \\
g_{I_0;I}(x)=\Big(\sqrt{2(\mu_I(x))_{ij}}\,\Big)_{\!j\in I-I_0}\,,
\end{gather*}
of~$g_{i;I}$ is independent of the choice of $i\!\in\!I_0$.

\begin{lmm}\label{cZtopol_lmm}
For every $i\!\in\![N]$, the~map
\BE{surjmap_e2} q_i\!: \cU_i^{\le}\lra \cZ_i^{\star}, \quad
(x,z)\lra q_{\cZ}\big(q_{\cZ;I}\big(x,g_{i;I}(x,z)\!\big)\!\big) 
\quad\forall~(x,z)\!\in\!\cU_{i;I}^{\le},\,I\!\in\!\cP_i(N),\EE
is well-defined, continuous, surjective, and closed.
For $I_0\!\in\!\cP_i(N)$, it restricts to the surjective closed~map
\BE{surjmap_e} q_{I_0}\!: U_{I_0}^{\le}\lra X_{I_0}, \quad
x\lra q_{\cZ}\!\big(q_{\cZ;I_0;I}\big(x,g_{I_0;I}(x)\big)\!\big)
~~\forall~x\!\in\!U_{I_0;I}^{\le},\,I\!\in\!\cP_{I_0}(N),\EE
which does not depend on the choice of $i\!\in\!I_0$.
\end{lmm}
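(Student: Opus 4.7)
The plan is to verify the four claims about $q_i$ in sequence and then deduce the corresponding claims for $q_{I_0}$ as a corollary.  Well-definedness, continuity, and surjectivity are relatively direct; closedness is the main technical hurdle.

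\textbf{Well-definedness.}  Suppose $(x,z)\!\in\!\cU_{i;I}^{\le}\!\cap\!\cU_{i;J}^{\le}$.  By Lemma~\ref{cZtopol_lmm1}, either $I\!\subset\!J$ or $J\!\subset\!I$; say $I\!\subset\!J$.  Using Definition~\ref{SympCut_dfn0}\ref{phiIUJ_it}, one checks directly from the formulas that $(x,g_{i;J}(x,z))\!=\!\wt\Th_{J,I;i}(x,g_{i;I}(x,z))$, so the two candidate values of $q_i(x,z)$ are identified by the equivalence~\eref{cZdfn_e}.  The independence of $q_{I_0}$ from the choice of $i\!\in\!I_0$ follows from~\eref{whThiIJprp_e3} together with the observation, stated just after~\eref{giIgraphdfn_e}, that $g_{I_0;I}$ itself is $i$-independent.

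\textbf{Continuity and surjectivity.}  By Lemma~\ref{cZtopol_lmm1}, $(\cU_{i;I}^{\le})_{I\in\cP_i(N)}$ is an open cover of $\cU_i^{\le}$, and on each piece $q_i$ is a composition of the continuous section $(x,z)\!\mapsto\!(x,g_{i;I}(x,z))$ with the smooth quotient $q_{\cZ;I}$ and the open map $q_{\cZ}$.  For surjectivity, any point of $\cZ_i^{\star}$ has a representative $(x,(z_j)_{j\in I})\!\in\!\wt\cZ_I^{\circ}$ for some $I\!\in\!\cP_i(N)$; via the isomorphism $\vr_{I;i}$ of~\eref{vrIidfn_e}, the $(S^1)^I_{\bu}$-action rotates each $z_j$ with $j\!\in\!I\!-\!i$ independently (at the cost of a compensating rotation of $z_i$), so we may normalize each such $z_j$ to lie in $\R^{\ge 0}$.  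The resulting point lies in $\wt\cZ_{I;i}^{\ge}$ and is thus the image of a unique point of $\cU_{i;I}^{\le}$ under the homeomorphism~\eref{giIgraphdfn_e}.

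\textbf{Closedness.}  This is the main obstacle.  Let $C\!\subset\!\cU_i^{\le}$ be closed and suppose $y_n\!=\!q_i(x_n,z_n)\!\to\!y$ in $\cZ_i^{\star}$ with $(x_n,z_n)\!\in\!C$.  Choose $I\!\in\!\cP_i(N)$ with $y\!\in\!q_{\cZ}(\cZ_I^{\circ})$; by Lemma~\ref{cZtopol_lmm3} this set is open in $\cZ_i^{\star}$, so $y_n\!\in\!q_{\cZ}(\cZ_I^{\circ})$ for large $n$.  Lemma~\ref{cZtopol_lmm1} together with the transition formula~\eref{whThiIJdfn_e} lets us enlarge the chart to arrange that $(x_n,z_n)\!\in\!\cU_{i;I}^{\le}$.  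The lifts $\wt w_n\!\equiv\!(x_n,g_{i;I}(x_n,z_n))$ lie in the closed subset $\wt\cZ_{I;i}^{\ge}\!\subset\!\wt\cZ_I^{\circ}$ and their images under the principal $(S^1)^I_{\bu}$-bundle projection $q_{\cZ;I}$ converge.  Picking a local section near a lift $\wt w$ of $y$ and using compactness of $(S^1)^I_{\bu}$, I would extract (after passing to a subsequence) group elements $g_n\!\to\!g_\infty$ such that $\wt w_n\!\to\!\wt\phi_I(g_\infty;\wt w)$; the limit remains in $\wt\cZ_{I;i}^{\ge}$ because the latter is closed in $\wt\cZ_I^{\circ}$, and the inverse of~\eref{giIgraphdfn_e} produces a limit $(x^\dagger,z^\dagger)\!\in\!\cU_{i;I}^{\le}$ of $(x_n,z_n)$.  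Closedness of $C$ forces $(x^\dagger,z^\dagger)\!\in\!C$, and by construction $q_i(x^\dagger,z^\dagger)\!=\!y$.  Finally, fixing any $i\!\in\!I_0$, we have $q_{I_0}(x)\!=\!q_i(x,0)$, and $U_{I_0}^{\le}\!\times\!\{0\}$ is closed in $\cU_i^{\le}$ by Lemma~\ref{Xitopol_lmm2}, so all the corresponding claims for $q_{I_0}$ descend from those for $q_i$.
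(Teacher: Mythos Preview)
Your argument is correct.  For well-definedness, continuity, and surjectivity you follow essentially the same path as the paper.  The difference lies in the closedness proof.  The paper avoids sequences entirely: for a closed $\cW\subset X\times\C$ it observes (using Lemma~\ref{cZtopol_lmm1}) that the full preimage of $q_i(\cU_i^{\le}\cap\cW)$ in $\wt\cZ_I^{\circ}$ equals the $(S^1)^I_{\bu}$-saturation of the image of $\cU_{i;I}^{\le}\cap\cW$ under the homeomorphism~\eref{giIgraphdfn_e}; since the latter is closed in the slice~$\wt\cZ_{I;i}^{\ge}$ and the torus is compact, the saturation is closed in~$\wt\cZ_I^{\circ}$, and this suffices because the charts $q_{\cZ}(\cZ_I^{\circ})$ form an open cover.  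Your sequential argument instead uses the principal-bundle structure of $q_{\cZ;I}$ (freeness of the $\wt\phi_I$-action, Lemma~\ref{wtcZreg_lmm}) together with compactness of the torus to lift a convergent sequence, and then invokes closedness of~$\wt\cZ_{I;i}^{\ge}$.  This is legitimate because $\cZ_i^{\star}$ is first-countable (each point has a neighborhood identified with an open set in the manifold~$\cZ_I^{\circ}$ by Lemma~\ref{cZtopol_lmm3}); note also that the step ``$(x_n,z_n)\in\cU_{i;I}^{\le}$'' works because the transition maps~$\wt\Th_{J,I;i}$ preserve the $x$-component, so $y_n\in q_{\cZ}(\cZ_I^{\circ})$ forces $x_n\in U_I$.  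The paper's argument is a little more self-contained (no appeal to first-countability or to the bundle structure), while yours makes the role of the free torus action more explicit.
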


\begin{proof}
If $(x,z)\!\in\!\cU_{i;I}^{\le}\!\cap\!\cU_{i;J}^{\le}$, 
then either $I\!\subset\!J$ or $J\!\subset\!I$
by Definition~\ref{SympCut_dfn0}\ref{IJinter_it}.
In either case,
$$\Th_{J,I}\big(q_{\cZ;I}\big(x,g_{i;I}(x,z)\!\big)\!\big)
=q_{\cZ;J}\big(x,g_{i;J}(x,z)\!\big)$$
by~\eref{cZidfn_e} and~\eref{whThiIJdfn_e}. 
Thus, the map~\eref{surjmap_e2} is well-defined.
By Lemma~\ref{cZtopol_lmm1}, $\cU_{i;I}^{\le}\!\subset\!\cU_i^{\le}$ is open.
The map~\eref{surjmap_e2} is continuous because its restriction to each of 
these open subspaces is continuous.\\

\noindent
For every $y\!\in\!\cZ_i^{\star}$, there are $I\!\in\!\cP_i(N)$ and
$$\wt{x}\equiv \big(x,(z_j)_{j\in I}\big)\in\wt\cZ_I^{\circ}
\qquad\hbox{s.t.}\quad
y=q_{\cZ}\big(q_{\cZ;I}(\wt{x})\!\big)\,.$$
Thus,
\BE{cZtopol_e3}
\big(\mu_I(x)\!\big)_{\!ij}+\frac12|z_i|^2
=\frac12|z_j|^2\ge0 \qquad\forall~j\!\in\!I\EE
and so $(x,z_i)\!\in\!\cU_{i;I}^{\le}$.
For each $j\!\in\!I\!-\!i$, let $\ne^{\fI\th_j}\!\in\!S^1$ be 
such that $\ne^{-\fI\th_j}\!z_j\!\in\!\R^{\ge0}$ and
$$\ne^{\fI\th_i}=\prod_{j\in I-i}\!\!\!\ne^{-\fI\th_j}\in S^1\,.$$
Then, $(\ne^{\fI\th_j})_{j\in I}$ is an element of $(S^1)^I_{\bu}$.
Since the $\phi_I$-action is Hamiltonian and $(x,z_i)\!\in\!\cU_{i;I}^{\le}$,
$$(x',z')\equiv
\big(\phi_I\big((\ne^{\fI\th_j})_{j\in I};x\big),\ne^{-\fI\th_i}z_i\big)\in \cU_{i;I}^{\le}
\quad\hbox{and}\quad
\wt\phi_I\big((\ne^{\fI\th_j})_{j\in I};\wt{x}\big)=
\big(x',g_{i;I}(x',z')\!\big).$$
Thus,
$$y=q_{\cZ}\big(q_{\cZ;I}(\wt{x})\!\big)
=q_{\cZ}\big(q_{\cZ;I}\big(\wt\phi_I((\ne^{\fI\th_j})_{j\in I};\wt{x})\!\big)\!\big)\\
=q_{\cZ}\big(q_{\cZ;I}\big(x',g_{i;I}(x',z')\!\big)\!\big)\,.$$
This establishes the surjectivity of the map~\eref{surjmap_e2}.\\

\noindent
For $I_0\!\in\!\cP_i(N)$, the restriction of the map~\eref{surjmap_e2}  
to the subspace 
$$\cU_{i,I_0}^{\le} =\bigcup_{I\in\cP_{I_0}(N)}\!\!\!\!\!\!\cU_{i,I_0;I}^{\le}
=\bigcup_{I\in\cP_{I_0}(N)}\!\!\!\!\!\!U_{I_0;I}^{\le}=U_{I_0}^{\le}$$
is the map~\eref{surjmap_e}.
Since the map~$g_{I_0;I}$ does not depend on the choice of $i\!\in\!I_0$,
neither does the map~\eref{surjmap_e}.
If $I\!\supset\!I_0$  and $y\!\in\!X_{I_0}$ are as in the previous paragraph, 
then $z_j\!=\!0$ for every $j\!\in\!I_0$.
By~\eref{cZtopol_e3}, this implies~that $\wt{x}\!\in\!U_{I_0}^{\le}$ and thus
$$x'\equiv\phi_I\big((\ne^{\fI\th_j})_{j\in I};x\big)\in U_{I_0}^{\le}
\quad\hbox{and}\quad
y=q_{\cZ}\!\big(q_{\cZ;I_0;I}\big(x',g_{I_0;I}(x')\big)\!\big).$$
This establishes the surjectivity of the map~\eref{surjmap_e}.\\

\noindent
For $i\!\in\!I\!\subset\![N]$ and $\cW\!\subset\!X\!\times\!\C$, let
$$\cW_{i;I} \equiv\big\{\wt\phi_I\big(g;x,g_{i;I}(x,z)\big)\!:
g\!\in\!(S^1)^I_{\bu},\,(x,z)\!\in\!\cU_{i;I}^{\le}\!\cap\!\cW\big\}\subset \wt\cZ_I^{\circ}.$$
By the second statement of Lemma~\ref{cZtopol_lmm1},
$$\cU_i^{\le}\cap\big(U_I\!\times\!\C\big)\cap\cW =  
\cU_{i;I}^{\le}\cap\cW\,.$$
Thus, the preimage of $q_i(\cU_i^{\le}\!\cap\!\cW)$ in~$\wt\cZ_I^{\circ}$  
under $q_{\cZ}\!\circ\!q_{\cZ;I}$ is~$\cW_{i;I}$.
Since the map~\eref{giIgraphdfn_e} is a homeomorphism and the group~$(S^1)^I_{\bu}$ is compact, 
$\cW_{i;I}$ is closed~in
$$\wt\phi_I\big((S^1)^I_{\bu}\!\times\!\wt\cZ_{I;i}^{\ge}\big)= \wt\cZ_I^{\circ}$$
if $\cW$ is closed in~$X\!\times\!\C$.
Thus, the map~\eref{surjmap_e2} is closed.
Since $q_i^{-1}(X_{I_0})\!=\!U_{I_0}^{\le}$ for $I_0\!\in\!\cP_i(N)$, 
the map~\eref{surjmap_e} is closed as~well.
\end{proof}

\begin{crl}\label{cZtopol_crl}
The topological spaces~$\cZ_i^{\star}$ with~$i\in\![N]$ and~$\cZ$ are Hausdorff.
They inherit symplectic forms~$\vp_i$ and~$\om_{\cZ}$ from the symplectic manifolds
$(\cZ_I^{\circ},\vp_I)$ with $i\!\in\!I\!\subset\![N]$. 
\end{crl}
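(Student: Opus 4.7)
The plan is to glue the symplectic forms~$\vp_I$ via the overlap maps~$\Th_{J,I}$ and to establish Hausdorffness of~$\cZ_i^\star$ and~$\cZ$ separately.  The gluing is immediate: since $\wt\Th_{J,I;i}$ is $(\wt\phi_J,\wt\phi_I)$-equivariant and satisfies~\eref{whThiIJprp_e0}, the uniqueness argument used in the proof of Lemma~\ref{SympNB_lmm0} yields $\Th_{J,I}^{\,*}(\vp_J|_{\cZ_{J,I}^\circ})\!=\!\vp_I|_{\cZ_{I,J}^\circ}$; hence the forms $(\vp_I)_{I\in\cP_i(N)}$ descend to a closed nondegenerate 2-form~$\vp_i$ on~$\cZ_i^\star$, and the full collection $(\vp_I)_{I\in\cP^*(N)}$ descends to~$\om_{\cZ}$ on~$\cZ$, once these quotients are known to be Hausdorff (they are locally Euclidean and second countable by the smooth gluings).

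For Hausdorffness of~$\cZ_i^\star$, I would apply the general fact that a closed continuous surjection from a Hausdorff space with compact fibers has Hausdorff image.  By Lemma~\ref{cZtopol_lmm}, the map $q_i\!:\cU_i^{\le}\!\to\!\cZ_i^\star$ is a closed continuous surjection from the Hausdorff subspace $\cU_i^{\le}\!\subset\!X\!\times\!\C$, and each fiber $q_i^{-1}(p)$ corresponds under the homeomorphism~\eref{giIgraphdfn_e} to a single $(S^1)^I_{\bu}$-orbit in~$\wt\cZ_I^\circ$ and so is compact.  For distinct $p,p'\!\in\!\cZ_i^\star$ I would separate the disjoint compact fibers by disjoint open sets $U_1,U_2\!\subset\!\cU_i^{\le}$ and take $V_k\!\equiv\!\cZ_i^\star\!\setminus\!q_i(\cU_i^{\le}\!\setminus\!U_k)$ as disjoint open neighborhoods of $p,p'$; these are open by closedness of~$q_i$, and their intersection is empty because a point in $V_1\!\cap\!V_2$ would have empty $q_i$-preimage, contradicting surjectivity.

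For Hausdorffness of~$\cZ$, I would use the standard criterion that, since the projection~\eref{cZproj_e} is an open surjection (Lemma~\ref{cZtopol_lmm3}), $\cZ$ is Hausdorff if and only if each graph $\Gamma_{J,I}\!\equiv\!\{(x,\Th_{J,I}(x))\!:\!x\!\in\!\cZ_{I,J}^\circ\}$ is closed in $\cZ_I^\circ\!\times\!\cZ_J^\circ$.  The case $I\!=\!J$ gives the diagonal of the smooth manifold~$\cZ_I^\circ$, incomparable $I,J$ give $\cZ_{I,J}^\circ\!=\!\eset$ by Definition~\ref{SympCut_dfn0}\ref{IJinter_it}, and by the symmetry provided by~\eref{whThiIJprp_e12} it suffices to treat $I\!\subsetneq\!J$.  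Assuming $x_n\!\in\!\cZ_{I,J}^\circ$ with $x_n\!\to\!x$ in~$\cZ_I^\circ$ and $\Th_{J,I}(x_n)\!\to\!y$ in~$\cZ_J^\circ$, I would use a local section of the principal $(S^1)^I_{\bu}$-bundle~$q_{\cZ;I}$ to pick converging lifts $(\hat{x}_n,\hat{z}_n)\!\to\!(\hat{x},\hat{z})$ in~$\wt\cZ_I^\circ$ with $\hat{x}_n\!\in\!U_I\!\cap\!U_J$ and $\hat{x}\!\in\!U_I$.  By~\eref{whThiIJdfn_e}, the lifts of $\Th_{J,I}(x_n)$ in~$\wt\cZ_J^\circ$ have $U_J$-component~$\hat{x}_n$; the convergence of $y_n$ in~$\cZ_J^\circ$ yields, after a compensating $(S^1)^J_{\bu}$-gauge $g_n$ with a convergent subsequence $g_n\!\to\!g$, the convergence of $\phi_J(g_n^{-1};\hat{x}_n)$ to some $\hat{y}\!\in\!U_J$, from which an inversion using the continuity of~$\phi_J$ at $(g,\hat{y})\!\in\!(S^1)^J_{\bu}\!\times\!U_J$ forces $\hat{x}_n\!\to\!\phi_J(g;\hat{y})\!\in\!U_J$; by uniqueness of limits in~$X$, $\hat{x}\!\in\!U_J$, so $x\!\in\!\cZ_{I,J}^\circ$ and $\Th_{J,I}(x)\!=\!y$ by continuity.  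The main obstacle is this final inversion: since $U_J\!\subset\!X$ is merely open, a priori a limit of a sequence in~$U_J$ need not remain in~$U_J$, and the argument relies crucially on the fact that the $U_J$-components of the convergent lifts in $\wt\cZ_J^\circ\!\subset\!U_J\!\times\!\C^J$ land inside~$U_J$ at the limit, where~$\phi_J$ is defined and can be inverted continuously.
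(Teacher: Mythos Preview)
Your proof is correct.  The symplectic gluing is exactly the paper's computation~\eref{omcomp_e}.  Your Hausdorffness argument for~$\cZ_i^{\star}$ also follows the paper's route through the closedness of~$q_i$ in Lemma~\ref{cZtopol_lmm}; the paper simply cites \cite[Lemma~73.3]{Mu} (a closed quotient of a normal space is normal), whereas you prove the Hausdorff case directly using compactness of the fibers.  Your fiber description is slightly telegraphic---the fiber corresponds under~\eref{giIgraphdfn_e} to the intersection of a single $(S^1)^{I^*}_{\bu}$-orbit with the slice~$\wt\cZ_{I^*;i}^{\ge}$ for the \emph{maximal} $I^*\!\ni\!i$ with $p\!\in\!q_{\cZ}(\cZ_{I^*}^{\circ})$---but that intersection is indeed compact and your separation argument goes through.

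Where you genuinely diverge from the paper is Hausdorffness of~$\cZ$.  Your closed-graph verification is correct, and the ``inversion'' you flagged as the main obstacle does work precisely because the section-lifts $(\hat y_n,\hat w_n)$ of~$y_n$ converge inside $\wt\cZ_J^{\circ}\!\subset\!U_J\!\times\!\C^J$, so $\phi_J$ is defined at the limit and the compactness of $(S^1)^J_{\bu}$ lets you pass to a subsequence.  The paper, however, bypasses all of this: having already shown each~$\cZ_i^{\star}$ is Hausdorff and open in~$\cZ$ (Lemma~\ref{cZtopol_lmm3}), it observes that two distinct points of~$\cZ$ either have lifts in charts $\cZ_I^{\circ},\cZ_J^{\circ}$ with $I,J$ incomparable---in which case the disjoint open sets $q_{\cZ}(\cZ_I^{\circ}),\,q_{\cZ}(\cZ_J^{\circ})$ separate them---or, if $I\!\subset\!J$, both lie in~$\cZ_i^{\star}$ for any $i\!\in\!I$ and are separated there.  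This two-line reduction buys a much shorter argument than the closed-graph computation, at the cost of relying on the extra structural fact that the~$\cZ_i^{\star}$ are open.
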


\begin{proof}
By Lemma~\ref{cZtopol_lmm}, \eref{surjmap_e2} is a closed quotient map.
Since its domain is metrizable (being a subspace of a manifold),
it is normal.
By \cite[Lemma~73.3]{Mu}, $\cZ_i^{\star}$ is thus a normal (and in particular Hausdorff) 
topological space.\\

\noindent
Suppose $x,y\!\in\!\cZ$ are two distinct points.
Let $I,J\!\in\!\cP^*(N)$, $\wt{x}\!\in\!\cZ_I^{\circ}$, and 
$\wt{y}\!\in\!\cZ_J^{\circ}$ be such that $x\!=\!q_{\cZ}(\wt{x})$ and 
$y\!=\!q_{\cZ}(\wt{y})$.
If $I\!\not\subset\!J$ and $I\!\not\supset\!J$, then 
$$q_{\cZ}\big(\cZ_I^{\circ}\big)\cap q_{\cZ}\big(\cZ_J^{\circ}\big)=\eset$$
by~\eref{cZdfn_e} and Definition~\ref{SympCut_dfn0}\ref{IJinter_it}.
By Lemma~\ref{cZtopol_lmm3}, $q_{\cZ}(\cZ_I^{\circ})$ and $q_{\cZ}(\cZ_J^{\circ})$
are open in~$\cZ$.
If \hbox{$i\!\in\!I\!\subset\!J$}, then $x,y\!\in\!\cZ_i^{\star}$. 
Since $\cZ_i^{\star}$ is Hausdorff, there exist disjoint open neighborhoods~$U_x$ of~$x$ 
and~$U_y$ of~$y$ in~$\cZ_i^{\star}$.
By Lemma~\ref{cZtopol_lmm3}, $U_x$ and~$U_y$ are open in~$\cZ$.
Thus, $\cZ$ is Hausdorff.\\

\noindent
Since the identification maps~$\Th_{I,J}$ are diffeomorphisms between open subspaces 
of manifolds and the restriction of~\eref{cZproj_e} to each~$\cZ_I^{\circ}$
is a homeomorphism onto its image, 
the smooth structure on the domain of~\eref{cZproj_e}
descends to a smooth structure on~$\cZ$.
By~\eref{SympRed_e4} and~\eref{whThiIJprp_e0},
\BE{omcomp_e}
q_{\cZ;I}^*\Th_{J,I}^*\big(\vp_J|_{\cZ_{J,I}^{\circ}}\big) 
=\wt\Th_{J,I}^*q_{\cZ;J}^*\big(\vp_J|_{\cZ_{J,I}^{\circ}}\big) 
=\wt\Th_{J,I}^*\big(\wt\om_J|_{\wt\cZ_{J,I}^{\circ}}\big) 
=\wt\om_I\big|_{\wt\cZ_{I,J}^{\circ}}
=q_{\cZ;I}^*\big(\vp_I|_{\cZ_{I,J}^{\circ}}\big)\,.\EE
By the uniqueness part of \cite[Theorem~23.1]{daSilva}, \eref{omcomp_e} implies that
$$\Th_{J,I}^*\big(\vp_J|_{\cZ_{J,I}^{\circ}}\big) =\vp_I\big|_{\cZ_{I,J}^{\circ}}\,.$$
Thus, $q_{\cZ}$ induces a symplectic structure on~$\cZ$.
\end{proof}

\subsection{The SC symplectic divisor}
\label{SCsympdiv_subs}

\noindent
By Lemmas~\ref{cZtopol_lmm3} and~\ref{cZtopol_lmm} and 
Corollary~\ref{cZtopol_crl}, $(\cZ,\om_{\cZ})$ 
is a  symplectic manifold obtained by collapsing the boundary and corners
of the subspaces $\cU_i^{\le}\!\subset\!X\!\times\!\C$ to form 
open symplectic manifolds~$\cZ_i^{\star}$ with $i\!\in\![N]$ and
gluing the latter together along the common open subspaces~$\cZ_I^{\circ}$
with $I\!\in\!\cP_i(N)$.
We next describe the image of the boundary and corners of~$\cU_i^{\le}$
under the collapsing map as an SC symplectic divisor in $(\cZ,\om_{\cZ})$ in the sense of
Definition~\ref{SCD_dfn}.

\begin{lmm}\label{cZsub_lmm}
Let $I_0\!\subset\![N]$.
The topological spaces~$\cZ_{i,I_0}^{\star}$ with $i\!\in\![N]$ and~$\cZ_{I_0}$ 
are closed symplectic submanifolds of~$(\cZ_i^{\star},\vp_i)$ and of~$(\cZ,\om_{\cZ})$,
respectively, of codimension~$2|I_0|$.
If $I_0\!\neq\!\eset$ and $X$ is compact, then $X_{I_0}\!=\!\cZ_{I_0}$ is compact.
\end{lmm}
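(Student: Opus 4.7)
The plan is to prove the claim for $\cZ_{I_0}\!\subset\!(\cZ,\om_{\cZ})$; the corresponding statement for $\cZ_{i,I_0}^{\star}\!\subset\!(\cZ_i^{\star},\vp_i)$ then follows by intersection with the open subset $\cZ_i^{\star}\!\subset\!\cZ$, using the identity $\cZ_{i,I_0}^{\star}\!=\!\cZ_{I_0}\!\cap\!\cZ_i^{\star}$ from~\eqref{cZI0cZi_e}. The case $I_0\!=\!\eset$ is vacuous, so assume $I_0\!\neq\!\eset$.

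For each $I\!\in\!\cP_{I_0}(N)$, I would first apply Lemma~\ref{SympNB_lmm0} with $I_0'\!=\!\eset$, noting $\cZ_{\eset;I}^{\circ}\!=\!\cZ_I^{\circ}$ and $\vp_{\eset;I}\!=\!\vp_I$, to exhibit $(\cZ_{I_0;I}^{\circ},\vp_{I_0;I})$ as a symplectic submanifold of $(\cZ_I^{\circ},\vp_I)$. The codimension $2|I_0|$ is immediate from the defining equality $\wt\cZ_{I_0;I}^{\circ}\!=\!\wt\cZ_I^{\circ}\!\cap\!(U_I\!\times\!\C^I_{I_0})$ and the regularity statement in Lemma~\ref{wtcZreg_lmm}. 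I would then check that the overlap diffeomorphism $\Th_{J,I}$ from~\eqref{ThiJIdfn_e2} carries $\cZ_{I_0;I}^{\circ}\!\cap\!\cZ_{I,J}^{\circ}$ into $\cZ_{I_0;J}^{\circ}\!\cap\!\cZ_{J,I}^{\circ}$ whenever $I_0\!\subset\!I\!\cap\!J$. Using the explicit formula~\eqref{whThiIJdfn_e} with index $i\!\in\!I_0$: in the case $I\!\subset\!J$ the coordinates indexed by $I_0\!\subset\!I$ remain zero while those indexed by $J\!-\!I$ are strictly positive by Definition~\ref{SympCut_dfn0}\ref{UIJpos_it}; in the case $J\!\subset\!I$ one discards the $(I\!-\!J)$-coordinates (nonzero by~\eqref{IsupJ_e}) and applies a torus rotation preserving zero entries. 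Combined with the symplectic compatibility~\eqref{omcomp_e}, this makes $\cZ_{I_0}$ into a symplectic submanifold of $(\cZ,\om_{\cZ})$ of codimension $2|I_0|$.

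The main obstacle is verifying that $\cZ_{I_0}$ is \emph{closed} in $\cZ$, for which I would compute
\[ q_{\cZ}^{-1}\!\big(\cZ_{I_0}\big)\cap\cZ_I^{\circ}
=\begin{cases}\cZ_{I_0;I}^{\circ},&I_0\!\subset\!I,\\ \eset,&I_0\!\not\subset\!I,\end{cases}
\qquad\forall~I\!\in\!\cP^*(N). \]
Suppose $y\!\in\!\cZ_I^{\circ}$ and $q_{\cZ}(y)\!=\!q_{\cZ}(y')$ for some $y'\!\in\!\cZ_{I_0;J}^{\circ}$ with $I_0\!\subset\!J$. Then $y\!\in\!\cZ_{I,J}^{\circ}$, so Definition~\ref{SympCut_dfn0}\ref{IJinter_it} gives $I\!\subset\!J$ or $J\!\subset\!I$. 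Fix a lift $\wt{y}\!\in\!\wt\cZ_I^{\circ}$ of $y$ and compare the zero patterns of the $\C^J$-coordinates of $\wt\Th_{J,I;i}(\wt{y})$ with those of a lift of $y'$ in $\wt\cZ_{I_0;J}^{\circ}$; these agree up to a $(S^1)^J_{\bu}$-rotation, which preserves vanishing. If $I\!\subset\!J$, the strict positivity of the $(J\!-\!I)$-coordinates of $\wt\Th_{J,I;i}(\wt{y})$ from Definition~\ref{SympCut_dfn0}\ref{UIJpos_it} forces $I_0\!\cap\!(J\!-\!I)\!=\!\eset$, so $I_0\!\subset\!I$; if $J\!\subset\!I$, then $I_0\!\subset\!J\!\subset\!I$ is automatic. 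In either case, $\wt{y}\!\in\!\wt\cZ_{I_0;I}^{\circ}$ and hence $y\!\in\!\cZ_{I_0;I}^{\circ}$. Since each $\wt\cZ_{I_0;I}^{\circ}$ is closed and $(S^1)^I_{\bu}$-saturated in $\wt\cZ_I^{\circ}$, its image $\cZ_{I_0;I}^{\circ}$ is closed in $\cZ_I^{\circ}$, and hence $q_{\cZ}^{-1}(\cZ_{I_0})$ is closed in $\sqcup_I\cZ_I^{\circ}$, as needed.

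Finally, when $X$ is compact and $I_0\!\neq\!\eset$, the subset $U_{I_0}^{\le}\!\subset\!X$ is closed by Lemma~\ref{Xitopol_lmm2} and therefore compact; the continuous surjection $q_{I_0}\!:\!U_{I_0}^{\le}\!\to\!X_{I_0}\!=\!\cZ_{I_0}$ of Lemma~\ref{cZtopol_lmm} then shows $\cZ_{I_0}$ is compact.
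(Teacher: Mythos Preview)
Your proof is correct. The symplectic-submanifold and compactness parts agree with the paper's argument (via Lemma~\ref{SympNB_lmm0} and the surjection of Lemma~\ref{cZtopol_lmm}, respectively); your explicit verification that $\Th_{J,I}$ carries $\cZ_{I_0;I}^{\circ}\!\cap\!\cZ_{I,J}^{\circ}$ into $\cZ_{I_0;J}^{\circ}\!\cap\!\cZ_{J,I}^{\circ}$ is already recorded just before~\eqref{cZI0dfn_e}, so that step is redundant but harmless.

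The closedness argument, however, takes a genuinely different route. You compute the preimage $q_{\cZ}^{-1}(\cZ_{I_0})$ chart by chart in the $\cZ_I^{\circ}$, using the explicit form~\eqref{whThiIJdfn_e} of the transition maps to pin down which coordinates vanish, and then observe that each $\cZ_{I_0;I}^{\circ}$ is closed in its chart. The paper instead proves that $\cZ_{i,I_0}^{\star}$ is closed in $\cZ_i^{\star}$ first, by realizing it as the image of the closed subset $\cU_{i,I_0}^{\le}\!\subset\!\cU_i^{\le}$ (Lemma~\ref{cZtopol_lmm2}) under the closed map $q_i$ of Lemma~\ref{cZtopol_lmm}, and only then passes to~$\cZ$ via the open cover $\{\cZ_i^{\star}\}_{i\in[N]}$ and the identity~\eqref{cZI0cZi_e}. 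Your approach is more self-contained, avoiding the closed-map machinery of $q_i$ at the cost of a direct case analysis of the overlaps; the paper's approach recycles the topological lemmas already set up for constructing~$q_{\eset}$.
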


\begin{proof}
By~\eref{SympCutSubm_e2} and the surjectivity of the map~\eref{surjmap_e2}, 
$\cZ_{i,I_0}^{\star}\!\subset\!\cZ_i^{\star}$ is the image of the restriction 
of this map to~$\cU_{i,I_0}^{\le}$.
By Lemma~\ref{cZtopol_lmm2}, $\cU_{i,I_0}^{\le}\!\subset\!\cU_i^{\le}$ 
is a closed subspace.
Since~\eref{surjmap_e2} is a closed map, 
$\cZ_{i,I_0}^{\star}\!\subset\!\cZ_i^{\star}$ is thus a closed subspace.
By the third identity in~\eref{cZI0cZi_e}, the intersection of 
$\cZ_{I_0}$ with each~$\cZ_i^{\star}$ is thus closed in~$\cZ_i^{\star}$.
By Lemma~\ref{cZtopol_lmm3}, $\{\cZ_i^{\star}\}_{i\in[N]}$ is an open cover of~$\cZ$.
Therefore, $\cZ_{I_0}$ is closed in~$\cZ$.  
Since~$\cZ_i^{\star}$ and~$\cZ$ are Hausdorff by Corollary~\ref{cZtopol_crl}, 
so are $\cZ_{i,I_0}^{\star}\!\subset\cZ_i^{\star}$ and~$\cZ_{I_0}\!\subset\cZ$.\\

\noindent
By Lemma~\ref{SympNB_lmm0}, $(\cZ_{I_0;I}^{\circ},\vp_{I_0;I})$ is a symplectic submanifold of 
$(\cZ_I^{\circ},\vp_I)$ for every $I\!\in\!\cP_{I_0}(N)$.
Since the symplectic form~$\om_{\cZ}$ on~$\cZ$ is induced by 
the symplectic forms~$\vp_I$ on~$\cZ_I^{\circ}$,
the symplectic form~$\om_{I_0}$ on~$X_{I_0}\!=\!\cZ_{I_0}$ induced by
the symplectic forms~$\vp_{I_0;I}$ on~$\cZ_{I_0;I}^{\circ}$
is the restriction of~$\om_{\cZ}$.\\

\noindent
Suppose $I_0\!\neq\!\eset$ and $X$ is compact. 
By Lemma~\ref{Xitopol_lmm2}, $U_{I_0}^{\le}\!\subset\!X$ is then also compact. 
Since the map~\eref{surjmap_e} is continuous and surjective, 
it follows that $X_{I_0}\!=\!\cZ_{I_0}$ is compact as~well.
\end{proof}

\noindent
As in the proof of Lemma~\ref{cZsub_lmm}, 
we denote by~$\om_{I_0}$ the symplectic form on~$\cZ_{I_0}$
induced by the symplectic forms~$\vp_{I_0;I}$ on~$\cZ_{I_0;I}^{\circ}$.
In particular,
$$\big(X_i,\om_i\big) \equiv \big(\cZ_{\{i\}},\om_{\{i\}}\big)
=\big(\cZ_{\{i\}},\om_{\cZ}|_{\cZ_{\{i\}}}\big)$$
is a symplectic manifold for each $i\!\in\![N]$.\\

\noindent
Let $i\!\in\!I\!\subset\![N]$  and $I_0'\!\subset\!I_0\subset\!I$.
With notation as in~\eref{wtcZcNo_e} and~\eref{whThiIJdfn_e0}, we define 
$$\wt\Th_{J,I;i}\!: \wt\cN_{I_0;I_0';I}^{\circ}
\big|_{\wt\cZ_{I_0;I}^{\circ}\cap\wt\cZ_{I,J}^{\circ}} 
\lra \wt\cN_{I_0;I_0';J}^{\circ}\big|_{\wt\cZ_{I_0;J}^{\circ}\cap\wt\cZ_{J,I}^{\circ}}$$
by the formula in~\eref{whThiIJdfn_e} with $|z_i|^2$ replaced by~0 if $i\!\in\!I_0\!-\!I_0'$.
This diffeomorphism is still 
$(\wt\phi_J,\wt\phi_I)$-equivariant with respect to the homomorphism~\eref{S1IJhomom_e},
satisfies the analogues of~\eref{whThiIJprp_e0}-\eref{whThiIJprp_e3} and 
$$  \wt\pi_{I_0;I_0';J}^{\circ} \circ\wt\Th_{J,I;i}
=\wt\Th_{J,I;i}\circ\wt\pi_{I_0;I_0';I}^{\circ}
\big|_{\wt\cZ_{I_0;I}^{\circ}\cap\wt\cZ_{I,J}^{\circ}}\,,$$
and is complex linear on the fibers of $\wt\pi_{I_0;I_0';I}^{\circ}$.
Thus, it descends to a diffeomorphism
$$\Th_{J,I}\!: 
\cN_{I_0;I_0';I}^{\circ}\big|_{\cZ_{I_0;I}^{\circ}\cap \cZ_{I,J}^{\circ}} 
\lra \cN_{I_0;I_0';J}^{\circ}\big|_{\cZ_{I_0;J}^{\circ}\cap \cZ_{J,I}^{\circ}}$$
which is independent of $i\!\in\!I$, satisfies the analogue of~\eref{ThCoc_e} and
\BE{piI0I0pr_e0}  \pi_{I_0;I_0';J}^{\circ} \circ\Th_{J,I}
=\Th_{J,I}\circ\pi_{I_0;I_0';I}^{\circ}\big|_{\cZ_{I_0;I}^{\circ}\cap \cZ_{I,J}^{\circ}}\,,\EE
and is complex linear on the fibers of~$\pi_{I_0;I_0';I}^{\circ}$.
Define
\begin{equation*}\begin{split}
&\cN_{I_0;I_0'}=
\bigg(\bigsqcup_{I\in\cP_{I_0}(N)}
\!\!\!\!\!\!
\cN_{I_0;I_0';I}^{\circ}\bigg)\Big/\!\!\sim,\qquad\hbox{where}\\
&\cN_{I_0;I_0';I}^{\circ}\big|_{\cZ_{I_0;I}^{\circ}\cap \cZ_{I,J}^{\circ}}
\ni \big[\wt{x},z\big]\sim \Th_{J,I}\big([\wt{x},z]\big)
\in \cN_{I_0;I_0';J}^{\circ}\big|_{\cZ_{I_0;J}^{\circ}\cap \cZ_{J,I}^{\circ}}.
\end{split}\end{equation*}
By~\eref{piI0I0pr_e0}, 
the projections~$\pi_{I_0;I_0';I}^{\circ}$ induce a complex vector bundle
\BE{cZcN_e}\pi_{I_0;I_0'}\!:  \cN_{I_0;I_0'}\lra \cZ_{I_0}\,.\EE

\begin{lmm}\label{SympNB_lmm2}
For all \hbox{$I_0'\!\subset\!I_0\!\subset\!I$},
$(\cZ_{I_0},\om_{I_0})$ is a symplectic submanifold of 
$(\cZ_{I_0'},\om_{I_0'})$  with the oriented normal bundle 
canonically isomorphic to~\eref{cZcN_e}.
\end{lmm}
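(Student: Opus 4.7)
The plan is to deduce the global statement from the local statement Lemma~\ref{SympNB_lmm0} by gluing, using the compatibility properties of the transition maps~$\Th_{J,I}$ recorded just before the statement of the lemma.

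First, observe that since $I_0'\!\subset\!I_0$, we have $\cP_{I_0}(N)\!\subset\!\cP_{I_0'}(N)$ and $\cZ_{I_0;I}^{\circ}\!\subset\!\cZ_{I_0';I}^{\circ}$ for every $I\!\in\!\cP_{I_0}(N)$. Thus the open subspaces $\cZ_{I_0;I}^{\circ}$ with $I\!\in\!\cP_{I_0}(N)$ cover~$\cZ_{I_0}$ in~$\cZ_{I_0'}$ by the construction~\eref{cZI0dfn_e}. By Lemma~\ref{SympNB_lmm0}, each inclusion $\cZ_{I_0;I}^{\circ}\!\hookrightarrow\!\cZ_{I_0';I}^{\circ}$ is a symplectic embedding (with respect to $\vp_{I_0;I}$ and $\vp_{I_0';I}$) and the oriented normal bundle is canonically identified with~$\cN_{I_0;I_0';I}^{\circ}$.

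Next, I would argue that these local pictures patch. The transition maps~$\Th_{J,I}$ restrict to diffeomorphisms between the overlaps of the $\cZ_{I_0;I}^{\circ}$'s and between the overlaps of the $\cZ_{I_0';I}^{\circ}$'s, and they are compatible with the inclusions because the formulas defining~$\wt\Th_{J,I;i}$ in~\eref{whThiIJdfn_e} treat the coordinates indexed by~$I_0$ uniformly. This shows that $\cZ_{I_0}$ is a smooth submanifold of~$\cZ_{I_0'}$ of codimension~$2(|I_0|\!-\!|I_0'|)$, and since by the proof of Corollary~\ref{cZtopol_crl} the form~$\om_{I_0'}$ restricts on each chart~$\cZ_{I_0';I}^{\circ}$ to~$\vp_{I_0';I}$ (and similarly for $\om_{I_0}$ and $\vp_{I_0;I}$), the identity $\vp_{I_0;I}\!=\!\vp_{I_0';I}|_{\cZ_{I_0;I}^{\circ}}$ from Lemma~\ref{SympNB_lmm0} globalizes to $\om_{I_0}\!=\!\om_{I_0'}|_{\cZ_{I_0}}$. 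Hence $\cZ_{I_0}$ is a symplectic submanifold of~$\cZ_{I_0'}$.

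For the normal bundle statement, the local isomorphisms~\eref{cNisom_e2} from Lemma~\ref{SympNB_lmm0}, combined with the fact (already recorded) that the maps~$\Th_{J,I}$ on the bundles~$\cN_{I_0;I_0';I}^{\circ}$ intertwine the projections via~\eref{piI0I0pr_e0} and are fiberwise complex-linear, show that the collection of local normal bundle isomorphisms assembles into a single global isomorphism between $\cN_{\cZ_{I_0'}}\cZ_{I_0}$ and the bundle~$\cN_{I_0;I_0'}$ defined in~\eref{cZcN_e}. The orientation statement follows because each local isomorphism in Lemma~\ref{SympNB_lmm0} is orientation-preserving and the transition maps are complex linear, hence orientation-preserving, on the fibers.

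The main (modest) obstacle is the bookkeeping needed to verify that the local isomorphisms of Lemma~\ref{SympNB_lmm0} are genuinely compatible with the bundle transition maps~$\Th_{J,I}$; but since the latter have already been set up to satisfy the cocycle condition and commute with both the projection~$\pi_{I_0;I_0';I}^{\circ}$ and the fiberwise complex structure, no further analytic work is required.
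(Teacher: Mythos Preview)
Your proposal is correct and follows essentially the same approach as the paper: both arguments glue the local picture of Lemma~\ref{SympNB_lmm0} via the compatibility of the transition maps~$\Th_{J,I}$ to obtain the global normal bundle isomorphism. The only difference is cosmetic: for the first claim the paper cites Lemma~\ref{cZsub_lmm} (both $\cZ_{I_0}$ and $\cZ_{I_0'}$ are already known to be symplectic submanifolds of~$(\cZ,\om_{\cZ})$, so the inclusion $\cZ_{I_0}\!\subset\!\cZ_{I_0'}$ immediately gives the symplectic submanifold statement) rather than re-patching the local symplectic embeddings as you do.
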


\begin{proof}
By Lemma~\ref{cZsub_lmm}, 
$(\cZ_{I_0},\om_{I_0})$ and $(\cZ_{I_0'},\om_{I_0'})$
are symplectic submanifolds of~$(\cZ,\om_{\cZ})$.
Since $\cZ_{I_0}\!\subset\!\cZ_{I_0'}$, 
$(\cZ_{I_0},\om_{I_0})$  is a  symplectic submanifold of $(\cZ_{I_0'},\om_{I_0'})$.\\

\noindent
Since the collection of all isomorphisms~\eref{cNisom_e2} respects 
the patching maps~$\Th_{J,I}$
for the manifold~$\cZ_{I_0'}$ and for the bundle~\eref{cZcN_e},
the isomorphisms~\eref{cNisom_e2} induce a vector bundle isomorphism 
$$\cN_{\cZ_{I_0'}}\cZ_{I_0}\equiv 
\frac{T\cZ_{I_0'}|_{\cZ_{I_0}}}{T\cZ_{I_0}}\lra  \cN_{I_0;I_0'}$$
of oriented vector bundles over~$\cZ_{I_0}$.
This isomorphism is orientation-preserving because 
the isomorphisms~\eref{cNisom_e2} and the identifications~$\Th_{J,I}$
are.
\end{proof}

\begin{crl}\label{SympNB_crl2}
The collection $\{X_i\}_{i\in[N]}$ is an SC symplectic divisor in $(\cZ,\om_{\cZ})$
with the associated $N$-fold symplectic configuration 
$\X\!\equiv\!((X_I)_{I\in\cP^*(N)},(\om_i)_{i\in[N]})$.
Furthermore, the map 
\BE{qesetdfn_e}q_{\eset}\!:X\lra X_{\eset}\!\subset\!\cZ, \qquad
q_{\eset}(x)=q_{\{i\}}(x)\quad\forall\,x\!\in\!U_i^{\le},\,i\in\![N],\EE
with $q_{\{i\}}$ as in~\eref{surjmap_e}, 
is well-defined, continuous, and surjective.
For each $i\!\in\![N]$, it
takes $\prt U_i^{\le}$ onto $X_i\!\cap\!X_{\prt}$ and restricts to a symplectomorphism 
\BE{qesetdfn_e2}\big(U_i^{\le}\!-\!\prt U_i^{\le},\om|_{U_i^{\le}-\prt U_i^{\le}}\big)
\lra \big(X_i\!-\!X_{\prt},\om_i|_{X_i\!-\!X_{\prt}}\big).\EE
\end{crl}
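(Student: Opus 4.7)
The plan is to verify the three assertions in sequence, drawing on the structure developed in Sections~\ref{SympRed_subs}--\ref{SCsympdiv_subs}.

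First, for the SC symplectic divisor claim, the key identity to establish is $\bigcap_{i\in I_0}X_i=X_{I_0}$ for every $I_0\!\in\!\cP^*(N)$. I would argue chart-by-chart: by Definition~\ref{SympCut_dfn0}\ref{IJinter_it} together with the patching~\eref{cZdfn_e} and the formula~\eref{whThiIJdfn_e}, the intersection $X_i\!\cap\!\cZ_I^{\circ}$ is empty unless $i\!\in\!I$, in which case it coincides with $\cZ_{\{i\};I}^{\circ}\!=\!\{[x,z]\!\in\!\cZ_I^{\circ}\!:z_i\!=\!0\}$; intersecting over $i\!\in\!I_0$ yields $\cZ_{I_0;I}^{\circ}$ when $I_0\!\subset\!I$, and taking the union over $I\!\in\!\cP_{I_0}(N)$ recovers $X_{I_0}$. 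Lemma~\ref{cZsub_lmm} then provides that each $X_{I_0}$ is a closed symplectic submanifold of codimension $2|I_0|$; since this equals $|I_0|$ times the codimension of a single $X_i$, the transversality condition of~\eref{TransVerHom_e} is automatic. For the orientation condition of Definition~\ref{SCD_dfn}, Lemma~\ref{SympNB_lmm2} canonically identifies $\cN_{\cZ}X_{I_0}$ with the complex vector bundle $\cN_{I_0;\eset}$ modeled fiberwise on $\C^{I_0}$; this decomposes as the complex direct sum of the line bundles $\cN_{\cZ}X_i|_{X_{I_0}}\!=\!\cN_{\{i\};\eset}|_{X_{I_0}}$ over $i\!\in\!I_0$, so the intersection orientation matches the $\om$-orientation on $X_{I_0}$.

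Second, for the properties of $q_{\eset}$: well-definedness on an overlap $U_i^{\le}\!\cap\!U_j^{\le}$, which equals $U_{\{i,j\}}^{\le}$ by Lemma~\ref{Xitopol_lmm2}, holds because at $x\!\in\!U_{\{i,j\};I}^{\le}$ the equality $(\mu_I(x))_i\!=\!(\mu_I(x))_j$ forces $g_{\{i\};I}(x)$ and $g_{\{j\};I}(x)$ to coincide as elements of $\C^I$; the noted independence of $g_{I_0;I}$ on the choice of base index extends this to arbitrary multi-overlaps. Continuity then follows from the pasting lemma applied to the finite closed cover $\{U_i^{\le}\}_{i\in[N]}$ of $X$ (Lemmas~\ref{sCcover_lmm} and~\ref{Xitopol_lmm1}) and the continuity of each $q_{\{i\}}$ established in Lemma~\ref{cZtopol_lmm}. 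Surjectivity onto $X_{\eset}\!=\!\bigcup_iX_i$ is immediate from the surjectivity of each $q_{\{i\}}\!:U_i^{\le}\!\to\!X_i$ in the same lemma.

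Third, a point $x\!\in\!\prt U_i^{\le}$ witnesses $(\mu_I(x))_i\!=\!(\mu_I(x))_j$ for some $j\!\in\!I\!-\!i$, so the $j$-th coordinate of $g_{\{i\};I}(x)$ vanishes and $q_{\eset}(x)\!\in\!\cZ_{\{i,j\}}\!\subset\!X_i\!\cap\!X_{\prt}$; conversely, any $y\!\in\!X_i\!\cap\!X_{\prt}$ lies in some $X_{\{i,j\}}$ and hence equals $q_{\{i,j\}}(x')$ for some $x'\!\in\!U_{\{i,j\}}^{\le}\!\subset\!\prt U_i^{\le}$ by Lemma~\ref{cZtopol_lmm}, yielding $q_{\eset}(\prt U_i^{\le})\!=\!X_i\!\cap\!X_{\prt}$. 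For the symplectomorphism~\eref{qesetdfn_e2}, I would factor the restriction of $q_{\{i\}}$ over $U_i^{\le}\!\cap\!U_I$ through the graph-like map $\gamma_i\!:x\!\mapsto\!(x,g_{\{i\};I}(x))\!\in\!\wt\cZ_{\{i\};I}^{\circ}$ followed by the symplectic quotient $q_{\cZ;\{i\};I}$; by~\eref{SympRed_e5} this gives $q_{\{i\}}^{\,*}\om_i=\gamma_i^{\,*}(\wt\om_I|_{\wt\cZ_{\{i\};I}^{\circ}})=\om|_{U_i^{\le}\cap U_I}+g_{\{i\};I}^{\,*}\om_{\C^I}$, and the second summand vanishes because $g_{\{i\};I}$ takes values in the Lagrangian real locus $\R^I\!\subset\!(\C^I,\om_{\C^I})$. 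Bijectivity of the restriction to the interior is the standard reduction fact: each $(S^1)^I_{\bu}$-orbit on the locus of $\wt\cZ_{\{i\};I}^{\circ}$ with all $z_j$ ($j\!\neq\!i$) nonzero has a unique representative with these $z_j$ in $\R^+$, which is exactly $(x,g_{\{i\};I}(x))$ for a unique $x\!\in\!(U_i^{\le}\!-\!\prt U_i^{\le})\!\cap\!U_I$. The main obstacle I anticipate is the bookkeeping required to glue these chart-level bijectivity and pullback identities coherently across different charts $\cZ_I^{\circ}$; this reduces to a direct verification using the $(\wt\phi_J,\wt\phi_I)$-equivariance and the defining formula~\eref{whThiIJdfn_e} of the transition maps $\wt\Th_{J,I;i}$, which intertwine the local $\gamma_i$'s over $U_I\!\cap\!U_J$.
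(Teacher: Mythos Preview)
Your proposal is correct and follows essentially the same route as the paper's proof: the SC divisor claim via Lemmas~\ref{cZsub_lmm} and~\ref{SympNB_lmm2}, the well-definedness and continuity of~$q_{\eset}$ via Lemmas~\ref{sCcover_lmm}--\ref{Xitopol_lmm2} and~\ref{cZtopol_lmm}, and the symplectomorphism via the graph map into the slice $\wt\cZ_{I;i}^{\ge}$ together with the vanishing of $g_{\{i\};I}^{\,*}\om_{\C^I}$. The paper phrases the last vanishing as ``the argument of every component of~$g_{i;I}$ is fixed'' rather than invoking a Lagrangian, and handles the cross-chart injectivity by citing the $\phi_I$-invariance of~$U_{i;I}^<$ and~\eref{cZI0dfn_e} rather than checking the transition maps directly, but these are the same arguments in different words.
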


\begin{proof}
By Lemma~\ref{cZsub_lmm}, $(X_i,\om_i)$ is a closed symplectic submanifold
of $(\cZ,\om_{\cZ})$ for every $i\!\in\![N]$.
By construction,
$$X_I=\cZ_I=\bigcap_{i\in I}\!X_i\subset\cZ \qquad\forall~I\!\subset\![N].$$
By Lemma~\ref{SympNB_lmm2}, the real codimension of~$X_i$ in~$\cZ$ is~2 
for every $i\!\in\![N]$ and the homomorphism
\BE{SympNB_e2}
\cN_{\cZ}X_I\equiv \frac{T\cZ|_{X_I}}{TX_I}
\lra \bigoplus_{i\in I}\! \frac{T\cZ|_{X_I}}{TX_i|_{X_I}}
\equiv \bigoplus_{i\in I} \cN_{\cZ}X_i\big|_{X_I}\EE
induced by the natural inclusions 
is an orientation-preserving isomorphism for all $I\!\subset\![N]$.
Thus, $\{X_i\}_{i\in[N]}$ is a transverse (in fact orthogonal) collection of 
closed symplectic submanifolds of~$\cZ$ 
so that the symplectic orientation of each~$X_I$ (which orients 
the left-hand side of~\eref{SympNB_e2}) agrees with its intersection orientation
(which is determined by the orientation of the right-hand side of~\eref{SympNB_e2}).
Therefore, $\{X_i\}_{i\in[N]}$ is an SC symplectic divisor in~$(\cZ,\om_{\cZ})$.\\

\noindent
By Lemmas~\ref{sCcover_lmm} and~\ref{Xitopol_lmm1}, 
the closed subsets $U_i^{\le}\!\subset\!X$ cover~$X$.
By Lemmas~\ref{cZtopol_lmm} and~\ref{Xitopol_lmm2},  
$q_{\{i\}}$ is continuous on $U_i^{\le}$ and $q_{\{i\}}\!=\!q_{\{j\}}$ on 
$U_i^{\le}\!\cap\!U_j^{\le}\!=\!U_{ij}^{\le}$.
Thus, \eref{qesetdfn_e} is a well-defined continuous map.
By Lemma~\ref{cZtopol_lmm}, it 
takes $U_i^{\le}$ onto~$X_i$ for every $i\!\in\![N]$ and 
$U_{I_0}^{\le}$ onto~$X_{I_0}$ for every $I\!\in\!\cP^*(N)$.
Thus, $q_{\eset}$ restricts  to surjective continuous maps
$$
q_{\eset}\!:\prt U_i^{\le}\!\equiv\!
\bigcup_{\begin{subarray}{c}I_0\in\cP_i(N)\\ |I_0|\ge2\end{subarray}}\!\!\!\!\!\!U_{I_0}^{\le} 
\lra \bigcup_{\begin{subarray}{c}I_0\in\cP_i(N)\\ |I_0|\ge2\end{subarray}}\!\!\!\!\!\!X_{I_0}
=X_i\!\cap\!X_{\prt}, \quad
q_{\eset}\!:U_i^{\le}\!-\!\prt U_i^{\le}\lra X_i\!-\!X_{\prt}\,.$$
The latter map is described by its restrictions to the open subsets $U_{i;I}^<$
with $I\!\in\!\cP_i(N)$:
\begin{gather*}
q_{\eset}\!:U_{i;I}^<\!\equiv\!\big\{x\!\in\!U_I\!:
(\mu_I(x))_i\!<\!(\mu_I(x))_j~\forall\,j\!\in\!I\!-\!i\big\}, \quad
q_{\eset}(x)=q_{\cZ}\big(q_{\cZ;i;I}\big(x,g_{i;I}(x)\big)\!\big),\\
\hbox{where}\qquad 
g_{i;I}(x)=\Big(\sqrt{2(\mu_I(x))_{ij}}\,\Big)_{j\in I-i}\in (\R^+)^{I-i}\,.
\end{gather*}
Since the restriction of~\eref{giIgraphdfn_e} to~$U_{i;I}^<$ 
is a homeomorphism~onto 
$$\wt{X}_{i;I}^> \equiv \wt\cZ_{\{i\};I}^{\circ}\!\cap\! \wt\cZ_{I;i}^{\ge}-
\bigcup_{\begin{subarray}{c} i\in I_0\subset I\\ |I_0|\ge2\end{subarray}}\!\!\!\!
\wt\cZ_{I_0;I}^{\circ}
=\wt\cZ_{\{i\};I}^{\circ}\!\cap\!\wt\cZ_{I;i}^{\ge}- q_{\cZ;i;I}^{-1}\big(q_{\cZ}^{-1}(X_{\prt})\big)
\subset U_{i;I}^<\!\times\!(\R^+)^{I-i}$$
and $\wt{X}_{i;I}^>$ is a slice for the $\wt\phi_I$-action on 
$\wt\cZ_{\{i\};I}^{\circ}\!-\!q_{\cZ;i;I}^{-1}(q_{\cZ}^{-1}(X_{\prt}))$,
the restriction of~$q_{\eset}$ to~$U_{i;I}^<$ is injective.
Since $U_{i;I}^<$ is $\phi_I$-invariant, \eref{cZI0dfn_e} then implies that 
the map~\eref{qesetdfn_e2} is injective.
Since  restriction of~$q_{\eset}$ to~$U_{i;I}^<$ is a composition of smooth maps,
the map~\eref{qesetdfn_e2} is smooth.
Since the argument of every component of the~map
$$g_{i;I}\!:U_{i;I}^<\lra\C^{I-i}$$ 
is fixed, \eref{SympRed_e5} implies~that
\begin{equation*}\begin{split}
\big\{q_{\eset}|_{U_{i;I}^<}\big\}^*\om_i
=\big\{\id\!\times\!g_{i;I}\big\}^{\!*}
q_{\cZ;i;I}^{\,*}\vp_{i;I}
&=\big\{\id\!\times\!g_{i;I}\big\}^{\!*}\pi_1^*\om
+\big\{\id\!\times\!g_{i;I}\big\}^{\!*}\pi_2^*\om_{\C^{I-i}}\\
&=\om|_{U_{i;I}^<}+g_{i;I}^{\!*}\om_{\C^{I-i}}=\om|_{U_{i;I}^<}\,.
\end{split}\end{equation*}
Thus, the map~\eref{qesetdfn_e2} is a symplectomorphism.
\end{proof}

\subsection{The Hamiltonian configuration}
\label{cZSympCutConf_subs}

\noindent
We next describe an $N$-fold Hamiltonian configuration
\BE{cZSympCutDfn_e}
\sC_{\cZ}\equiv\big(U_{\cZ;I},\phi_{\cZ;I},\mu_{\cZ;I}\big)_{I\in\cP^*(N)}\EE
for~$(\cZ,\om_{\cZ})$.
For each $I\!\in\!\cP^*(N)$, let $U_{\cZ;I}\!=\!q_{\cZ}(\cZ_I^{\circ})$.
By Lemma~\ref{cZtopol_lmm3}, $U_{\cZ;I}$ is open in~$\cZ$.
By~\eref{cZdfn_e}, $(U_{\cZ;I})_{I\in\cP^*(N)}$ is a cover of~$\cZ$.
Since $U_I\!\cap\!U_J\!=\!\eset$ unless $I\!\subset\!J$ or $I\!\subset\!J$,
\eref{cZdfn_e} implies that 
the same is the case for~$U_{\cZ;I}$ and~$U_{\cZ;J}$.\\

\noindent
For $I\!\in\!\cP^*(N)$, we define a Hamiltonian $(S^1)^I_{\bu}$- pair for 
$(U_I\!\times\!\C^I,\wt\om_I\big)$~by
\begin{alignat}{2}
\label{wtcZphiiIdfn_e}
\wt\phi_{\cZ;I}\!: 
(S^1)^I_{\bu}\!\times\!\big(U_I\!\times\!\C^I\big)&\lra U_I\!\times\!\C^I,
&\qquad
\wt\phi_{\cZ;I}\big(g;x,z\big)&=\big(\phi_I(g;x),z\big);\\
\label{wtcZhiIdfn_e}
\wt\mu_{\cZ;I}\!: U_I\!\times\!\C^I&\lra\ft_{I;\bu}^*, &\qquad
\wt\mu_{\cZ;I}(x,z)&=\mu_I(x).
\end{alignat}
By~\ref{phiIUJ_it} and~\ref{UIJpos_it} in Definition~\ref{SympCut_dfn0},
\BE{cZphiIUJ_e12}\begin{aligned}
\wt\mu_{\cZ;J}(x,z)|_{\ft_{I;\bu}}&=\wt\mu_{\cZ;I}(x,z)
&\quad&\forall~(x,z)\!\in\!(U_I\!\cap\!U_J)\!\times\!\C^I,\,I\!\subset\!J\!\subset\![N],\\
\big(\wt\mu_{\cZ;J}(x)\big)_i&<\big(\wt\mu_{\cZ;J}(x)\big)_j
&\quad&\forall\,
(x,z)\!\in\!(U_I\!\cap\!U_J)\!\times\!\C^I,\,i\!\in\!I\!\subset\!J\!\subset\![N],
\,j\!\in\!J\!-\!I,
\end{aligned}\EE
respectively.\\

\noindent
Since the $\phi_I$-action is Hamiltonian, \eref{wtcZphiiIdfn_e} 
restricts to an $(S^1)^I_{\bu}$-action 
\BE{Ham_e3a}
\wt\phi_{\cZ;I}\!: (S^1)^I_{\bu}\!\times\!\wt\cZ_I^{\circ} \lra \wt\cZ_I^{\circ}\,.\EE
Since the $\wt\phi_I$-action commutes with this action and preserves
the moment map~$\wt\mu_{\cZ;I}$,
\eref{Ham_e3a} and~\eref{wtcZhiIdfn_e} determine a Hamiltonian pair
\BE{Ham_e3b}
\phi_{\cZ;I}^{\circ}\!: (S^1)^I_{\bu}\!\times\!\cZ_I^{\circ} \lra\cZ_I^{\circ}
\qquad\hbox{and}\qquad
\mu_{\cZ;I}^{\circ}\!: \cZ_I^{\circ} \lra\ft_{I;\bu}^*\EE
for $(\cZ_I^{\circ},\vp_I)$.
Since the restriction of~\eref{cZproj_e} to~$\cZ_I^{\circ}$ is a symplectomorphism 
onto~$U_{\cZ;I}$, $\phi_{\cZ;I}^{\circ}$ and~$\mu_{\cZ;I}^{\circ}$ induce smooth~maps
\BE{Ham_e3d}
\phi_{\cZ;I}\!: (S^1)^I_{\bu}\!\times\!U_{\cZ;I}\lra U_{\cZ;I}, \qquad
\mu_{\cZ;I}\!: U_{\cZ;I} \lra\ft_{I;\bu}^*,\EE
so that $\phi_{\cZ;I}$  is a Hamiltonian $(S^1)^I_{\bu}$-action 
with moment map $\mu_{\cZ;I}$.

\begin{lmm}\label{cZHamConf_lmm}
The tuple~\eref{cZSympCutDfn_e}  described above is an $N$-fold Hamiltonian configuration
for~$(\cZ,\om_{\cZ})$.
\end{lmm}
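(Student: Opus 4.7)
The plan is to verify the three conditions \ref{IJinter_it}--\ref{UIJpos_it} of Definition~\ref{SympCut_dfn0} for the tuple~\eref{cZSympCutDfn_e}. The construction preceding the lemma already gives that $\{U_{\cZ;I}\}_{I\in\cP^*(N)}$ is an open cover of~$\cZ$ and that $\phi_{\cZ;I}$ is a Hamiltonian $(S^1)^I_{\bu}$-action on $U_{\cZ;I}$ with moment map~$\mu_{\cZ;I}$, so the only remaining content is the three compatibility conditions.

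For~\ref{IJinter_it}, I would observe that by the gluing prescription~\eref{cZdfn_e},
$$U_{\cZ;I}\cap U_{\cZ;J}=q_{\cZ}\big(\cZ_{I,J}^{\circ}\big)=q_{\cZ}\big(q_{\cZ;I}(\wt\cZ_{I,J}^{\circ})\big).$$
Since $\wt\cZ_{I,J}^{\circ}\subset(U_I\!\cap\!U_J)\!\times\!\C^I$ by~\eref{whThiIJdfn_e0}, Definition~\ref{SympCut_dfn0}\ref{IJinter_it} applied to~$\sC$ forces the intersection to be empty unless $I\!\subset\!J$ or $J\!\subset\!I$.

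For~\ref{phiIUJ_it} and~\ref{UIJpos_it}, suppose $x\!\in\!U_{\cZ;I}\!\cap\!U_{\cZ;J}$ with $I\!\subset\!J\!\subset\![N]$. I would choose a lift $\wt{x}_I\!=\!(x_0,(z_k)_{k\in I})\!\in\!\wt\cZ_{I,J}^{\circ}$ with $q_{\cZ}(q_{\cZ;I}(\wt{x}_I))\!=\!x$ and set $\wt{x}_J\!=\!\wt\Th_{J,I;i}(\wt{x}_I)\!\in\!\wt\cZ_{J,I}^{\circ}$ for any $i\!\in\!I$, so that $q_{\cZ}(q_{\cZ;J}(\wt{x}_J))\!=\!x$ by~\eref{cZdfn_e} and~\eref{ThiJIdfn_e2}. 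Since $I\!\subset\!J$, the first line of~\eref{whThiIJdfn_e} shows the $X$-component of~$\wt{x}_J$ is again~$x_0\!\in\!U_I\!\cap\!U_J$. By the construction of $\mu_{\cZ;I}^{\circ}$ via~\eref{Ham_e3b} from the lifted moment map~\eref{wtcZhiIdfn_e}, and the subsequent identification with~$\mu_{\cZ;I}$ on~$U_{\cZ;I}$, this gives
$$\mu_{\cZ;I}(x)=\wt\mu_{\cZ;I}(\wt{x}_I)=\mu_I(x_0), \qquad \mu_{\cZ;J}(x)=\wt\mu_{\cZ;J}(\wt{x}_J)=\mu_J(x_0).$$
Now properties~\ref{phiIUJ_it} and~\ref{UIJpos_it} of Definition~\ref{SympCut_dfn0} for the original cutting configuration~$\sC$, applied to $x_0\!\in\!U_I\!\cap\!U_J$, give respectively $\mu_J(x_0)|_{\ft_{I;\bu}}\!=\!\mu_I(x_0)$ and $(\mu_J(x_0))_i\!<\!(\mu_J(x_0))_j$ for $i\!\in\!I$, $j\!\in\!J\!-\!I$. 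Transporting these via the two displayed identities yields the required conditions for~$\sC_{\cZ}$; these are in fact the relations already recorded in~\eref{cZphiIUJ_e12}.

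The main obstacle is bookkeeping: one must check that the lifts $\wt{x}_I$ and $\wt{x}_J$ chosen above are genuinely related by~$\wt\Th_{J,I;i}$ and have the same underlying point~$x_0\!\in\!X$, which is the content of the first case of~\eref{whThiIJdfn_e}, and that the passage from~$\wt\mu_{\cZ;I}$ through $\mu_{\cZ;I}^{\circ}$ to $\mu_{\cZ;I}$ preserves pointwise values under the identification $q_{\cZ}|_{\cZ_I^{\circ}}$, which is Lemma~\ref{cZtopol_lmm3}. Once these identifications are in place, both compatibility conditions for~$\sC_{\cZ}$ reduce verbatim to the corresponding conditions for~$\sC$.
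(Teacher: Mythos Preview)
Your proposal is correct and follows essentially the same approach as the paper's proof. The paper's argument is extremely terse---it simply cites the preceding discussion for the open cover and condition~\ref{IJinter_it}, and invokes~\eref{cZphiIUJ_e12} for conditions~\ref{phiIUJ_it} and~\ref{UIJpos_it}---whereas you have unpacked in detail how~\eref{cZphiIUJ_e12} descends through the quotients via lifts $\wt{x}_I,\wt{x}_J$ related by~$\wt\Th_{J,I;i}$, which is exactly the content the paper leaves implicit.
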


\begin{proof}
As explained above, $(U_{\cZ;I})_{I\in\cP^*(N)}$ is an open cover of~$\cZ$
satisfying Definition~\ref{SympCut_dfn0}\ref{IJinter_it}.
By~\eref{cZphiIUJ_e12}, 
the moment maps~$\mu_{\cZ;I}$ satisfy~\ref{phiIUJ_it}
and~\ref{UIJpos_it} in Definition~\ref{SympCut_dfn0}.
\end{proof}

\noindent
For $I\!\in\!\cP^*(N)$, we define
$$\wt\cZ_{\prt;I}^{\circ}
=\bigcup_{\begin{subarray}{c}I_0\subset I\\ |I_0|=2\end{subarray}}
\!\!\!\wt\cZ_{I_0;I}^{\circ}\subset \wt\cZ_I^{\circ}\subset U_I\!\times\!\C^I\,.$$
If in addition $I_0\!\subset\!I$, let 
\BE{wthcZiII0Idfn_e}\begin{split}
\wt\mu_{\cZ;I_0;I}\!: \big\{\wt{x}\!\in\!\wt\cZ_I^{\circ}\!\!:
\big(\wt\mu_{\cZ;I}(\wt{x})\!\big)_{\!i}\!<\!\big(\wt\mu_{\cZ;I}(\wt{x})\!\big)_{\!j}
~&\forall\,i\!\in\!I_0,\,j\!\in\!I\!-\!I_0\big\}\lra \ft_{I_0;\bu}^*,\\
\wt\mu_{\cZ;I_0;I}(\wt{x})=&\wt\mu_{\cZ;I}(\wt{x})\big|_{\ft_{I_0;\bu}}\,.
\end{split}\EE
The commuting torus actions~\eref{wtphiIdfn_e} and~\eref{wtcZphiiIdfn_e}
induce an  $(S^1)^{I_0}_{\bu}\!\times\!(S^1)^I_{\bu}$-action on~$U_I\!\times\!\C^I$:
\BE{CombAct_e} 
\big(g_0,g\big)\cdot \wt{x}
=\wt\phi_{\cZ;I}\big(g_0;\wt\phi_I(g;\wt{x})\big).\EE
Since the $\phi_I$-action is Hamiltonian, 
the action~\eref{CombAct_e} preserves $\wt\cZ_{\prt;I}^{\circ}$ and $\wt\mu_{\cZ;I_0;I}^{\,-1}(0)$.

\begin{lmm}\label{wtcZiIreg_lmm2}
For all $I_0\!\in\!\cP^*(N)$ and $I\!\in\!\cP_{I_0}(N)$,
 the restriction of the torus action~\eref{CombAct_e} 
to $\wt\mu_{\cZ;I_0;I}^{\,-1}(0)\!-\!\wt\cZ_{\prt;I}^{\circ}$
is~free.
\end{lmm}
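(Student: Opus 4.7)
The plan is to fix $(g_0,g)\in(S^1)^{I_0}_{\bu}\!\times\!(S^1)^I_{\bu}$ and a point $\wt{x}=(x,(z_k)_{k\in I})$ in $\wt\mu_{\cZ;I_0;I}^{-1}(0)\!-\!\wt\cZ_{\prt;I}^{\circ}$ fixed by the action \eref{CombAct_e}, and to show $g=g_0=\id$. Unraveling \eref{wtphiIdfn_e} and \eref{wtcZphiiIdfn_e}, the combined action works out to
$$(g_0,g)\cdot\big(x,(z_k)\big)=\big(\phi_I(g_0 g;x),\phi_{\C^I;\bu}(g^{-1};(z_k))\big),$$
so the fixed-point condition splits into $\phi_I(g_0 g;x)=x$ on $U_I$ and $\phi_{\C^I;\bu}(g^{-1};(z_k))=(z_k)$ on $\C^I$.

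The first step is to dispose of $g$, using the assumption $\wt{x}\not\in\wt\cZ_{\prt;I}^{\circ}$. Since $\wt\cZ_{\prt;I}^{\circ}$ is the union of the subspaces $\wt\cZ_{I_0';I}^{\circ}$ with $|I_0'|\!=\!2$, which by definition require two of the coordinates $z_k$ to vanish, the assumption forces $z_k\!=\!0$ for at most one index $k\!\in\!I$. Writing $g=(\ne^{\fI\th_k})_{k\in I}$, the standard diagonal action of \eref{phimustan_e} yields $\ne^{-\fI\th_k}z_k\!=\!z_k$ for every $k\!\in\!I$, and hence $\th_k\!\in\!2\pi\Z$ whenever $z_k\!\neq\!0$. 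Combined with the relation $\sum_{k\in I}\th_k\!\in\!2\pi\Z$ defining $(S^1)^I_{\bu}$, the remaining angle (if some $z_k$ does vanish) is also forced into $2\pi\Z$, so $g=\id$.

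With $g=\id$, the $U_I$-equation becomes $\phi_I(g_0;x)=x$. From \eref{wthcZiII0Idfn_e} and the hypothesis $\wt\mu_{\cZ;I_0;I}(\wt{x})=0$, the base point $x$ automatically lies in the domain of the map $\mu_{I_0;I}$ of \eref{hdfn_e} and satisfies $\mu_{I_0;I}(x)=0$; the freeness clause in Definition~\ref{SympCut_dfn1} then yields $g_0=\id$.

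I do not anticipate a real obstacle: the argument is essentially a bookkeeping exercise with one genuine input, namely the freeness condition in Definition~\ref{SympCut_dfn1} — this is precisely the extra assumption that distinguishes a cutting configuration from a mere Hamiltonian one, so it is natural that it should appear at the final step. The only mild subtlety is handling the degenerate case in which one coordinate $z_k$ is allowed to vanish, and it is exactly the trace-free condition defining $(S^1)^I_{\bu}$ that closes the argument for $g$ in that case.
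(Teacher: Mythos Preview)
Your proof is correct and follows essentially the same approach as the paper's: first use that at most one $z_k$ vanishes together with the trace-free constraint on $(S^1)^I_{\bu}$ to force $g=\id$, then use $\wt\mu_{\cZ;I_0;I}(\wt{x})=\mu_{I_0;I}(x)$ and the freeness condition of Definition~\ref{SympCut_dfn1} to force $g_0=\id$. Your write-up is actually a bit more explicit about the trace-free step than the paper's, which simply asserts $g=\id$ ``because $z_j=0$ for at most one element $j\in I$''.
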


\begin{proof} 
Let $\wt{x}\!=\!(x,(z_j)_{j\in I})$ be an element of
 $\wt\mu_{\cZ;I_0;I}^{\,-1}(0)\!-\!\wt\cZ_{\prt;I}^{\circ}$.
If the action~\eref{CombAct_e} by some $(g_0,g)$ fixes~$\wt{x}$,
then $g\!=\!\id$ because $z_j\!=\!0$ for at most one element $j\!\in\!I$.
By~\eref{wthcZiII0Idfn_e}, \eref{wtcZhiIdfn_e}, and~\eref{hdfn_e},  
$$\wt\mu_{\cZ;I_0;I}(\wt{x})=\mu_{I_0;I}(x)\,.$$
Thus, $g_0\!=\!\id$ by Definition~\ref{SympCut_dfn1}.
This establishes the second claim.
\end{proof}

\begin{crl}\label{cZHamConf_crl}
The restriction of the $N$-fold Hamiltonian configuration~\eref{cZSympCutDfn_e} 
to $\cZ\!-\!X_{\prt}$
is an $N$-fold cutting configuration.
\end{crl}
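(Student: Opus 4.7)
The plan is to verify the cutting-configuration freeness axiom of Definition~\ref{SympCut_dfn1} for the restriction $\sC_{\cZ}|_{\cZ-X_{\prt}}$, using Lemma~\ref{wtcZiIreg_lmm2} to handle the lifted freeness upstairs and the symplectic-reduction picture of Section~\ref{SympRed_subs} to descend. By Lemma~\ref{cZHamConf_lmm}, $\sC_{\cZ}$ is already a Hamiltonian configuration, and the open subset $\cZ-X_{\prt}$ is $\sC_{\cZ}$-invariant because each $\phi_{\cZ;I}$ preserves the loci $q_{\cZ}(q_{\cZ;I}(\wt\cZ_{I_0;I}^{\circ}))$ (the action acts only on the $x$-coordinate and the vanishing of the $z_k$'s is preserved). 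So Definition~\ref{SympCut_dfn0} passes to the restriction, and only the freeness condition must be checked.

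Fix $I_0\subset I\subset[N]$ with $I_0\neq\eset$. First I would identify the relevant subsets in the chart $U_{\cZ;I}=q_{\cZ}(\cZ_I^{\circ})$. Since $\mu_{\cZ;I}^{\circ}$ descends from $\wt\mu_{\cZ;I}$ and the latter factors through $\mu_I\circ\pi_1$, the preimage $\mu_{\cZ;I_0;I}^{-1}(0)\cap U_{\cZ;I}$ is the image under $q_{\cZ}\!\circ\!q_{\cZ;I}$ of $\wt\mu_{\cZ;I_0;I}^{-1}(0)$. Next, I would identify $X_{\prt}\cap U_{\cZ;I}$ with $q_{\cZ}(q_{\cZ;I}(\wt\cZ_{\prt;I}^{\circ}))$; this uses the injectivity of $q_{\cZ}|_{\cZ_I^{\circ}}$ from Lemma~\ref{cZtopol_lmm3}, the identification $X_{I_0}\cap\cZ_I^{\circ}=\cZ_{I_0;I}^{\circ}$ whenever $I_0\subset I$, and the observation from the overlap formulas~\eref{whThiIJdfn_e} that any point in $U_{\cZ;I}$ also lying on some $X_{I_0}$ forces $I_0\subset I$ (since $U_{\cZ;I}\cap U_{\cZ;J}\neq\eset$ requires $I\subset J$ or $J\subset I$, and $I_0\subset J$ in either case forces $I_0$ into $I$ via the $z$-vanishing characterization of $\cZ_{I_0;J}^{\circ}$).

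With these identifications in hand, the freeness argument is short. Suppose $g_0\in(S^1)^{I_0}_{\bu}$ fixes a point $p\in\mu_{\cZ;I_0;I}^{-1}(0)\cap(U_{\cZ;I}-X_{\prt})$, and write $p=q_{\cZ}(q_{\cZ;I}(\wt{x}))$ for a unique $\wt{x}\in\wt\mu_{\cZ;I_0;I}^{-1}(0)-\wt\cZ_{\prt;I}^{\circ}$. Lifting the fixed-point equation through the $(S^1)^I_{\bu}$-quotient yields $\wt\phi_{\cZ;I}(g_0;\wt{x})=\wt\phi_I(g;\wt{x})$ for some $g\in(S^1)^I_{\bu}$. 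Since $\wt\phi_{\cZ;I}$ and $\wt\phi_I$ commute (both abelian, and affecting $x$ through the same commuting $\phi_I$-slots while $\wt\phi_I$ alone touches $z$), this is exactly the statement that the pair $(g_0,g^{-1})$ fixes $\wt{x}$ under the combined action~\eref{CombAct_e}. Lemma~\ref{wtcZiIreg_lmm2} then forces $g_0=\id$ (and $g=\id$), establishing freeness.

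The main subtle point, which I would be careful to spell out, is the identification $X_{\prt}\cap U_{\cZ;I}=q_{\cZ}(q_{\cZ;I}(\wt\cZ_{\prt;I}^{\circ}))$: it is tempting to dismiss this as obvious, but it genuinely requires the interaction between the overlap combinatorics of $U_{\cZ;I}\cap U_{\cZ;J}$ (Definition~\ref{SympCut_dfn0}\ref{IJinter_it}) and the $z_k$-vanishing description of the strata. Once this is in place, everything else follows by a clean transfer of Lemma~\ref{wtcZiIreg_lmm2} through the quotient, using that the two torus actions commute.
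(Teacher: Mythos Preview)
Your proposal is correct and follows essentially the same route as the paper's proof: reduce via Lemma~\ref{cZHamConf_lmm} to the freeness condition, work in the chart $U_{\cZ;I}\cong\cZ_I^{\circ}$, and lift a hypothetical $(S^1)^{I_0}_{\bu}$-stabilizer through the $(S^1)^I_{\bu}$-quotient to obtain a nontrivial stabilizer for the combined action~\eref{CombAct_e}, contradicting Lemma~\ref{wtcZiIreg_lmm2}. Your explicit verification that $X_{\prt}\cap U_{\cZ;I}=q_{\cZ}(q_{\cZ;I}(\wt\cZ_{\prt;I}^{\circ}))$ fills in a step the paper uses implicitly (in asserting that $q_{\cZ}$ carries $\Dom(\mu_{\cZ;I_0;I}^{\circ})-\cZ_{\prt;I}^{\circ}$ onto $\Dom(\mu_{\cZ;I_0;I})-X_{\prt}$); your case analysis via~\eref{IsubJ_e}--\eref{IsupJ_e} is the right way to see it, though the sentence ``$I_0\subset J$ in either case forces $I_0$ into $I$'' should be unpacked as you indicate: when $I\subset J$ the new $\C^{J-I}$ components under $\wt\Th_{J,I;i}$ are strictly positive, so $I_0$ cannot meet $J-I$.
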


\begin{proof}
In light of Lemma~\ref{cZHamConf_lmm},
it remains to show that the restriction satisfies the  conditions of 
Definition~\ref{SympCut_dfn1}.
Fix $I_0\!\in\!\cP^*(N)$ and $I\!\in\!\cP_{I_0}(N)$.
We denote by~$\mu_{\cZ;I_0;I}$ the analogue of the map~\eref{hdfn_e}
for the Hamiltonian configuration~\eref{cZSympCutDfn_e}
and by~$\phi_{\cZ;I_0;I}$ the restriction of the $(S^1)^I_{\bu}$-action~$\phi_{\cZ;I}$
to~$(S^1)^{I_0}_{\bu}$.
Let 
\begin{gather*}
\cZ_{\prt;I}^{\circ}= 
q_{\cZ;I}\big(\wt\cZ_{\prt;I}^{\circ}\big)\subset \cZ_I^{\circ}, \qquad
\phi_{\cZ;I_0;I}^{\circ}\!=\!
\phi_{\cZ;I}^{\circ}\big|_{(S^1)^{I_0}_{\bu}\times\cZ_I^{\circ}}\!:
(S^1)^{I_0}_{\bu}\!\times\!\cZ_I^{\circ}\lra \cZ_I^{\circ},\\
\mu_{\cZ;I_0;I}^{\circ}\!: \big\{x\!\in\!\cZ_I^{\circ}\!\!:
\big(\mu_{\cZ;I}^{\circ}(x)\!\big)_{\!i}\!<\!\big(\mu_{\cZ;I}^{\circ}(x)\!\big)_{\!j}
\,\forall\,i\!\in\!I_0,\,j\!\in\!I\!-\!I_0\big\}\lra \ft_{I_0;\bu}^*,~~
\mu_{\cZ;I_0;I}^{\circ}(x)=\mu_{\cZ;I}^{\circ}(x)\big|_{\ft_{I_0;\bu}}.
\end{gather*}
Thus,
$$q_{\cZ}\!\circ\!\phi_{\cZ;I_0;I}^{\circ}=
\phi_{\cZ;I_0;I}\!\circ\!\big\{\id\!\times\!q_{\cZ}\big\}
\qquad\hbox{and}\qquad
\mu_{\cZ;I_0;I}^{\circ}=\mu_{\cZ;I_0;I}\!\circ\!q_{\cZ}\,.$$
Since 
$$q_{\cZ}\!:\Dom\big(\mu_{\cZ;I_0;I}^{\circ}\big)\!-\!\cZ_{\prt;I}^{\circ}\lra
\Dom\big(\mu_{\cZ;I_0;I}\big)\!-\!X_{\prt}$$
is a diffeomorphism, it is sufficient to show~that
the restriction of the $\phi_{\cZ;I_0;I}^{\circ}$-action to 
$\mu_{\cZ;I_0;I}^{\circ~-1}(0)$ is free.
Since $\cZ_I^{\circ}$ is the quotient of~$\wt\cZ_I^{\circ}$ by the $\wt\phi_I$-action
and
$$q_{\cZ}\!\circ\!\wt\phi_{\cZ;I}\big|_{(S^1)^{I_0}_{\bu}\times\wt\cZ_I^{\circ}}
=\phi_{\cZ;I_0;I}^{\circ}\!\circ\!\big\{\id\!\times\!q_{\cZ;I}\big\},$$
this follows from Lemma~\ref{wtcZiIreg_lmm2}.
\end{proof}

\section{Proof of Theorem~\ref{SympCut_thm3}}
\label{SumCutSm_sec}

\noindent
By Corollary~\ref{cZtopol_crl},
an $N$-fold cutting configuration~$\sC$ determines 
an SC symplectic configuration~$\X(\sC)$ and a symplectic  manifold 
$(\cZ,\om_{\cZ})$ containing the SC symplectic variety~$X_{\eset}$ associated
with~$\X(\sC)$ as an SC symplectic divisor.
We show in Section~\ref{SympNBpf_subs} that~$\sC$ also 
gives rise to a one-parameter family \hbox{$\pi\!:\cZ'\!\lra\!\C$} of smoothings of~$X_{\eset}$ 
so that~$\cZ'$ is a neighborhood of~$X_{\eset}$ in~$\cZ$.
This family of smoothings satisfies the last two claims of Theorem~\ref{SympCut_thm3}.
Unlike the constructions of Sections~\ref{SympRed_subs}-\ref{cZSympCutConf_subs},
the construction of Sections~\ref{SympNBpf_subs} involves choices.
However, these choices are deformation equivalent and
the resulting one-parameter family of smoothings is well-defined up to deformations.
For a family of $N$-fold cutting configurations,
the choices involved in the construction of Sections~\ref{SympNBpf_subs} can be made systemically
on sufficiently small neighborhoods of all cutting configurations
and thus result in a continuously varying family of deformation equivalence classes of
one-parameter families of smoothings.
This establishes Theorem~\ref{SympCut_thm3}.

\subsection{Geometric preliminaries}
\label{HomotTriv_subs}

\noindent
We begin with a lemma which enables us to refine open covers as in Definition~\ref{SympCut_dfn0}.
We then establish several local statements that are patched together in Section~\ref{SympNBpf_subs}
to construct a one-parameter family of smoothings of~$X_{\eset}$.

\begin{lmm}\label{sCref_lmm}
Let $\sC$ be an $N$-fold Hamiltonian configuration for $(X,\om)$ 
as in~\eref{SympCutDfn_e}.
There exists a $\sC$-invariant open cover $(U_I')_{I\in\cP^*(N)}$ of~$X$
properly refining~$(U_I)_{I\in\cP^*(N)}$.
For every open cover $(U_I')_{I\in\cP^*(N)}$ of~$X$
properly refining~$(U_I)_{I\in\cP^*(N)}$, there exists
a $\sC$-invariant open cover $(U_I'')_{I\in\cP^*(N)}$  of~$X$
properly refining~$(U_I)_{I\in\cP^*(N)}$ and 
properly refined by~$(U_I')_{I\in\cP^*(N)}$.
\end{lmm}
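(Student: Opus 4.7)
My plan is to prove both parts by a common saturation construction, leveraging the compactness of $(S^1)^I_{\bu}$. The key preliminary observation, which I will refer to as the \emph{saturation principle}, is that for any open $W\!\subset\!U_I$ with $\ov{W}$ compact in~$U_I$, the saturation $\phi_I((S^1)^I_{\bu}\!\times\!W)$ is an open $\phi_I$-invariant subset of $U_I$ whose closure in~$X$ equals $\phi_I((S^1)^I_{\bu}\!\times\!\ov{W})$, a compact subset of~$U_I$. This is because $\phi_I$ factors as $\pi_2\!\circ\!\Phi_I$, where $\Phi_I(g,x)\!=\!(g,\phi_I(g,x))$ is a self-diffeomorphism of $(S^1)^I_{\bu}\!\times\!U_I$ and $\pi_2$ is an open, proper projection.

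For part~$(1)$, I would first apply the shrinking lemma for finite open covers of the normal space~$X$ to obtain an open cover $(V_I)_{I\in\cP^*(N)}$ of~$X$ with $\ov{V_I}\!\subset\!U_I$. Averaging a proper smooth function on $U_I$ over $(S^1)^I_{\bu}$ (which preserves properness as the group is compact) yields a $\phi_I$-invariant proper smooth function $\tilde h_I\!:\!U_I\!\to\![0,\infty)$ whose sublevel sets are $\phi_I$-invariant and compact. By paracompactness and local compactness of~$X$, I can decompose $V_I\!=\!\bigcup_\alpha V_{I,\alpha}$ as a locally finite union of precompact open sets with $\ov{V_{I,\alpha}}\!\subset\!\{\tilde h_I\!\leq\!c_\alpha\}$. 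Setting
$$U_I'\,:=\,\bigcup_\alpha\phi_I\big((S^1)^I_{\bu}\!\times\!V_{I,\alpha}\big),$$
each summand is open, $\phi_I$-invariant, and contained in the compact $\phi_I$-invariant sublevel set $\{\tilde h_I\!\leq\!c_\alpha\}\!\subset\!U_I$ by the saturation principle; combined with local finiteness, this yields $\ov{U_I'}^X\!\subset\!U_I$. Since $V_I\!\subset\!U_I'$, the cover property holds.

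For part~$(2)$, given $(U_I')$ properly refining $(U_I)$, the closed sets $\ov{U_I'}$ and $X\!\setminus\!U_I$ in~$X$ are disjoint, so normality of~$X$ yields an open $W_I$ with $\ov{U_I'}\!\subset\!W_I$ and $\ov{W_I}\!\subset\!U_I$. Applying the part~$(1)$ construction to $(W_I)$ in place of $(U_I)$, while arranging the locally finite precompact refinement to cover $\ov{U_I'}$, produces a $\phi_I$-invariant open $U_I''$ satisfying $\ov{U_I'}\!\subset\!U_I''$ and $\ov{U_I''}^X\!\subset\!U_I$; the cover property follows from $U_I''\!\supset\!U_I'$.

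The main obstacle will be controlling the $X$-closure of the saturation, since orbits of the $(S^1)^I_{\bu}$-action on $U_I$ can in principle carry points toward $\partial U_I$ even from sets whose closures sit well inside~$U_I$. The $\phi_I$-invariant proper exhaustion $\tilde h_I$ is what circumvents this: it pins each saturated piece inside a compact $\phi_I$-invariant sublevel set of~$U_I$, so the locally finite union of these sublevel-set-bounded pieces closes up cleanly inside~$U_I$.
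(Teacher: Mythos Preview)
Your route differs from the paper's: the paper runs an induction over a total order on $\cP^*(N)$ in the style of Step~1 of Munkres' shrinking lemma, at each step using normality of~$X$ to find an open $W'$ with $A\subset W'$ and $\ov{W'}\subset U_{I^*}$ (where $A=X\setminus\bigcup_{I\neq I^*}U_I'$ for the first claim and $A=\ov{U_{I^*}'}$ for the second), and then replacing the relevant set by the saturation $W''=\phi_{I^*}((S^1)^{I^*}_\bu\times W')$. The only closure fact used is $\ov{W''}=\phi_{I^*}((S^1)^{I^*}_\bu\times\ov{W'})\subset U_{I^*}$, deduced directly from compactness of $(S^1)^{I^*}_\bu$ and $\phi_{I^*}$-invariance of~$U_{I^*}$; no exhaustion function or decomposition is introduced.

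Your argument has a gap at ``combined with local finiteness, this yields $\ov{U_I'}^X\subset U_I$.'' Local finiteness of $\{V_{I,\alpha}\}$ does not give local finiteness of the saturated family $\{\phi_I((S^1)^I_\bu\times V_{I,\alpha})\}$ at points of $X\setminus U_I$, so you cannot interchange the $X$-closure with the union there. In fact $U_I'=\bigcup_\alpha\phi_I((S^1)^I_\bu\times V_{I,\alpha})=\phi_I((S^1)^I_\bu\times V_I)$ regardless of the decomposition, so the decomposition is a detour and the real question is just whether the $X$-closure of this single saturation lies in~$U_I$. Your exhaustion~$\tilde h_I$ does not settle this either: the constants~$c_\alpha$ are unbounded, and a sequence $y_n\to y\in X\setminus U_I$ with $\tilde h_I(y_n)\to\infty$ is not a contradiction since~$\tilde h_I$ is undefined at~$y$. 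You correctly identified the obstacle in your last paragraph, but the proposed machinery does not pin down the closure; the paper's one-step saturation claim (for a set $W'$ with $\ov{W'}\subset U_{I^*}$, saturating by a compact group acting on the invariant open set~$U_{I^*}$) is what is actually needed here.
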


\begin{proof}
We modify {\it Step~1} in the proof of \cite[Theorem~36.1]{Mu} to take into account 
the torus actions using the following observation.
Suppose $I\!\in\!\cP^*(N)$,
$W\!\subset\!U_I$ is an open subset such that 
\hbox{$\phi_I((S^1)^I_{\bu}\!\times\!W)\!=\!W$}, 
and $A\!\subset\!X$ is a closed subset such that $A\!\subset\!W$.
Since $X$ is normal, there exists an open subset $W'\!\subset\!X$ such~that 
$A\!\subset\!W'$ and the closure~$\ov{W'}$ of~$W'$ in~$X$ is contained in~$W$.
Since the $\phi_I$-action is continuous,
the subspace  
$$W''\equiv \phi_I\big((S^1)^I_{\bu}\!\times\!W'\big)\subset U_I$$
is open in~$U_I$ and thus in~$X$.
Since the group $(S^1)^I_{\bu}$ is compact and \hbox{$\phi_I((S^1)^I_{\bu}\!\times\!W)\!=\!W$}, 
$$\ov{W''}=\phi_I\big((S^1)^I_{\bu}\!\times\!\ov{W'}\big)\subset W\,,$$
where the closure is taken in~$X$.
Thus, $W''\!\subset\!X$ is an open subset such that 
\BE{sCref_e3}A\subset W'', \qquad  \ov{W''}\subset W, \quad\hbox{and}\quad 
\phi_I\big((S^1)^I_{\bu}\!\times\!W''\big)= W''.\EE
For the remainder of this proof, we fix a total order~$<$ on the subsets $I\!\subset\![N]$ 
so~that  $I\!<\!I^*$ whenever $|I|\!>\!|I^*|$.\\

\noindent
Suppose $I^*\!\in\!\cP^*(N)$ and $(U_I')_{I\in\cP^*(N)}$ is
a $\sC$-invariant open cover of~$X$ refining~$(U_I)_{I\in\cP^*(N)}$ such~that 
\BE{sCref_e5}  \ov{U_I'}\subset U_I \qquad\forall~I\!<\!I^*.\EE
Since $(U_I')_{I\in\cP^*(N)}$ is an open cover of~$X$, the closed subset 
$$A\equiv X - \bigcup_{\begin{subarray}{c}I\in\cP^*(N)\\ I\neq I^*\end{subarray}}
\!\!\!\!\!\!U_I'$$
is contained in $U_{I^*}'\!\subset\!U_{I^*}$. 
By the previous paragraph with $I\!=\!I^*$ and $W\!=\!U_{I^*}$, there exists 
an open subset $W''\!\subset\!X$ satisfying~\eref{sCref_e3}.
Replacing the open subset~$U_{I^*}'$ with~$W''$, we obtain 
a $\sC$-invariant open cover $(U_I')_{I\in\cP^*(N)}$ of~$X$ refining~$(U_I)_{I\in\cP^*(N)}$ 
such~that the inclusion in~\eref{sCref_e5} holds for all $I\!\le\!I^*$.
Continuing in this way, we obtain a $\sC$-invariant open cover $(U_I')_{I\in\cP^*(N)}$ of~$X$
properly refining~$(U_I)_{I\in\cP^*(N)}$.\\

\noindent
We next establish the second claim.
Suppose $I^*\!\in\!\cP^*(N)$ and $(U_I'')_{I\in\cP^*(N)}$ is
a $\sC$-invariant open cover of~$X$ refining~$(U_I)_{I\in\cP^*(N)}$ 
and properly refined by~$(U_I')_{I\in\cP^*(N)}$ such~that 
\BE{sCref_e7}  \ov{U_I''}\subset U_I \qquad\forall~I\!<\!I^*.\EE
By the observation in the first paragraph applied with $I\!=\!I^*$,
$A\!=\!\ov{U_{I^*}'}$, and $W\!=\!U_{I^*}$, there exists 
an open subset $W''\!\subset\!X$ satisfying~\eref{sCref_e3}.
Replacing the open subset~$U_{I^*}''$ with~$W''$, we obtain 
a $\sC$-invariant open cover $(U_I'')_{I\in\cP^*(N)}$ of~$X$ refining~$(U_I)_{I\in\cP^*(N)}$
and properly refined by~$(U_I')_{I\in\cP^*(N)}$
such~that the inclusion in~\eref{sCref_e7} holds for all $I\!\le\!I^*$.
Continuing in this way, we obtain a $\sC$-invariant open cover  
$(U_I'')_{I\in\cP^*(N)}$  of~$X$
properly refining~$(U_I)_{I\in\cP^*(N)}$ and 
properly refined by~$(U_I')_{I\in\cP^*(N)}$.
\end{proof}

\noindent
For a $\sC$-invariant open cover $(U_I')_{I\in\cP^*(N)}$  of~$X$
properly refining~$(U_I)_{I\in\cP^*(N)}$,  we denote~by
$$\wt\cZ_I'^{\circ}\equiv \wt\cZ_I^{\circ}\cap\big(U_I'\!\times\!\C^I\big),\quad
\cZ_I'^{\circ}\subset \cZ_I^{\circ}, \quad
\wt\cZ_{I,J}'^{\circ}\subset \wt\cZ_I'^{\circ}\!\cap\!\wt\cZ_{I,J}^{\circ} 
\subset \wt\cZ_I^{\circ},\quad
\cZ_{I,J}'^{\circ}\subset \cZ_I'^{\circ}\!\cap\!\cZ_{I,J}^{\circ} 
\subset \cZ_I^{\circ}\,,$$
the spaces as in~\eref{wtcZIdfn_e}, 
\eref{wtcZiIdfn_e}, \eref{whThiIJdfn_e0}, and~\eref{cZiIJdfn_e} 
corresponding to 
the restriction~$\sC'$ of~$\sC$ to  $(U_I')_{I\in\cP^*(N)}$ defined by~\eref{sCprdfn_e}.
For each $I\!\in\!\cP^*(N)$, the torus action~$\phi_{\cZ;I}^{\circ}$
in~\eref{Ham_e3b} preserves the subspaces $\cZ_I'^{\circ}$ and~$\cZ_{I,J}'^{\circ}$
of~$\cZ_I^{\circ}$.

\begin{lmm}\label{fIcoll_lmm}
Let $(U_I')_{I\in\cP^*(N)}$ be a $\sC$-invariant open cover of~$X$
properly refining~$(U_I)_{I\in\cP^*(N)}$.
There exists a tuple $(f_I\!:\cZ_I'^{\circ}\!\lra\!\R^+)_{I\in\cP^*(N)}$
of smooth functions such~that each~$f_I$ is $\phi_{\cZ;I}^{\circ}$-invariant, 
\BE{fIcoll_e}\begin{split} 
f_I\big([x,(z_j)_{j\in I}]\big)&=
f_J\big(\Th_{J,I}\big([x,(z_j)_{j\in I}]\big)\!\big) \!\!\!\!
\prod_{j\in J-I}\!\!\!\!\!\sqrt{2(\mu_J(x))_{ij}\!+\!|z_i|^2}\\
&\hspace{1.5in}~\forall\,
\big[x,(z_j)_{j\in I}\big]\!\in\!\cZ_{I,J}'^{\circ},\,i\!\in\!I\!\subset\!J\!\subset\![N],
\end{split}\EE
with $(\mu_J(x))_{ij}$ as in~\eref{hijdfn_e}.
\end{lmm}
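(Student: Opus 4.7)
The plan is by downward induction on $|I|$, starting from the base case $I\!=\![N]$, where~\eref{fIcoll_e} is vacuous and I may take $f_{[N]}\!\equiv\!1$ on $\cZ_{[N]}'^{\,\circ}$. The inductive step for a given $I$ with $|I|\!<\!N$ has two stages: first prescribe $\wh f_I$ on the open subset $A_I\!\equiv\!\bigcup_{J\supsetneq I}\cZ_{I,J}'^{\,\circ}$ of $\cZ_I'^{\,\circ}$ by the right-hand side of~\eref{fIcoll_e} using the already constructed $f_J$ on $\cZ_J'^{\,\circ}$; then extend $\wh f_I$ to a smooth positive $\phi_{\cZ;I}^{\circ}$-invariant function on all of $\cZ_I'^{\,\circ}$.

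Three consistency checks justify the prescription. The factor $\prod_{j\in J-I}\!\sqrt{2(\mu_J(x))_{ij}\!+\!|z_i|^2}$ is independent of the choice of $i\!\in\!I$ because $(\mu_I(x))_i\!-\!\tfrac12|z_i|^2$ is constant over $i\!\in\!I$ on $\wt\cZ_I^{\circ}$ and $\mu_J|_{\ft_{I;\bu}}\!=\!\mu_I$ by Definition~\ref{SympCut_dfn0}\ref{phiIUJ_it}; each factor is moreover strictly positive by~\eref{IsubJ_e}. Independence of the choice of $J\!\supsetneq\!I$ on overlaps follows from Definition~\ref{SympCut_dfn0}\ref{IJinter_it} (which forces, WLOG, $J_1\!\subsetneq\!J_2$), the inductive compatibility between $f_{J_1}$ and $f_{J_2}$, the cocycle identity $\Th_{J_2,I}\!=\!\Th_{J_2,J_1}\!\circ\!\Th_{J_1,I}$ from~\eref{ThCoc_e}, and the equality $(\mu_{J_1}(x))_{ij}\!=\!(\mu_{J_2}(x))_{ij}$ for $i,j\!\in\!J_1$, which together merge the $J_1$-based product with the $J_2\!-\!J_1$-based product at the image point into the single product over $J_2\!-\!I$. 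Finally, $\phi_{\cZ;I}^{\circ}$-invariance holds because $\phi_I$ agrees with the restriction of $\phi_J$ to $(S^1)^I_{\bu}$ on $U_I\!\cap\!U_J$ (same moment map by~\ref{phiIUJ_it}, hence same characteristic vector fields), making $\mu_J$ and $|z_i|^2$ invariant under $\phi_I$, while $\Th_{J,I}$ is equivariant under the inclusion $(S^1)^I_{\bu}\!\hookrightarrow\!(S^1)^J_{\bu}$ induced by~\eref{S1IJhomom_e} and $f_J$ is $(S^1)^J_{\bu}$-invariant.

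The main obstacle is the extension: the prescribed values on $A_I$ need not admit a smooth extension to $\cZ_I'^{\,\circ}$ while remaining equal to $\wh f_I$ throughout $A_I$. To create room, apply Lemma~\ref{sCref_lmm} to interpose a $\sC$-invariant cover $(U_I'')_{I\in\cP^*(N)}$ with $\ov{U_I'}\!\subset\!U_I''$ and $\ov{U_I''}\!\subset\!U_I$, and run the same inductive construction for this slightly coarser cover---the identical formulas yield a prescribed $\wh f_I$ on the strictly larger open subset $\wh A_I$ of $\cZ_I'^{\,\circ}$ consisting of all $[x,(z_j)_{j\in I}]$ with $x\!\in\!U_I'\!\cap\!U_J''$ for some $J\!\supsetneq\!I$, and because $\ov{U_J'}\!\subset\!U_J''$ the closure of $A_I$ in $\cZ_I'^{\,\circ}$ is contained in $\wh A_I$. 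Construct a smooth $\phi_{\cZ;I}^{\circ}$-invariant cutoff $\chi_I\!:\cZ_I'^{\,\circ}\!\lra\![0,1]$ with $\chi_I\!\equiv\!1$ on the closure of $A_I$ in $\cZ_I'^{\,\circ}$ and $\tn{supp}\,\chi_I\!\subset\!\wh A_I$, using standard partitions of unity on the manifold $\cZ_I'^{\,\circ}$ followed by averaging over the compact torus $(S^1)^I_{\bu}$, and set $f_I\!=\!\chi_I\wh f_I\!+\!(1\!-\!\chi_I)$, interpreting $\chi_I\wh f_I$ as $0$ outside $\tn{supp}\,\chi_I$. The result is smooth, positive, and $\phi_{\cZ;I}^{\circ}$-invariant on $\cZ_I'^{\,\circ}$, and agrees with $\wh f_I$ throughout $A_I$, so~\eref{fIcoll_e} holds on every $\cZ_{I,J}'^{\,\circ}$. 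The nested refinement provided by Lemma~\ref{sCref_lmm} is precisely what makes this cutoff interpolation possible.
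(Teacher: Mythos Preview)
Your argument follows the same route as the paper: downward induction on~$|I|$, prescription on the overlap region by the compatibility formula, the three consistency checks (independence of~$i$, of~$J$, and $(S^1)^I_{\bu}$-invariance---all handled correctly), then extension via an interposed refinement and a torus-averaged cutoff.

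The one gap is in the extension step. Defining $\wh f_I$ on the enlarged set~$\wh A_I$ requires each $f_J$ with $J\!\supsetneq\!I$ to already live on $\cZ_J''^{\circ}$, but your inductive hypothesis only supplies $f_J$ on $\cZ_J'^{\circ}$. The phrase ``run the same inductive construction for this slightly coarser cover'' does not close this: producing the $f_J$ on $\cZ_J''^{\circ}$ is the same lemma with $(U_I')$ replaced by $(U_I'')$, whose own extension step would demand a still coarser cover, and so on. The paper resolves this regress by shrinking the working cover at each inductive step rather than fixing one intermediate cover for the whole argument: it carries a cover $(U_I'')$ strictly between $(U_I')$ and $(U_I)$, builds $f_{I^*}$ on all of $\cZ_{I^*}''^{\circ}$ while requiring the prescribed match only over a new strictly finer intermediate $(U_I''')$, and then passes to $(U_I''')$ for the next step. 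After the finitely many steps the surviving cover still contains $(U_I')$, and restriction yields the tuple. Once you build this shrinking sequence of covers into your induction (or, equivalently, fix at the outset a finite tower of nested $\sC$-invariant refinements between $(U_I')$ and $(U_I)$ and index your construction by it), your proof is complete.
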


\begin{proof} 
Choose a total order~$<$ on the subsets $I\!\subset\![N]$ 
so~that  $I\!<\!I^*$ whenever $|I|\!>\!|I^*|$.
Suppose $I^*\!\in\cP^*(N)$ and we have constructed
\begin{enumerate}[label=$\bu$,leftmargin=*]

\item a $\sC$-invariant open cover $(U_I'')_{I\in\cP^*(N)}$
 of~$X$ refining~$(U_I)_{I\in\cP^*(N)}$ and properly refined by 
the open cover $(U_I')_{I\in\cP^*(N)}$,

\item smooth functions $f_I\!:\cZ_I''^{\circ}\!\lra\!\R^+$
for all  $I\!<\!I^*$ such that
each $f_I$ is $\phi_{\cZ;I}^{\circ}$-invariant
and the equality in~\eref{fIcoll_e}
holds for all $[x,(z_j)_{j\in I}]\!\in\!\cZ_{I,J}''^{\circ}$ and
$i\!\in\!I\!\subset\!J$ with $I\!<\!I^*$.\\

\end{enumerate}

\noindent
By Lemma~\ref{sCref_lmm}, there exists
a $\sC$-invariant open cover $(U_I''')_{I\in\cP^*(N)}$   of~$X$
properly refining~$(U_I'')_{I\in\cP^*(N)}$ and 
properly refined by~$(U_I')_{I\in\cP^*(N)}$.
Let 
$$W''=\bigcup_{I^*\subsetneq J\subset[N]}\!\!\!\!\!\!\!\cZ_{I^*,J}''^{\circ},
\qquad 
W'''=\bigcup_{I^*\subsetneq J\subset[N]}\!\!\!\!\!\!\!\cZ_{I^*,J}'''^{\circ}.$$
We define $f\!: W''\lra\R^+$ by choosing $i\!\in\!I^*$ and setting
\BE{fdfn_e}\begin{split}
f\big([x,(z_j)_{j\in I^*}]\big)&=
f_J\big(\Th_{J,I^*}\big([x,(z_j)_{j\in I^*}]\big)\!\big) \!\!\!\!
\prod_{j\in J-I^*}\!\!\!\!\!\sqrt{2(\mu_J(x))_{ij}\!+\!|z_i|^2}\\
&\hspace{1.5in}~\forall\,
\big[x,(z_j)_{j\in I^*}\big]\!\in\!\cZ_{I^*,J}''^{\circ},\,
I^*\!\subsetneq\!J\!\subset\![N].
\end{split}\EE
By~\eref{wtcZIdfn_e} and Definition~\ref{SympCut_dfn0}\ref{UIJpos_it},
$f$ is well-defined on each $\cZ_{I^*,J}''^{\circ}$
and is independent of the choice of $i\!\in\!I^*$.
By~\ref{IJinter_it} and~\ref{phiIUJ_it} in Definition~\ref{SympCut_dfn0}, \eref{ThCoc_e},
and the last inductive assumption, $f$ is well-defined on the overlaps.
Since each map $\Th_{J,I^*}$ is 
$(\phi_{\cZ;J}^{\circ},\phi_{\cZ;I^*}^{\circ})$-equivariant
with respect to the homomorphism~\eref{S1IJhomom_e} with $I\!=\!I^*$ 
and  each map $f_J$ is $\phi_{\cZ;J}^{\circ}$-invariant,
the map~$f$ is $\phi_{\cZ;I^*}^{\circ}$-invariant.\\  

\noindent
Since  $\ov{U_I'''}\!\subset\!U_I''$ for all $I\!\in\!\cP^*(N)$,
the closure of $\wt\cZ_{I^*,J}'''^{\circ}$ in~$\wt\cZ_{I^*}''^{\circ}$ is contained
in $\wt\cZ_{I^*,J}''^{\circ}$ for all $J\!\in\!\cP_{I^*}(N)$.
Thus, the closure $\ov{W'''}$ of~$W'''$ in~$\cZ_{I^*}''^{\circ}$ is contained in~$W''$.
Therefore, there exists a smooth~function 
\BE{fIcoll_e5}f_{I^*}\!:\cZ_{I^*}''^{\circ}\lra\R^+ 
\qquad\hbox{s.t.}\quad f_{I^*}\big|_{W'''}=f\,.\EE
Since the group $(S^1)^{I^*}_{\bu}$ is compact, we can make $f_{I^*}$ 
$\phi_{\cZ;I^*}^{\circ}$-invariant by averaging it over the group action.
Since $f$ is $\phi_{\cZ;I^*}^{\circ}$-invariant, this does not change~$f_{I^*}$  
over~$W'''$ and so~\eref{fIcoll_e5} still holds.
By~\eref{fIcoll_e5} and~\eref{fdfn_e}, the equality in~\eref{fIcoll_e} with $I\!=\!I^*$ holds
for all $[x,(z_j)_{j\in I}]\!\in\!\cZ_{I^*,J}'''^{\circ}$ and
$J\!\in\!\cP_{I^*}(N)$.\\

\noindent
The open cover $(U_I''')_{I\in\cP^*(N)}$ of~$X$ and the tuple
$(f_I)_{I\le I^*}$ of smooth functions
satisfy the inductive assumptions of the bullet points 
with $U_I''$ replaced by $U_I'''$ for all $I\!\le\!I^*$.
Continuing in this way, we obtain a $\sC$-invariant open cover  
$(U_I'')_{I\in\cP^*(N)}$  of~$X$
 refining~$(U_I)_{I\in\cP^*(N)}$ and properly refined by~$(U_I')_{I\in\cP^*(N)}$
and a tuple $(f_I\!:\cZ_I''^{\circ}\!\lra\!\R^+)_{I\in\cP^*(N)}$ 
of smooth functions so that each $f_I$ is $\phi_{\cZ;I}^{\circ}$-invariant 
and the equality in~\eref{fIcoll_e}
holds for all $[x,(z_j)_{j\in I}]\!\in\!\cZ_{I,J}''^{\circ}$ and
$i\!\in\!I\!\subset\!J\!\subset\![N]$.
Restricting each $f_I$ to~$\cZ_I'^{\circ}$, we obtain 
a desired tuple of functions.
\end{proof}

\noindent
For $I\!\in\!\cP^*(N)$, we define
$$\wt\cZ_{0;I}^{\circ}
=\bigcup_{\eset\neq I_0\subset I}
\hspace{-.12in}\wt\cZ_{I_0;I}^{\circ}\subset \wt\cZ_I^{\circ},\qquad
\wt\cZ_{\prt;I}^{\circ}
=\bigcup_{\begin{subarray}{c}I_0\subset I\\ |I_0|=2\end{subarray}}
\!\!\!\wt\cZ_{I_0;I}^{\circ}\subset \wt\cZ_{0;I}^{\circ}\,.$$

\begin{lmm}\label{fIsub_lmm}
Suppose $I\!\in\!\cP^*(N)$, $\wt{f}_I\!:\wt\cZ_I^{\circ}\!\lra\!\R^+$
is a smooth function, and
\BE{wtpicZIdfn_e}\wt\pi_{\cZ;I}\!: \wt\cZ_I^{\circ}\lra\C, \qquad
\wt\pi_{\cZ;I}\big(x,(z_j)_{j\in I}\big)=\wt{f}_I(x,(z_j)_{j\in I}\big)\prod_{j\in I}\!z_j\,.\EE
Then there exists an  open subset $\wt{W}_I\!\subset\!\wt\cZ_I^{\circ}$ 
containing $\wt\cZ_{0;I}^{\circ}$ such~that the homomorphism
\BE{fIsub_e2a}\nd_{\wt{x}}\wt\pi_{\cZ;I}\!:
\ker\nd_{\wt{x}}\pi_1\big|_{T_{\wt{x}} \wt\cZ_I^{\circ}}\lra \C,\EE
where  $\pi_1\!:U_I\!\times\!\C^I\!\lra\!U_I$ is the projection,
is surjective for all $\wt{x}\!\in\!\wt{W}_I\!-\!\wt\cZ_{\prt;I}^{\circ}$.
\end{lmm}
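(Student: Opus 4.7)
The plan is to verify surjectivity of~\eref{fIsub_e2a} pointwise on $\wt\cZ_{0;I}^{\circ}\!-\!\wt\cZ_{\prt;I}^{\circ}$, extend this to an open neighborhood~$V$ of that stratum via lower semi-continuity of rank, and then cover $\wt\cZ_{\prt;I}^{\circ}$ by local neighborhoods on which surjectivity persists off of~$\wt\cZ_{\prt;I}^{\circ}$. First I would identify $\ker\nd_{\wt x}\pi_1\cap T_{\wt x}\wt\cZ_I^{\circ}$ as the subspace of $\{0\}\!\times\!\C^I$ cut out by the real-linear relations $\tn{Re}(\bar z_i\dot z_i)\!=\!\tn{Re}(\bar z_k\dot z_k)$ for $i,k\!\in\!I$, obtained by differentiating the defining equations of~$\wt\cZ_I^{\circ}$ in~\eref{wtcZIdfn_e}. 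Writing $\wt x\!=\!(x,(z_j)_{j\in I})$, the differential on this subspace is
$$\nd_{\wt x}\wt\pi_{\cZ;I}(\dot z)=\sum_{i\in I}\wt f_I(\wt x)\Big(\!\!\prod_{j\in I-i}\!\!\!\!z_j\Big)\dot z_i+\Big(\prod_{j\in I}\!z_j\Big)(\nd_z\wt f_I)(\dot z).$$

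At any point of $\wt\cZ_{0;I}^{\circ}\!-\!\wt\cZ_{\prt;I}^{\circ}$ exactly one coordinate~$z_{i_0}$ vanishes: the second term above dies, the first collapses to $\wt f_I(\wt x)(\prod_{j\neq i_0}\!z_j)\,\dot z_{i_0}$ with nonzero coefficient, the tangent-space relations impose nothing on~$\dot z_{i_0}$, and taking $\dot z_j\!=\!0$ for $j\!\neq\!i_0$ satisfies the remaining ones. Hence $\dot z_{i_0}\!\in\!\C$ is free and the image is all of~$\C$. By lower semi-continuity of the rank of a smooth bundle map, the subset $V\!\subset\!\wt\cZ_I^{\circ}$ on which~\eref{fIsub_e2a} is surjective is open and contains $\wt\cZ_{0;I}^{\circ}\!-\!\wt\cZ_{\prt;I}^{\circ}$.

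The hard part is that $\nd_{\wt x}\wt\pi_{\cZ;I}$ vanishes on~$\wt\cZ_{\prt;I}^{\circ}$, so to build an open neighborhood of all of $\wt\cZ_{0;I}^{\circ}$ I would show that every $p\!=\!(x_0,z^0)\!\in\!\wt\cZ_{\prt;I}^{\circ}$ admits an open neighborhood $N_p$ in $\wt\cZ_I^{\circ}$ with $N_p\!-\!\wt\cZ_{\prt;I}^{\circ}\!\subset\!V$; setting $\wt W_I\!=\!V\!\cup\!\bigcup_p\!N_p$ then completes the proof. Let $I_0\!=\!\{i\!\in\!I\!:z_i^0\!=\!0\}$, of cardinality at least~$2$. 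For $(x,z)$ close to~$p$ outside $\wt\cZ_{\prt;I}^{\circ}$ the only case not already absorbed by~$V$ is when all $z_j$ are nonzero with $z_j$ small for $j\!\in\!I_0$. I would test the differential on two tangent vectors: $\dot z^{(i_0)}$ with $\dot z^{(i_0)}_{i_0}\!=\!\fI z_{i_0}$ for a fixed $i_0\!\in\!I_0$ and all other components zero, and $\dot z^{(I_0)}$ with $\dot z^{(I_0)}_j\!=\!z_j/|z_j|^2$ for every $j\!\in\!I$, both of which satisfy the tangent-space relations. Direct substitution would give
$$\nd_{\wt x}\wt\pi_{\cZ;I}(\dot z^{(i_0)})=\Big(\!\prod_{j\in I}\!z_j\!\Big)\big(\fI\wt f_I+O(|z_{i_0}|)\big),\quad \nd_{\wt x}\wt\pi_{\cZ;I}(\dot z^{(I_0)})=\Big(\!\prod_{j\in I}\!z_j\!\Big)\Big(\wt f_I\!\sum_{i\in I}\!\frac1{|z_i|^2}+O(|z_{\min}|^{-1})\Big),$$
with $|z_{\min}|\!=\!\min_{i\in I_0}\!|z_i|$. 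The main obstacle is calibrating the scales correctly: because $\|\dot z^{(I_0)}\|\!\sim\!|z_{\min}|^{-1}$, the $\nd\wt f_I$-correction in the second expression is only of order $|z_{\min}|^{-1}$, strictly smaller than the principal $|z_{\min}|^{-2}$ term. Thus for $(x,z)$ sufficiently close to~$p$ the two principal parts -- the imaginary constant $\fI\wt f_I$ and a large positive real number -- are $\R$-independent, so their images $\R$-span $\big(\!\prod_{j\in I}\!z_j\big)\!\cdot\!\C\!=\!\C$, which supplies the desired~$N_p$.
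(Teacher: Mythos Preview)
Your proof is correct and follows essentially the same route as the paper: your two test vectors $\dot z^{(i_0)}$ and $\dot z^{(I_0)}$ span exactly the two-dimensional subspace~$V_{\wt x}$ (the angular direction in one $\C$-factor together with the one-dimensional radial slice of $\ker\nd_{\wt x}\pi_1\!\cap\!T_{\wt x}\wt\cZ_I^{\circ}$) on which the paper shows $\nd_{\wt x}\wt\pi_{\cZ;I}$ is an isomorphism, and your principal-versus-correction estimate is the same comparison the paper makes. The only organizational difference is that the paper packages the outcome as a single explicit inequality defining~$\wt W_I$, while you assemble~$\wt W_I$ from the open surjectivity locus~$V$ together with local patches~$N_p$ around $\wt\cZ_{\prt;I}^{\circ}$.
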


\begin{proof}
Choose $i\!\in\!I$ and define 
$$h_{i;I}\!:\C^I\lra\R^{I-i}, \qquad
h_{i;I}\big((z_j)_{j\in I}\big)=
\frac12\big(|z_j|^2\!-\!|z_i|^2\big)_{\!\!j\in I-i}\,.$$
Thus, 
\BE{fIsub_e4}\ker\nd_{\wt{x}}\pi_1\big|_{T_{\wt{x}}\wt\cZ_I^{\circ}}=\ker\nd_zh_{i;I}
\subset\C^I \subset T_xX\oplus T_z\C^I=T_{\wt{x}}\big(U_I\!\times\!\C^I\big)
\quad\forall\,\wt{x}\!\equiv\!(x,z)\!\in\!\wt\cZ_I^{\circ}.\EE
Let $\wt{x}\!=\!(x,z)$ be an element of
 $\wt\cZ_I^{\circ}\!-\!\wt\cZ_{\prt;I}^{\circ}$ and $z\!=\!(z_j)_{j\in I}$. 
Since $\wt{x}\!\not\in\!\wt\cZ_{\prt;I}^{\circ}$,
$z_j\!=\!0$ for at most one element $j\!\in\!I$.\\

\noindent
Suppose $z_{j^*}\!=\!0$ for some $j^*\!\in\!I$.
By~\eref{fIsub_e4},
$\ker\nd_{\wt{x}}\pi_1|_{T_{\wt{x}}\wt\cZ_I^{\circ}}$ then contains
the component $\C^{\{j^*\}}\!\subset\!\C^I$~and 
$$\nd_{\wt{x}}\wt\pi_{\cZ;I}\!=
\bigg(\wt{f}_I(\wt{x}\big)\!\!\!\!\!\prod_{j\in I-j^*}\!\!\!\!\!z_j\!\bigg)\Id: 
\C^{\{j^*\}}\lra\C\,.$$
This homomorphism is surjective if $z_j\!\neq\!0$ for all $j\!\in\!I\!-\!j^*$.\\

\noindent
From now on, we assume that $z_j\!\neq\!0$ for all $j\!\in\!I$.
Denote~by 
$$\R^I\subset \C^I\subset T_{\wt{x}}\big(U_I\!\times\!\C^I)$$
the radial tangent directions. % and by $\fI\R^I\!\subset\!\C^I$ the angular tangent directions.
Let
$$\R^I_{\wt{x}} = 
\big(\!\ker\nd_{\wt{x}}\pi_1\big|_{T_{\wt{x}}\wt\cZ_I^{\circ}}\big) \cap\R^I$$
and $V_{\wt{x}}\!\subset\!T_{\wt{x}}\wt\cZ_I^{\circ}$ be the span of $\R^I_{\wt{x}}$
and of the angular tangent space in the $i$-th $\C$-factor $\C^{\{i\}}\!\subset\!\C$.
We will show that the homomorphism
$$\nd_{\wt{x}}\wt\pi_{\cZ;I}\!: V_{\wt{x}}\lra \C$$
is an isomorphism if $\wt{x}$ lies in a sufficiently small 
neighborhood  $\wt{W}_I\!\subset\!\wt\cZ_I^{\circ}$ 
of~$\wt\cZ_{0;I}^{\circ}$.\\

\noindent
Define
\begin{alignat*}{3}
\pi_{\C^I}\!: \C^I&\lra\C, &\qquad 
\pi_{\C^I}^{\R}\!: \C^I&\lra\R, &\qquad  
H_{i;I}\!: \C^I&\lra\R^I\!\equiv\!\R^{I-i}\!\times\!\R,\\
\pi_{\C^I}\big((z_j)_{j\in I}\big)&=\prod_{j\in I}\!z_j, &\qquad  
\pi_{\C^I}^{\R}\big((z_j)_{j\in I}\big)&=\prod_{j\in I}\!|z_j|, &\qquad  
H_{i;I}(z)&=\big(h_{i;I}(z),\pi_{\C}^{\R}(z)\!\big).
\end{alignat*}
The determinant of the restriction of $\nd_zH_{i;I}$ 
to the radial tangent directions \hbox{$\R^I\!\subset\!T_z\C^I$} is given~by 
$$\det\big(\nd_zH_{i;I}\!:\R^I\!\lra\!\R^I\big)
=\!\sum_{j\in I}\prod_{k\in I-j}\!\!\!\!|z_k|^2\,.$$
This implies that there exists a universal constant~$\de_I$ (dependent only on~$|I|$)
such~that
$$\big|\nd_z\pi_{\C^I}^{\R}\!:\R_{\wt{x}}^I\!\lra\!\R\big|
\ge \de_I\sum_{j\in I}\prod_{k\in I-j}\!\!\!\!|z_k|$$
with respect to the standard norms on $\R_{\wt{x}}^I\!\subset\!\C^I$ and~$\R\!\subset\!\C$.
Since the differential of the angular component of $\pi_{\C^I}$
along the angular direction in~$V_{\wt{x}}$ satisfies the same bound,
it follows that the homomorphism
$$\nd_{\wt{x}}\pi_{\C^I}\!+\!D\!:  V_{\wt{x}}\lra\C$$
is an isomorphism whenever $D\!:V_{\wt{x}}\!\lra\!\C$ is a homomorphism
such that 
$$\|D\|<\de_I \sum_{j\in I}\prod_{k\in I-j}\!\!\!\!|z_k|.$$\\

\noindent
By the definition of~$\wt\pi_{\cZ;I}$,
$$\nd_{\wt{x}}\wt\pi_{\cZ;I}=\wt{f}_I(\wt{x})\,\nd_z\pi_{\C^I}
+\bigg(\prod_{k\in I}\!z_k\bigg)\nd_{\wt{x}}\wt{f}_I.$$
Thus, the restriction of $\nd_{\wt{x}}\wt\pi_{\cZ;I}$ to $V_{\wt{x}}$
is an isomorphism~if
$$ \frac{\|\nd_{\wt{x}}\wt{f}_I\!:V_{\wt{x}}\!\lra\!\C\|}{\wt{f}_I(\wt{x})}
\big|z_j\big|<\de_I \qquad\hbox{for some}~~j\!\in\!I.$$
This specifies an open subset $\wt{W}_I\!\subset\!\wt\cZ_I^{\circ}$  
containing~$\wt\cZ_{0;I}^{\circ}$ so that the homomorphism~\eref{fIsub_e2a} is 
surjective on $\wt{W}_I\!-\!\wt\cZ_{\prt;I}^{\circ}$.
\end{proof}

\noindent
For $I\!\in\!\cP^*(N)$, we define
\BE{wtcZiIRdfn_e}\wt\cZ_{I;i}^>=\big\{\big(x,(z_j)_{j\in I}\big)\!\in\!\wt\cZ_I^{\circ}\!:\,
z_i\!\in\!\C^*,\,z_j\!\in\!\R^+\,\forall\,j\!\in\!I\!-\!i\big\}
\subset U_I\!\times\!\C^I.\EE
By~\eref{wtphiIdfn_e}, $\wt\cZ_{I;i}^>$ is a slice for the $\wt\phi_I$-action
on~$\wt\cZ_I^{\circ}\!-\!\wt\cZ_{0;I}^{\circ}$.
By Lemma~\ref{wtcZreg_lmm}, 
$\wt\cZ_{I;i}^>$ is a smooth submanifold of $U_I\!\times\!\C^I$
of $\dim_{\R}\!X\!+\!2$.

\begin{lmm}\label{fIgraph_lmm}
Let $I$,  $\wt{f}_I$, and $\wt\pi_{\cZ;I}$ be as in Lemma~\ref{fIsub_lmm}.
Then there exists an
open subset $\wt{W}_I\!\subset\!\wt\cZ_I^{\circ}$ 
containing $\wt\cZ_{0;I}^{\circ}$ such that
for every $i\!\in\!I$ the restriction of the smooth~map
\BE{fIgraph_e}
\pi_1\!\times\!\wt\pi_{\cZ;I}\!: \wt\cZ_{I;i}^>\lra U_I\!\times\!\C\EE
to $\wt\cZ_{I;i}^>\!\cap\!\wt{W}_I$ 
is a diffeomorphism onto the intersection of $U_I\!\times\!\C^*$
with an open neighborhood of~$U_I\!\times\!0$ in~$U_I\!\times\!\C$.
\end{lmm}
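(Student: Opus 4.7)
The plan is to reduce the lemma to an inverse-function-theorem argument by parametrizing the slice $\wt\cZ_{I;i}^>$ explicitly and combining the result with Lemma~\ref{fIsub_lmm}.\\

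\noindent
First, I would exploit the explicit form of the slice.  Solving the moment map equation $\wt\mu_I\!=\!0$ subject to the constraint $z_j\!\in\!\R^+$ for $j\!\in\!I\!-\!i$ yields $z_j\!=\!\sqrt{2(\mu_I(x))_{ij}\!+\!|z_i|^2}$, so $\pi_1\!\times\!\pi_i$ identifies $\wt\cZ_{I;i}^>$ with the open subset
$$\tilde\Omega_i \equiv \big\{(x,w)\!\in\!U_I\!\times\!\C^*\!:\,
2(\mu_I(x))_{ij}\!+\!|w|^2\!>\!0~\forall\,j\!\in\!I\!-\!i\big\}.$$
Under this identification, $\pi_1\!\times\!\wt\pi_{\cZ;I}$ takes the form $(x,w)\!\mapsto\!\big(x,F_i(x,w)w\big)$, where
$$F_i(x,w) \equiv \wt{f}_I\big(x,w,(z_j(x,w))_{j\in I-i}\big)\!\!\!\!\prod_{j\in I-i}\!\!\!\sqrt{2(\mu_I(x))_{ij}\!+\!|w|^2}$$
is a strictly positive smooth function that extends smoothly and positively across $w\!=\!0$ over the open subset $U_{\{i\};I}^<\!\equiv\!\{x\!\in\!U_I\!:\,(\mu_I(x))_{ij}\!>\!0~\forall\,j\!\in\!I\!-\!i\}$.\\

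\noindent
Second, I would observe that the two-dimensional subspace $V_{\wt x}$ constructed in the proof of Lemma~\ref{fIsub_lmm}, spanned by $\R^I_{\wt x}$ and by the angular direction in $\C^{\{i\}}$, coincides with $\ker\nd_{\wt x}\pi_1\!\cap\!T_{\wt x}\wt\cZ_{I;i}^>$ (a dimension count using the slice constraints $\dot z_j\!\in\!\R$ for $j\!\neq\!i$ confirms this).  Since Lemma~\ref{fIsub_lmm} asserts that $\nd_{\wt x}\wt\pi_{\cZ;I}|_{V_{\wt x}}\!:\!V_{\wt x}\!\to\!\C$ is a bijection on $\wt W_I\!-\!\wt\cZ_{\prt;I}^{\circ}$ (which contains all of $\wt W_I\!\cap\!\wt\cZ_{I;i}^>$, as the slice meets none of $\wt\cZ_{\prt;I}^{\circ}$), the restriction of $\pi_1\!\times\!\wt\pi_{\cZ;I}$ to $\wt\cZ_{I;i}^>\!\cap\!\wt W_I$ has bijective differential and hence is a local diffeomorphism.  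Equivalently, the differential of $(x,w)\!\mapsto\!(x,F_i(x,w)w)$ is invertible on the image of $\wt\cZ_{I;i}^>\!\cap\!\wt W_I$ in $\tilde\Omega_i$; the same holds across $w\!=\!0$ because the $\C$-component of the differential at $(x_0,0)$ is multiplication by $F_i(x_0,0)\!>\!0$.\\

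\noindent
Third, I would promote the local diffeomorphism to an honest diffeomorphism onto the asserted image by applying the inverse function theorem at each $(x_0,0)\!\in\!U_{\{i\};I}^<\!\times\!\{0\}$.  This yields an open neighborhood of $(x_0,0)$ in $U_I\!\times\!\C$ on which the extended map is a diffeomorphism, and taking the union over $x_0$ and pulling back to the slice gives an open subset $\wt W_I^{(i)}\!\subset\!\wt\cZ_I^{\circ}$ containing $\wt\cZ_{\{i\};I}^{\circ}$ on which the assertion of the lemma holds for the particular $i$, with image an open neighborhood of $U_{\{i\};I}^<\!\times\!\{0\}$ intersected with $U_I\!\times\!\C^*$.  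I would then define $\wt W_I$ to be the intersection of the $\wt\phi_I$-saturations of the $\wt W_I^{(i)}$ with the neighborhood supplied by Lemma~\ref{fIsub_lmm}, using that each stratum $\wt\cZ_{I_0;I}^{\circ}\!\subset\!\wt\cZ_{0;I}^{\circ}$ lies in the $\wt\phi_I$-orbit of a point in the closure of $\wt\cZ_{I;i}^>$ for every $i\!\in\!I_0$.  The main technical obstacle is arranging that this single $\wt W_I$ is a common open neighborhood of all of $\wt\cZ_{0;I}^{\circ}$ while simultaneously validating the diffeomorphism assertion for every $i\!\in\!I$; this is handled by the stratified cover of $\wt\cZ_{0;I}^{\circ}$ by the $\wt\phi_I$-saturations of the closures of the slices together with a routine shrinking of the individual $\wt W_I^{(i)}$ to arrange compatibility.
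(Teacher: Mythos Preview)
Your first two steps are fine and match the paper: the identification $V_{\wt x}=\ker\nd_{\wt x}\pi_1\cap T_{\wt x}\wt\cZ_{I;i}^>$ is correct, and together with Lemma~\ref{fIsub_lmm} this gives that \eref{fIgraph_e} is a local diffeomorphism on $\wt\cZ_{I;i}^>\cap\wt W_I$.

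The gap is in your third step. Your extension of $F_i$ across $w\!=\!0$ and the IFT argument at $(x_0,0)$ only apply when $x_0\!\in\!U_{\{i\};I}^<$, i.e.\ when $(\mu_I(x_0))_{ij}\!>\!0$ for all $j\!\in\!I\!-\!i$. For $x_0\!\in\!U_I\!\setminus\!U_{\{i\};I}^<$ the point $(x_0,0)$ is not in the closure of~$\tilde\Omega_i$: the fiberwise domain $\{w:(x_0,w)\!\in\!\tilde\Omega_i\}$ is the annulus $|w|^2\!>\!\vr_{I;i}(x_0)\!>\!0$, and $F_i(x_0,\cdot)$ has a square-root singularity at its inner boundary (where some $z_j\!\to\!0$). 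The map $w\!\mapsto\!F_i(x_0,w)w$ sends that inner boundary, not $w\!=\!0$, to $0\!\in\!\C$, and you give no injectivity or coverage argument there. Consequently your $\wt W_I^{(i)}$ only yields an image neighborhood of $U_{\{i\};I}^<\!\times\!\{0\}$, whereas the lemma asserts the image is a neighborhood of all of $U_I\!\times\!\{0\}$ intersected with $U_I\!\times\!\C^*$.

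Your saturation patch does not fix this. The locus $\wt\cZ_{\{j\};I}^{\circ}$ for $j\!\neq\!i$ is already $\wt\phi_I$-invariant, so if the saturation of $\wt W_I^{(i)}$ fails to contain it (and it does, since in the slice $\wt\cZ_{I;i}^>$ the points approaching $\wt\cZ_{\{j\};I}^{\circ}$ have $|w|^2\!\to\!-2(\mu_I(x))_{ij}\!>\!0$, far from the $(x_0,0)$'s you covered), then the intersection $\bigcap_i\wt W_I^{(i)}$ does not contain $\wt\cZ_{0;I}^{\circ}$. Moreover, since $\wt\pi_{\cZ;I}$ is not $\wt\phi_I$-equivariant for a general $\wt f_I$, knowing the diffeomorphism property on one slice does not transfer to another via the action.

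The paper's proof avoids this by analyzing, for each fixed $x\!\in\!U_I$, the one-variable equation in $\vr\!=\!|z_i|^2$ on $[\vr_{I;i}(x),\infty)$: the product $\wt f_I^2\,\vr\prod_{j\neq i}(2(\mu_I(x))_{ij}\!+\!\vr)$ vanishes at the left endpoint with strictly positive first nonvanishing derivative, hence is injective on an interval $[\vr_{I;i}(x),\vr_{I;i}(x)\!+\!\de_{I;i}(x)]$ whose image contains $[0,\ve_{I;i}(x)]$. This works uniformly for \emph{all} $x\!\in\!U_I$, not just those in $U_{\{i\};I}^<$, and directly produces the open set $\wt W_I$ containing $\wt\cZ_{0;I}^{\circ}$ with the required diffeomorphism property for every~$i$.
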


\begin{proof}
Let $\wt{W}_I\!\subset\!\wt\cZ_I^{\circ}$ be an open subspace
provided by Lemma~\ref{fIsub_lmm}.
By  Lemma~\ref{fIsub_lmm} and its proof,
the homomorphisms
$$\nd_{\wt{x}}\wt\pi_{\cZ;I}\!: \ker\nd_{\wt{x}}\pi_1\big|_{T_{\wt{x}}\wt\cZ_I^{\circ}}
\lra \C
\qquad\hbox{and}\qquad
\nd_{\wt{x}}\pi_1\!:T_{\wt{x}}\wt\cZ_I^{\circ}\lra T_{\pi_1(\wt{x})}X $$
are surjective for all $\wt{x}\!\in\!\wt{W}_I\!-\!\wt\cZ_{\prt;I}^{\circ}$.
For dimensional reasons, this implies~that the differential of~\eref{fIgraph_e}
is an isomorphism for all $\wt{x}\!\in\!\wt\cZ_{I;i}^>\!\cap\!\wt{W}_I$.
From the Inverse Function Theorem \cite[Theorem~1.30]{Warner},
we then conclude that~\eref{fIgraph_e} is a local diffeomorphism from 
$\wt\cZ_{i;I}^>\!\cap\!\wt{W}_I$ onto an open subset of $U_I\!\times\!\C^*$.\\

\noindent
We next show that~\eref{fIgraph_e} is injective on~$\wt\cZ_{I;i}^>\!\cap\!\wt{W}_I$,
after possibly shrinking the neighborhood~$\wt{W}_I$ of~$\wt\cZ_{0;I}^{\circ}$.
For each $x\!\in\!U_I$, let 
$$\vr_{I;i}(x)=\max\!\big\{\!-\!2(\mu_I(x))_{ij}\!:j\!\in\!I\big\}
\in\R^{\ge0},$$
with $(\mu_I(x))_{ij}$ given by~\eref{hijdfn_e}.
For $\la\!\in\!\C^*$ and $j\!\in\!I$, define 
$$g_{\la;I;i;j}\!:
\big\{(x,\vr)\!\in\!U_I\!\times\!\R\!:\,\vr\!>\!\vr_{I;i}(x)\big\}\lra\C^*, \quad
g_{\la;I;i;j}(x,\vr)=\begin{cases} \frac{\la}{|\la|}\sqrt{\vr},&\hbox{if}~i\!=\!j;\\
\sqrt{2(\mu_I(x))_{ij}\!+\!\vr},&\hbox{if}~j\!\in\!I\!-\!i.
\end{cases}$$
By~\eref{wtcZIdfn_e}, \eref{wtcZiIRdfn_e}, and~\eref{wtpicZIdfn_e}, 
every preimage  $\wt{x}\!\equiv\!(x,(z_j)_{j\in I})$ of \hbox{$(x,\la)\!\in\!U_I\!\times\!\C^*$}
under~\eref{fIgraph_e} is of the~form
$$\big(x,(z_j)_{j\in I}\big)=
\big(x,\big(g_{\la;I;i;j}(x,\vr)\!\big)_{\!j\in I}\big)$$
with $\vr\!=\!\vr_i(\la,x)$ being a solution~of the~equation
\BE{vrdfn_e}
\wt{f}_I\big(x,\big(g_{\la;I;i;j}(x,\vr)\!\big)_{\!j\in I}\big)^2
\vr\!\!\!\prod_{j\in I-i}\!\!\!g_{\la;I;i;j}(x,\vr)^2=|\la|^2,
\quad \vr\in\big(\vr_{I;i}(x),\i\big).\EE
At $\vr\!=\!\vr_{I;i}(x)$, the first non-vanishing derivative of the left-hand side
of~\eref{vrdfn_e} is strictly positive.
Thus, there exist continuous functions 
$$\de_{I;i},\ve_{I;i}\!:U_I\lra\R^+$$
such~that the left-hand side of~\eref{vrdfn_e} as a function of~$\vr$
is injective on \hbox{$[\vr_{I;i}(x),\vr_{I;i}(x)\!+\!\de_{I;i}(x)]$} and 
 the image of this interval contains the interval $[0,\ve_{I;i}(x)]$.\\

\noindent
By the previous paragraph, the map~\eref{fIgraph_e} is injective on the intersection of
$\wt\cZ_{I;i}^>$~with
$$\wt{W}_{i;I}'\equiv\big\{(x,(z_j)_{j\in I})\!\in\!\wt{W}_I\!:\,
\min_{j\in I}\!|z_j|^2\!<\!\de_{I;i}(x)\big\}$$
and the image of this intersection under~\eref{fIgraph_e} contains
$$\big\{(x,z)\!\in\!U_I\!\times\!\C^*\!:\,|z|\!<\!\ve_{I;i}(x)\big\}
\subset U_I\!\times\!\C\,.$$
Since $\wt{W}_{I;i}'\!\subset\!\wt{W}_I$ is an open neighborhood of~$\wt\cZ_{0;I}^{\circ}$,
the proof is completed by taking $\wt{W}_I$ in the statement of Lemma~\ref{fIgraph_lmm}
to be the intersection of the sets $\wt{W}_{I;i}'$ over $i\!\in\!I$. 
\end{proof}

\begin{crl}\label{fIgraph_crl}
Let $I$,  $\wt{f}_I$, and $\wt\pi_{\cZ;I}$ be as in Lemma~\ref{fIsub_lmm}.
Then there exists an open subset $\wt{W}_I\!\subset\!\wt\cZ_I^{\circ}$ 
containing~$\wt\cZ_{0;I}^{\circ}$ such~that 
$$\wt\om_I^n\big|_{T_{\wt{x}}(\wt\pi_{\cZ;I}^{-1}(\la))}\neq0
 \quad\forall~\wt{x}\!\in\!\wt\pi_{\cZ;I}^{-1}(\la),\,\la\!\in\!\C^*\,,
\qquad\hbox{where}\quad 2n\!=\!\dim_{\R}\!X.$$
\end{crl}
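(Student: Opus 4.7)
The plan is to combine the diffeomorphism of Lemma~\ref{fIgraph_lmm} with the observation that, in its explicit parametrization, each coordinate $z_j(x,\la)$ stays on a single real ray of~$\C$ as~$x$ varies, so the pullback of $\pi_2^*\om_{\C^I}$ vanishes on tangent vectors to the fiber over~$\la$.  This reduces the non-degeneracy of $\wt\om_I^n$ on the fiber to the non-degeneracy of~$\om$ on~$U_I$.

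For each $i\!\in\!I$, let $\wt{W}_I^{(i)}$ be the open subset provided by Lemma~\ref{fIgraph_lmm}.  I would take $\wt{W}_I$ to be the largest $\wt\phi_I$-invariant open subset of $\bigcap_{i\in I}\wt{W}_I^{(i)}$; it contains~$\wt\cZ_{0;I}^{\circ}$ since the latter is $\wt\phi_I$-invariant and contained in every $\wt{W}_I^{(i)}$.  For any $\la\!\in\!\C^*$ and $\wt{x}\!\in\!\wt\pi_{\cZ;I}^{-1}(\la)\!\cap\!\wt{W}_I$, the equality $\wt\pi_{\cZ;I}(\wt{x})\!=\!\la\!\neq\!0$ forces $\prod_{j\in I}z_j\!\neq\!0$, so $\wt{x}\!\notin\!\wt\cZ_{\prt;I}^{\circ}$, and rotating the phases of the $z_j$ by a suitable element of $(S^1)^I_{\bu}$ carries~$\wt{x}$ into $\wt\cZ_{I;i}^{>}\!\cap\!\wt{W}_I\!\subset\!\wt{W}_I^{(i)}$ for some $i\!\in\!I$.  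Since $\wt\phi_I$ preserves both $\wt\om_I$ and $\wt\pi_{\cZ;I}$, it suffices to verify the non-degeneracy at the translated point.

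On the slice $\wt\cZ_{I;i}^{>}$, let $\sigma_i$ denote the inverse of $\pi_1\!\times\!\wt\pi_{\cZ;I}$ from Lemma~\ref{fIgraph_lmm}, given explicitly by
\begin{equation*}
\sigma_i(x,\la)=\big(x,(z_j(x,\la))_{j\in I}\big),\quad
z_i(x,\la)=(\la/|\la|)\sqrt{\vr(x,\la)},\quad
z_j(x,\la)=\sqrt{2(\mu_I(x))_{ij}\!+\!\vr(x,\la)}\in\R^+\ \text{for }j\!\neq\!i.
\end{equation*}
Decomposing $\sigma_i^*\wt\om_I=\sigma_i^*\pi_1^*\om+\sigma_i^*\pi_2^*\om_{\C^I}$ and restricting to $T(U_I\!\times\!\{\la\})$, the first summand reduces to~$\om$.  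For the second, at any $(x,\la)$ and $v\!\in\!T_xU_I$, the derivative $\partial_vz_j(x,\la)$ is a real multiple of a fixed unit vector in $T_{z_j(x,\la)}\C$---namely $\la/|\la|$ for $j\!=\!i$ and~$1$ for $j\!\neq\!i$---so pairs of such vectors are $\om_{\C}$-orthogonal and $\sigma_i^*\pi_2^*\om_{\C^I}|_{T(U_I\times\{\la\})}=0$.  Hence $\sigma_i^*\wt\om_I|_{T(U_I\times\{\la\})}=\om$, whose $n$-th power is nowhere zero.  Since $\sigma_i(U_I\!\times\!\{\la\})$ is a $2n$-dimensional complement in $\wt\pi_{\cZ;I}^{-1}(\la)$ to the $\wt\phi_I$-orbit directions, which lie in $\ker\wt\om_I|_{\wt\cZ_I^{\circ}}$ and hence in the kernel of $\wt\om_I^n$ restricted to the fiber, we conclude that $\wt\om_I^n|_{T_{\wt{x}}\wt\pi_{\cZ;I}^{-1}(\la)}\!\neq\!0$.

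The main obstacle is the clean identification $\sigma_i^*\pi_2^*\om_{\C^I}|_{T(U_I\times\{\la\})}=0$: although $\vr(x,\la)$ depends non-trivially on~$x$, both the radial scaling and the phase of each $z_j(x,\la)$ remain uncoupled from the $x$-variation, keeping $\partial_vz_j$ on a single real ray of~$\C$.  Once this is observed, no Poisson-bracket or moment-map computation on the reduction $(\cZ_I^{\circ},\vp_I)$ is needed.
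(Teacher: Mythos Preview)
Your proof is correct and takes essentially the same approach as the paper: use Lemma~\ref{fIgraph_lmm} to parametrize the slice $\wt\cZ_{I;i}^{>}\cap\wt\pi_{\cZ;I}^{-1}(\la)$ by an open subset of~$U_I$, observe that the fixed arguments of the coordinates $z_j(x,\la)$ force the pullback of $\pi_2^*\om_{\C^I}$ to vanish so that $\wt\om_I$ pulls back to~$\om$, and then invoke the $\wt\phi_I$-action by symplectomorphisms to pass from the slice to an arbitrary point of the fiber.  One small caveat: your assertion that ``$\wt\phi_I$ preserves~$\wt\pi_{\cZ;I}$'' requires $\wt{f}_I$ to be $\wt\phi_I$-invariant, which is not among the hypotheses of Lemma~\ref{fIsub_lmm} but does hold in the application in Section~\ref{SympNBpf_subs} (and is equally implicit in the paper's concluding sentence).
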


\begin{proof}
Let $\la\!\in\!\C^*$, $\wt{W}_I\!\subset\!\wt\cZ_I^{\circ}$ be an open subspace
provided by Lemma~\ref{fIgraph_lmm}, $i\!\in\!I$, and 
$$\wt\cZ_{\la;I;i}^>=\wt\pi_{\cZ;I}^{-1}(\la)\!\cap\!\wt\cZ_{I;i}^>.$$
In particular, $\wt\cZ_{\la;I;i}^>$ is a smooth submanifold of~$U_I\!\times\!\C^I$ and
the~map
$$\pi_1\!:\wt\cZ_{\la;I;i}^>\lra U_I$$
is a diffeomorphism onto an open subset~$U_{\la;I;i}$.
Thus, there exists a smooth function 
\begin{gather*}
g_{\la;I;i}\!: U_{\la;I;i}\lra 
\big\{(z_j)_{j\in I}\!\in\!\C^I\!:\,
z_i\!\in\!\frac{\la}{|\la|}\R^+,\,z_j\!\in\!\R^+\,\forall\,j\!\in\!I\!-\!i\big\}\\
\hbox{s.t.}\qquad
\wt\cZ_{\la;I;i}^>=\big\{\big(x,g_{\la;I;i}(x)\big)\!:\,x\!\in\!U_{\la;I;i}\big\}.
\end{gather*}
Since the argument of every component of $g_{\la;I;i}\!:U_{\la;I;i}\!\lra\!\C^I$ is fixed,
\BE{fIgraph_e17}
\big\{\id\!\times\!g_{\la;I;i}\big\}^{\!*}\wt\om_I
=\big\{\id\!\times\!g_{\la;I;i}\big\}^{\!*}\pi_1^*\om
+\big\{\id\!\times\!g_{\la;I;i}\big\}^{\!*}\pi_2^*\om_{\C^I}
=\om|_{U_{\la;I;i}}+g_{\la;I;i}^{\!*}\om_{\C^I}=\om|_{U_{\la;I;i}}.\EE
This implies that
$$\big\{\id\!\times\!g_{\la;I;i}\big\}^{\!*}
\big(\wt\om_I^n\big|_{T_{\wt{x}}(\wt\pi_{\cZ;I}^{-1}(\la))}\big)
=\big\{\id\!\times\!g_{\la;I;i}\big\}^{\!*}\big(\wt\om_I^n\big|_{\wt{x}}\big)
=\om^n\big|_{\pi_1(\wt{x})}\neq0 \qquad\forall~\wt{x}\!\in\!\wt\cZ_{\la;I;i}^>,$$
since $\om$ is a symplectic form.
Since the $\wt\phi_I$-action  consists of a family of symplectomorphisms
on $U_I\!\times\!\C^I$, this establishes the claim.
\end{proof}

\subsection{One-parameter family of smoothings}
\label{SympNBpf_subs}

\noindent
By Lemma~\ref{sCref_lmm}, there exists a $\sC$-invariant open cover 
$(U_I')_{I\in\cP^*(N)}$ of~$X$ properly refining~$(U_I)_{I\in\cP^*(N)}$. 
By Lemma~\ref{fIcoll_lmm}, there exists a tuple 
$(f_I\!:\cZ_I'^{\circ}\!\lra\!\R^+)_{I\in\cP^*(N)}$
of smooth functions such~that each~$f_I$ is $\phi_{\cZ;I}^{\circ}$-invariant
and~\eref{fIcoll_e} holds.
Define
\begin{gather*}
\pi\!: \cZ\lra\C \quad\hbox{by}\quad
\pi\big(\big[[x,(z_i)_{i\in I}]\big]\big)=
f_I\big([x,(z_i)_{i\in I}]\big)\!\prod_{i\in I}\!z_i
~~~\forall\,
\big[x,(z_i)_{i\in I}\big]\!\in\!\cZ_I'^{\circ},\,I\!\in\!\cP^*(N).
\end{gather*}
By~\eref{wtphiIdfn_e}, $\pi$ is independent of the choice of representative for 
an element of $\cZ_I'^{\circ}$ for $I\!\in\!\cP^*(N)$ fixed.
By~\eref{cZdfn_e} and~\eref{fIcoll_e},  
$\pi$ is also independent of the choice of $I\!\in\!\cP^*(N)$
and so is well-defined.
Since all~$f_I$ take values in~$\R^+$, 
$$\cZ_0\equiv \pi^{-1}(0)= \cZ_{\{1\}}\!\cup\!\ldots\!\cup\!\cZ_{\{N\}}
=X_1\!\cup\!\ldots\!\cup\!X_N \equiv X_{\eset}\,;$$
the second equality above holds by~\eref{Xidfn_e}.

\begin{lmm}\label{cZfamsm_lmm}
There exists a neighborhood~$\cZ'$ of $\cZ_0\!=\!X_{\eset}$ such that 
$(\cZ',\om_{\cZ}|_{\cZ'},\pi|_{\cZ'})$ is a one-parameter family 
of smoothings of the SC symplectic variety~$X_{\eset}$ 
associated with the SC symplectic configuration~$\X(\sC)$ of Corollary~\ref{SympNB_crl2}. 
\end{lmm}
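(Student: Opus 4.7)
The plan is to verify the three bullet points of Definition~\ref{SimpFibr_dfn}. The first — that $\cZ_0=\pi^{-1}(0)$ coincides with the SC~symplectic divisor $X_{\eset}=X_1\cup\cdots\cup X_N$ in $(\cZ,\om_{\cZ})$ — is already established by Corollary~\ref{SympNB_crl2} and the displayed identity just before the lemma statement. The remaining two conditions, namely submersivity of $\pi$ on $\cZ'\!-\!X_{\prt}$ and nondegeneracy of $\om_{\cZ}|_{\pi^{-1}(\la)}$ for $\la\!\neq\!0$, are both pointwise statements on each stratum $\cZ_I^{\circ}$, so I will construct $\cZ'$ as a union of $\wt\phi_I$-saturated neighborhoods produced by Lemmas~\ref{fIsub_lmm}--\ref{fIgraph_lmm} and Corollary~\ref{fIgraph_crl}.

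For each $I\!\in\!\cP^*(N)$, let $\wt{f}_I\!=\!f_I\!\circ\!q_{\cZ;I}\!:\wt\cZ_I'^{\circ}\!\to\!\R^+$; this lift is $\wt\phi_I$-invariant by Lemma~\ref{fIcoll_lmm}. The associated $\wt\pi_{\cZ;I}$ of~\eref{wtpicZIdfn_e} satisfies $\pi\!\circ\!q_{\cZ}\!\circ\!q_{\cZ;I}\!=\!\wt\pi_{\cZ;I}$, because $\prod_{j\in I}z_j$ is $(S^1)^I_{\bu}$-invariant. I apply Lemma~\ref{fIsub_lmm} and Corollary~\ref{fIgraph_crl} (whose proofs rely only on properties of $\wt{f}_I$ and the standard form on $\C^I$) to obtain an open $\wt{W}_I\!\subset\!\wt\cZ_I'^{\circ}$ containing $\wt\cZ_{0;I}^{\circ}$ on which both conclusions hold. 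Since $\wt\pi_{\cZ;I}$ and $\wt\om_I$ are $\wt\phi_I$-invariant and the property in Corollary~\ref{fIgraph_crl} is fiberwise, I may replace $\wt{W}_I$ by its $\wt\phi_I$-saturation without loss. Setting $W_I\!=\!q_{\cZ;I}(\wt{W}_I)\!\subset\!\cZ_I'^{\circ}$ and $\cZ'\!=\!\bigcup_{I\in\cP^*(N)}q_{\cZ}(W_I)$, Lemma~\ref{cZtopol_lmm3} shows that $\cZ'$ is an open neighborhood of $X_{\eset}$ in $\cZ$.

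To verify the submersion bullet, let $p\!\in\!\cZ'\!-\!X_{\prt}$. Pick $I\!\in\!\cP^*(N)$ and $\wt{x}\!\in\!\wt{W}_I$ with $p\!=\!q_{\cZ}(q_{\cZ;I}(\wt{x}))$; because $q_{\cZ}\!\circ\!q_{\cZ;I}$ sends $\wt\cZ_{\prt;I}^{\circ}$ into $X_{\prt}$, the point $\wt{x}$ lies outside $\wt\cZ_{\prt;I}^{\circ}$. By Lemma~\ref{fIsub_lmm}, $\nd_{\wt{x}}\wt\pi_{\cZ;I}$ is surjective (even when restricted to $\ker\nd\pi_1|_{T\wt\cZ_I^{\circ}}$), and since $q_{\cZ}\!\circ\!q_{\cZ;I}$ is a submersion this gives surjectivity of $\nd_p\pi$. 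For the nondegeneracy bullet, take $\la\!\in\!\De\!-\!\{0\}$ and $p\!\in\!\pi^{-1}(\la)\cap\cZ'$; again choose $\wt{x}$ as above, now in $\wt\pi_{\cZ;I}^{-1}(\la)$. Corollary~\ref{fIgraph_crl} gives $\wt\om_I^n|_{T_{\wt{x}}\wt\pi_{\cZ;I}^{-1}(\la)}\!\neq\!0$ for $2n\!=\!\dim_{\R}\!X$. Since $\dim_{\R}\wt\pi_{\cZ;I}^{-1}(\la)\!=\!2n\!+\!|I|\!-\!1$, the form $\wt\om_I$ restricted to this tangent space has kernel of dimension exactly $|I|\!-\!1$; the $\wt\phi_I$-orbit is tangent to $\wt\pi_{\cZ;I}^{-1}(\la)$ (by $\wt\phi_I$-invariance of $\wt\pi_{\cZ;I}$) and is contained in this kernel by the symplectic reduction identity at a zero-level-set point, so by dimension count the kernel coincides with the orbit direction. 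Because $q_{\cZ;I}^{\,*}\vp_I\!=\!\wt\om_I|_{\wt\cZ_I^{\circ}}$ by~\eref{SympRed_e4}, this forces $\vp_I|_{\pi^{-1}(\la)\cap\cZ_I^{\circ}}\!=\!\om_{\cZ}|_{\pi^{-1}(\la)\cap\cZ_I^{\circ}}$ to be nondegenerate at~$p$.

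The main technical point is the bookkeeping in the nondegeneracy argument: one must simultaneously use (i) that $(S^1)^I_{\bu}$-orbit directions are isotropic and tangent to $\wt\pi_{\cZ;I}^{-1}(\la)$, (ii) the dimension count, and (iii) Corollary~\ref{fIgraph_crl}, to pin down the kernel of $\wt\om_I|_{T\wt\pi_{\cZ;I}^{-1}(\la)}$ exactly. The local nature of the three bullet conditions means no compatibility between the $\wt{W}_I$ for different $I$ is required beyond covering $X_{\eset}$, which is why the union construction of $\cZ'$ suffices and no extra refinement argument in the spirit of Lemma~\ref{sCref_lmm} is needed here.
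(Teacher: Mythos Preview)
Your proof is correct and follows essentially the same approach as the paper's: lift $f_I$ to $\wt f_I$, invoke Lemma~\ref{fIsub_lmm} and Corollary~\ref{fIgraph_crl} on each chart to produce the open sets $\wt W_I$, take $\cZ'$ to be the union of their images, and verify the submersion and nondegeneracy conditions locally. The only expository difference is in the nondegeneracy step: the paper argues directly that $\om_{\cZ}^n|_{\pi^{-1}(\la)}\neq 0$ because its pullback $\wt\om_I^n|_{T\wt\pi_{\cZ;I}^{-1}(\la)}$ does not vanish, whereas you spell out the equivalent kernel/dimension-count argument identifying the null directions with the $(S^1)^I_{\bu}$-orbit.
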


\begin{proof}
For each $I\!\in\!\cP^*(N)$, define
\begin{alignat}{2}
\label{wtfiIdfn_e}
\wt{f}_I\!: \wt\cZ_I'^{\circ}&\lra\R^+, &\qquad
\wt{f}_I\big(x,(z_i)_{i\in I}\big)&=f_I\big([x,(z_i)_{i\in I}]\big),\\
\notag
\wt\pi_{\cZ;I}\!: \wt\cZ_I'^{\circ}&\lra\C, &\qquad
\wt\pi_{\cZ;I}\big(x,(z_i)_{i\in I}\big)&=\pi\big(\big[[x,(z_i)_{i\in I}]\big]\big).
\end{alignat}
Let $\wt{W}_I\!\subset\!\wt\cZ_I'^{\circ}$ be an open subset as in Lemmas~\ref{fIsub_lmm}
and~\ref{fIgraph_lmm} with $(U_I)_{I\in\cP^*(N)}$ replaced by $(U_I')_{I\in\cP^*(N)}$.
By Lemma~\ref{cZtopol_lmm3}, the image $W_I\!\subset\!\cZ$ of~$\wt{W}_I$
under the quotient map 
\BE{wtcZiItocZ_e} q_{\cZ}\!\circ\!q_{\cZ;I}\!:\wt\cZ_I'^{\circ}\!\lra\!\cZ \EE 
is an open subset. 
The union~$\cZ'$ of the open subsets~$W_I$ taken over all 
$I\!\in\!\cP^*(N)$ is an open neighborhood of $\cZ_0\!=\!X_{\eset}$
in~$\cZ$.\\

\noindent
Since~$\wt\pi_{\cZ;I}$ factors through~$\pi$,
the surjectivity of the homomorphism~\eref{fIsub_e2a} implies that 
$\pi$~is a submersion on~$W_I$ outside~of 
$$X_{\prt}=\bigcup_{\begin{subarray}{c}I\subset[N]\\ |I|=2\end{subarray}}\!\!\!\cZ_I\subset \cZ\,.$$
For any $\la\!\in\!\C^*$, the pullback of $\om_{\cZ}|_{\pi^{-1}(\la)}$ by~\eref{wtcZiItocZ_e}
is $\wt\om_I\big|_{T(\wt\pi_{\cZ;I}^{-1}(\la))}$.
By Corollary~\ref{fIgraph_crl},  $\om_{\cZ}^n|_{\pi^{-1}(\la)}$ 
thus does not vanish over~$W_I$
and so the restriction of~$\om_{\cZ}$ to $\pi^{-1}(\la)\!\cap\!W_I$ is nondegenerate.
Thus, $(\cZ',\om_{\cZ}|_{\cZ'},\pi|_{\cZ'})$ is a nearly regular symplectic fibration 
in the sense of Definition~\ref{SimpFibr_dfn} with $\{\pi|_{\cZ'}\}^{-1}(0)\!=\!X_{\eset}$.
\end{proof}
 
\noindent
For each $I\!\in\!\cP^*(N)$, the subspace $\wt\cZ_{0;I}'^{\circ}\!\subset\!\wt\cZ_I'^{\circ}$
is preserved by the $\wt\phi_{\cZ;I}$-action in~\eref{wtcZphiiIdfn_e}.
By replacing $\wt{W}_I$ in the proof of Lemma~\ref{cZfamsm_lmm} with 
$$ \bigcap_{g\in(S^1)^I_{\bu}}\!\!\!\!\!
\wt\phi_{\cZ;I}\big(\{g\}\!\times\!\wt{W}_I\big)
\subset U_I'\!\times\!\C^I\,,$$
we can thus assume that $\wt{W}_I$ is $\wt\phi_{\cZ;I}$-invariant.
This implies that the subspace $W_I\!\subset\!U_{\cZ;I}$ is preserved
by the $\phi_{\cZ;I}$-action in~\eref{Ham_e3d}.
The collection $(W_I)_{I\in\cP^*(N)}$ is then a $\sC_{\cZ}$-invariant 
open cover of the subspace~$\cZ'$ of~$\cZ$
(corresponding to $(U_{\sZ;t;I}')_{I\in\cP^*(N)}$ in Theorem~\ref{SympCut_thm3}).
We denote by
$$\sC_{\cZ}'\equiv\big(W_I,\phi_{\cZ;I}',\mu_{\cZ;I}'\big)_{I\in\cP^*(N)}$$
the restriction of~$\sC_{\cZ}$ to~$(W_I)_{I\in\cP^*(N)}$.

\begin{lmm}\label{cZfamcutconf_lmm}
For every $\la\!\in\!\C$,
the restriction $\sC_{\cZ}'|_{\cZ'\cap\pi^{-1}(\la)}$ of $\sC_{\cZ}'$ to 
$\cZ'\!\cap\!\pi^{-1}(\la)$
is an $N$-fold Hamiltonian configuration for 
$(\cZ'\!\cap\!\pi^{-1}(\la),\om_{\cZ}|_{\cZ'\cap\pi^{-1}(\la)})$.
It is a cutting configuration if $\la\!\in\!\C^*$.
\end{lmm}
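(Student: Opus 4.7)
My plan is to verify Definition~\ref{SympCut_dfn0} (and Definition~\ref{SympCut_dfn1} when $\la\!\in\!\C^*$) for the restrictions of the triples $(W_I,\phi_{\cZ;I},\mu_{\cZ;I})$ to the fibers of~$\pi$. The pivotal observation is that $\pi$ is $\phi_{\cZ;I}$-invariant for every $I\!\in\!\cP^*(N)$: from the defining formula
$$\pi\big([[x,(z_i)_{i\in I}]]\big)=f_I\big([x,(z_i)_{i\in I}]\big)\!\prod_{i\in I}\!z_i,$$
the $\phi_{\cZ;I}^{\circ}$-invariance of~$f_I$ furnished by Lemma~\ref{fIcoll_lmm}, and formula~\eref{wtcZphiiIdfn_e} showing that $\wt\phi_{\cZ;I}$ acts trivially on the $\C^I$-factor, this invariance is immediate. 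Consequently each slice $W_I\!\cap\!\pi^{-1}(\la)$ is preserved by the $(S^1)^I_{\bu}$-action~$\phi_{\cZ;I}$, and the open-cover and intersection conditions~(a)--(c) of Definition~\ref{SympCut_dfn0} are inherited directly from those already verified for~$\sC_{\cZ}'$ on~$\cZ'$.

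For $\la\!\in\!\C^*$, Lemma~\ref{cZfamsm_lmm} provides that $\cZ'_{\la}\!\equiv\!\cZ'\!\cap\!\pi^{-1}(\la)$ is a smooth symplectic submanifold of $(\cZ,\om_{\cZ})$. I then invoke the standard principle that a Hamiltonian torus action preserving a symplectic submanifold restricts to a Hamiltonian action whose moment map is the restriction of the ambient one; this yields that $\phi_{\cZ;I}|_{W_I\cap\cZ'_\la}$ is Hamiltonian on $(W_I\cap\cZ'_\la,\om_{\cZ}|_{W_I\cap\cZ'_\la})$ with moment map $\mu_{\cZ;I}|_{W_I\cap\cZ'_\la}$. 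This establishes the Hamiltonian configuration claim for nonzero~$\la$.

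For the cutting property when $\la\!\in\!\C^*$, I use that each~$f_I$ takes values in~$\R^+$, so $\pi^{-1}(0)\!=\!X_{\eset}\!\supset\!X_{\prt}$, and hence $\cZ'_\la\!\subset\!\cZ\!-\!X_{\prt}$ whenever $\la\!\neq\!0$. By Corollary~\ref{cZHamConf_crl}, $\sC_{\cZ}$ already restricts to a cutting configuration on $\cZ\!-\!X_{\prt}$, so its freeness condition (which is a pointwise condition on stabilizers) descends to~$\cZ'_\la$ without additional work.

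The case $\la\!=\!0$ must be understood stratum-wise, since $\cZ'\!\cap\!\pi^{-1}(0)\!=\!X_{\eset}\!\cap\!\cZ'$ is not a smooth symplectic manifold. On each smooth symplectic stratum $X_J\!\subset\!X_{\eset}$, the restrictions of~$\phi_{\cZ;I}$ and~$\mu_{\cZ;I}$ yield Hamiltonian pairs whose compatibility with the intersection structure of Definition~\ref{SympCut_dfn0} reduces to that of~$\sC_{\cZ}'$. The main subtlety here is the bookkeeping of how the restricted actions interact across the various $X_J$'s comprising~$X_{\eset}$, but the verification itself is routine once the $\phi_{\cZ;I}$-invariance of~$\pi$ and the stratification of~$X_{\eset}$ by the subsets~$\cZ_I$ are in hand.
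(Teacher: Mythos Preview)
Your proposal is correct and follows essentially the same approach as the paper: both hinge on the $\phi_{\cZ;I}$-invariance of~$\pi$ (via the $\phi_{\cZ;I}^{\circ}$-invariance of~$f_I$), appeal to Lemma~\ref{cZfamsm_lmm} for the symplectic structure on the fibers, and invoke Corollary~\ref{cZHamConf_crl} together with $\pi^{-1}(\la)\subset\cZ\!-\!X_{\prt}$ for the cutting claim when $\la\!\in\!\C^*$. The paper's proof is terser because it relies on the restriction machinery for $\sC$-invariant symplectic submanifolds already set up in Section~\ref{SympCutThm_subs1}, whereas you rederive that principle inline; your stratum-wise treatment of the $\la\!=\!0$ case is in fact more careful than the paper's, which glosses over the fact that $\pi^{-1}(0)\!=\!X_{\eset}$ is not a smooth manifold and treats all~$\la$ uniformly.
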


\begin{proof}
By Lemma~\ref{cZfamsm_lmm}, $\om_{\cZ}|_{\cZ'\cap\pi^{-1}(\la)}$ is a symplectic form
on $\cZ'\!\cap\!\pi^{-1}(\la)$.
Since the function~$f_I$ is $\phi_{\cZ;I}^{\circ}$-invariant for each $I\!\in\!\cP^*(N)$,
the restriction of~$\pi$ to~$W_I$ is $\phi_{\cZ;I}'$-invariant.
Thus, $\cZ'\!\cap\!\pi^{-1}(\la)$ is $\sC_{\cZ}'$-invariant.
This establishes the first claim.
If  $\la\!\in\!\C^*$, then 
$$\pi^{-1}(\la)\subset \cZ\!-\!X_{\eset}\subset \cZ\!-\!X_{\prt}$$
and so $\sC_{\cZ}'|_{\cZ'\cap\pi^{-1}(\la)}$ is a restriction of 
the cutting configuration of Corollary~\ref{cZHamConf_crl}.
This establishes the second claim.
\end{proof}

\begin{lmm}\label{cZfamcmpt_lmm}
Suppose $X$ is compact,  
$(U_I'')_{I\in\cP^*(N)}$ is a $\sC$-invariant open cover
properly refining $(U_I')_{I\in\cP^*(N)}$, 
$\sC''$ is the restriction of~$\sC$ to $(U_I'')_{I\in\cP^*(N)}$, and 
$\sC_{\cZ}''$ is the analogue of 
the $N$-fold Hamiltonian configuration~$\sC_{\cZ}$ in~\eref{cZSympCutDfn_e}
for~$\sC''$.
Then, there exist
neighborhoods $\cZ'\!\subset\!\cZ$ of~$X_{\eset}$ 
and $\De\!\subset\!\C$ of~$0$ such that
the symplectic manifold $(\pi^{-1}(\la),\om_{\cZ}|_{\pi^{-1}(\la)})$ with 
the cutting configuration~$\sC_{\cZ}''|_{\pi^{-1}(\la)}$
is canonically isomorphic to the symplectic manifold~$(X,\om)$ 
with the cutting configuration~$\sC''$ for every $\la\!\in\!\De\!\cap\!\R^+$.
\end{lmm}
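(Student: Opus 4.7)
\noindent
The plan is to construct, for each $\la\!\in\!\R^+$ sufficiently small, a canonical diffeomorphism $\Phi_\la\colon X\!\lra\!\pi^{-1}(\la)$ that pulls back $\om_\cZ|_{\pi^{-1}(\la)}$ to~$\om$ and identifies the cutting configuration~$\sC_\cZ''|_{\pi^{-1}(\la)}$ with~$\sC''$. The key observation is that, since $\la\!\in\!\R^+$ and each $\wt{f}_I$ from~\eref{wtfiIdfn_e} is positive, each fiber $\pi^{-1}(\la)$ contains a canonical \emph{all-positive-real} slice parametrized by~$X$. By Lemma~\ref{sCcover_lmm} applied to~$\sC''$, the sets $U_{i;I}^{\le}\!\cap\!U_I''$ cover~$X$; for $x$ in such a set I will solve for $z_i\!=\!z_i(x,\la)\!\in\!\R^+$ so that, with $z_j\!=\!\sqrt{2(\mu_I(x))_{ij}\!+\!z_i^2}\!\in\!\R^+$ for $j\!\in\!I\!-\!i$, the equation $\wt{f}_I(x,(z_j)_{j\in I})\!\prod_{j\in I}\!z_j\!=\!\la$ holds, and then set $\Phi_\la(x)\!=\!q_\cZ(q_{\cZ;I}(x,(z_j)_{j\in I}))$. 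The left-hand side of this equation, viewed as a function of $z_i\!\in\![0,\infty)$, vanishes at $z_i\!=\!0$ and is strictly increasing for small $z_i$, just as in the proof of Lemma~\ref{fIgraph_lmm}; the Implicit Function Theorem supplies a unique smooth $z_i(x,\la)$ on a neighborhood of $(U_{i;I}^{\le}\!\cap\!U_I'')\!\times\!\{0\}$, and compactness of~$X$ yields a uniform $\de\!>\!0$ so that $\Phi_\la$ is defined on all of~$X$ for $\la\!\in\!(0,\de)$.

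\noindent
The main technical point is well-definedness of~$\Phi_\la$ across the different charts~$(i,I)$. If $(\mu_I(x))_{i_1}\!=\!(\mu_I(x))_{i_2}$, the defining equation at~$x$ through~$i_1$ and the one through~$i_2$ coincide term by term, so the two local recipes agree in~$\cZ_I^\circ$. If $x$ lies in an overlap with $I\!\subsetneq\!J$, then Definition~\ref{SympCut_dfn0}\ref{UIJpos_it} gives $(\mu_J(x))_{ij}\!>\!0$ for $j\!\in\!J\!-\!I$, and the augmentation of $(z_j)_{j\in I}$ to $(z_j)_{j\in J}$ produced by~$\wt\Th_{J,I;i}$ in~\eref{whThiIJdfn_e} is precisely what one would compute from the $J$-equation; that the two equations $\wt{f}_I\prod_{j\in I}\!z_j\!=\!\la$ and $\wt{f}_J\prod_{j\in J}\!z_j\!=\!\la$ have matching solutions is exactly the cocycle~\eref{fIcoll_e} of Lemma~\ref{fIcoll_lmm}---this is the whole reason that lemma was proved. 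Thus the local expressions glue via $\Th_{J,I}$ into a single smooth map $\Phi_\la\colon X\!\lra\!\pi^{-1}(\la)\!\cap\!\cZ'$, and this verification is the principal obstacle in the argument.

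\noindent
Bijectivity of~$\Phi_\la$ follows along the lines of Lemma~\ref{fIgraph_lmm}: the $\wt\phi_I$-orbit of a point in the all-real-positive slice meets that slice only at itself, giving injectivity, while every $y\!\in\!\pi^{-1}(\la)\!\cap\!W_I$ lifts to~$\wt W_I$, where $\la\!\in\!\R^+$ together with positivity of $\wt{f}_I$ forces $\prod\!z_j\!\in\!\R^+$, so the $(S^1)^I_\bu$-action rotates a representative into the real-positive slice. Shrinking~$\De$ to ensure $\pi^{-1}(\la)\!\subset\!\cZ'$ for $\la\!\in\!\De$---possible because $X_\eset$ is compact (Lemma~\ref{cZsub_lmm}) and~$\pi$ is continuous---promotes $\Phi_\la$ to a diffeomorphism onto the entire fiber. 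The identity $\Phi_\la^*(\om_\cZ|_{\pi^{-1}(\la)})\!=\!\om$ then follows from the computation~\eref{fIgraph_e17}: each component of $(z_j)_{j\in I}$ has constant argument, so $\pi_2^*\om_{\C^I}$ vanishes on the tangent spaces to the slice and $\wt\om_I$ restricts to $\pi_1^*\om$. Finally, since $\wt\phi_{\cZ;I}$ in~\eref{wtcZphiiIdfn_e} acts trivially on the $\C^I$-factor while $\wt{f}_I$ and~$\mu_I$ are $\phi_I$-invariant, the assignment $x\!\mapsto\!(z_j)_{j\in I}$ depends only on the $\phi_I$-orbit of~$x$; hence $\Phi_\la$ intertwines $\phi_I$ with $\phi_{\cZ;I}$ and satisfies $\mu_{\cZ;I}\!\circ\!\Phi_\la\!=\!\mu_I$, yielding the canonical identification of $\sC''$ with $\sC_\cZ''|_{\pi^{-1}(\la)}$.
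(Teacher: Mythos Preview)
Your proposal is correct and follows essentially the same approach as the paper's proof: both construct the map by sending $x$ to the unique point of $\pi^{-1}(\la)$ over $x$ in the all-real-positive slice, verify well-definedness across charts using the cocycle~\eref{fIcoll_e} and across base indices using the symmetry of the defining equation for $\la\!\in\!\R^+$, establish bijectivity via the slice argument of Lemma~\ref{fIgraph_lmm}, and obtain the symplectomorphism and intertwining properties from~\eref{fIgraph_e17} and the $\phi_I$-invariance of the construction. The paper organizes the argument slightly more efficiently by working with the functions $g_{\la;I;i}$ on all of~$U_I''$ (rather than only on $U_{i;I}^{\le}\!\cap\!U_I''$) and explicitly records the independence $g_{\la;I;i}\!=\!g_{\la;I;j}$ as~\eref{cmptcase_e7}, noting this is exactly where $\la\!\in\!\R^+$ is used; your Implicit Function Theorem step should, strictly speaking, be phrased in the variable $\vr\!=\!z_i^2$ as in the proof of Lemma~\ref{fIgraph_lmm} to handle boundary points of~$U_{i;I}^{\le}$ smoothly, but you already cite that lemma for the relevant monotonicity.
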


\begin{proof} For $i\!\in\!I\!\subset\![N]$ and $\la\!\in\!\R^+$, let  
$$\ve_{I;i}\!:U_I'\lra\R^+ \qquad\hbox{and}\qquad
g_{\la;I;i}\!:U_{\la;I;i}'\lra (\R^+)^I$$
be as in the proofs of Lemma~\ref{fIgraph_lmm} and  Corollary~\ref{fIgraph_crl},
respectively, with $(U_I)_{I\in\cP^*(N)}$ replaced by $(U_I')_{I\in\cP^*(N)}$.
Since the subsets $\ov{U_I''}\!\subset\!U_I'$ are compact, 
$$\ep\equiv \min_{i\in I\subset[N]}\big\{\ve_{I;i}(x)\!:\,x\!\in\!\ov{U_I''}\big\}\in\R^+\,.$$
Let $\De\!\subset\!\C$ be the disk of radius~$\ep$ around the origin
and $\cZ'\!=\!\pi^{-1}(\De)$.
Since each function~$f_I$ is $\phi_{\cZ;I}^{\circ}$-invariant,
$\cZ'\!\subset\!\cZ$ is a $\sC_{\cZ}$-invariant subspace.
By the proofs of Corollary~\ref{fIgraph_crl} and Lemma~\ref{cZfamsm_lmm},
it satisfies the condition of the latter.\\

\noindent
By~\eref{wtfiIdfn_e} and~\eref{fIcoll_e},
\begin{equation*}\begin{split} 
\wt{f}_I\big(x,(z_j)_{j\in I}\big)&=
\wt{f}_J\big(\wt\Th_{J,I}\big(x,(z_j)_{j\in I}\big)\!\big) \!\!\!\!
\prod_{j\in J-I}\!\!\!\!\sqrt{2(\mu_J(x))_{ij}\!+\!|z_i|^2}\\
&\hspace{1.5in}~\forall~
\big(x,(z_j)_{j\in I}\big)\!\in\!\wt\cZ_{I,J}'^{\circ},~i\!\in\!I\!\subset\!J\!\subset\![N].
\end{split}\end{equation*}
By the uniqueness of~$g_{\la;i;I}$, this implies~that
\BE{cmptcase_e5} 
\wt\Th_{J,I}\big(x,g_{\la;I;i}(x)\big)=\big(x,g_{\la;J;i}(x)\big) \qquad
\forall~x\!\in\!U_{\la;I;i}'\!\cap\!U_{\la;J;i}',~
i\!\in\!I,J\!\subset\![N].\EE
Furthermore, 
\BE{cmptcase_e7} 
g_{\la;I;i}(x) = g_{\la;I;j}(x) 
\qquad\forall~x\!\in\!U_{\la;I;i}'\!\cap\!U_{\la;I;j}',
~i,j\!\in\!I\!\subset\![N].\EE
In contrast to~\eref{cmptcase_e5}, \eref{cmptcase_e7} does not hold for 
$\la\!\in\!\C^*\!-\!\R^+$
(even after passing to the quotient~$\cZ_I'^{\circ}$).\\

\noindent
By the definition of~$\ep$, $U_{\la;I;i}''\!=\!U_I''$ for all $\la\!\in\!\De\!-\!0$
and $i\!\in\!I\!\subset\![N]$.
Define
$$f_{\la}\!: X\lra \pi^{-1}(\la)\!\subset\!\cZ', \qquad
f_{\la}(x)=q_{\cZ}\big(q_{\cZ;I}\big(x,g_{\la;I;i}(x)\big)\big)
\quad\forall~x\!\in\!U_I'',\,i\!\in\!I\!\subset\![N].$$
By~\eref{cmptcase_e5} and~\eref{cZdfn_e},  
$f_{\la}$ is independent of the choice of $I\!\in\!\cP_i(N)$ for  $i\!\in\![N]$ fixed.
By~\eref{cmptcase_e7}, $f_{\la}$ is also independent of the choice of 
$i\!\in\![N]$ and so is well-defined.\\

\noindent
Since the graph of~$g_{\la;I;i}|_{U_I''}$ is a slice
for the $\wt\phi_I$-action on $\wt\pi_{\cZ;I}^{-1}(\la)\!\cap\!\wt\cZ_I''^{\circ}$,
$$f_{\la}(U_I'')=\pi^{-1}(\la)\!\cap\!U_{\cZ;I}'' \qquad\forall~I\!\in\!\cP^*(N).$$
Since the sets $U_{\cZ;I}''\!=\!q_{\cZ}(q_{\cZ;I}(\wt\cZ_{I}''^{\circ}))$ cover~$\cZ$,
the map~$f_{\la}$ is thus surjective.
By~\eref{cZdfn_e} and \eref{cmptcase_e5},  it is injective.
Since the restriction of~$f_{\la}$ to each~$U_I''$
 is a composition of smooth maps, $f_{\la}$ is a smooth map. 
By~\eref{SympRed_e4} and~\eref{fIgraph_e17}, 
\begin{equation*}\begin{split}
f_{\la}^*\om_{\cZ}\,\big|_{U_I''}
=\big\{\id\!\times\!g_{\la;I;i}\big\}^{\!*}
q_{\cZ;I}^{\!*}q_{\cZ}^*\om_{\cZ}\,\big|_{U_I''}
&=\big\{\id\!\times\!g_{\la;I;i}\big\}^{\!*}
q_{\cZ;I}^{\!*}\vp_I\,\big|_{U_I''}\\
&=\big\{\id\!\times\!g_{\la;I;i}\big\}^{\!*}\wt\om_I\,\big|_{U_I''}
=\om\big|_{U_I''}\,.
\end{split}\end{equation*}
Thus, $f_{\la}$ is a symplectomorphism from $(X,\om)$ to 
$(\pi^{-1}(\la),\om_{\cZ}|_{\pi^{-1}(\la)})$.\\

\noindent
By the construction of~$\mu_{\cZ;I}$ in Section~\ref{cZSympCutConf_subs}, 
$$\mu_{\cZ;I}\!\circ\!f_{\la}\,\big|_{U_I''}
=\wt\mu_{\cZ;I}\!\circ\!\big\{\id\!\times\!g_{\la;I;i}\big\}\,\big|_{U_I''}
= \mu_I\big|_{U_I''}
\qquad\forall~I\!\in\!\cP^*(N).$$
Since $\wt{f}_I$ is a $\wt\phi_{\cZ;I}$-invariant map,
the uniqueness of~$g_{\la;I;i}$ implies that 
$g_{\la;I;i}|_{U_I''}$ is  a $\phi_I$-invariant map.
Thus, the maps $\{\id\!\times\!g_{\la;I;i}\}|_{U_I''}$ 
and $f_{\la}|_{U_I''}$ are $(\wt\phi_{\cZ;I},\phi_I)$-equivariant
and $(\phi_{\cZ;I},\phi_I)$-equivariant, respectively.
We~conclude that $f_{\la}$ is an isomorphism from the cutting configuration~$\sC''$  
for~$(X,\om)$ to the cutting configuration $\sC_{\cZ}''|_{\pi^{-1}(\la)}$
for $(\pi^{-1}(\la),\om_{\cZ}|_{\pi^{-1}(\la)})$.
\end{proof}

\noindent
The projection  $\pi\!:\cZ\!\lra\!\De$ depends on the choice of the functions~$f_I$.
By the same inductive reasoning as in the proof of Lemma~\ref{fIcoll_lmm},
any two such collections of functions are homotopic after shrinking their domains.
The latter does not effect the space~$\cZ$.
If $U_{[N]}\!=\!X$, there is a natural choice of~$\pi$ given by $f_{[N]}\!=\!1$;
see the proof of Proposition~\ref{THamConf_prp}.

\section{NC degenerations of Hamiltonian manifolds}
\label{FullTor_sec}

\noindent
The tori $(S^1)^I_{\bu}$ whose actions~$\phi_I$  appear in 
a Hamiltonian configuration~\eref{SympCutDfn_e} are subtori~of
\begin{gather}
(S^1)^N_{\bu}\equiv (S^1)^{[N]}_{\bu}=\rho_{\bu}^{-1}(\id),  
\qquad\hbox{where}\quad
\label{S1Ndiag_e} 
\rho_{\bu}\!:(S^1)^N \lra S^1,  ~~ 
\rho_{\bu}\big((\ne^{\fI\th_i})_{i\in[N]}\big)
=\prod_{i\in[N]}\!\!\!\ne^{\fI\th_i}.
\end{gather}
If the domain~$U_{[N]}$ of the $\phi_{[N]}$-action is the entire manifold~$X$,
the constructions of Sections~\ref{Thm12Pf_sec} and~\ref{SumCutSm_sec} greatly simplify
and result in a richer structure.
By Proposition~\ref{THamConf_prp}, there is then a one-parameter 
family~$\pi$
of smoothings of the associated SC symplectic variety~$X_{\eset}$
defined over the entire symplectic manifold~$(\cZ,\om_{\cZ})$.
Furthermore, a  Hamiltonian pair~$(\phi_{\T},\mu_{\T})$ for~$(X,\om)$
compatible with $(\phi_{[N]},\mu_{[N]})$
induces a Hamiltonian $\T$-action
on~$(\cZ,\om_{\cZ})$ preserving the associated cut symplectic manifolds~$(X_i,\om_i)$
and the projection~$\pi$.
We illustrate this situation on the simple local example of Section~\ref{BasicEg_subs}
when all relevant objects can be readily described explicitly.\\

\noindent
If the $\phi_{[N]}$-action is in addition the restriction of 
a Hamiltonian action of the entire torus~$(S^1)^N$ on~$(X,\om)$,
then there is a natural Hamiltonian $S^1$-pair $(\phi_{\cZ;S^1},\mu_{\cZ;S^1})$
for $(\cZ,\om_{\cZ})$ so that the projection~$\pi$
is $S^1$-equivariant; see Lemma~\ref{THamConf_lmm2}.
If $X$ is  also compact, then~$\pi$ can be ``cut"
to a one-parameter family
\BE{whpiwhcZa_e}\wh\pi\!:\wh\cZ_a\lra\P^1\EE
of smoothings of~$X_{\eset}$ with all smooth fibers (i.e.~those not over~$[1,0]$)
canonically isomorphic to~$(X,\om)$; see Corollary~\ref{CmptHam_crl}.\\

\noindent
As shown in Section~\ref{MomPolytThm_subs},
a Hamiltonian action of an abstract torus~$\T$ on~$(X,\om)$ gives rise to a plethora of 
Hamiltonian $(S^1)^N$-pairs $(\phi,\mu)$ for~$(X,\om)$ 
and often to cutting configurations.
Theorem~\ref{toriccut_thm} describes the effect 
of the constructions of  Sections~\ref{Thm12Pf_sec} and~\ref{SumCutSm_sec}
with cutting configurations arising in this way on 
the moment polytope of the original torus action. 
Proposition~\ref{CombCond_prp} 
provides a combinatorial criterion for a Hamiltonian configuration
obtained as in Section~\ref{MomPolytThm_subs} to be a cutting configuration;
it is especially effective in the case of toric symplectic manifolds.\\

\noindent
Let $\T\!\approx\!(S^1)^k$ be a $k$-torus.
A \sf{Hamiltonian $\T$-manifold} 
(\sf{Hamiltonian $\T$-space} of \cite[Definition~22.1]{daSilva})
is a tuple $(X,\om,\phi,\mu)$ such that 
$(X,\om)$ is a symplectic manifold and 
$(\phi,\mu)$ is a Hamiltonian $\T$-pair for~$(X,\om)$.
We call such a Hamiltonian $\T$-manifold \sf{compact} 
(resp.~\sf{connected}) if $X$ is compact (resp.~connected).
We call Hamiltonian $\T$-pair $(\phi,\mu)$ and $\T'$-pair $(\phi',\mu')$ for~$(X,\om)$ 
\sf{compatible} if the actions of~$\phi$ and~$\phi'$ commute,
$\mu'$ is $\phi$-invariant, and  $\mu$ is $\phi'$-invariant.
This means that the two actions and the two moment maps form a Hamiltonian 
$(\T\!\times\!\T')$-pair for~$(X,\om)$.
If $(\phi,\mu)$ is a  Hamiltonian $\T$-pair for~$(X,\om)$,
then the pair obtained by restricting the action~$\phi$ 
to a subtorus $\T'\!\subset\!\T$ and composing~$\mu$ with the restriction
$\ft^*\!\lra\!\ft'^*$ is compatible with~$(\phi,\mu)$.
Compatible pairs of a different kind play an important role in Theorem~\ref{toriccut_thm}.\\

\noindent
Suppose $X$ and $X'$ are topological spaces with $\T$-actions~$\phi$ and~$\phi'$, respectively,
and $\mu$ and~$\mu'$ are $\ft^*$-valued maps on~$X$ and~$X'$, respectively.
A~map \hbox{$f\!:X\!\lra\!X'$} \sf{intertwines} $(\phi,\mu)$ and  $(\phi',\mu')$~if
$$f\big(\phi(g;x)\big)=\phi'\big(g;f(x)\big) \quad\hbox{and}\quad
\mu(x)=\mu'\big(f(x)\big) \qquad\forall\,g\!\in\!\T,\,x\!\in\!X.$$
In such a case, we call
$$f\!:(X,\phi,\mu)\lra(X',\phi',\mu')$$
a \sf{morphism}.

\subsection{Basic setup and output}
\label{HamSpaGen_subs}

\noindent
Let $N\!\in\!\Z^+$ and $(X,\om,\phi,\mu)$ be a Hamiltonian $(S^1)^N_{\bu}$-manifold.
For each $I\!\in\!\cP^*(N)$, define
\BE{UIdfn_e}\begin{split}
U_I=\big\{x\!\in\!X\!:
(\mu(x))_i\!<\!(\mu(x))_j~&\forall\,i\!\in\!I,\,j\!\in\![N]\!-\!I\big\},\\
\phi_I\!=\!\phi|_{(S^1)^I_{\bu}\times U_I}\!: 
(S^1)^I_{\bu}\!\times\!U_I\lra U_I, &\qquad
\mu_I\!=\!r_I\!\circ\!\mu|_{U_I}\!:U_I\lra\ft_{I;\bu}^*,
\end{split}\EE
where $r_I\!:\ft_{N;\bu}^*\!\lra\!\ft_{I;\bu}^*$ is the restriction homomorphism.
We call $(X,\om,\phi,\mu)$ \sf{regular} if
the $\phi_I$-action of~$(S^1)^I_{\bu}$ on $\mu_I^{-1}(0)$ is free
for every $I\!\in\!\cP^*(N)$.

\begin{lmm}\label{THamConf_lmm}
Let $N\!\in\!\Z^+$ and $(X,\om,\phi,\mu)$ be a Hamiltonian $(S^1)^N_{\bu}$-manifold.
The~tuple
\BE{THamConf_e}   \sC_{\phi,\mu} \equiv\big(U_I,\phi_I,\mu_I\big)_{I\in\cP^*(N)} \EE
defined by~\eref{UIdfn_e} is a maximal $N$-fold Hamiltonian configuration.
It is a cutting configuration if and only if $(X,\om,\phi,\mu)$ is regular.
\end{lmm}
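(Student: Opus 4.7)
My strategy is to check directly the defining properties in Definitions~\ref{SympCut_dfn0} and~\ref{SympCut_dfn1}, then invoke the reduction (noted right after~\eref{IJpos_e}) that, for maximal Hamiltonian configurations, the cutting condition need only be verified in the diagonal case $I_0\!=\!I$. The continuity of the differences $(\mu(\cdot))_{ij}$ shows each $U_I$ in~\eref{UIdfn_e} is open. To see $(U_I)_{I\in\cP^*(N)}$ covers~$X$, for $x\!\in\!X$ set $I\!=\!\{i\!\in\![N]\!:(\mu(x))_i\!=\!\min_k(\mu(x))_k\}\!\in\!\cP^*(N)$; then $(\mu(x))_i\!<\!(\mu(x))_j$ for $i\!\in\!I$, $j\!\in\![N]\!-\!I$, so $x\!\in\!U_I$. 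Since $\mu$ is $\phi$-invariant, $U_I$ is $\phi$-invariant, and so the restricted action $\phi_I$ is a well-defined $(S^1)^I_{\bu}$-action on~$U_I$; because $\mu_I\!=\!r_I\!\circ\!\mu|_{U_I}$ is obtained from~$\mu$ by dualizing the inclusion $\ft_{I;\bu}\!\subset\!\ft_{N;\bu}$, it is a moment map for~$\phi_I$ by the standard fact on restriction of Hamiltonian actions to subtori.

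Next I would verify conditions~\ref{IJinter_it}, \ref{phiIUJ_it}, \ref{UIJpos_it} of Definition~\ref{SympCut_dfn0}. For~\ref{IJinter_it}, if $x\!\in\!U_I\!\cap\!U_J$ with neither $I\!\subset\!J$ nor $J\!\subset\!I$, picking $i\!\in\!I\!-\!J$ and $j\!\in\!J\!-\!I$ simultaneously gives $(\mu(x))_i\!<\!(\mu(x))_j$ (from $x\!\in\!U_I$) and $(\mu(x))_j\!<\!(\mu(x))_i$ (from $x\!\in\!U_J$), a contradiction. Property~\ref{phiIUJ_it} is the functoriality identity $r_I\!=\!r_{J,I}\!\circ\!r_J$ for $I\!\subset\!J$, where $r_{J,I}\!:\ft_{J;\bu}^*\!\lra\!\ft_{I;\bu}^*$ is the dual of the inclusion. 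Property~\ref{UIJpos_it} is immediate from $x\!\in\!U_I$ since $J\!-\!I\!\subset\![N]\!-\!I$, and the inequality on $\mu_J$ is the same as on $\mu$ because it only depends on the difference of coordinates. For maximality~\eref{IJpos_e}, suppose $x\!\in\!U_J$ satisfies $(\mu_J(x))_i\!<\!(\mu_J(x))_j$ for all $i\!\in\!I$, $j\!\in\!J\!-\!I$; for any $k\!\in\![N]\!-\!I$ either $k\!\in\!J\!-\!I$ (use the hypothesis) or $k\!\in\![N]\!-\!J$ (use $x\!\in\!U_J$ with any $i\!\in\!I\!\subset\!J$), giving $(\mu(x))_i\!<\!(\mu(x))_k$ and hence $x\!\in\!U_I$.

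For the second assertion, by the reduction recalled in the paper after~\eref{IJpos_e}, a maximal Hamiltonian configuration is a cutting configuration if and only if, for every $I\!\in\!\cP^*(N)$, the $(S^1)^I_{\bu}$-action $\phi_I$ on $\mu_{I;I}^{-1}(0)\!=\!\mu_I^{-1}(0)$ is free. Since $\phi_I\!=\!\phi|_{(S^1)^I_{\bu}\times U_I}$ and $\mu_I\!=\!r_I\!\circ\!\mu$, this is precisely the regularity assumption on $(X,\om,\phi,\mu)$, establishing the equivalence. No step presents a serious obstacle; the only mildly subtle point is the invocation of the diagonal reduction, which rests on the maximality proved in the preceding paragraph together with property~\ref{phiIUJ_it}.
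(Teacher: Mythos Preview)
Your proposal is correct and follows essentially the same approach as the paper's proof: verify the Hamiltonian configuration axioms and maximality directly from~\eref{UIdfn_e}, then use the diagonal reduction for maximal configurations to identify the cutting condition with regularity. The only cosmetic difference is that the paper obtains the covering property in one stroke from $U_{[N]}\!=\!X$ (the defining condition being vacuous when $I\!=\![N]$), whereas you construct the minimal-coordinate index set explicitly; both arguments are fine.
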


\begin{proof}
Since $U_{[N]}\!=\!X$ and $\mu$ is continuous, $\{U_I\}_{I\in\cP^*(N)}$ is an open cover of~$X$.
Since~$\mu$ is $\phi$-invariant, each $U_I$ is preserved by the $\phi$-action and
thus by its restriction to~$(S^1)^I_{\bu}$.
It follows that $\phi_I$ is a Hamiltonian $(S^1)^I_{\bu}$-action on~$U_I$ with moment map~$\mu_I$.
It is immediate from~\eref{UIdfn_e} that the tuple~\eref{THamConf_e} satisfies 
\ref{IJinter_it}-\ref{UIJpos_it} in Definition~\ref{SympCut_dfn0} and~\eref{IJpos_e}.
This establishes the first claim.
The regularity conditions on $(X,\om,\phi,\mu)$ are the $I_0\!=\!I$ cases of the conditions
of Definition~\ref{SympCut_dfn1} for a Hamiltonian configuration to be a cutting configuration.
On the other hand, \eref{THamConf_e} is a maximal Hamiltonian configuration
and so the domain of~$\mu_{I_0;I}$ in~\eref{hdfn_e} is an open subspace of~$U_{I_0}$.
Furthermore, $\mu_{I_0;I_0}\!=\!\mu_{I_0}$.
By~\eref{UIdfn_e},  
$\mu_{I_0;I}$ and the restriction of the $\phi_I$-action to~$(S^1)^{I_0}_{\bu}$
are the restrictions of~$\mu_{I_0}$ and the $\phi_{I_0}$-action to $\Dom(\mu_{I_0;I})$.
This establishes the second claim.
\end{proof}

\begin{prp}\label{THamConf_prp}
Suppose $N\!\in\!\Z^+$, $(X,\om,\phi,\mu)$ is a regular Hamiltonian $(S^1)^N_{\bu}$-manifold,
 $(\cZ,\om_{\cZ})$ is the symplectic manifold determined by~\eref{THamConf_e} via 
the construction of Section~\ref{Thm12Pf_sec}, and 
\hbox{$q_{\eset}\!:X\!\lra\!X_{\eset}$}
is the corresponding surjection.
There are then natural continuous maps
\BE{picZCdfn_e} \pi\!:\cZ\lra\C  \qquad\hbox{and}\qquad
F\!: \R^{\ge0}\!\times\!(S^1)^N\!\times\!X\lra\cZ\EE
so that $\pi$ is a one-parameter family of smoothings of~$X_{\eset}$
representing the germ of deformation equivalence classes of Theorem~\ref{SympCut_thm3}
determined by~\eref{THamConf_e}, $F$ is smooth outside of $F^{-1}(X_{\prt})$, 
\BE{THamConf_e1a} 
F(0,\id,x)=q_{\eset}(x) ~~\forall\,x\!\in\!X, \quad
\pi\big(F(r,g,x)\big)=r\,\rho_{\bu}(g)
~~\forall~(r,g,x)\in \R^{\ge0}\!\times\!(S^1)^N\!\!\times\!X,\EE
with $\rho_{\bu}$ given by~\eref{S1Ndiag_e}, and 
\BE{THamConf_e1d}F_{r,g}\!: (X,\om)\lra
\big(\pi^{-1}\big(r\rho_{\bu}(g)\big),\om_{\cZ}\big|_{\pi^{-1}(r\rho_{\bu}(g))}\big),
\quad x\lra F(r,g,x),\EE
is a symplectomorphism for every $(r,g)\!\in\!\R^+\!\times\!(S^1)^N$.
Every Hamiltonian $\T$-pair $(\phi_{\T},\mu_{\T})$ for  $(X,\om)$
compatible with~$(\phi,\mu)$
determines a Hamiltonian $\T$-pair $(\phi_{\cZ;\T},\mu_{\cZ;\T})$ 
for~$(\cZ,\om_{\cZ})$ so that $(\phi_{\cZ;\T},\mu_{\cZ;\T})$ is 
compatible with the  Hamiltonian $(S^1)^N_{\bu}$-pair $(\phi_{\cZ},\mu_{\cZ})$
for $(\cZ,\om_{\cZ})$ determined by~$(\phi,\mu)$
and intertwined with~$(\phi_{\T},\mu_{\T})$ by~$F$.
The Hamiltonian configuration~\eref{cZSympCutDfn_e} for~$(\cZ,\om_{\cZ})$
determined by~\eref{THamConf_e} is~$\sC_{\phi_{\cZ},\mu_{\cZ}}$.
\end{prp}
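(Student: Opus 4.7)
The plan is to exploit $U_{[N]}\!=\!X$ to realize $(\cZ,\om_\cZ)$ as a single Marsden--Weinstein quotient $\wt\cZ_{[N]}^\circ/(S^1)^N_\bu$ with $\wt\cZ_{[N]}^\circ\!=\!\wt\mu_{[N]}^{-1}(0)\!\subset\!X\!\times\!\C^N$ and $(\wt\phi_{[N]},\wt\mu_{[N]})$ as in \eref{wtphiIdfn_e}--\eref{wtmuIdfn_e}; regularity together with Lemma~\ref{wtcZreg_lmm} gives a free $\wt\phi_{[N]}$-action on $\wt\cZ_{[N]}^\circ$, and the remaining $\cZ_I^\circ$ embed into this quotient as open subsets via the maps $\wt\Th_{[N],I;i}$. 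Taking $f_{[N]}\!\equiv\!1$ in the setup of Section~\ref{SympNBpf_subs}, the map $\wt\pi_{\cZ;[N]}(x,z)\!=\!\prod_{j\in[N]}z_j$ descends to a globally defined $\pi\!:\cZ\!\to\!\C$ precisely because $\rho_\bu$ of \eref{S1Ndiag_e} is trivial on $(S^1)^N_\bu$. Since $\nd\wt f_{[N]}\!=\!0$, the open subsets $\wt W_{[N]}$ in Lemma~\ref{fIsub_lmm}, Lemma~\ref{fIgraph_lmm}, and Corollary~\ref{fIgraph_crl} may be taken to be all of $\wt\cZ_{[N]}^\circ$, so the conclusion of Lemma~\ref{cZfamsm_lmm} holds on all of $\cZ$ and $\pi$ is a global nearly regular symplectic fibration representing the natural deformation equivalence class of one-parameter families of smoothings of~$X_\eset$ alluded to at the end of Section~\ref{SympNBpf_subs}.

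For $F$, the strict monotonicity of $c\mapsto\prod_j\sqrt{2(\mu(x)_j\!-\!c)}$ on $(-\infty,\min_j\mu(x)_j)$ (the same argument as in the proof of Lemma~\ref{fIgraph_lmm}) yields, for every $(r,x)\!\in\!\R^{\ge0}\!\times\!X$, a unique $c(r,x)\!\in\!(-\infty,\min_j\mu(x)_j]$ satisfying $\prod_j\sqrt{2(\mu(x)_j\!-\!c(r,x))}\!=\!r$; this function is continuous in $(r,x)$ and smooth wherever either $r\!>\!0$ or the minimum of $\mu(x)_j$ is attained at a unique index. Setting $\bar z_j(r,x)\!=\!\sqrt{2(\mu(x)_j\!-\!c(r,x))}$, I define
\[
F(r,g,x)=\bigl[x,(g_j\bar z_j(r,x))_{j\in[N]}\bigr]\in\cZ
\]
for $g\!=\!(g_j)_j\!\in\!(S^1)^N$. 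The identities in \eref{THamConf_e1a} follow by direct computation: $\pi(F(r,g,x))\!=\!r\rho_\bu(g)$, and $F(0,\id,x)\!=\!q_\eset(x)$ by matching with the formula for $q_{\{i\}}$ in Lemma~\ref{cZtopol_lmm} at any $i\!\in\!\{j\!:\mu(x)_j\!=\!\min_k\mu(x)_k\}$. For \eref{THamConf_e1d}, I show that the section $s_{r,g}\!:X\!\to\!\wt\cZ_{[N]}^\circ$, $x\!\mapsto\!(x,g\bar z(r,x))$, is a global slice for the $\wt\phi_{[N]}$-action on the fiber $\wt\pi_{\cZ;[N]}^{-1}(r\rho_\bu(g))$ when $r\!>\!0$: using $\phi$-invariance of $\mu$, every orbit representative in the fiber is uniquely transportable by a $\wt\phi_{[N]}(h;\cdot)$ with $h\!\in\!(S^1)^N_\bu$ (uniquely determined by the angular constraints together with $\rho_\bu(h)\!=\!1$) to the prescribed angular configuration~$g$; since the arguments $\arg g_j$ are constant along $s_{r,g}$, the pullback of $\pi_2^*\om_{\C^N}$ vanishes and $s_{r,g}^*\wt\om_{[N]}\!=\!\om$ as in \eref{fIgraph_e17}, whence $F_{r,g}^*\om_\cZ\!=\!\om$ by \eref{SympRed_e4}.

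For a compatible Hamiltonian $\T$-pair $(\phi_\T,\mu_\T)$, I lift to a Hamiltonian $\T$-pair $(\Phi_\T,\wt\mu_\T)$ on $(X\!\times\!\C^N,\wt\om_{[N]})$ with $\Phi_\T(t;x,z)\!=\!(\phi_\T(t;x),z)$ and $\wt\mu_\T\!=\!\mu_\T\!\circ\!\pi_1$. The compatibility hypotheses ensure $\Phi_\T$ commutes with $\wt\phi_{[N]}$ and preserves $\wt\mu_{[N]}$, so symplectic reduction yields a Hamiltonian $\T$-pair $(\phi_{\cZ;\T},\mu_{\cZ;\T})$ on $(\cZ,\om_\cZ)$ given by $\phi_{\cZ;\T}(t;[x,z])\!=\![\phi_\T(t;x),z]$ and $\mu_{\cZ;\T}([x,z])\!=\!\mu_\T(x)$. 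Compatibility with $(\phi_\cZ,\mu_\cZ)\!=\!(\phi_{\cZ;[N]},\mu_{\cZ;[N]})$ and intertwining with~$F$ follow from $\phi_\T\!\circ\!\phi\!=\!\phi\!\circ\!\phi_\T$, $\mu\!\circ\!\phi_\T\!=\!\mu$, $\mu_\T\!\circ\!\phi\!=\!\mu_\T$, together with the resulting $\phi_\T$-invariance $\bar z(r,\phi_\T(t;x))\!=\!\bar z(r,x)$. The final identification $\sC_{\phi_\cZ,\mu_\cZ}\!=\!\sC_\cZ$ reduces to comparing definitions: $\mu_\cZ([x,z])\!=\!\mu(x)$ is defined on all of~$\cZ$, the open cover $(U_{\cZ;I})_{I\in\cP^*(N)}$ coincides with $\{[x,z]\!:\,x\!\in\!U_I\}$ via the explicit description of $q_\cZ(\cZ_I^\circ)$ in the $U_{[N]}\!=\!X$ case, and the action/moment data match by construction. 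The main delicacy throughout is the interplay between the quotiented $(S^1)^N_\bu$ and the coset $\rho_\bu^{-1}(s)\!\subset\!(S^1)^N$ parametrizing the residual $S^1$-direction, which underlies the well-definedness, bijectivity, and equivariance of the $F_{r,g}$ identifications.
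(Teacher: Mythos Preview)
Your proof is correct and follows essentially the same route as the paper's: the single-quotient description of $\cZ$, the choice $f_{[N]}\equiv 1$ yielding $\pi([x,z])=\prod_j z_j$, the implicit-function construction of the slice $s_{r,g}$ (your $c(r,x)$ is the paper's $\mu(x)_i-\tfrac12\vr_i(r,x)$ for any fixed~$i$), the constant-argument computation $s_{r,g}^*\wt\om_{[N]}=\om$, and the trivially lifted $\T$-pair are all as in the paper. The only difference is cosmetic: the paper parametrizes via $\vr_i(r,x)$ solving $\prod_j(2(\mu(x))_{ij}+\vr)=r^2$, which avoids ever writing the individually ill-defined components $\mu(x)_j\in\R$.
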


\begin{proof}
Since $U_{[N]}\!=\!X$ in this case,
\BE{THamConf_e3}\begin{split}
\big(\cZ,\om_{\cZ}\big)=\big(\cZ_{[N]}^{\circ},\vp_{[N]}\big), &\qquad
(X_i,\om_i)=\big(\cZ_{\{i\};[N]}^{\circ},\vp_{\{i\};[N]}\big)
\quad\forall\,i\!\in\![N], \\
(X_I,\om_I)&=\big(\cZ_{I;[N]}^{\circ},\vp_{I;[N]}\big)
\quad\forall\,I\!\in\!\cP^*(N);
\end{split}\EE
see~\eref{cZdfn_e}, \eref{Xidfn_e}, and~\eref{cZI0dfn_e}.
The $I\!=\![N]$ cases of~\eref{wtphiIdfn_e} and~\eref{wtcZIdfn_e} become 
\begin{gather}
\label{THamConf_e5a}
\wt\phi\!\equiv\!\wt\phi_{[N]}:(S^1)^N_{\bu}\!\times\!\big(X\!\times\!\C^N\big) \lra X\!\times\!\C^N, 
\quad \wt\phi(g;x,z)=\big(\phi(g;x),\phi_{\C^N}(g^{-1};z)\big),\\
\label{THamConf_e5b}
\wt\cZ\!\equiv\!\wt\cZ_{[N]}^{\circ}
=\big\{\big(x,(z_j)_{j\in[N]}\big)\!\in\!X\!\times\!\C^N\!\!:
(\mu(x))_i\!-\!\frac12|z_i|^2\!=\!(\mu(x))_j\!-\!\frac12|z_j|^2
~\forall\,i,j\!\in\![N]\big\}.
\end{gather}
By~\eref{THamConf_e3}, \eref{wtcZiIdfn_e}, and~\eref{SympRed_e4},  the symplectic manifold
$(\cZ,\om_{\cZ})$ of Corollary~\ref{cZtopol_crl} is given~by
\BE{THamConf_e6}\cZ=\wt\cZ/\wt\phi, \qquad 
q^*\om_{\cZ}=\big(\pi_1^*\om\!+\!\pi_2^*\om_{\C^N}\big)\big|_{\wt\cZ},\EE
where $q\!:\wt\cZ\!\lra\!\cZ$ is the quotient map.
The manifolds $(X_i,\om_i)$  and their submanifolds~$(X_I,\om_I)$ are 
the symplectic submanifolds of~$\cZ$ with $z_i\!=\!0$ and $z_j\!=\!0$ for all $j\!\in\!I$, 
respectively.
They are the images under~$q$ of the subspaces~$\wt{X}_i$
and~$\wt{X}_I$ of~$\wt\cZ$ described in the same way.
The map~$q_{\eset}$ in~\eref{qesetdfn_e} is now given~by
\BE{UIgeTor_e}\begin{split}
q_{\eset}\!:U_i^{\le}&\!=\!\big\{x\!\in\!X\!: (\mu(x))_i\!\le\!(\mu(x))_j~\forall\,j\!\in\![N]\big\}
\lra X_i\subset X_{\eset},\\
&q_{\eset}(x)=
q\Big(x,\Big(\sqrt{2(\mu(x))_{ij}}\,\Big)_{\!j\in[N]}\Big),
\end{split}\EE
for each $i\!\in\![N]$, with $(\mu(x))_{ij}$ as in~\eref{hijdfn_e}.\\

\noindent
A smooth map $\pi\!:\cZ\!\lra\!\C$ as in Section~\ref{SympNBpf_subs}
is determined by a tuple $(f_I)_{I\in\cP^*(N)}$
of smooth functions as in Lemma~\ref{fIcoll_lmm}.
Since $U_{\cZ;[N]}\!=\!\cZ$ in this case, such a tuple can be constructed without shrinking
the original cover.
For $I\!\in\!\cP^*(N)$, $i\!\in\!I$, and $[x,(z_j)_{j\in I}]\!\in\!\cZ_I^{\circ}$,
define
$$f_I\big(\big[x,(z_j)_{j\in I}\big]\big)
=\prod_{j\in [N]-I}\!\!\!\!\!\!\sqrt{2(\mu(x))_{ij}\!+\!|z_i|^2}>0\,;$$
the inequality holds by~\eref{UIdfn_e}.
By~\eref{THamConf_e5b}, the function~$f_I$ does not depend on~$i\!\in\!I$.
The associated projection map is~then given~by
\BE{THamConf_e15b}\pi\big(\big[x,(z_i)_{i\in [N]}\big]\big)
=f_{[N]}\big(\big[x,(z_i)_{i\in[N]}\big]\big)
\!\!\prod_{i\in[N]}\!\!\!z_i
=\prod_{i\in[N]}\!\!\!z_i\,.\EE
The proofs of Lemmas~\ref{fIsub_lmm} and~\ref{fIgraph_lmm} with $I\!=\![N]$ and
$\wt{f}_{[N]}\!=\!1$ show that $\wt{W}_{[N]}\!=\!\wt\cZ_{[N]}^{\circ}$ satisfies the conditions
stated in these lemmas.
Thus, $\wt{W}_{[N]}\!=\!\wt\cZ_{[N]}^{\circ}$ also satisfies the condition of
Corollary~\ref{fIgraph_crl}. 
In the proof of Lemma~\ref{cZfamsm_lmm}, $W_{[N]}$ then becomes $q_{\cZ}(\cZ_{[N]}^{\circ})$.
From~\eref{THamConf_e3}, we  conclude~that 
$$\cZ'=q_{\cZ}(\cZ_{[N]}^{\circ})=\cZ\,,$$
i.e.~\eref{THamConf_e15b} defines a one-parameter family~$\pi$ 
of smoothings of $X_{\eset}\!=\!\pi^{-1}(0)$ without shrinking~$\cZ$.
This conclusion also follows from~\eref{THamConf_e1a} and~\eref{THamConf_e1d}.\\

\noindent
Fix $i\!\in\![N]$ and define
\BE{vridfn_E}\vr_i\!:X\lra\R^{\ge0}, \qquad
\vr_i(x)=\max\!\big\{\!-\!2(\mu(x))_{ij}\!:j\!\in\![N]\big\}.\EE
By the same reasoning as in the proof of Lemma~\ref{fIgraph_lmm}, 
the~equation 
\BE{THamConf_e18}\prod_{j\in[N]}\!\!\!\big(2(\mu(x))_{ij}\!+\!\vr\big)=r^2\EE
has a unique solution $\vr\!=\!\vr_i(r,x)$ in $[\vr_i(x),\i)$ for each $r\!\in\!\R^{\ge0}$.
For $r\!\in\!\R^+$, it lies in $(\vr_i(x),\i)$ and depends smoothly on~$(r,x)$.
For $x\!\in\!X$ such that the maximum in~\eref{vridfn_E} is achieved
at a unique value $j\!\in\![N]$,
the function 
$$(r,x')\lra \sqrt{2(\mu(x'))_{ij}\!+\!\vr_i(r,x')}$$
is smooth around $(x,0)$.
Define~$F$ in~\eref{picZCdfn_e}~by
\BE{THamConf_e19}
F\big(r,(\ne^{\fI\th_j})_{j\in[N]},x\big)=
q\Big(x,\big(\ne^{\fI\th_j}\!\sqrt{2(\mu(x))_{ij}\!+\!\vr_i(r,x)}\big)_{\!j\in[N]}\Big).\EE
This function is independent of the choice of $i\!\in\![N]$.
It is continuous everywhere and smooth outside of the points $(0,g,x)$ such that 
$$\big|\big\{j\!\in\![N]\!: \vr_i(x)\!=\!-2(\mu(x))_{ij}\big\}\big|\ge2,$$
i.e.~on the complement of $F^{-1}(X_{\prt})$.
It restricts to~\eref{UIgeTor_e} over $\{(0,\id)\}\!\times\!X$.
By~\eref{THamConf_e15b}, \eref{S1Ndiag_e}, and \eref{THamConf_e18},
$F$~satisfies the second property in~\eref{THamConf_e1a} as~well.
By the same reasoning as in the proof of Corollary~\ref{SympNB_crl2},
each map~\eref{THamConf_e1d} is a symplectomorphism.\\

\noindent
Let $(\phi_{\T},\mu_{\T})$ be a Hamiltonian $\T$-pair for $(X,\om)$
compatible with~$(\phi,\mu)$.
We define a Hamiltonian $\T$-pair for $(X\!\times\!\C^N,\pi_1^*\om\!+\!\pi_2^*\om_{\C^N})$ by 
\begin{alignat}{2}
\label{THamConf_e7a} 
\wt\phi_{\cZ;\T}\!:\T\!\times\!\big(X\!\times\!\C^N\big)&\lra X\!\times\!\C^N, 
&\qquad \wt\phi_{\cZ;\T}(g;x,z)&=\big(\phi_{\T}(g;x),z\big),\\
\label{THamConf_e7b} 
\wt\mu_{\cZ;\T}\!:X\!\times\!\C^N&\lra\ft^*, &\qquad
\wt\mu_{\cZ;\T}(x,z)&=\mu_{\T}(x)\,.
\end{alignat}
Since the $\phi_{\T}$-action preserves~$\mu$ and commutes with the $\phi$-action, 
the action~\eref{THamConf_e7a} restricts to an action on~\eref{THamConf_e5b}
and descends to a $\T$-action 
$$ \phi_{\cZ;\T}\!:\T\!\times\!\cZ\lra \cZ$$
on~$\cZ$. 
Since the moment map~$\mu_{\T}$ is $\phi$-invariant, \eref{THamConf_e7b} descends to a smooth~map
$$\mu_{\cZ;\T}\!:\cZ\lra \ft^*.$$
By~\eref{THamConf_e6}, $(\phi_{\cZ;\T},\mu_{\cZ;\T})$ is a Hamiltonian $\T$-pair
for~$(\cZ,\om_{\cZ})$.
Since the actions of~$\phi$ and~$\phi_{\T}$ commute,
$\mu_{\T}$ is $\phi$-invariant, and  $\mu$ is $\phi_{\T}$-invariant,
\eref{THamConf_e7a} and~\eref{THamConf_e7b} imply that 
$(\phi_{\cZ;\T},\mu_{\cZ;\T})$ is  compatible $(\phi_{\cZ},\mu_{\cZ})$.
By~\eref{THamConf_e19}, \eref{THamConf_e7a}, and~\eref{THamConf_e7b},
\BE{THamConf_e1b}
\phi_{\cZ;\T}\big(g';F(r,g,x)\big)=F\big(r,g,\phi_{\T}(g';x)\big), \qquad
\mu_{\cZ;\T}\big(F(r,g,x)\big)=\mu_{\T}(x),\EE
i.e.~$F$ intertwines $(\phi_{\T},\mu_{\T})$ and $(\phi_{\cZ;\T},\mu_{\cZ;\T})$.\\

\noindent
It remains to establish the last claim.
Since $U_{\cZ;I}\!=\!q_{\cZ}(q_{\cZ;I}(\wt\cZ_I^{\circ}))$,
\eref{THamConf_e3}  and~\eref{cZdfn_e} give
$$U_{\cZ;I}=q\big(\wt\cZ_{[N],I}^{\circ}\big)
=q\big(\wt\cZ\!\cap\!(U_I\!\times\!\C^N)\big).$$
Combining this with the first equation in~\eref{UIdfn_e} 
and~\eref{THamConf_e7b} for $\mu_{\T}\!=\!\mu$, 
we~obtain 
\BE{THamConf_e11a}\begin{split}
U_{\cZ;I}&=\big\{[x,z]\!\in\!\cZ\!:(\mu(x))_i\!<\!(\mu(x))_j
~\forall\,i\!\in\!I,~j\!\in\![N]\!-\!I\big\}\\
&=\big\{[x,z]\!\in\!\cZ\!:\,\big(\mu_{\cZ}([x,z])\!\big)_{\!i}\!<\!\big(\mu_{\cZ}([x,z])\!\big)_{\!j}
~\forall\,i\!\in\!I,~j\!\in\![N]\!-\!I\big\}.
\end{split}\EE
By~\eref{wtcZphiiIdfn_e}, the second equation in~\eref{UIdfn_e}, 
and~\eref{THamConf_e7a} for $\phi_{\T}\!=\!\phi$,
\BE{THamConf_e11b}\phi_{\cZ;I}=\phi_{\cZ}\big|_{(S^1)^I_{\bu}\times U_{\cZ;I}}.\EE
By~\eref{wtcZhiIdfn_e}, the third equation in~\eref{UIdfn_e},  
and~\eref{THamConf_e7b} for $\mu_{\T}\!=\!\mu$,
\BE{THamConf_e11c}\mu_{\cZ;I}=r_I\!\circ\!\mu_{\cZ}\big|_{U_{\cZ;I}}.\EE
By~\eref{THamConf_e11a}-\eref{THamConf_e11c}, 
$\sC_{\cZ}\!=\!\sC_{\phi_{\cZ},\mu_{\cZ}}$.
\end{proof}

\noindent
By~\eref{THamConf_e1a} and the first property in~\eref{THamConf_e1b}, 
$$\phi_{\cZ;\T}(\T\!\times\!X_i)=X_i~~\forall\,i\!\in\![N], \quad
\pi\big(\phi_{\cZ;\T}(g;y)\big)=\pi(y).$$
By~\eref{THamConf_e1d} and~\eref{THamConf_e1b}, 
the restriction~\eref{THamConf_e1d} of $F$ induces an isomorphism
$$F_{r,g}\!: \big(X,\om,\phi_{\T},\mu_{\T}\big)\lra  
\big(\pi^{-1}(\la),\om_{\cZ}|_{\pi^{-1}(\la)},
\phi_{\cZ;\T}|_{\T\times \pi^{-1}(\la)},\mu_{\cZ;\T}|_{\T\times \pi^{-1}(\la)}\big)$$
whenever $\la\!\equiv\!r\rho_{\bu}(g)\neq0$.
Since $\sC_{\cZ}\!=\!\sC_{\phi_{\cZ},\mu_{\cZ}}$, 
the $(\phi_{\T},\mu_{\T})\!=\!(\phi,\mu)$ case of this statement implies
that $F_{r,g}$ identifies the cutting configuration~\eref{THamConf_e} for $(X,\om)$
with the restriction of the induced Hamiltonian configuration~$\sC_{\cZ}$
to~$\pi^{-1}(\la)$.

\subsection{A local example}
\label{BasicEg_subs}

\noindent
For $N\!\in\!\Z^+$, let
$$\C^N_0=\bigcup_{i\in[N]}\!\!\C^N_i \qquad\hbox{and}\qquad
 \C^N_{\prt}=\bigcup_{\begin{subarray}{c}I\in\cP^*(N)\\ |I|=2\end{subarray}}\!\!\!\!\!\!\C^N_I$$
be the union of the coordinate hyperplanes and 
the union of  the codimension~2 coordinate subspaces, respectively.
Thus, $\C^N_0$ and~$\C^N_{\prt}$ are the SC symplectic variety and its singular locus
associated to the SC symplectic configuration
\BE{BasicSCC_e} 
\X_{\C^N}\equiv \big((\C^N_I)_{I\in\cP^*(N)},(\om_{\C^N}|_{\C^N_i})_{i\in[N]}\big)\EE
as in~\eref{Xesetdfn_e} and~\eref{Xprtdfn_e}. 
Let 
\BE{BasicSCCsm_e} 
\pi_{\C^N}\!: \C^N\lra\C, \qquad 
\pi_{\C^N}(x_1,\ldots,x_N)= x_1\!\ldots\!x_N\,.\EE
The tuple $(\C^N,\om_{\C^N},\pi_{\C^N})$ is then a one-parameter family of smoothings
of the SC symplectic variety~$\C^N_0$ in the sense of
 the sentence after Definition~\ref{SimpFibr_dfn}.\\

\noindent
Under the identifications~\eref{ftNnu_e}, the restriction homomorphism
$$r_{\bu}\!:\ft_N^*\lra\ft_{N;\bu}^*
=\R^N\big/\big\{(a,\ldots,a)\!\in\!\R^N\!:\,a\!\in\!\R\big\},
\qquad \eta\lra\eta|_{\ft_{N;\bu}},$$
is the quotient map.
Let  $(\phi_{\C^N},\mu_{\C^N})$ be the standard Hamiltonian pair for $(\C^N,\om_{\C^N})$
given by~\eref{phimustan_e}.
The maximal $N$-fold Hamiltonian configuration 
\BE{SympCutDfnCN_e}
\sC_{\C^N}\equiv\sC_{\phi_{\C^N}|_{(S^1)^N_{\bu}\times\C^N},r_{\bu}\circ\mu_{\C^N}}\equiv 
\big(U_{\C^N;I},\phi_{\C^N;I},\mu_{\C^N;I} \big)_{I\in\cP^*(N)}\EE 
determined by this pair via~\eref{UIdfn_e} is  given~by
\begin{gather*} 
U_{\C^N;I}=\big\{(x_1,\ldots,x_N)\!\in\!\C^N\!:\,|x_i|\!<\!|x_j|~\forall\,i\!\in\!I,\,
j\!\in\![N]\!-\!I\big\},\\ 
\phi_{\C^N;I}\!: 
(S^1)^I_{\bu}\!\times\!U_I\lra U_I,\quad
\mu_{\C^N;I}\big((x_i)_{i\in[N]}\big)=\frac12
\big[\big(|x_i|^2\big)_{i\in I}\big]\in\ft_{I;\bu}^*\,.
\end{gather*}
The associated $(S^1)$-actions and Hamiltonians in~\eref{phiijhij_e} are
\begin{alignat*}{2}
\phi_{\C^N;ij}\!: S^1\!\times\!\C^N&\lra\C^N, &\quad
\big(\phi_{\C^N;ij}(\ne^{\fI\th};x_1,\ldots,x_N)\big)_k&=
\begin{cases}
\ne^{-\fs_{ij}\fI\th}x_k,&\hbox{if}~k\!=\!i;\\
\ne^{\fs_{ij}\fI\th}x_k,&\hbox{if}~k\!=\!j;\\
x_k,&\hbox{otherwise};
\end{cases}\\
h_{\C^N;ij}\!: \C^N&\lra\R, &\quad 
h_{\C^N;ij}(x_1,\ldots,x_N)&=\frac{\fs_{ij}}{2}\big(|x_j|^2\!-\!|x_i|^2\big).
\end{alignat*}
The Hamiltonian configuration~\eref{SympCutDfnCN_e} is not a cutting configuration,
as it does not satisfy the  additional conditions of Definition~\ref{SympCut_dfn1} if $N\!\ge\!2$.
On the other hand, the restriction of~$\sC_{\C^N}$ to $\C^N\!-\!\C^N_{\prt}$ satisfies
these  conditions and thus is an $N$-fold cutting configuration.\\

\noindent
Fix  $\la\!\in\!\C^*$. Let
\BE{Xladfn_e}
(S^1)^N_{\la}=\rho_{\bu}^{-1}\big(\la/|\la|\big), \quad
X=\big\{(x_1,\ldots,x_N)\!\in\!\C^N\!:\,x_1\!\ldots\!x_N\!=\!\la\big\},
\quad \om=\om_{\C^N}\big|_X.\EE
Since $X$ is preserved by the restriction of the $\phi_{\C^N}$-action
to~$(S^1)^N_{\bu}$, 
\BE{BasicEgConf_e}\sC\equiv \sC_{\C^N}\big|_X
\equiv \sC_{\phi_{\C^N}|_{(S^1)^N_{\bu}\times X},r_{\bu}\circ\mu_{\C^N}|_X}
\equiv \big(U_I,\phi_I,\mu_I\big)_{I\in\cP^*(N)}\EE
is an $N$-fold Hamiltonian configuration.
Since the full $\phi_{\C^N}$-action  does not preserve~$X$,
the present situation is a special case of the setting of 
Lemma~\ref{THamConf_lmm} and not of Lemma~\ref{THamConf_lmm2}.

\begin{lmm}\label{BasicEg_lmm}
The tuple~$\sC$ in~\eref{BasicEgConf_e} is a maximal $N$-fold cutting configuration
for~$(X,\om)$.
\end{lmm}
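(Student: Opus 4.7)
The plan is to reduce the lemma to two facts established in the discussion preceding its statement: that $\sC_{\C^N}$ is a maximal $N$-fold Hamiltonian configuration for~$(\C^N,\om_{\C^N})$ (Lemma~\ref{THamConf_lmm}) and that its restriction to $\C^N\!-\!\C^N_\prt$ is an $N$-fold cutting configuration. Everything then follows once we verify that $X$ is a $\sC_{\C^N}$-invariant symplectic submanifold of~$\C^N$ lying inside $\C^N\!-\!\C^N_\prt$.

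\noindent
First I would check the smoothness and symplectic structure on~$X$. The map $\pi_{\C^N}$ in~\eref{BasicSCCsm_e} has differential $\sum_k(\prod_{j\neq k}x_j)\nd x_k$ at $(x_1,\ldots,x_N)$, which is surjective whenever at most one coordinate vanishes. Since $\la\!\neq\!0$, no coordinate vanishes on~$X$; hence $X$ is a smooth complex codimension-one submanifold of~$\C^N$ and in particular $X\!\subset\!\C^N\!-\!\C^N_0\!\subset\!\C^N\!-\!\C^N_\prt$. As a complex submanifold, $X$ is automatically a symplectic submanifold of~$(\C^N,\om_{\C^N})$.

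\noindent
Next I would verify the $\sC_{\C^N}$-invariance of~$X$. Every element $(\ne^{\fI\th_i})_{i\in[N]}\!\in\!(S^1)^N_\bu$ satisfies $\prod_i\ne^{\fI\th_i}\!=\!1$, so the restriction of the $\phi_{\C^N}$-action to any subtorus $(S^1)^I_\bu$ preserves the product $x_1\!\ldots\!x_N$ and hence preserves~$X$. Thus the tuple $\sC\!\equiv\!\sC_{\C^N}|_X$ makes sense in the sense of the restriction discussion in Section~\ref{SympCutThm_subs1}, with $U_I\!=\!U_{\C^N;I}\!\cap\!X$, and $\phi_I,\mu_I$ obtained by restricting $\phi_{\C^N;I},\mu_{\C^N;I}$ to~$X$.

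\noindent
Finally I would assemble the conclusions. Since $X\!\subset\!\C^N\!-\!\C^N_\prt$, the restriction $\sC_{\C^N}|_X$ coincides with the restriction of the cutting configuration $\sC_{\C^N}|_{\C^N-\C^N_\prt}$ to the $\sC_{\C^N}$-invariant symplectic submanifold~$X$; by the general principle in Section~\ref{SympCutThm_subs1} that restriction of a cutting configuration to an invariant symplectic submanifold is a cutting configuration, $\sC$ is an $N$-fold cutting configuration for~$(X,\om)$. Maximality transfers directly from~$\sC_{\C^N}$: for $I\!\subset\!J$, if $x\!\in\!U_J$ satisfies $(\mu_J(x))_i\!<\!(\mu_J(x))_j$ for all $i\!\in\!I$ and $j\!\in\!J\!-\!I$, then the same inequality holds with $\mu_{\C^N;J}$ in place of~$\mu_J$ and $x\!\in\!U_{\C^N;J}$, so maximality of~$\sC_{\C^N}$ gives $x\!\in\!U_{\C^N;I}$ and hence $x\!\in\!U_{\C^N;I}\!\cap\!X\!=\!U_I$, verifying~\eref{IJpos_e} for~$\sC$. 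There is no genuine obstacle; the entire argument amounts to transferring properties of $\sC_{\C^N}$ under restriction to the invariant hypersurface~$X$.
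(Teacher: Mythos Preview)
Your proof is correct, but it takes a somewhat different organizational route from the paper's. The paper applies Lemma~\ref{THamConf_lmm} directly to the intrinsic Hamiltonian $(S^1)^N_{\bu}$-manifold $(X,\om,\phi_{\C^N}|_{(S^1)^N_{\bu}\times X},r_{\bu}\!\circ\!\mu_{\C^N}|_X)$: the first statement of that lemma yields maximality immediately, and the second statement reduces the cutting-configuration property to regularity, which the paper verifies by noting that the full $(S^1)^N$-action~$\phi_{\C^N}$ is free on $\C^N\!-\!\C^N_0\!\supset\!X$, so every subtorus action~$\phi_I$ is free on all of~$X$. You instead view~$\sC$ as the restriction of the ambient cutting configuration $\sC_{\C^N}|_{\C^N-\C^N_{\prt}}$ to the invariant symplectic submanifold~$X$, invoke the restriction principle from Section~\ref{SympCutThm_subs1} to inherit the cutting-configuration property, and then check maximality by hand. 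Both approaches are short; the paper's is marginally more economical because Lemma~\ref{THamConf_lmm} packages maximality and the cutting criterion together, while yours has the mild advantage of making explicit that~$X$ is a symplectic (indeed complex) hypersurface and that~\eref{IJpos_e} is preserved under restriction to invariant submanifolds.
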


\begin{proof}
By the first statement of Lemma~\ref{THamConf_lmm},
\eref{BasicEgConf_e} is a maximal $N$-fold Hamiltonian configuration for~$(X,\om)$.
Since the $\phi_{\C^N}$-action is free on $\C^N\!-\!\C^N_0$,
the $\phi_I$-action is free on~$X$ for each $I\!\in\!\cP^*(N)$.
By the second statement of Lemma~\ref{THamConf_lmm},
\eref{BasicEgConf_e} is thus a cutting configuration.
\end{proof}

\begin{prp}\label{BasicEg_prp}
Suppose $(\cZ,\om_{\cZ})$ is the symplectic manifold
determined by~\eref{BasicEgConf_e} via the construction of Section~\ref{Thm12Pf_sec}
and $\pi$, $F$, $\phi_{\cZ}$, and~$\mu_{\cZ}$ are the associated maps provided
by Proposition~\ref{THamConf_prp}.
There is a natural smooth~map
\BE{BasicEg_e1} 
\wch{F}\!: (S^1)^N_{\la}\!\times\!\C^N\lra\cZ\EE
such that 
\BE{BasicEg_e1d}\wch{F}_g\!: \big(\C^N,\om_{\C^N}\big)\lra \big(\cZ,\om_{\cZ}\big),
\qquad x\lra \wch{F}(g,x),\EE
is a symplectomorphism for every $g\!\in\!(S^1)^N_{\la}$, 
\begin{gather}
\label{BasicEg_e1a}
\wch{F}\big((S^1)^N_{\la}\!\times\!\C_i^N\big)=X_i~\forall\,i\!\in\![N], ~~
\pi\big(\wch{F}(g,z)\big)=\pi_{\C^N}(z), ~~
\wch{F}|_{(S^1)^N_{\la}\times X}=F|_{\{|\la|\}\times(S^1)^N_{\la}\times X},\\
\label{BasicEg_e1b}
\phi_{\cZ}\big(g';\wch{F}(g,z)\big)=\wch{F}\big(g,\phi_{\C^N}(g';z)\big), \qquad
\mu_{\cZ}\big(\wch{F}(g,z)\big)=r_{\bu}\big(\mu_{\C^N}(z)\big).
\end{gather}
\end{prp}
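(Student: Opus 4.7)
\noindent
The plan is to write down an explicit formula for $\wch{F}$ in terms of the global symplectic reduction description of $\cZ$ and then verify each required property by direct calculation. Denote by $q\!:\!\wt\cZ\!\lra\!\cZ$ the quotient map from the proof of Proposition~\ref{THamConf_prp}, with $\wt\cZ$ as in~\eref{THamConf_e5b}. For $w\!=\!(w_j)_{j\in[N]}\!\in\!\C^N$, let $\vr(w)$ be the unique real number such that $|w_j|^2\!+\!\vr(w)\!>\!0$ for every $j\!\in\![N]$ and
$\prod_j(|w_j|^2\!+\!\vr(w))\!=\!|\la|^2$.
Since the $\vr$-derivative of $\prod_j(|w_j|^2\!+\!\vr)$ at the solution equals $\sum_j\!\prod_{k\neq j}(|w_k|^2\!+\!\vr(w))\!>\!0$, the Implicit Function Theorem yields smooth dependence of $\vr$ on $w$. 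I~then~set
$$\wch{F}\big(g,w\big)=q\big(x(g,w),w\big), \qquad x(g,w)_i\!\equiv\!\ne^{\fI\th_i}\sqrt{|w_i|^2\!+\!\vr(w)},$$
for $g\!=\!(\ne^{\fI\th_i})_{i\in[N]}\!\in\!(S^1)^N_\la$. One checks that $\prod_i\!x_i(g,w)\!=\!\rho_\bu(g)\!\prod_i\!\sqrt{|w_i|^2\!+\!\vr(w)}\!=\!(\la/|\la|)|\la|\!=\!\la$, so $x(g,w)\!\in\!X$; and $|x_i(g,w)|^2\!-\!|w_i|^2\!=\!\vr(w)$ is independent of $i$, placing $(x(g,w),w)\!\in\!\wt\cZ$. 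Hence $\wch{F}$ is well-defined and smooth.

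\noindent
Next I would verify that each $\wch{F}_g$ is a diffeomorphism by writing down its inverse. Given $[x',z']\!\in\!\cZ$ (with $x'_i\!\neq\!0$ for every $i$, since $x'\!\in\!X$ and $\la\!\neq\!0$), set $h\!=\!(\ne^{\fI(\th_i-\arg x'_i)})_{i\in[N]}$; this lies in $(S^1)^N_\bu$ because $\prod_i\!\ne^{\fI\th_i}\!=\!\la/|\la|\!=\!\prod_i\!\ne^{\fI\arg x'_i}$. Then $\wt\phi(h;x',z')\!=\!((\ne^{\fI\th_i}|x'_i|)_i,(\ne^{\fI(\arg x'_i-\th_i)}z'_i)_i)$ is of the form $(x(g,w),w)$ with $w_i\!=\!\ne^{\fI(\arg x'_i-\th_i)}z'_i$; this $w$ is the unique preimage of $[x',z']$ under~$\wch{F}_g$. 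For the symplectomorphism property, I use~\eref{THamConf_e6}: pulling back $q^*\om_\cZ\!=\!(\pi_1^*\om\!+\!\pi_2^*\om_{\C^N})|_{\wt\cZ}$ along $w\!\mapsto\!(x(g,w),w)$ yields $x(g,\cdot)^*\om\!+\!\om_{\C^N}$. Since $\arg x_i(g,w)\!=\!\th_i$ is independent of~$w$, each summand $x(g,\cdot)^*(\nd x_i\!\w\!\nd y_i)$ vanishes, so $x(g,\cdot)^*\om\!=\!0$ and $\wch{F}_g^*\om_\cZ\!=\!\om_{\C^N}$.

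\noindent
The remaining identities in~\eref{BasicEg_e1a} and~\eref{BasicEg_e1b} reduce to direct computation. The relation $\pi(\wch{F}(g,w))\!=\!\prod_i\!w_i\!=\!\pi_{\C^N}(w)$ is immediate from $\pi([x,z])\!=\!\prod_i\!z_i$ in the proof of Proposition~\ref{THamConf_prp}, and $\wch{F}((S^1)^N_\la\!\times\!\C^N_i)\!=\!X_i$ from the description $X_i\!=\!q(\{(x,z)\!\in\!\wt\cZ\!:\,z_i\!=\!0\})$. For the equivariance in~\eref{BasicEg_e1b}, I note that $\vr(\phi_{\C^N}(g';w))\!=\!\vr(w)$ for $g'\!\in\!(S^1)^N_\bu$ (the defining equation of~$\vr$ depends only on moduli), so $x(g,\phi_{\C^N}(g';w))\!=\!x(g,w)$; applying the $\wt\phi$-action of~$g'$ then shows $(x(g,w),\phi_{\C^N}(g';w))\!\sim\!(\phi(g';x(g,w)),w)$, whence $\wch{F}(g,\phi_{\C^N}(g';w))\!=\!\phi_\cZ(g';\wch{F}(g,w))$. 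The moment-map identity follows from $\mu_\cZ([x,z])\!=\!r_\bu(\mu_{\C^N}(x))$ (by \eref{wtcZhiIdfn_e}) together with $\mu_{\C^N}(x(g,w))\!-\!\mu_{\C^N}(w)\!=\!(\vr(w)/2,\ldots,\vr(w)/2)\!\in\!\ker r_\bu$. Finally, for $w\!=\!x\!\in\!X$ the defining equation of~$\vr$ gives $\vr(x)\!=\!0$, so $x(g,x)_i\!=\!\ne^{\fI\th_i}|x_i|$; matching this with $F(|\la|,g,x)\!=\!q(x,(\ne^{\fI\th_j}|x_j|)_j)$ (which emerges from the formula in the proof of Proposition~\ref{THamConf_prp} with $\vr_i(|\la|,x)\!=\!|x_i|^2$), the two representatives in~$\wt\cZ$ are related by the $\wt\phi$-action of $(\ne^{\fI(\th_j-\arg x_j)})_j$, which lies in~$(S^1)^N_\bu$ exactly because $g\!\in\!(S^1)^N_\la$ and $x\!\in\!X$. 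The main technical subtlety is tracking the smoothness of $\vr$, hence of $x(g,\cdot)$, near the locus where some $w_j$ vanish (including~$w\!=\!0$), which rests on the positivity of $\partial_\vr\!\prod_j(|w_j|^2\!+\!\vr)$ at the solution noted above.
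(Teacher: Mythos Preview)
Your proof is correct and follows essentially the same route as the paper's: both define $\wch{F}(g,w)=q(x(g,w),w)$ with $x(g,w)_j=\ne^{\fI\th_j}\sqrt{|w_j|^2+\vr(w)}$, where $\vr(w)$ solves $\prod_j(|w_j|^2+\vr)=|\la|^2$ (the paper writes this with a distinguished base index~$i$, giving the shifted variable $\vr_{\la;i}=\vr+|w_i|^2$, but the formulas are identical). Your verification of the properties matches the paper's, and your explicit construction of the inverse of~$\wch{F}_g$ and direct pullback computation $\wch{F}_g^*\om_{\cZ}=\om_{\C^N}$ are slightly more self-contained than the paper's appeal to the reasoning of Corollary~\ref{SympNB_crl2}.
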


\begin{proof}
The action~\eref{THamConf_e5a} in this case reduces~to
\BE{whphiieg_e}
\wt\phi\!:
(S^1)^N_{\bu}\times \big(X\!\times\!\C^N\big) \lra X\!\times\!\C^N, 
\quad \wt\phi(g;x,z)=\big(\phi_{\C^N}(g;x),\phi_{\C^N}(g^{-1};z)\big).\EE
The symplectic manifold $(\cZ,\om_{\cZ})$ of Corollary~\ref{cZtopol_crl} is
described by~\eref{THamConf_e5b} and~\eref{THamConf_e6}, which~become
\begin{gather}
\notag
\wt\cZ=\big\{\!\big((x_j)_{j\in[N]},(z_j)_{j\in[N]}\big)\!\in\!\C^N\!\times\!\C^N\!\!:\!
\prod_{i\in[N]}\!\!x_i\!=\!\la,\,|x_i|^2\!-\!|z_i|^2\!=\!|x_j|^2\!-\!|z_j|^2
~\forall\,i,j\!\in\![N]\big\},\\
\label{cZdfneg_e}
\cZ=\wt\cZ/\wt\phi, \qquad 
q^*\om_{\cZ}=\big(\pi_1^*\om_{\C^N}\!+\!\pi_2^*\om_{\C^N}\big)\big|_{\wt\cZ}\,,
\end{gather}
where $q\!:\wt\cZ\!\lra\!\cZ$ is the quotient map.
The manifolds $(X_i,\om_i)$  and their submanifolds~$(X_I,\om_I)$ are again 
the symplectic submanifolds of~$\cZ$ with $z_i\!=\!0$ and $z_j\!=\!0$ for all $j\!\in\!I$, 
respectively.
They are the images under~$q$ of the subspaces~$\wt{X}_i$
and~$\wt{X}_I$ of~$\wt\cZ$ described in the same way.\\

\noindent
Fix $i\!\in\![N]$ and define
\begin{gather*}
\vr_i\!:\C^N\lra\R^{\ge0}, \qquad
\vr_i(z)=\max\!\big\{\!-\!2\big(\mu_{\C^N}(z)\!\big)_{\!ij}\!:j\!\in\![N]\big\},\\
\hbox{where}\qquad
\big(\mu_{\C^N}(z)\!\big)_{\!ij}=\frac{1}{2}\big(|z_j|^2\!-\!|z_i|^2\big)
~~\forall~z\!\equiv\!(z_j)_{j\in[N]}\in\C^N\,.
\end{gather*}
For each $z\!\in\!\C^N$, let $\vr_{\la;i}(z)\!\in\!(\vr_i(z),\i)$ be 
the unique solution of the equation
\BE{BasicEg_e18}\prod_{j\in[N]}\!\!\!\big(2(\mu_{\C^N}(z))_{ij}\!+\!\vr\big)=|\la|^2\EE
in~$\vr$; this is a special case of the equation~\eref{THamConf_e18}.
Define~$\wch{F}$ in~\eref{BasicEg_e1}~by
\BE{BasicEg_e19}
\wch{F}\big((\ne^{\fI\th_j})_{j\in[N]},(z_j)_{j\in[N]}\big)=
q\Big(\big(\ne^{\fI\th_j}\!
\sqrt{|z_j|^2\!-\!|z_i|^2\!+\!\vr_{\la;i}((z_k)_{k\in[N]})}\big)_{\!j\in[N]},
(z_j)_{j\in[N]}\Big).\EE
This function is independent of the choice of $i\!\in\![N]$.\\

\noindent
By the reasoning in the proof of Corollary~\ref{SympNB_crl2} with
the $x$ and~$z$ components of~$\cZ$ interchanged,
each map~\eref{BasicEg_e1d} is a symplectomorphism.
Since $X_i\!=\!q(\wt{X}_i)$, $\wch{F}$ satisfies the first property in~\eref{BasicEg_e1a}.
By~\eref{THamConf_e15b} and~\eref{BasicSCCsm_e},
it also satisfies the second property in~\eref{BasicEg_e1a}.
By~\eref{THamConf_e7b} for \hbox{$\mu_{\T}\!=\!r_{\bu}\!\circ\!\mu_{\C^N}$}, \eref{BasicEg_e19},
and~\eref{phimustan_e},  
\begin{equation*}\begin{split}
&\mu_{\cZ}\big(\wch{F}\big((\ne^{\fI\th_j})_{j\in[N]},(z_j)_{j\in[N]}\big)\big)
=\big\{r_{\bu}\!\circ\!\mu_{\C^N}\big\}\Big(\!\big(\ne^{\fI\th_j}\!
\sqrt{|z_j|^2\!-\!|z_i|^2\!+\!\vr_{\la;i}((z_k)_{k\in[N]})}\big)_{\!j\in[N]}\Big)\\
&\quad=\frac12 r_{\bu}\Big(\!\big(|z_j|^2\!-\!|z_i|^2\!+\!\vr_{\la;i}((z_k)_{k\in[N]})\big)_{\!j\in[N]}
\Big)
=\frac12 r_{\bu}\big(\big(|z_j|^2\big)_{\!j\in[N]}\big)
=r_{\bu}\big(\mu_{\C^N}\big((z_j)_{j\in[N]}\big)\big).
\end{split}\end{equation*}
This establishes the second claim in~\eref{BasicEg_e1b}.
Since each map~\eref{BasicEg_e1d} is a symplectomorphism, 
the second claim in~\eref{BasicEg_e1b} implies the first one.\\

\noindent
Suppose $(\ne^{\fI\th_j})_{j\in[N]}\!\in\!(S^1)^N_{\la}$ and 
$(z_j)_{j\in[N]}\!\in\!X$. Let
$$\big(\ne^{\fI\th_j'}\big)_{j\in[N]} =
\big(\ne^{-\fI\th_j}z_j/|z_j|\big)_{j\in[N]} \in (S^1)^N_{\bu}\,.$$
By the uniqueness of the solution of~\eref{BasicEg_e18},
\BE{BasicEg_e15} |z_j|^2-|z_i|^2+\vr_{\la;i}\big((z_j\big)_{j\in[N]})
=|z_j|^2\qquad\forall\,j\!\in\![N].\EE
Combining this with~\eref{BasicEg_e19}, \eref{cZdfneg_e}, and~\eref{whphiieg_e},
we~obtain
\begin{equation*}\begin{split}
\wch{F}\big((\ne^{\fI\th_j})_{j\in[N]},(z_j)_{j\in[N]}\big)=
q\Big(\!\big(\ne^{\fI\th_j'}\ne^{\fI\th_j}\!|z_j|\big)_{\!j\in[N]},
\big(\ne^{-\fI\th_j'}z_j\big)_{j\in[N]}\Big)
=q\Big(\!(z_j)_{\!j\in[N]},\big(\ne^{\fI\th_j}|z_j|\big)_{j\in[N]}\Big).
\end{split}\end{equation*}
The last property in~\eref{BasicEg_e1a} now follows 
from~\eref{BasicEg_e15} and~\eref{THamConf_e19}.
\end{proof}

\noindent
By Proposition~\ref{BasicEg_prp}, the restriction~\eref{BasicEg_e1d} of $\wch{F}$ induces 
an isomorphism
$$\wch{F}_g\!: \big(\C^N,\om_{\C^N},\X(\sC),X_{\eset},\pi_{\C^N},
\phi_{\C^N}|_{(S^1)^N_{\bu}\times\C^N},r_{\bu}\!\circ\!\mu_{\C^N}\big)\lra  
\big(\cZ,\om_{\cZ},\X_{\C^N},\C^N_0,\pi,\phi_{\cZ},\mu_{\cZ}\big),$$
where $\X(\sC)$ is the SC symplectic configuration determined by~\eref{BasicEgConf_e}
and $X_{\eset}$ is the associated SC symplectic divisor.
By~\eref{SympCutDfnCN_e} and the last statement of Proposition~\ref{THamConf_prp}, 
this implies that $\wch{F}_g$ identifies the Hamiltonian configuration~\eref{SympCutDfnCN_e} 
for $(\C^N,\om_{\C^N})$ and the induced Hamiltonian configuration~$\sC_{\cZ}$
for~$(\cZ,\om_{\cZ})$.

\subsection{Further refinements}
\label{CmptcZ_subs}

\noindent
For a Hamiltonian $(S^1)^N$-manifold $(X,\om,\phi,\mu)$,
the global quotient description of the main constructions of this paper
provided by the proof of Proposition~\ref{THamConf_prp} gives rise 
to a Hamiltonian $S^1$-action on~$(\cZ,\om_{\cZ})$ which is free 
on the complement of the SC symplectic divisor $X_{\eset}\!\subset\!\cZ$;
see Lemma~\ref{THamConf_lmm2}.
If in addition~$X$ is compact, this action can be used to ``cut" off a precompact
neighborhood of~$X_{\eset}$ in~$\cZ$ to form a one-parameter family of 
smoothings of~$X_{\eset}$ with compact total space~$(\wh\cZ_a,\om_{\wh\cZ;a})$
and projection map~\eref{whpiwhcZa_e} 
which is equivariant with respect to the induced $S^1$-action on~$\wh\cZ_a$ and
the standard $S^1$-action on $\P^1$ given~by
\BE{S1P1dfn_e}
\phi_{\P^1}\!:S^1\!\times\!\P^1\lra\P^1, \quad 
\phi_{\P^1}\big(\ne^{\fI\th};[w,z]\big)=\big[w,\ne^{\fI\th}z\big].\EE
For the purposes of Corollary~\ref{CmptHam_crl} below, let
\BE{whP10dfn_e}
\wh\P_0^1=\Big(\!\big[0,1)\!\times\!S^1\big)\!\sqcup\!\big(\P^1\!-\!\big\{[1,0]\big\}\big)\!\!\Big)
\!\Big/\!\!\!\sim, \quad
(0,1)\!\times\!S^1\!\ni\!\big(r,\ne^{\fI\th}\big)\sim
\big[1,r\ne^{\fI\th}\big]\in \P^1\!-\!\big\{[1,0]\big\}\,;\EE
this is the disk obtained  from~$\P^1$ by replacing $[1,0]$ with~$S^1$.
Let
$$q_{\P^1}\!: \wh\P_0^1\lra\P^1 $$
be the natural projection map restricting to the inclusion of $\P^1\!-\!\{[1,0]\}$.

\begin{lmm}\label{THamConf_lmm2}
Suppose $N\!\in\!\Z^+$,  $(X,\om,\phi,\mu)$ is a regular Hamiltonian $(S^1)^N$-manifold,
$(\cZ,\om_{\cZ})$ is the symplectic manifold
determined by~\eref{THamConf_e} via the construction of Section~\ref{Thm12Pf_sec},
and $\pi$ is the associated map provided 
by Proposition~\ref{THamConf_prp}.
There is then a natural Hamiltonian $S^1$-pair $(\phi_{\cZ;S^1},\mu_{\cZ;S^1})$ for 
$(\cZ,\om_{\cZ})$ such~that
\BE{THamConfC_e1}
\phi_{\cZ;S^1}(S^1\!\times\!X_i)=X_i~~\forall\,i\!\in\![N], \qquad
\pi\big(\phi_{\cZ;S^1}(\ne^{\fI\th};y)\big)=\ne^{\fI\th}\pi(y).\EE
If $(\phi_{\T},\mu_{\T})$ is a Hamiltonian $\T$-pair for $(X,\om)$
compatible with~$(\phi,\mu)$, 
then the associated Hamiltonian  $\T$-pair $(\phi_{\cZ;\T},\mu_{\cZ;\T})$
for~$(\cZ,\om_{\cZ})$ provided  by Proposition~\ref{THamConf_prp}
is compatible with~$(\phi_{\cZ;S^1},\mu_{\cZ;S^1})$.
\end{lmm}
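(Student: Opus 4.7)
The plan is to extend the $(S^1)^N_{\bu}$-action $\wt\phi$ of~\eref{THamConf_e5a} to an action of the full torus $(S^1)^N$ on $X\!\times\!\C^N$ by the same formula
$$\wh\phi\!:(S^1)^N\!\times\!\big(X\!\times\!\C^N\big)\lra X\!\times\!\C^N, \qquad
\wh\phi(g;x,z)=\big(\phi(g;x),\phi_{\C^N}(g^{-1};z)\big),$$
with corresponding moment map $\wh\mu(x,z)\!=\!\mu(x)\!-\!\mu_{\C^N}(z)$. Since $\mu$ is $\phi$-invariant and the moduli $|z_i|^2$ are preserved by $\phi_{\C^N}$, the defining conditions of~$\wt\cZ$ in~\eref{THamConf_e5b} are preserved by~$\wh\phi$. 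Because $(S^1)^N$ is abelian, $\wh\phi$ commutes with its restriction $\wt\phi\!=\!\wh\phi|_{(S^1)^N_{\bu}}$ and thus descends to an $(S^1)^N$-action on $\cZ\!=\!\wt\cZ/\wt\phi$ on which the subgroup $(S^1)^N_{\bu}\!=\!\ker\rho_{\bu}$ acts trivially. Via the isomorphism $(S^1)^N/(S^1)^N_{\bu}\!\approx\!S^1$ induced by~$\rho_{\bu}$ in~\eref{S1Ndiag_e} (with a sign twist), this descends to an $S^1$-action characterized~by
$$\phi_{\cZ;S^1}(\ne^{\fI\th};[x,z])=\big[\wh\phi(g;x,z)\big]
\qquad\hbox{for any}~g\!\in\!(S^1)^N~\hbox{with}~\rho_{\bu}(g)\!=\!\ne^{-\fI\th}.$$

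Next I verify~\eref{THamConfC_e1}. Since $\wt{X}_i\!\subset\!\wt\cZ$ is cut out by $z_i\!=\!0$ and each $\phi_{\C^N}(g^{-1})$ merely multiplies every coordinate by a phase, $\wh\phi$ preserves each $\wt{X}_i$, and hence $\phi_{\cZ;S^1}(S^1\!\times\!X_i)\!=\!X_i$. Tracking $\pi$ from~\eref{THamConf_e15b} gives $\pi(\phi_{\cZ;S^1}(\ne^{\fI\th};[x,z]))\!=\!\prod_i(g_i^{-1}z_i)\!=\!\rho_{\bu}(g)^{-1}\pi([x,z])\!=\!\ne^{\fI\th}\pi([x,z])$, which is the second property in~\eref{THamConfC_e1}. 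For the moment map, I observe that the defining condition of~$\wt\cZ$ forces $\wh\mu|_{\wt\cZ}$ to take values in the diagonal line $\{(a,\ldots,a)\!:\,a\!\in\!\R\}\!\subset\!\R^N$, and that $\wh\mu$ is $\wt\phi$-invariant. Pairing the descent to~$\cZ$ with the Lie-algebra lift $\xi\!=\!(-1,0,\ldots,0)\!\in\!\R^N$ of the $S^1$-generator (matched to the sign twist via~$\rho_{\bu}$), one~obtains
$$\mu_{\cZ;S^1}\!:\cZ\lra\R, \qquad
\mu_{\cZ;S^1}\big([x,z]\big)=\tfrac12|z_i|^2\!-\!\big(\mu(x)\!\big)_{\!i}
\qquad\forall\,i\!\in\![N],$$
which is independent of $i\!\in\![N]$ by~\eref{THamConf_e5b}. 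A standard symplectic reduction argument using~\eref{SympRed_e4} then confirms that $(\phi_{\cZ;S^1},\mu_{\cZ;S^1})$ is a Hamiltonian $S^1$-pair for $(\cZ,\om_{\cZ})$.

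The compatibility of $(\phi_{\cZ;S^1},\mu_{\cZ;S^1})$ with $(\phi_{\cZ;\T},\mu_{\cZ;\T})$ reduces to the compatibility of the underlying pairs on~$X$ by chasing through the formulas for $(\phi_{\cZ;\T},\mu_{\cZ;\T})$ in the proof of Proposition~\ref{THamConf_prp}. Explicitly, $\wt\phi_{\cZ;\T}(g_{\T};x,z)\!=\!(\phi_{\T}(g_{\T};x),z)$ commutes with~$\wh\phi$ because $\phi_{\T}$ commutes with~$\phi$; $\wt\mu_{\cZ;\T}(x,z)\!=\!\mu_{\T}(x)$ is $\wh\phi$-invariant because $\mu_{\T}$ is $\phi$-invariant; and $\wh\mu$ is $\wt\phi_{\cZ;\T}$-invariant because $\mu$ is $\phi_{\T}$-invariant and $\wt\phi_{\cZ;\T}$ does not touch the $z$-coordinates. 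Descending to $\cZ$ yields the three compatibility conditions. I expect the main obstacle to be purely bookkeeping: keeping the sign convention in the identification $(S^1)^N/(S^1)^N_{\bu}\!\approx\!S^1$ consistent throughout the descent so that both relations in~\eref{THamConfC_e1} and the sign in the moment map formula come out correctly; once this is fixed, the remaining checks reduce to short computations from the explicit formulas above.
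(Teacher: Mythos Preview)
Your proposal is correct and amounts to the same construction as the paper's, just packaged slightly differently. The paper fixes an $i\!\in\![N]$, embeds $S^1$ into $(S^1)^N$ via $\ne^{\fI\th}\!\mapsto\!\io_i(\ne^{-\fI\th})$, defines the lifted pair $(\wt\phi_{\cZ;S^1;i},\wt\mu_{\cZ;S^1;i})$ directly on $X\!\times\!\C^N$, and then checks independence of~$i$ after restricting to~$\wt\cZ$; you instead work with the full $(S^1)^N$-action $\wh\phi$ on~$\wt\cZ$, pass to the quotient $(S^1)^N/(S^1)^N_{\bu}\!\approx\!S^1$, and recover the same formulas by choosing a Lie-algebra lift. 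Your choice $g\!=\!\io_i(\ne^{-\fI\th})$ with $\rho_{\bu}(g)\!=\!\ne^{-\fI\th}$ is exactly the paper's section, and your moment map $\tfrac12|z_i|^2\!-\!(\mu(x))_i$ agrees with the paper's~$\wt\mu_{\cZ;S^1;i}$. The remaining verifications (preservation of~$X_i$, equivariance of~$\pi$, compatibility with $(\phi_{\cZ;\T},\mu_{\cZ;\T})$) proceed identically in both write-ups.
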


\begin{proof}
We continue with the notation and setup of Proposition~\ref{THamConf_prp}.
In this case, the components $(\mu(x))_i\!\in\!\R$ of $\mu(x)\!\in\!\ft^*$ are well-defined.
For $i\!\in\![N]$, let 
$$\io_i\!: S^1\lra(S^1)^N$$
be the inclusion as the $i$-th component.
Define a Hamiltonian $S^1$-pair for $(X\!\times\!\C^N,\pi_1^*\om\!+\!\pi_2^*\om_{\C^N})$~by 
\begin{alignat}{2}
\label{THamConf_e27a} 
\wt\phi_{\cZ;S^1;i}\!:S^1\!\times\!\big(X\!\times\!\C^N\big)&\lra X\!\times\!\C^N, 
&\quad \wt\phi_{\cZ;S^1;i}\big(\ne^{\fI\th};x,z\big)&=
\big(\phi\big(\io_i(\ne^{-\fI\th});x\big),\phi_{\C^N}\big(\io_i(\ne^{\fI\th});z\big)\!\big),\\
\label{THamConf_e27b} 
\wt\mu_{\cZ;S^1;i}\!:X\!\times\!\C^N&\lra\R, &\quad
\wt\mu_{\cZ;S^1;i}(x,z)&=-\big(\mu(x)\!\big)_{\!i}\!+\!\frac12|z_i|^2\,.
\end{alignat}
By the same reasoning as below~\eref{THamConf_e7b}, 
this pair descends to a Hamiltonian $S^1$-pair
\BE{THamConf_e29} \phi_{\cZ;S^1}\!: S^1\!\times\!\cZ\lra \cZ, \qquad
\mu_{\cZ;S^1}\!: \cZ\lra \R,\EE
for~$(\cZ,\om_{\cZ})$. 
By~\eref{THamConf_e5b}, the restriction of~\eref{THamConf_e27b} to~$\wt\cZ$ is independent of
the choice of $i\!\in\![N]$.
Thus, so is the pair~\eref{THamConf_e29}.
Since \eref{THamConf_e27a} preserves the subspace~$\wt{X}_i$ of~$\wt\cZ$, 
$\phi_{\cZ;S^1}$ preserves the subspace~$X_i$ of~$\cZ$.
By~\eref{THamConf_e15b} and~\eref{THamConf_e27a}, 
\eref{THamConf_e29} satisfies the second property in~\eref{THamConfC_e1} as well.
If  $(\phi_{\T},\mu_{\T})$ is a Hamiltonian $\T$-pair for $(X,\om)$
compatible with~$(\phi,\mu)$,  then the actions of~$\wt\phi_{\cZ;S^1;i}$ and~$\wt\phi_{\cZ;\T}$ commute,
$\wt\mu_{\cZ;\T}$ is $\wt\phi_{\cZ;S^1;i}$-invariant, and  
$\wt\mu_{\cZ;S^1;i}$ is $\wt\phi_{\T;\cZ}$-invariant.
This implies the last claim.
\end{proof}

\noindent
In the setting of Proposition~\ref{THamConf_prp}, 
an extra Hamiltonian $S^1$-action on~$(\cZ,\om_{\cZ})$ can be obtained
via the diagonal action of~$S^1$ on the $\C^N$-component in~\eref{THamConf_e5b}.
However, the action of $(S^1)^{N+1}$ on~$\cZ$ obtained by combining this action
with~$\phi_{\cZ}$ is not effective even if the $(S^1)^N$-action~$\phi$
is effective.
In contrast,  the action of $(S^1)^{N+1}$ on~$\cZ$ obtained by 
combining the $S^1$-action of Lemma~\ref{THamConf_lmm2} with 
with~$\phi_{\cZ}$ is effective if  the $\phi$-action is effective;
see Theorem~\ref{toriccut_thm}.

\begin{crl}\label{CmptHam_crl}
Suppose $N\!\in\!\Z^+$,
$(X,\om,\phi,\mu)$ is a compact regular Hamiltonian $(S^1)^N$-manifold,
and $(\cZ,\om_{\cZ})$,  $X_{\eset}$, $q_{\eset}$, and $(\phi_{\cZ;S^1},\mu_{\cZ;S^1})$ 
are as in Proposition~\ref{THamConf_prp} and Lemma~\ref{THamConf_lmm2}.
For every $a\!\in\!\R$ sufficiently large, 
there exist a natural compact symplectic manifold $(\wh\cZ_a,\om_{\wh\cZ;a})$,
an open neighborhood~$\cZ_a'$ of $X_{\eset}$ in~$\cZ$,  
continuous surjections
\BE{CmptHam_e0} \wh\pi\!:\wh\cZ_a\lra\P^1, \qquad
f_a\!:\ov\cZ_a'\lra\wh\cZ_a, \qquad\hbox{and}\quad
\wh{F}_a\!: \wh\P_0^1\!\times\!X\lra\wh\cZ_a,\EE
and a Hamiltonian $S^1$-pair $(\phi_{\wh\cZ;S^1;a},\mu_{\wh\cZ;S^1;a})$ 
for $(\wh\cZ_a,\om_{\wh\cZ;a})$ so~that
$\wh\pi$ is a one-parameter family of smoothings of
the SC symplectic variety $\wh\cZ_{a;0}\!\equiv\!\wh\pi^{-1}([1,0])$,
$\wh{F}_a$ is smooth outside of~$\wh{F}_a^{-1}(f_a(X_{\prt}))$,
\BE{CmptHam_e1a} 
\wh{F}_a(0,1,x)=f_a\big(q_{\eset}(x)\big), \quad
\wh\pi\big(\wh{F}_a(w,x)\big)=q_{\P^1}(w), \quad
\wh\pi\big(\phi_{\wh\cZ;S^1;a}(\ne^{\fI\th};y)\big)=\ne^{\fI\th}\wh\pi(y),\EE
and the maps
\begin{gather}
\label{CmptHam_e1e}
f_a\!: \big(\cZ_a',\om_{\cZ},X_{\eset},\phi_{\cZ;S^1},\mu_{\cZ;S^1}\big)
\lra  \big(\wh\cZ_a\!-\!\wh\pi^{-1}([0,1]),\om_{\wh\cZ;a},\wh\cZ_{a;0},
\phi_{\wh\cZ;S^1;a},\mu_{\wh\cZ;S^1;a}\big),\\
\label{CmptHam_e1d}
\wh{F}_{a;w}\!: (X,\om)\lra
\big(\wh\pi^{-1}(w),\om_{\wh\cZ;a}\big|_{\wh\pi^{-1}(w)}\big),
~~x\lra\wh{F}_a(w,x),\quad\forall\,w\!\in\!\P^1\!-\!\{[1,0]\},
\end{gather}
are isomorphisms.
Every Hamiltonian $\T$-pair $(\phi_{\T},\mu_{\T})$ for  $(X,\om)$
compatible with~$(\phi,\mu)$
determines a Hamiltonian $\T$-pair $(\phi_{\wh\cZ;\T},\mu_{\wh\cZ;\T})$ 
for~$(\wh\cZ_a,\om_{\wh\cZ;a})$ so that 
$(\phi_{\wh\cZ;\T},\mu_{\wh\cZ;\T})$ is compatible with the pair
$(\phi_{\wh\cZ;S^1;a},\mu_{\wh\cZ;S^1;a})$ and
with the  Hamiltonian $(S^1)^N_{\bu}$-pair $(\phi_{\wh\cZ},\mu_{\wh\cZ})$
for $(\wh\cZ_a,\om_{\wh\cZ;a})$ determined by~$(\phi,\mu)$
and is intertwined with $(\phi_{\T},\mu_{\T})$ by~$\wh{F}_a$ 
and with the pair $(\phi_{\cZ;\T},\mu_{\cZ;\T})$ of Proposition~\ref{THamConf_prp}
by~$f_a$.
\end{crl}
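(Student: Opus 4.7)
The plan is to obtain $(\wh\cZ_a,\om_{\wh\cZ;a})$ as the Lerman symplectic cut of $(\cZ,\om_\cZ)$ using the Hamiltonian $S^1$-pair $(\phi_{\cZ;S^1},\mu_{\cZ;S^1})$ of Lemma~\ref{THamConf_lmm2} at a sufficiently large level~$a$.  Explicitly, I take $\wh\cZ_a$ to be the symplectic reduction of $(\cZ\!\times\!\C,\om_\cZ\!\oplus\!\om_\C)$ at level~$a$ under the diagonal $S^1$-action $\ne^{\fI\th}\!\cdot\!(y,w)\!=\!(\phi_{\cZ;S^1}(\ne^{\fI\th};y),\ne^{\fI\th}w)$ with combined moment map $\mu_{\cZ;S^1}(y)\!+\!|w|^2/2$.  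The commuting $S^1$-action $[y,w]\!\mapsto\![y,\ne^{-\fI\th}w]$ descends to the desired Hamiltonian $S^1$-pair $(\phi_{\wh\cZ;S^1;a},\mu_{\wh\cZ;S^1;a})$ with moment map $-|w|^2/2$; the extended projection $\wh\pi([y,w])\!=\![w,\pi(y)]\!\in\!\P^1$ is well-defined on the quotient and $S^1$-equivariant with the action~\eref{S1P1dfn_e} by the equivariance~\eref{THamConfC_e1} of~$\pi$; and the map $f_a(y)\!=\![y,\sqrt{2(a\!-\!\mu_{\cZ;S^1}(y))}\,]$ from $\cZ_a'\!\equiv\!\mu_{\cZ;S^1}^{-1}((-\i,a))$ is a symplectomorphism onto $\wh\cZ_a\!-\!\wh\pi^{-1}([0,1])$, extending continuously over $\ov\cZ_a'$ by collapsing $\mu_{\cZ;S^1}^{-1}(a)$ to the cut divisor~$\wh\pi^{-1}([0,1])$.

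For $a$ large enough, two points ensure that the cut produces a smooth compact symplectic manifold.  Using the quotient description~\eref{THamConf_e5b}--\eref{THamConf_e6} and the formula~\eref{THamConf_e27b} for~$\mu_{\cZ;S^1}$, the sublevel set $\mu_{\cZ;S^1}^{-1}((-\i,a])\!\subset\!\cZ$ lifts in $X\!\times\!\C^N$ to a closed subset on which every $|z_j|^2\!=\!2\mu_{\cZ;S^1}\!+\!2\mu(x)_j$ is bounded by $2(a\!+\!\|\mu\|_{\i})$, yielding compactness of~$\wh\cZ_a$ from that of~$X$.  For $a\!>\!2\|\mu\|_{\i}$, every point of $\mu_{\cZ;S^1}^{-1}(a)$ has all $|z_j|\!>\!0$, so the argument in the proof of Lemma~\ref{wtcZreg_lmm} (with $I\!=\![N]$) yields freeness of $\phi_{\cZ;S^1}$ on $\mu_{\cZ;S^1}^{-1}(a)$; together with the freeness of rotation on~$\C^*$, this makes the reducing $S^1$-action free on the level set of the combined moment map, and standard symplectic reduction produces the smooth compact $(\wh\cZ_a,\om_{\wh\cZ;a})$.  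That $\wh\pi$ is a nearly regular symplectic fibration near $\wh\cZ_{a;0}\!=\!\wh\pi^{-1}([1,0])$ follows via $f_a$ from the analogous fact for~$\pi$ on~$\cZ$ established in the proof of Lemma~\ref{cZfamsm_lmm} with $\cZ'\!=\!\cZ$ (as in the global-quotient setting of Proposition~\ref{THamConf_prp}).  Compatibility of an additional Hamiltonian $\T$-pair $(\phi_{\cZ;\T},\mu_{\cZ;\T})$ with both $(\phi_{\wh\cZ;S^1;a},\mu_{\wh\cZ;S^1;a})$ and the pair $(\phi_{\wh\cZ},\mu_{\wh\cZ})$ descended from~$\phi_{\cZ}$ is obtained by lifting it trivially to~$\cZ\!\times\!\C$, using the last statement of Lemma~\ref{THamConf_lmm2} to see this lift commutes with the reducing action, and descending as in the last paragraph of the proof of Proposition~\ref{THamConf_prp}.

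The main obstacle is constructing $\wh F_a\!:\wh\P_0^1\!\times\!X\!\lra\!\wh\cZ_a$ satisfying~\eref{CmptHam_e1a} and~\eref{CmptHam_e1d}.  Given $[1,\la]\!\in\!\P^1\!-\!\{[1,0]\}$ and $x\!\in\!X$, I seek $w\!>\!0$ and $y\!=\!F(r,g,x)$ with $r\!=\!|\la|w$ and $\rho_{\bu}(g)\!=\!\la/|\la|$ such that $\mu_{\cZ;S^1}(y)\!+\!w^2/2\!=\!a$ and $\pi(y)\!=\!\la w$.  Using the explicit formulas in the proof of Proposition~\ref{THamConf_prp}, this reduces to the polynomial equation
\BE{plan_poly}
\prod_{j\in[N]}\!\bigl(2a\!+\!2\mu(x)_j\!-\!w^2\bigr)=|\la|^2 w^2,
\EE
which has a unique smooth solution $w\!=\!w(x,|\la|)$ in $[0,\sqrt{2a\!+\!2\min_{j}\mu(x)_j}]$ for $a$ large by the monotonicity of the two sides in~$w^2$.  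Setting $\wh F_a([1,\la],x)\!=\![F(|\la|w,s(\la/|\la|),x),w]$ for a fixed smooth section $s\!:S^1\!\lra\!(S^1)^N$ of~$\rho_{\bu}$ gives a smooth map on~$\P^1\!-\!\{[1,0]\}$; dependence on~$s$ is absorbed by the reduction and the $(S^1)^N_{\bu}$-symmetry $F(r,gg_0,x)\!=\!F(r,g,\phi(g_0;x))$ observed in the proof of Proposition~\ref{THamConf_prp}.  As $|\la|\!\to\!0$ the solution tends to $(w,r)\!\to\!(\sqrt{2a\!+\!2\mu(x)_{i(x)}},\,0)$ where $i(x)\!\in\!\arg\min\mu(x)$, so $y\!\to\!q_{\eset}(x)$ by~\eref{UIgeTor_e}, giving the continuous extension to the boundary circle of~$\wh\P_0^1$ with $\wh F_a(0,1,x)\!=\!f_a(q_{\eset}(x))$.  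That each $\wh F_{a;w}$ is a fiberwise symplectomorphism for $w\!\in\!\P^1\!-\!\{[1,0]\}$ follows from the same computation as the last one in the proof of Lemma~\ref{cZfamcmpt_lmm}, applied after the reduction by the diagonal~$S^1$, and the intertwining with Hamiltonian $\T$-pairs is inherited from the corresponding property of~$F$ in Proposition~\ref{THamConf_prp}.
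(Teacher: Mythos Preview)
Your overall strategy---Lerman-cutting $(\cZ,\om_\cZ)$ at level~$a$ of $\mu_{\cZ;S^1}$, defining $\wh\pi([y,w])=[w,\pi(y)]$, and solving a polynomial equation for the fibre coordinate---is exactly the paper's. The equation~\eref{plan_poly} agrees (after the substitutions $\la=1/w$, $w_s^2=\vr$) with the paper's~\eref{CmptHam_e21b}, and your treatment of compactness, freeness of the reducing action for large~$a$, the map~$f_a$, and the descent of compatible Hamiltonian $\T$-pairs is correct and matches the paper.

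There is, however, a genuine gap in your construction of~$\wh F_a$. You parametrise $\P^1$ by $[1,\la]$ with $\la\in\C^*$ and write $\wh F_a([1,\la],x)=[F(|\la|w_s,s(\la/|\la|),x),w_s]$ via a section $s\!:S^1\!\to\!(S^1)^N$ of~$\rho_\bu$. This covers $\P^1\!-\!\{[1,0],[0,1]\}$; you then extend continuously to the boundary circle at~$[1,0]$. But you never address $[0,1]$ (i.e.\ $|\la|\!\to\!\infty$), and in fact your formula does not extend continuously there: as $|\la|\!\to\!\infty$ along direction $\ne^{\fI\th}$, the limit is $[F(r_\infty,s(\ne^{\fI\th}),x),0]$, and for distinct~$\th,\th'$ these points differ in~$\wh\cZ_a$ unless $\phi_{\cZ;S^1}(\ne^{\fI(\th'-\th)};\cdot)$ relates them---which, unwinding \eref{THamConf_e27a} and~\eref{THamConf_e19}, would force $\phi(\io_i(\ne^{\fI(\th-\th')});x)=x$. (Already for $N\!=\!1$ this fails at generic~$x$.) Your claim that the $s$-dependence ``is absorbed'' is likewise incorrect: different sections give genuinely different maps, related by $\wh F_a^{s'}(\cdot,x)=\wh F_a^s(\cdot,\phi(g_0(\cdot);x))$.

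The paper avoids this by working in the chart $[w,1]$, $w\!\in\!\C$, and writing $\wh F_a$ directly in the $(N{+}1)$-fold quotient coordinates $\wt\cZ_{N+1;a}/(S^1)^N$ of~\eref{CmptHam_e8} with all $z_j$ real positive and the extra coordinate carrying the phase $w/|w|$; see~\eref{CmptHam_e23}. This places the only non-smooth point at $w\!=\!\infty$, i.e.\ $[1,0]$, precisely where the boundary circle of~$\wh\P_0^1$ resolves it via~\eref{CmptHam_e23b}. No section of~$\rho_\bu$ is needed.
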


\begin{proof} We split $(\cZ,\om_{\cZ})$ into a compact piece 
$(\wh\cZ,\om_{\wh\cZ})$ containing a neighborhood~$\cZ_a'$ of~$X_{\eset}$
and the ``infinite remainder".
We use the symplectic cut construction of~\cite{L} 
with the $S^1$-action~$\phi_{\cZ;S^1}$,
moment map~$\mu_{\cZ;S^1}$, and its value
\BE{CmptHam_e7} a>-\min_{i\in[N]}\min_{x\in X}\big(\mu(x)\!\big)_{\!i}\,.\EE
This corresponds to the construction of Proposition~\ref{THamConf_prp}
with $N\!=\!2$, 
\begin{alignat*}{2}
\phi\!=\!\phi_{S^1;\bu}:(S^1)^2_{\bu}\!\times\!\cZ&\lra\cZ,  &\qquad 
\phi_{S^1;\bu}\big(\big(\ne^{\fI\th},\ne^{-\fI\th}\big);y\big)&=\phi_{\cZ;S^1}\big(\ne^{\fI\th};y\big),\\
\mu\!=\!\mu_{S^1;\bu}\!:\cZ&\lra\ft_{2;\bu}^*,  &\qquad 
\mu_{S^1;\bu}(y)&=r_{\bu}\big(\mu_{\cZ;S^1}(y),a\!\big).
\end{alignat*}
By~\eref{THamConfC_e1}, the restriction of the $\phi_{S^1;\bu}$-action to $\cZ\!-\!X_{\eset}$
is free.
By~\eref{THamConf_e27b} and~\eref{CmptHam_e7}, $\mu_{S^1;\bu}^{-1}(0)$ is disjoint from~$X_{\eset}$
and so the restriction of the $\phi_{S^1;\bu}$-action to~$\mu_{S^1;\bu}^{-1}(0)$ is~free.
Thus,
$(\cZ,\om,\phi_{S^1;\bu},\mu_{S^1;\bu})$ is a regular Hamiltonian $(S^1)^2_{\bu}$-manifold.
By Lemma~\ref{THamConf_lmm} and Corollary~\ref{SympNB_crl2}, 
$(\cZ,\om,\phi_{S^1;\bu},\mu_{S^1;\bu})$ thus determines  symplectic manifolds
$(\wh\cZ_1,\om_{\wh\cZ;1})$ and $(\wh\cZ_2,\om_{\wh\cZ;2})$
with a common smooth symplectic divisor~$\wh\cZ_{12}$.
We denote $(\wh\cZ_1,\om_{\wh\cZ;1})$ and~$\wh\cZ_{12}$ by $(\wh\cZ_a,\om_{\wh\cZ;a})$
and~$\wh\cZ_{a;\i}$, respectively.\\

\noindent
By the proof of Proposition~\ref{THamConf_prp},
\begin{gather}
\notag
\wh\cZ_a=\wch\cZ_a\big/\!\!\sim, \quad
\wh\cZ_{a;\i}=\wch\cZ_{a;\i}\big/\!\!\sim \,\,\subset \wh\cZ_a, \qquad\hbox{where}\\
\label{CmptHam_e2a}
\wch\cZ_a=\big\{(y,w)\!\in\!\cZ\!\times\!\C\!:\mu_{\cZ;S^1}(y)\!=\!a\!-\!\frac12|w|^2\big\},
 \qquad
\wch\cZ_{a;\i}=\wch\cZ_a\!\cap\!\big(\cZ\!\times\!\{0\}\big), \\
\notag
(y,w)\sim\big(\phi_{\cZ;S^1}(\ne^{\fI\th};y),\ne^{\fI\th}w\big)\,.
\end{gather}
Let $\wh{q}\!:\wch\cZ_a\!\lra\!\wh\cZ_a$ be the quotient map.
The map~$f_a$ in~\eref{CmptHam_e0} is the restriction of the collapsing map~$q_{\eset}$
for this symplectic cut to the preimage of~$\wh\cZ_a$,
\BE{CmptHam_e2e}
f_a\!:\ov\cZ_a'\!\equiv\!\big\{y\!\in\!\cZ\!:\mu_{\cZ;S^1}(y)\!\le\!a\big\}\lra \wh\cZ_a, \qquad
f_a(y)=\wh{q}\Big(y,\sqrt{2\big(a\!-\!\mu_{\cZ;S^1}(y)\big)}\Big).\EE
By Corollary~\ref{SympNB_crl2}, its restriction 
$$f_a\!:\cZ_a'\!\equiv\!\big\{y\!\in\!\cZ\!:\mu_{\cZ;S^1}(y)\!<\!a\big\}
\lra \wh\cZ_a\!-\!\wh\cZ_{a;\i}$$
is a symplectomorphism with respect to the symplectic forms~$\om_{\cZ}$ and~$\om_{\wh\cZ;a}$.\\

\noindent
Let $\pi\!:\cZ\!\lra\!\C$ be as in Proposition~\ref{THamConf_prp} and define 
\BE{whpidfn_e}\wh\pi\!:\wh\cZ_a\lra\P^1, \qquad 
\wh\pi\big(\wh{q}(y,w)\big)=\big[w,\pi(y)\big].\EE
Since $\wh\cZ_{a;\i}$ is disjoint from $f_a(X_{\eset})\!=\!f_a(\pi^{-1}(0))$,
this map is well-defined.
Furthermore, 
\BE{THamConf_e4}\wh\cZ_{a;0}\equiv \wh\pi^{-1}\big([1,0]\big)=f_a\big(\pi^{-1}(0)\big)=f_a(X_{\eset}).\EE
Thus, $f_a$ identifies~$X_{\eset}$ with~$\wh\cZ_{a;0}$.
The facts that $\wh\pi$ is a submersion outside of~$\wh\cZ_{a;0}$ and  
$\om_{\wh\cZ;a}|_{\wh\pi^{-1}(\la)}$ is a symplectic form 
whenever $\la\!\!\neq\![1,0]$ follow from
the first statement in~\eref{CmptHam_e1a} and~\eref{CmptHam_e1d}.\\ 

\noindent
With the notation as in the proof of Lemma~\ref{THamConf_lmm2}, 
 define an $(S^1)^N$-action on $X\!\times\!\C^{N+1}$~by
$$\wt\phi_{N+1}\!:(S^1)^N\!\times\!\big(X\!\times\!\C^{N+1}\big)\lra X\!\times\!\C^{N+1},
\quad \wt\phi_{N+1}\big(\io_i(\ne^{\fI\th});x,z,w\big)
=\big(\wt\phi_{\cZ;S^1;i}(\ne^{\fI\th};x,z),\ne^{\fI\th}w\big).$$
By~\eref{THamConf_e5a}, \eref{THamConf_e5b}, and~\eref{CmptHam_e2a}, 
\BE{CmptHam_e8}\begin{split}
\wh\cZ_a&=\wt\cZ_{N+1;a}\big/\wt\phi_{N+1}, \qquad\hbox{where}\\
\wt\cZ_{N+1;a}=
\bigg\{\big(x,(z_i)_{i\in[N]},w\big)\!&\in\!X\!\times\!\C^{N+1}\!:
-\big(\mu(x)\!\big)_i\!+\!\frac12|z_i|^2=a\!-\!\frac12|w|^2~\forall\,i\!\in\![N]\bigg\}.
\end{split}\EE
The symplectic form $\om_{\wh\cZ;a}$ on~$\wh\cZ_a$
is determined by the condition
\BE{CmptHam_e6}\wh{q}_{N+1;a}^{\,*}\om_{\wh\cZ;a} = 
\big(\pi_1^*\om_X\!+\!\pi_2^*\om_{\C^{N+1}}\big)
\big|_{\wt\cZ_{N+1;a}}\,,\EE
where $\wh{q}_{N+1;a}\!:\wt\cZ_{N+1;a}\!\lra\!\wh\cZ_a$ is the quotient map.
By~\eref{THamConf_e4},
$$\wh\cZ_{a;0}=\wt\cZ_{N+1;a;0}\big/\wt\phi_{N+1}, \quad\hbox{where}~~
\wt\cZ_{N+1;a;0}=\bigcup_{i\in[N]}\!\!\!\big\{
\big(x,(z_j)_{j\in[N]},w\big)\!\in\!\wt\cZ_{N+1;a}\!:z_i\!=\!0\big\}.$$
In particular,
$$\wt\cZ_{N+1;a}^>\equiv \wt\cZ_{N+1;a}\cap\big(X\!\times\!(\R^+)^N\!\times\!\C\big)$$
is a slice for the $\wt\phi_{N+1}$-action on $\wt\cZ_{N+1;a}\!-\!\wt\cZ_{N+1;a;0}$
identified by~$\wh{q}_{N+1;a}$ with $\wh\cZ_a\!-\!\wh\cZ_{a;0}$.\\

\noindent
For each $x\!\in\!\C\!\times\!X$, let
\BE{CmptHam_e21a}\vr(x)=2\min\!\big\{a\!+\!(\mu(x))_i\!:i\!\in\![N]\big\}
\in\R^+;\EE
see~\eref{CmptHam_e7}.
For each $(w,x)\!\in\!\C\!\times\!X$, let $\vr(w,x)\!\in\![0,\vr(x)]$
be the unique solution of the~equation 
\BE{CmptHam_e21b}
|w|^2\!\!\prod_{i\in[N]}\!\!\!\big(2\big(a\!+\!(\mu(x))_i\big)\!-\!\vr\big)=\vr\,\EE
in~$\vr$.
As~$\vr$ increases from~0 to~$\vr(x)$,
the left-hand side of~\eref{CmptHam_e21b} decreases from a positive value to~0
if $w\!\neq\!0$.
Thus, $\vr(w,x)$ is well-defined, depends smoothly on~$(w,x)$, and
\BE{CmptHam_e21c}\sqrt{\vr(w,x)}=|w|\,g(w,x)\EE
for some smooth $\R^+$-valued function~$g$ on~$\C\!\times\!X$.
Define~$\wh{F}_a$ in~\eref{CmptHam_e0}~by
\BE{CmptHam_e23}
\wh{F}_a([w,1],x\big)=\wh{q}_{N+1;a}
\Big(x,\big(\sqrt{2\big(a\!+\!(\mu(x))_i\big)\!-\!\vr(w,x)}\big)_{i\in[N]},
wg(w,x)\Big).\EE
By~\eref{whpidfn_e}, \eref{CmptHam_e21b}, and~\eref{CmptHam_e21c},
$\wh{F}_a$ satisfies the second property in~\eref{CmptHam_e1a}.
By the same reasoning as in the proof of Corollary~\ref{SympNB_crl2} and~\eref{CmptHam_e6},
each map~\eref{CmptHam_e1d} is a symplectomorphism.\\

\noindent
Under the identification in~\eref{whP10dfn_e}, \eref{CmptHam_e23} becomes
\BE{CmptHam_e23b}
\wh{F}_a(r,\ne^{\fI\th},x\big)=\wh{q}_{N+1;a}
\Big(x,\big(\sqrt{2\big(a\!+\!(\mu(x))_i\big)\!-\!\rho(r,x)}\big)_{i\in[N]},
\ne^{-\fI\th}\!\sqrt{\rho(r,x)}\Big),\EE
where $\rho\!=\!\rho(r,x)$ is the unique solution of
$$\prod_{i\in[N]}\!\!\!\big(2\big(a\!+\!(\mu(x))_i\big)\!-\!\rho\big)=r^2\rho$$
in $[0,\vr(x)]$.
It extends continuously over $r\!=\!0$ as $\rho(0,x)\!=\!\vr(x)$.
Thus, \eref{CmptHam_e23b} extends continuously over $\{0\}\!\times\!S^1\!\times\!X$.
By~\eref{UIgeTor_e} and~\eref{CmptHam_e2e},  
this extension satisfies the first property in~\eref{CmptHam_e1a}.
By the continuity of both sides, it also satisfies the second property in~\eref{CmptHam_e1a}.
The~functions 
$$(r,x)\lra \sqrt{2\big(a\!+\!(\mu(x))_i\big)\!-\!\rho(r,x)}, \qquad i\!\in\![N],$$
are smooth at $(0,x)$ if the minimum in~\eref{CmptHam_e21a}
is reached at a unique $i\!\in\![N]$.
Thus, the function~\eref{CmptHam_e23b} is smooth outside of the preimage of~$f_a(X_{\prt})$.\\

\noindent
By Proposition~\ref{THamConf_prp}, the Hamiltonian $S^1$-pair 
$(\phi_{\cZ;S^1},\mu_{\cZ;S^1})$ for~$(\cZ,\om_{\cZ})$ determines 
a Hamiltonian $S^1$-pair $(\phi_{\wh\cZ;S^1;a},\mu_{\wh\cZ;S^1;a})$ 
for $(\wh\cZ_a,\om_{\wh\cZ;a})$ such~that 
$$\phi_{\wh\cZ;S^1;a}\big(\ne^{\fI\th};f_a(y)\big)
=f_a\big(\phi_{\cZ;S^1}(\ne^{\fI\th};y)\big), \quad
\mu_{\wh\cZ;S^1;a}\big(f_a(y)\big)=\mu_{\cZ;S^1}(y) \qquad
\forall\,\ne^{\fI\th}\!\in\!S^1,\,y\!\in\!\cZ_a';$$
this establishes~\eref{CmptHam_e1e}.
By~\eref{whpidfn_e}, the second property in~\eref{THamConfC_e1}, and~\eref{S1P1dfn_e},
$\phi_{\wh\cZ;S^1;a}$ also satisfies the third property in~\eref{CmptHam_e1a}.\\

\noindent
Let $(\phi_{\T},\mu_{\T})$ be a Hamiltonian $\T$-pair for $(X,\om)$
compatible with~$(\phi,\mu)$.
By Proposition~\ref{THamConf_prp} applied to $(\phi_{\cZ;\T},\mu_{\cZ;\T})$,
$(\phi_{\T},\mu_{\T})$ determines a Hamiltonian pair $(\phi_{\wh\cZ;\T},\mu_{\wh\cZ;\T})$
for $(\wh\cZ_a,\om_{\wh\cZ;a})$.
By this proposition and Lemma~\ref{THamConf_lmm2}, 
 $(\phi_{\wh\cZ;\T},\mu_{\wh\cZ;\T})$ is compatible with
$(\phi_{\wh\cZ;S^1;a},\mu_{\wh\cZ;S^1;a})$ and~$(\phi_{\wh\cZ},\mu_{\wh\cZ})$
and is intertwined   with $(\phi_{\wh\cZ;\T},\mu_{\wh\cZ;\T})$ by~$f_a$.
Since $(\phi_{\T},\mu_{\T})$ is compatible with~$(\phi,\mu)$,
the solution $\vr(w,x)$ of~\eref{CmptHam_e21b} is $\phi_{\T}$-invariant.
By~\eref{CmptHam_e23} and the construction of the induced Hamiltonian pair, 
\BE{CmptConf_e1b}
\phi_{\wh\cZ;\T}\big(g;\wh{F}_a(w,x)\!\big)=\wh{F}_a\big(w,\phi_{\T}(g;x)\!\big), \qquad
\mu_{\wh\cZ;\T}\big(\wh{F}_a(w,x)\!\big)=\mu_{\T}(x),\EE
i.e.~$\wh{F}_a$ thus intertwines $(\phi_{\T},\mu_{\T})$
and~$(\phi_{\wh\cZ;\T},\mu_{\wh\cZ;\T})$.
\end{proof}

\noindent
By the second property in~\eref{CmptHam_e1a} and 
the first property in~\eref{CmptConf_e1b},
$$\wh\pi\big(\phi_{\wh\cZ;\T}(g;y)\!\big)=\wh\pi(y).$$
An analogue~$\sC_{\wh\cZ}$ of the cutting configuration~\eref{cZSympCutDfn_e} 
for the symplectic manifold $(\wh\cZ_a,\om_{\wh\cZ;a})$ of Corollary~\ref{CmptHam_crl}
can be constructed from the initial cutting configuration~\eref{THamConf_e}
via~\eref{CmptHam_e2a}  
as in Section~\ref{cZSympCutConf_subs}.
By the same reasoning as in the proof of Proposition~\ref{THamConf_prp},
$\sC_{\wh\cZ}$ is then the cutting configuration determined by 
the Hamiltonian pair $(\phi_{\wh\cZ},\mu_{\wh\cZ})$ for  $(\wh\cZ_a,\om_{\wh\cZ;a})$.
By~\eref{CmptHam_e1d} and~\eref{CmptConf_e1b},   the restriction~\eref{CmptHam_e1d} of $\wh{F}_a$ 
induces an isomorphism
$$\wh{F}_{a;w}\!: \big(X,\om,\phi_{\T},\mu_{\T}\big)\lra  
\big(\wh\pi^{-1}(\la),\om_{\wh\cZ;a}|_{\wh\pi^{-1}(\la)},
\phi_{\wh\cZ;\T}|_{\T\times\wh\pi^{-1}(\la)},\mu_{\wh\cZ;\T}|_{\wh\pi^{-1}(\la)}\big)$$
whenever $\la\!\equiv\!q_{\P^1}(w)\!\neq\![1,0]$.
Since $\sC_{\wh\cZ}\!=\!\sC_{\phi_{\wh\cZ},\mu_{\wh\cZ}}$,
the $(\phi_{\T},\mu_{\T})\!=\!(\phi,\mu)$ case of this statement implies
that $\wh{F}_{a;w}$ identifies the cutting configuration~\eref{THamConf_e} for $(X,\om)$
with the restriction of~$\sC_{\wh\cZ}$ to~$\wh\pi^{-1}(\la)$.

\subsection{Degenerations and moment polytopes}
\label{MomPolytThm_subs}

\noindent
Suppose $\T\!\approx\!(S^1)^k$ is a $k$-torus
and $(X,\om,\phi_{\T},\mu_{\T})$ is a compact connected Hamiltonian $\T$-manifold.
By the Atiyah-Guillemin-Sternberg Convexity Theorem 
\cite[Theorem~27.1]{daSilva}, 
$$\De\equiv\mu_{\T}(X)\subset\ft^*$$ 
is then a convex  polytope.
The Hamiltonian $\T$-pair $(\phi_{\T},\mu_{\T})$ for~$(X,\om)$ gives rise to 
Hamiltonian $(S^1)^N$-pairs for $(X,\om)$;
the latter in turn determine Hamiltonian configurations for~$(X,\om)$
as in~\eref{UIdfn_e}.
In this section, we describe the effect of the constructions of this paper
with cutting configurations arising in this way on 
the moment polytope~$\De$.\\

\noindent
For $\xi\!\in\!\ft$, define
$$L_{\xi}\!:\ft^*\lra\R,\quad L_{\xi}(\eta)=\lr{\eta,\xi}, \qquad
h_{\xi}\!\equiv\!L_{\xi}\!\circ\!\mu_{\T}\!: X\lra\R, \quad
h_{\xi}(x)=L_{\xi}\big(\mu_{\T}(x)\big).$$
A vector $\xi\!\in\!\ft$ is called \sf{integral} if 
its time $2\pi$-flow in~$\T$ generates a circle subgroup $S^1_{\xi}\!\subset\!\T$.
An~integral vector~$\xi$ determines a homomorphism and an $S^1$-action,
$$ \vr_{\xi}\!: S^1\lra S^1_{\xi}
\qquad\hbox{and}\qquad 
\phi_{\xi}\!: S^1\!\times\!X\lra X,$$
respectively;
the latter is the composition of the $\T$-action~$\phi_{\T}$ with the former.
The action~$\phi_{\xi}$ commutes with the $\T$-action and has Hamiltonian~$h_{\xi}$.
We denote by $\La_{\ft}\!\subset\!\ft$ the lattice of integral vectors and by
$\La_{\ft}^*\!\subset\!\ft^*$ the dual lattice.
An integral vector $\xi\!\in\!\ft$ is called \sf{primitive} if 
$\xi/m$ is not integral for any integer $m\!>\!1$.
The homomorphism~$\vr_{\xi}$ is injective if and only~if $\xi$ is primitive.\\

\noindent
Suppose $\xi\!\in\!\La_{\ft}$ is primitive, 
$\ep\!\in\!\R$ is a regular value of~$h_{\xi}$, and 
the $S^1_{\xi}$-action on the hypersurface
$$\wt{V}_{\ep}\equiv \big\{x\!\in\!X\!:\, h_\xi(x)\!=\!\ep\big\}\subset X$$
is free.
By \cite[Remark~1.5]{L}, the 2-fold symplectic cut construction with 
the $S^1_{\xi}$-action and Hamiltonian $h_{\xi}\!-\!\ep$ then
cuts~$X$ along~$\wt{V}_\ep$ into two symplectic manifolds, $(X_-,\om_-)$ and $(X_+,\om_+)$,
with Hamiltonian $\T$-actions and moment polytopes
$$\De_-=\big\{\eta\!\in\!\De\!: L_{\xi}(\eta)\!\le\!\ep\big\}
\qquad\hbox{and}\qquad
 \De_+=\big\{\eta\!\in\!\De\!: L_{\xi}(\eta)\!\ge\!\ep\big\},$$
respectively.\\

\noindent
Our $N$-fold symplectic cut construction produces more complicated subdivisions of 
the moment polytope~$\De$.
Fix a tuple $(\xi_i,\ep_i)_{i\in[N]}$ in $(\La_{\ft}\!\times\!\R)^N$.
For $i\!\in\![N]$ and $a\!\in\!\R$, define
\BE{wtDeadfn_e}\begin{split}
L_i=L_{\xi_i}\!-\!\ep_i\!:\ft^*\lra\R, \quad
\De_i&\equiv\!\De_i\big((\xi_j,\ep_j)_{j\in[N]}\big)
=\big\{\eta\!\in\!\De\!:\,L_i(\eta)\!\le\!L_j(\eta)~\forall\,j\!\in\![N]\big\},\\
\wh\De_a\equiv\!\wh\De_a\big((\xi_j,\ep_j)_{j\in[N]}\big)
&=\big\{(\eta,u)\!\in\!\De\!\times\!\R\!:\,-\min_{i\in[N]}L_i(\eta)\le u\le a\big\}.
\end{split}\EE
For $I\!\in\!\cP^*(N)$, let
$$\De_I\!\equiv\!\De_I\big((\xi_i,\ep_i)_{i\in[N]}\big)
=\big\{\eta\!\in\!\De\!:L_i(\eta)\!\le\!L_j(\eta)~\forall\,i\!\in\!I,\,j\!\in\![N]\big\}.$$
For our purposes, a pair $(\xi_i,\ep_i)$ such that the polytope~$\De_i$ is empty
can be dropped from the consideration (thus reducing the value of~$N$).\\

\noindent
The polytopes $\De_i$ with $i\!\in\![N]$ subdivide~$\De$;
the first diagram in Figure~\ref{P1P1_fig} 
shows such a subdivision for the data of Example~\ref{P1P1_eg}.
For generic choices of $\ep_i\!\in\!\R$, the intersection of each~$\De_I$ with a facet of~$\De$ 
is a polytope of codimension $|I|\!-\!1$ in the facet.
By Proposition~\ref{CombCond_prp} in the next section, this property needs to hold for 
the $N$-fold Hamiltonian configuration~\eref{toriccut_e} for~$(X,\om)$
determined by the tuple $(\xi_i,\ep_i)_{i\in[N]}$ to be a cutting configuration.
By Theorem~\ref{toriccut_thm} below,
the polytopes~$\De_i$ are then the moment polytopes of the Hamiltonian $\T$-manifolds
$(X_i,\om_i,\phi_{\T;i},\mu_{\T;i})$ determined by the construction of
Section~\ref{Thm12Pf_sec}.
If 
\BE{abnd_e2} a>-\min_{i\in[N]}\min_{\eta\in\De}L_i(\eta)\,,\EE
then $\wh\De_a$ is the polytope rising from the graph of the function
$$\De\lra\R, \qquad \eta\lra -\min_{i\in[N]}\!\big(L_i(\eta)\!\big)\,,$$
to the ``horizontal" hyperplane $\ft^*\!\times\!\{a\}$ in $\ft^*\!\times\!\R$.
This graph consists of $N$~polytopes~$\De_i'$ with $\pi_{\ft^*}(\De_i')\!=\!\De_i$,
where
$$\pi_{\ft^*}\!: \ft^*\!\oplus\!\R\lra \ft^*$$
is the projection map.
The second diagram in Figure~\ref{P1P1_fig} shows such a polytope~$\wh\De_a$ 
for the data of Example~\ref{P1P1_eg}.
If~\eref{toriccut_e} is a cutting configuration and~\eref{abnd_e2} holds, then 
$\wh\De_a$ is the moment polytope of the symplectic manifold $(\wh\cZ_a,\om_{\wh\cZ;a})$
of Corollary~\ref{CmptHam_crl} with the Hamiltonian $(\T\!\times\!S^1)$-pair
obtained by combing the pairs $(\phi_{\wh\cZ;\T},\mu_{\wh\cZ;\T})$
and $(\phi_{\wh\cZ;S^1;a},\mu_{\wh\cZ;S^1;a})$.\\

\noindent
With $(\xi_i,\ep_i)_{i\in[N]}$ as above and $\io_i$ as in the proof of Lemma~\ref{THamConf_lmm2},
we define a Hamiltonian $(S^1)^N$-pair
for $(X,\om)$~by 
\BE{Ttophimu_e}\begin{aligned}
\phi\!:(S^1)^N\!\times\!X&\lra X, &\qquad
\phi\big(\io_i(\ne^{\fI\th});x\big)&=\phi_{\xi_i}(\ne^{\fI\th};x\big), \\
\mu\!:X &\lra\R^N, &\qquad
\mu(x)&=\big(L_i(\mu_{\T}(x))\!\big)_{\!i\in[N]}.
\end{aligned}\EE
Let 
\BE{toriccut_e}
\sC_{\phi,\mu}\equiv
\sC_{\phi_{\T},\mu_{\T}}\big((\xi_i,\ep_i)_{i\in[N]}\big)
\equiv \big(U_I,\phi_I,\mu_I\big)_{I\in\cP^*(N)}\EE
be the associated maximal $N$-fold Hamiltonian configuration for~$(X,\om)$ as
in~\eref{THamConf_e}.
In particular,
\BE{UIdfneg2_e}
U_I^{\le}\equiv\big\{x\!\in\!X\!:L_i(\mu_{\T}(x))\!\le\!L_j(\mu_{\T}(x))\,
\forall\,i\!\in\!I,\,j\!\in\![N]\big\}
=\mu_{\T}^{-1}\big(\De_I\big) \quad\forall\,I\!\in\!\cP^*(N).\EE
For an arbitrary Hamiltonian $(S^1)^N$-pair for~$(X,\om)$,
the configuration~\eref{THamConf_e} is the case of~\eref{toriccut_e}
with $(\phi_{\T},\mu_{\T})\!=\!(\phi,\mu)$, $\xi_i\!\in\!\Z^N$ being
the standard $i$-th coordinate vector, and $\ep_i\!=\!0$.\\

\noindent
We call a tuple $(\xi_i,\ep_i)_{i\in [N]}$ as above \sf{regular} 
if the associated $(S^1)^N$-Hamiltonian manifold $(X,\om,\phi,\mu)$
is regular as defined above Lemma~\ref{THamConf_lmm}.
Proposition~\ref{CombCond_prp} in Section~\ref{AdmissDecomp_subs} provides 
a geometric interpretation of 
this criterion in terms of the Delzant condition of \cite[Definition~28.1]{daSilva}.

\begin{numthm}\label{toriccut_thm}
Suppose $\T$ is a $k$-torus, $(X,\om,\phi_{\T},\mu_{\T})$ 
is a compact connected Hamiltonian $\T$-space, 
and $(\xi_i,\ep_i)_{i\in [N]}$ is a regular tuple in $(\La_{\ft}\!\times\!\R)^N$.
The tuple~\eref{toriccut_e} is then a maximal $N$-fold cutting configuration for~$(X,\om)$. 
For every $a\!\in\!\R$ sufficiently large, this tuple determines
\begin{enumerate}[label=(\arabic*),leftmargin=*]

\item a compact symplectic manifold $(\wh\cZ_a,\om_{\wh\cZ;a})$ containing 
the tuple $(X_i,\om_i)_{i\in[N]}$ of  the cut symplectic manifolds of
Corollary~\ref{SympNB_crl2} as an SC symplectic divisor~$X_{\eset}$ and 

\item a Hamiltonian $(\T\!\times\!S^1)$-pair $(\phi_{\wh\cZ;a},\mu_{\wh\cZ;a})$
such~that 
\BE{toriccutprp_e1}
\phi_{\wh\cZ;a}\big(\T\!\times\!S^1\!\times\!X_i\big)=X_i
~~\forall\,i\!\in\![N],\qquad
\mu_{\wh\cZ;a}\big(\wh\cZ_a\big)=\wh\De_a\,.\EE
\end{enumerate}
For all $i\!\in\!I\!\subset\![N]$, the preimage of~$\De_I$ 
under $\pi_{\ft^*}\!\circ\!\mu_{\wh\cZ;a}|_{X_i}$
 is  the symplectic submanifold $(X_I,\om_I)$ of~$(X_i,\om_i)$ determined by~\eref{toriccut_e}.
If the $\phi_{\T}$-action is effective, then so is the $\phi_{\wh\cZ;a}$-action.
The deformation equivalence class of smoothings of~$X_{\eset}$
determined by~\eref{toriccut_e} is represented by
a nearly regular symplectic fibration  $\wh\pi\!:\wh\cZ_a\!\lra\!\P^1$
equivariant with respect to the projection $\T\!\times\!S^1\!\lra\!S^1$.
\end{numthm}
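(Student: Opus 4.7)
The plan is to derive the theorem by chaining Lemma~\ref{THamConf_lmm}, Proposition~\ref{THamConf_prp}, Lemma~\ref{THamConf_lmm2}, and Corollary~\ref{CmptHam_crl}, applied to the Hamiltonian $(S^1)^N$-pair $(\phi,\mu)$ defined in~\eref{Ttophimu_e}. The restriction of~$(\phi,\mu)$ to $(S^1)^N_{\bu}$ is a Hamiltonian $(S^1)^N_{\bu}$-pair, so Lemma~\ref{THamConf_lmm} gives at once that $\sC_{\phi,\mu}$ in~\eref{toriccut_e} is a maximal $N$-fold Hamiltonian configuration, which is a cutting configuration precisely under the regularity hypothesis on $(\xi_i,\ep_i)_{i\in[N]}$. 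This settles the first claim.

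For the remaining claims, I would first check that $(\phi_{\T},\mu_{\T})$ is compatible with~$(\phi,\mu)$: each $\phi_{\xi_i}$ factors through~$\phi_{\T}$ via the homomorphism~$\vr_{\xi_i}$ and thus commutes with it, the components of $\mu\!=\!(L_i\!\circ\!\mu_{\T})_{i\in[N]}$ are $\phi_{\T}$-invariant by construction, and $\mu_{\T}$ is $\phi$-invariant for the same reason. Applying Proposition~\ref{THamConf_prp} then produces $(\cZ,\om_{\cZ})$, the SC symplectic divisor~$X_{\eset}$, the projection $\pi\!:\cZ\!\lra\!\C$, and a Hamiltonian $\T$-pair $(\phi_{\cZ;\T},\mu_{\cZ;\T})$; Lemma~\ref{THamConf_lmm2} adds a compatible Hamiltonian $S^1$-pair $(\phi_{\cZ;S^1},\mu_{\cZ;S^1})$. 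Corollary~\ref{CmptHam_crl}, applied for any $a$ satisfying~\eref{abnd_e2} (which coincides with~\eref{CmptHam_e7}), yields the compact symplectic manifold $(\wh\cZ_a,\om_{\wh\cZ;a})$, the nearly regular symplectic fibration $\wh\pi\!:\wh\cZ_a\!\lra\!\P^1$, and compatible Hamiltonian $\T$- and $S^1$-pairs on $\wh\cZ_a$ that assemble into the Hamiltonian $(\T\!\times\!S^1)$-pair $(\phi_{\wh\cZ;a},\mu_{\wh\cZ;a})$. The first identity in~\eref{toriccutprp_e1} is immediate from the $X_i$-preservation clauses in~\eref{THamConfC_e1} and the last statement of Proposition~\ref{THamConf_prp}.

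For the polytope identity $\mu_{\wh\cZ;a}(\wh\cZ_a)\!=\!\wh\De_a$, the intertwining $\mu_{\wh\cZ;\T}\!\circ\!\wh{F}_{a;w}\!=\!\mu_{\T}$ from Corollary~\ref{CmptHam_crl} gives $\pi_{\ft^*}\!\circ\!\mu_{\wh\cZ;a}(\wh\cZ_a)\!=\!\De$. For the $S^1$-component, formula~\eref{THamConf_e27b} yields $\mu_{\cZ;S^1}(q(x,z))\!=\!-L_i(\mu_{\T}(x))\!+\!\frac12|z_i|^2$, which is independent of $i\!\in\![N]$ by~\eref{THamConf_e5b} and takes values in $[-\!\min_i\!L_i(\mu_{\T}(x)),\i)$; the symplectic cut at level~$a$ truncates the upper end to~$a$, so the joint image traces out exactly~$\wh\De_a$. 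The preimage claim reduces to~\eref{UIdfneg2_e}: $X_i\!=\!q_{\eset}(\mu_{\T}^{-1}(\De_i))$, and $\mu_{\T}(y)\!\in\!\De_I$ for $y\!=\!q_{\eset}(x)\!\in\!X_i$ is equivalent to $x\!\in\!\mu_{\T}^{-1}(\De_I)\!=\!U_I^{\le}$, giving $y\!\in\!X_I$. Effectiveness follows because the $S^1$-factor rotates~$\P^1$ nontrivially by the third property in~\eref{CmptHam_e1a}, so any element acting trivially has trivial $S^1$-component, after which the effective $\T$-action on each smooth fiber (via~$\wh{F}_{a;w}$) forces its $\T$-component to be~$\id$. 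Finally, the $S^1$-equivariance of~$\wh\pi$ is the third property in~\eref{CmptHam_e1a}, and its $\T$-invariance $\wh\pi\!\circ\!\phi_{\wh\cZ;\T}\!=\!\wh\pi$ is inherited from $\pi\!\circ\!\phi_{\cZ;\T}\!=\!\pi$ of Proposition~\ref{THamConf_prp} through the cutting. The main obstacle will be the explicit polytope computation: one must carefully combine the two moment-map components and verify that neither the quotient by~$\wt\phi$ nor the symplectic cut excludes any values in~$\wh\De_a$.
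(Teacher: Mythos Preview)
Your proposal is correct and follows essentially the same route as the paper: invoke Lemma~\ref{THamConf_lmm} for the cutting configuration, then chain Proposition~\ref{THamConf_prp}, Lemma~\ref{THamConf_lmm2}, and Corollary~\ref{CmptHam_crl} to produce $(\wh\cZ_a,\om_{\wh\cZ;a})$, $\wh\pi$, and the $(\T\!\times\!S^1)$-pair, and then verify the polytope and preimage identities via the moment-map intertwinings. The only notable deviation is your effectiveness argument: the paper works directly on the cover~$\wt\cZ_{N+1;a}$ using the explicit formulas~\eref{toriccut_e15a}--\eref{toriccut_e15b} and a point with all $z_i,w\!\neq\!0$, whereas you argue via the $\T$-invariance and $S^1$-equivariance of~$\wh\pi$ together with the identification $\wh{F}_{a;w}$ of a smooth fiber with~$X$; your version is slightly cleaner. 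One small citation slip: the $X_i$-preservation by~$\phi_{\cZ;\T}$ is stated in the paragraph \emph{after} Proposition~\ref{THamConf_prp}, not in its last statement (which concerns~$\sC_{\cZ}$).
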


\begin{proof}
Since the tuple $(\xi_i,\ep_i)_{i\in [N]}$ is regular, 
\eref{toriccut_e} is a maximal $N$-fold cutting configuration for~$(X,\om)$
by Lemma~\ref{THamConf_lmm}.
By Theorem~\ref{SympCut_thm12}, it thus determines a symplectic manifold~$(\cZ,\om_{\cZ})$
and a~tuple $(X_i,\om_i)_{i\in[N]}$ of symplectic manifolds contained in~$(\cZ,\om_{\cZ})$
as an SC symplectic divisor~$X_{\eset}$. 
For  $a\!\in\!\R$ sufficiently large, let $(\wh\cZ_a,\om_{\wh\cZ;a})$,
$$\wh\pi\!:\wh\cZ_a\lra\P^1, \qquad f_a\!:\cZ_a'\lra\wh\cZ_a,$$
$(\phi_{\wh\cZ;S^1;a},\mu_{\wh\cZ;S^1;a})$, and $(\phi_{\wh\cZ;\T},\mu_{\wh\cZ;\T})$
be the associated objects provided by  Corollary~\ref{CmptHam_crl}.
In particular, $f_a$ embeds~$X_{\eset}$ as an SC symplectic divisor 
into the compact symplectic manifold $(\wh\cZ_a,\om_{\wh\cZ;a})$.
The required condition~\eref{CmptHam_e7} on $a\!\in\!\R$ in this case becomes~\eref{abnd_e2}.\\

\noindent
Since  the Hamiltonian $\T$-pair $(\phi_{\wh\cZ;\T},\mu_{\wh\cZ;\T})$ and 
the Hamiltonian $S^1$-pair $(\phi_{\wh\cZ;S^1;a},\mu_{\wh\cZ;S^1;a})$ are compatible,
they determine a Hamiltonian $(\T\!\times\!S^1)$-pair $(\phi_{\wh\cZ;a},\mu_{\wh\cZ;a})$
with 
$$\mu_{\wh\cZ;a}\!=\!\big(\mu_{\wh\cZ;\T},\mu_{\wh\cZ;S^1;a}\big)\!:
\wh\cZ_a\lra \ft^*\!\oplus\!\R.$$
Since $\wh\pi$ is $S^1$-equivariant, it is 
equivariant with respect to the projection \hbox{$\T\!\times\!S^1\!\lra\!S^1$}. 
Since the $\phi_{\wh\cZ;\T}$- and $\phi_{\wh\cZ;S^1;a}$-actions preserve $X_i\!\subset\!\wh\cZ_a$,
so does the $\phi_{\wh\cZ;a}$-action.
By Corollary~\ref{CmptHam_crl} and  Proposition~\ref{THamConf_prp},
\BE{toriccut_e9}\mu_{\cZ;\T}=\mu_{\wh\cZ;\T}\!\circ\!f_a \qquad\hbox{and}\qquad
\mu_{\T}=\mu_{\cZ;\T}\!\circ\!q_{\eset}\,,\EE
respectively.
Combining these statements with~\eref{UIdfneg2_e} and Corollary~\ref{SympNB_crl2}, we obtain
\begin{equation*}\begin{split}
\big\{\pi_{\ft^*}\!\circ\!\mu_{\wh\cZ;a}\big\}^{-1}(\De_I)\cap X_{\eset}
&\equiv\mu_{\wh\cZ;\T}^{\,-1}(\De_I) \cap f_a(X_{\eset})
=f_a\big(\mu_{\cZ;\T}^{-1}(\De_I)\!\cap\!X_{\eset}\big)
=f_a\big(\mu_{\cZ;\T}^{-1}(\De_I)\!\cap\!q_{\eset}(X)\big)\\
&=f_a\big(q_{\eset}\big(\mu_{\T}^{-1}(\De_I)\big)\!\big)
=f_a\big(q_{\eset}(U_I^{\le})\big)=f_a(X_I)\equiv X_I 
\end{split}\end{equation*}
for every $I\!\in\!\cP^*(N)$.\\

\noindent
With the notation as in~\eref{CmptHam_e8}, define
\begin{alignat}{2}
\label{toriccut_e15a}
\wt\phi_{\wh\cZ;a}\!: \T\!\times\!S^1\!\times\!\wt\cZ_{N+1;a}&\lra \wt\cZ_{N+1;a},
&\quad \wt\phi_{\wh\cZ;a}\big(g,\ne^{\fI\th};x,z,w\big)&=\big(\phi_{\T}(g;x),z,\ne^{-\fI\th}w\big),\\
\label{toriccut_e15b}
\wt\mu_{\wh\cZ;a}\!:\wt\cZ_{N+1;a}&\lra\ft^*\!\oplus\!\R, &\qquad
\wt\mu_{\wh\cZ;a}(x,z,w)&=\Big(\mu_{\T}(x),a\!-\!\frac12|w|^2\Big).
\end{alignat}
By construction, the pair $(\wt\phi_{\wh\cZ;a},\wt\mu_{\wh\cZ;a})$ descends to 
the pair $(\phi_{\wh\cZ;a},\mu_{\wh\cZ;a})$ on the quotient in~\eref{CmptHam_e8}.
By~\eref{toriccut_e15b} and~\eref{toriccut_e9}, $\pi_{\ft^*}(\mu_{\wh\cZ;a}(\wh\cZ_a))\!=\!\De$.
Since \hbox{$(\mu(x))_i\!=\!L_i(\mu_{\T}(x))$},
\eref{CmptHam_e8} and~\eref{toriccut_e15b} give
$$\big\{u\!\in\!\R\!: (\eta,u)\!\in\!\mu_{\wh\cZ;a}(\wh\cZ_a)\big\}
=\big\{u\!\in\!\R\!:  -L_i(\eta)\!\le\!u\!\le\!a~\forall\,i\!\in\![N]\big\}
\quad\forall\,\eta\!\in\!\De.$$
This establishes the second statement in~\eref{toriccutprp_e1}.\\
 
\noindent
Suppose the $\phi_{\wh\cZ;a}$-action of $(g,\ne^{\fI\th})\!\in\!\T\!\times\!S^1$ 
on~$\wh\cZ_a$ is trivial.
Let
$$z\!=\!(z_i)_{i\in[N]}\in\C^N, \qquad
\wt{y}\equiv\big(x,z,w\big)\in\wt\cZ_{N+1;a}, \qquad\hbox{and}\quad
\big(\ne^{\fI\th_i}\big)_{i\in[N]}\in(S^1)^N$$
be such that
\BE{toriccut_e25}z_i\neq0~~\forall\,i\!\in\![N],\quad w\neq0, \quad
\wt\phi_{\wh\cZ;a}\big(g,\ne^{\fI\th};x,z,w\big)=
\wt\phi_{N+1}\big((\ne^{\fI\th_i})_{i\in[N]};x,z,w\big).\EE
By~\eref{toriccut_e15a} and the first and last assumptions in~\eref{toriccut_e25}, 
$\ne^{\fI\th_i}\!=\!1$
for every $i\!\in\![N]$.
By the second assumption  in~\eref{toriccut_e25}, this in turn implies that
$\ne^{\fI\th}\!=\!1$.
Since the projection 
$$\wt\cZ_{N+1;a}\cap\big(X\!\times\!(\C^{N+1}\!-\!\C^{N+1}_0)\big)\lra X$$
is surjective, it follows that  the $\phi_{\T}$-action of~$g$ on~$X$ is trivial.
If the~$\phi_{\T}$-action is effective, the $\phi_{\wh\cZ;a}$-action is thus effective as well.
\end{proof}

\noindent
The compatible Hamiltonian pairs $(\phi_{\cZ;\T},\mu_{\cZ;\T})$ and 
$(\phi_{\cZ;S^1},\mu_{\cZ;S^1})$  for $(\cZ,\om_{\cZ})$
provided by Proposition~\ref{THamConf_prp} and Lemma~\ref{THamConf_lmm2}
give rise to a Hamiltonian $(\T\!\times\!S^1)$-pair 
$(\phi_{\cZ;\i},\mu_{\cZ;\i})$
for the symplectic manifold $(\cZ,\om_{\cZ})$ determined by~\eref{toriccut_e}
if the tuple $(\xi_i,\ep_i)_{i\in[N]}$ is regular.
The corresponding moment ``polytope"~is
$$\mu_{\cZ;\i}(\cZ)=\wh\De_{\i}\!\equiv\!
\big\{(\eta,u)\!\in\!\De\!\times\!\R\!:\,-\min_{i\in[N]}L_i(\eta)\le u\big\}.$$
The moment polytope of  $(\wh\cZ_a,\om_{\wh\cZ;a},\phi_{\wh\cZ;a},\mu_{\wh\cZ;a})$
 is  obtained by cutting off this infinite ``polytope"  
at the level $u\!=\!a$ of the moment map for the  $S^1$-action~$\phi_{\cZ;S^1}$,
as expected from the proof of Corollary~\ref{CmptHam_crl} and \cite[Remark~1.5]{L}.\\

\noindent
By Corollary~\ref{CmptHam_crl}, the fibers~$(\wh\cZ_{a;\la},\om_{\wh\cZ;a;\la})$ of 
the map~$\wh\pi$ in Theorem~\ref{toriccut_thm}
over $\P^1\!-\!\{[1,0]\}$ with the Hamiltonian $\T$-pair $(\phi_{\wh\cZ;\T},\mu_{\wh\cZ;\T})$
are canonically isomorphic to~$(X,\om)$ with the pair~$(\phi_{\T},\mu_{\T})$.
The fibers over~$[1,0]$ and~$[0,1]$ are preserved by the full $\T\!\times\!S^1$-action
$\phi_{\wh\cZ;a}$ on~$\wh\cZ_a$.
The restriction of the $S^1$-action to the latter is in fact trivial;
this is reflected in the ``top" face of the polytope~$\wh\De_a$ in~\eref{wtDeadfn_e} being ``horizontal".
The fibers over $\C^*\!\subset\!\P^1$ are not preserved by the $S^1$-action.
Their images \hbox{$\mu_{\wh\cZ;a}(\wh\cZ_{a;\la})\!\subset\!\wh\De_a$}  
do not depend the angular component of~$\la$.
The restriction of $\pi_{\ft^*}$ to $\mu_{\wh\cZ;a}(\wh\cZ_{a;\la})$ is surjective onto~$\De$
and has one-dimensional fibers.\\
 
\noindent
A compact connected Hamiltonian $\T$-manifold $(X,\om,\phi_{\T},\mu_{\T})$ 
is a \sf{symplectic toric manifold}
if the $\T$-action~$\phi_{\T}$ is effective and $\dim_{\R}\!X\!=\!2\dim_{\R}\!\T$.
By Theorem~\ref{toriccut_thm}, $(\wh\cZ_a,\om_{\wh\cZ;a},\phi_{\wh\cZ;a},\mu_{\wh\cZ;a})$
is a symplectic toric manifold if $(X,\om,\phi_{\T},\mu_{\T})$~is.
The projection 
\hbox{$\ft\!\oplus\!\R\!\lra\!\R$}
then induces a projection from the toric fan of~$\wh\cZ_a$ to the toric fan of~$\P^1$. 
The projection~$\wh\pi$ in Theorem~\ref{toriccut_thm} is
the projective morphism induced by~$\pi_{\R}$;
see \cite[Proposition VII.1.16]{Audin}.
We can think of~$\wh\pi$ as a one-parameter family 
of K\"ahler manifolds smoothing $\wh\cZ_{a;0}\!=\!X_{\eset}$ into $\wh\cZ_{a;\i}\!\approx\!X$. 
The vertical edges of the polytope~$\wh\De_a$ in~\eref{wtDeadfn_e} correspond to 
holomorphic sections of~$\wh\pi$.\\

\noindent
The configuration~\eref{toriccut_e}, 
which determines the output of Theorems~\ref{SympCut_thm12} and~\ref{SympCut_thm3}, 
depends only on the restriction of 
the Hamiltonian $(S^1)^N$-pair~$(\phi,\mu)$ defined by~\eref{Ttophimu_e} to
the subtorus $(S^1)^N_{\bu}\!\subset\!(S^1)^N$.
The latter is determined by a tuple $(\xi_{ij},\ep_{ij})_{i,j\in[N]}$ in
 $(\La_{\ft}\!\times\!\R)^{N\times N}$ satisfying the one-cocycle condition
$$\big(\xi_{ij},\ep_{ij}\big)+\big(\xi_{jk},\ep_{jk}\big)=
\big(\xi_{ik},\ep_{ik}\big) \qquad\forall~i,j,k\!\in\![N].$$
A tuple $(\xi_i,\ep_i)_{i\in[N]}$ in $(\La_{\ft}\!\times\!\R)^N$ 
cobounding $(\xi_{ij},\ep_{ij})_{i,j\in[N]}$, i.e.
\BE{coboundcond_e}\big(\xi_{ij},\ep_{ij}\big)=\big(\xi_j,\ep_j\big)-\big(\xi_i,\ep_i\big)
\qquad\forall\,i,j\!\in\![N],\EE
determines an extension of the Hamiltonian $(S^1)^N_{\bu}$-pair
 determined by $(\xi_{ij},\ep_{ij})_{i,j\in[N]}$ to an \hbox{$(S^1)^N$-pair}.
If $(\xi_i',\ep_i')_{i\in[N]}$ is another tuple satisfying~\eref{coboundcond_e}, then
there exists~$(\xi,\ep)$ in $\La_{\ft}\!\times\!\R$ such~that
$$\big(\xi_i',\ep_i'\big)=\big(\xi_i,\ep_i\big)+(\xi,\ep) \qquad\forall\,i\!\in\![N].$$
Along with~\eref{THamConf_e27a} and~\eref{THamConf_e27b}, this implies that
$$\phi_{\cZ;S^1}'\big(\ne^{\fI\th};y\big)=
\phi_{\cZ;\T}\big(\rho_{\xi}(\ne^{-\fI\th});\phi_{\cZ;S^1}(\ne^{\fI\th};y)\big), \quad
\mu_{\cZ;S^1}'(y)=\mu_{\cZ;S^1}(y)\!-\!L_{\xi}\big(\mu_{\cZ;\T}(y)\big)\!+\!\ep\,.$$
As indicated by Example~\ref{P1cutN1_eg},  the deformation equivalence class of
the symplectic manifold $(\wh\cZ_a,\om_{\wh\cZ;a})$ of Theorem~\ref{toriccut_thm} in general 
depends on the choice of such a coboundary.

\begin{eg}\label{P1cutN1_eg}
Let $\om_{\P^1}$ denote the doubled Fubini-Study symplectic form on~$\P^1$, i.e.
$$\om_{\P^1}\big|_z=\frac{2\,\om_{\C}}{(1\!+\!|z|^2)^2} \qquad\forall~z\!\in\!\C\!\subset\!\P^1.$$
We take $(X,\om)\!=\!(\P^1,\om_{\P^1})$, $\T\!=\!S^1$, 
$\phi_{\T}\!=\!\phi_{\P^1}$ as in~\eref{S1P1dfn_e},  
$$\mu_{\T}\!\equiv\!\mu_{\P^1}:\P^1\lra\R, ~~ 
\mu_{\P^1}\big([w,z]\big)=\frac{|z|^2}{|w|^2\!+\!|z|^2}\,,
\quad N=1,  \quad \xi_1=m\in\Z\!=\!\La_{\ft}, \quad \ep_1=0.$$
Thus, $\cZ\!=\!\P^1\!\times\!\C$, $\wh\De_a$ is the polytope in the first diagram of 
Figure~\ref{P1cut_fig}, and
$$\wh\pi\!:\wh\cZ_a\!=\!\bF_m\lra\P^1$$
is the $m$-th Hirzebruch surface with the canonical projection; see \cite[Homework~22.3]{daSilva}. 
We conclude that $(\wh\cZ_a,\om_{\wh\cZ;a})$ depends on the parity of~$m$ in this case.
\end{eg}

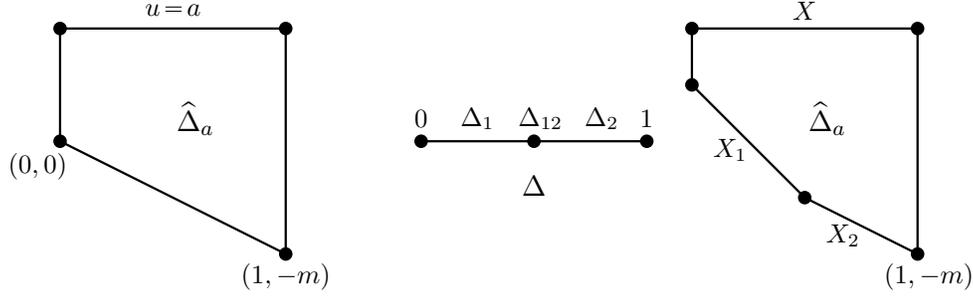
\begin{figure}
\begin{pspicture}(-3.5,-.3)(10,3)
\psset{unit=.3cm}
\psline[linewidth=.1](-3,10)(7,10)(7,0)(-3,5)(-3,10)
\rput(2,10.8){\sm{$u\!=\!a$}}
\pscircle*(-3,10){.3}\pscircle*(7,10){.3}\pscircle*(7,0){.3}\pscircle*(-3,5){.3}
\rput(3,6){$\wh\De_a$}\rput(7,-1){\sm{$(1,-m)$}}\rput(-4,3.9){\sm{$(0,0)$}}
% middle diagram 
\psline[linewidth=.1](13,5)(23,5)
\pscircle*(13,5){.3}\pscircle*(18,5){.3}\pscircle*(23,5){.3}
\rput(13,6){\sm{$0$}}\rput(23,6){\sm{$1$}}
\rput(15.5,6){\sm{$\De_1$}}\rput(21,6){\sm{$\De_2$}}\rput(18.3,6){\sm{$\De_{12}$}}
\rput(18,3){$\De$}
% right diagram
\psline[linewidth=.1](35,10)(25,10)(25,7.5)(30,2.5)(35,0)(35,10)
\rput(30,10.8){\sm{$X$}}
\pscircle*(35,0){.3}\pscircle*(30,2.5){.3}\pscircle*(25,7.5){.3}
\pscircle*(35,10){.3}\pscircle*(25,10){.3}
\rput(31,6){$\wh\De_a$}\rput(35.5,-1){\sm{$(1,-m)$}}
\rput(26.7,4.6){\sm{$X_1$}}\rput(31.7,.8){\sm{$X_2$}}
\end{pspicture}
\caption{The polytope~$\wh\De_a$ for the one-parameter ``degeneration" of symplectic manifolds
for the data of Example~\ref{P1cutN1_eg},
the subdivision of~$\De$ into two polytopes for the data of Example~\ref{P1cut_eg},
and the polytope~$\wh\De_a$ for the associated one-parameter family 
of symplectic manifolds degenerating $X\!=\!\P^1$
into $X_{\eset}\!=\!\P^1\!\vee\!\P^1$.}
\label{P1cut_fig}
\end{figure}

\begin{eg}\label{P1cut_eg}
With $(X,\om,\phi_{\T},\mu_{\T})$ as in Example~\ref{P1cutN1_eg} and $m\!\in\!\Z$, 
we now~take 
$$N=2, \quad \xi_1=m\!+\!1, \quad \xi_2=m, \quad \ep_1=\frac12, \quad \ep_2=0.$$
The tuple $(\xi_i,\ep_i)_{i\in[2]}$ is then regular.
The polytopes~$\De$ for $(\phi_{\T},\mu_{\T})$,
$\De_i$ for the cut symplectic manifolds~$(X_i,\om_i)$ with $i\!=\!1,2$,
and $\wh\De_a$ for the symplectic manifold $(\wh\cZ_a,\om_{\wh\cZ;a})$
containing $X_1\!\cup\!X_2$ as an SC symplectic divisor are shown in
the second and third diagrams of Figure~\ref{P1cut_fig}.
By \cite[Homework~22.2]{daSilva}, $(\wh\cZ_a,\om_{\wh\cZ;a},\phi_{\wh\cZ;a},\mu_{\wh\cZ;a})$
is a symplectic manifold that can be constructed as either 
a toric blowup of~$\bF_m$ or a toric blowup of~$\bF_{m+1}$.
By Delzant's Classification Theorem \cite[Theorem~28.2]{daSilva},
this quadruple  depends on the choice of~$m$.
However, the deformation equivalence class of the symplectic manifold $(\wh\cZ_a,\om_{\wh\cZ;a})$
is independent of this choice.
\end{eg}

\begin{eg}\label{P1P1_eg}
With the notation as in Example~\ref{P1cutN1_eg}, we now take
\begin{gather*}
X=\P^1\!\times\!\P^1, \qquad \om=3\pi_1^*\om_{\P^1}\!+\!2\pi_2^*\om_{\P^1},
\qquad \T\!=\!(S^1)^2, \\
\phi_{\T}\!=\!\phi_{\P^1}\!\times\!\phi_{\P^1}\!:\T\!\times\!X\lra X,
\qquad 
\mu_{\T}=\big(3\mu_{\P^1}\!\circ\!\pi_1\!-\!2,2\mu_{\P^1}\!\circ\!\pi_2\!-\!1\big)\!:
X\lra\R^2,\\
\xi_1=(0,0), \qquad \xi_2=(1,0), \qquad \xi_3=(0,1), 
\qquad \ep_1,\ep_2,\ep_3\!=\!0.
\end{gather*}
The associated subdivision of~$\De\!=\![-2,1]\!\times\![-1,1]$ is  
shown in the first diagram of Figure~\ref{P1P1_fig}.
It corresponds to a 3-fold symplectic cut 
of $\P^1\!\times\!\P^1$ into  $X_1\!=\!\P^1\!\times\!\P^1$, 
a one-point blowup~$X_2$ of $\P^1\!\times\!\P^1$, and $X_3\!=\!\bF_1$. 
The smooth divisors $X_{12},X_{13}\!\subset\!X_1$ are one of the horizontal lines and 
one of the vertical lines and thus have normal bundles of degree~0.
The smooth divisors $X_{12},X_{23}\!\subset\!X_2$ are the proper transform of a ruling 
of~$\P^1\!\times\!\P^1$ through the blowup point and the exceptional divisor, respectively,
and thus have normal bundles of degree~$-1$.
The smooth divisors $X_{13},X_{23}\!\subset\!X_3$ are the exceptional divisor and a fiber
and thus have normal bundles of degrees~$-1$ and~$0$, respectively.
Since $X_{123}$ is a single point in this case, the restrictions
of the line bundle~$\cO_{X_{\prt}}(X_{\eset})$ in~\eref{PsiDfn_e} to $X_{12},X_{13},X_{23}$ 
are thus of degree~0.
Since $X_{12},X_{13},X_{23}\!\approx\!\P^1$ are simply connected, 
this line bundle has a unique homotopy class of trivializations.
The second diagram in Figure~\ref{P1P1_fig} shows the polytope~\eref{wtDeadfn_e}
for the associated symplectic toric $\T\!\times\!S^1$-manifold
$(\wh\cZ_a,\om_{\wh\cZ;a},\phi_{\wh\cZ;a},\mu_{\wh\cZ;a})$. 
\end{eg}

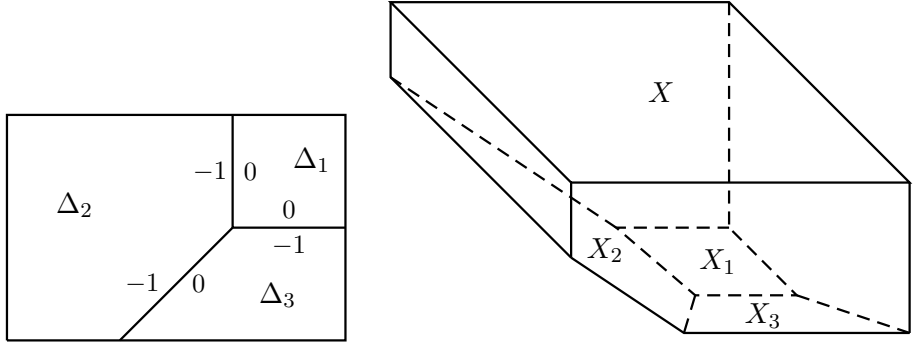
\begin{figure}
\begin{pspicture}(1.2,0.5)(10,5)
\psset{unit=.3cm}
\psline[linewidth=.1](10,2)(25,2)(25,12)(10,12)(10,2)
\psline[linewidth=.1](20,12)(20,7)(25,7)
\psline[linewidth=.1](20,7)(15,2)
\rput(23.5,10){$\De_1$}\rput(13,8){$\De_2$}\rput(22,4){$\De_3$}
\rput(20.8,9.5){\sm{$0$}}\rput(22.5,7.8){\sm{$0$}}
\rput(16,4.5){\sm{$-1$}}\rput(19,9.5){\sm{$-1$}}
\rput(22.5,6.2){\sm{$-1$}}\rput(18.5,4.5){\sm{$0$}}
% right 
\psline[linewidth=.1](35,9)(50,9)(42,17)(27,17)(35,9)
\psline[linewidth=.1,linestyle=dashed](42,17)(42,7)(45,4)(40.5,4)(37,7)(42,7)
\psline[linewidth=.1,linestyle=dashed](45,4)(50,2.33)
\psline[linewidth=.1](50,2.33)(50,9)
\psline[linewidth=.1,linestyle=dashed](40.5,4)(40,2.33)
\psline[linewidth=.1,linestyle=dashed](37,7)(27,13.67)
\psline[linewidth=.1](35,5.67)(27,13.67)(27,17)
\psline[linewidth=.1](35,5.67)(35,9)
\psline[linewidth=.1](50,2.33)(40,2.33)(35,5.67)
\rput(41.5,5.5){$X_1$}\rput(36.5,6){$X_2$}\rput(43.5,3.1){$X_3$}
\rput(39,13){$X$}
\end{pspicture}
\caption{The subdivision of $\De$ into three polytopes for the data of Example~\ref{P1P1_eg}
and the polytope~$\wh\De_a$ for the associated one-parameter family of symplectic manifolds 
degenerating~$X$ (the top face) into the SC symplectic variety $X_1\!\cup\!X_2\!\cup\!X_3$.
The numbers next to each edge $\De_{ij}\!=\!\De_i\!\cap\!\De_j$ in the first diagram
are the degrees of the normal bundles of the symplectic submanifold
$(X_{ij},\om_{ij})$ in the symplectic manifolds $(X_i,\om_i)$.}
\label{P1P1_fig}
\end{figure}

\subsection{Admissible decompositions of polytopes}
\label{AdmissDecomp_subs}

\noindent
Suppose $\T$ is a $k$-torus  and $\De\!\subset\!\ft^*$ is a convex polytope
as before.
For each $v\!\in\!\De$, let $\fd_v(\De)\!\in\!\Z^{\ge0}$ be the dimension
of the minimal facet~$\De_v$ of~$\De$ containing~$v$.
If  $v\!\in\!\De$ is a vertex,
let $\E_v(\De)$ be the set of edges of~$\De$ containing~$v$. 
For each $e\!\in\!\E_v(\De)$, we denote by $e/v\!\in\!\De$
the vertex of~$e$ other than~$v$.
A vertex~$v$ of a polytope $\De\!\subset\!\ft^*$ is called~\sf{smooth}
if there exist a $\Z$-basis $\{u_e\}_{e\in\E_v(\De)}$ for~$\La_{\ft}^*$ 
and a~tuple $(t_{e,v})_{e\in\E_v(\De)}$ in $\R^{\E_v(\De)}$ so that 
$$e/v=v\!+\!t_{e,v}u_e  \in \ft^* \qquad\forall~e\!\in\!\E_v(\De).$$
The smoothness of a vertex $v\!\in\!\De$ implies that $|\E_v(\De)|\!=\!k$
and the slope of each edge $e\!\in\!\E_v(\De)$ is rational with respect to 
the lattice~$\La_{\ft}^*$.
A polytope $\De\!\subset\!\ft^*$ is \sf{Delzant} if every vertex $v\!\in\!\De$ is smooth.\\

\noindent
Let $(X,\om,\phi_{\T},\mu_{\T})$ be a compact connected Hamiltonian $\T$-manifold
with moment polytope~$\De$.
For a tuple  $(\xi_i,\ep_i)_{i\in [N]}$  in $(\La_{\ft}\!\times\!\R)^N$,
let $\De_i\!\subset\!\De$ be as in~\eref{wtDeadfn_e}.
For each $v\!\in\!\De$, define
$$I_v=\{i\!\in\![N]\!:\,v\!\in\!\De_i\big\}\,.$$
We call $(\xi_i,\ep_i)_{i\in [N]}$ a \sf{quasi-regular} tuple  
 if every polytope $\De_i\!\subset\!\De$ is Delzant,
$\xi_i\!-\!\xi_j$ is a primitive element of~$\La_{\ft}$ 
for all $i,j\!\in\![N]$ distinct such that $\De_{ij}\!\neq\!\eset$, and
\BE{DeTrans_e}
|I_v|\le\fd_v(\De)\!+\!1 \qquad\forall~v\!\in\!\De.\EE
The last condition is equivalent to the same condition 
for all vertices~$v$ of the polytopes~$\De_i$ with $i\!\in\![N]$.
By dimensional considerations, the inequality in~\eref{DeTrans_e} for any given vertex $v\!\in\!\De_i$
is equivalent to the equality for the same vertex.
An example of a subdivision for a quasi-regular tuple is shown in 
the first diagram in Figure~\ref{P1P1_fig}.\\

\noindent
By~\eref{DeTrans_e},  the combinatorics of the subdivision of~$\De$ determined
by a quasi-regular tuple  $(\xi_i,\ep_i)_{i\in[N]}$ does not change under
sufficiently small changes in the values of~$\ep_i$.
The next proposition relates the combinatorial notion of quasi-regularity for 
 $(\xi_i,\ep_i)_{i\in [N]}$ to the geometric notion of regularity for
the induced Hamiltonian configuration~\eref{toriccut_e}.

\begin{prp}\label{CombCond_prp}
Suppose $N\!\in\!\Z^+$,
$(X,\om,\phi_{\T},\mu_{\T})$ is a compact connected Hamiltonian $\T$-manifold, 
%$\phi$ is an effective torus action, 
and $(\xi_i,\ep_i)_{i\in [N]}$ is a tuple in $(\La_{\ft}\!\times\!\R)^N$.
If this tuple is regular, then it is quasi-regular.
If it satisfies~\eref{DeTrans_e} and $(\ep_i)_{i\in [N]}$ is generic, then
 the $\phi_I$-action of $(S^1)^I_{\bu}$ on $\mu_I^{-1}(0)$ has at most finite
stabilizers for every $I\!\in\!\cP^*(N)$.
If $(X,\om,\phi_{\T},\mu_{\T})$ is a toric symplectic manifold 
and the tuple $(\xi_i,\ep_i)_{i\in [N]}$ 
is quasi-regular, then it is regular.
\end{prp}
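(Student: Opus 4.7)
The plan rests on the observation that for each $I\!\in\!\cP^*(N)$, the $(S^1)^I_\bu$-action~$\phi_I$ on~$X$ factors as the composition of the~$\T$-action~$\phi_\T$ with a homomorphism $\psi_I\!:(S^1)^I_\bu\!\lra\!\T$ whose differential on the lattice $\La_I\!\equiv\!\Z^I\!\cap\!\ft_{I;\bu}$ sends $e_j\!-\!e_k$ to $\xi_j\!-\!\xi_k$ for all $j,k\!\in\!I$. Hence, writing $\Ga_x\!\subset\!\T$ for the $\T$-isotropy of $x\!\in\!X$, the $\phi_I$-stabilizer of~$x$ is $\psi_I^{-1}(\Ga_x)$, and $\phi_I$ is free at~$x$ iff the composite $(S^1)^I_\bu\!\lra\!\T/\Ga_x$ is injective. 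I will use the standard fact that, for $v\!=\!\mu_\T(x)$, the identity component~$\Ga_x^\circ$ has Lie algebra equal to the annihilator of the linear span of the minimal face $\De_v\!\subset\!\De$ through~$v$, of dimension $k\!-\!\fd_v(\De)$, together with the direct observation that any $x\!\in\!\mu_I^{-1}(0)$ satisfies $I\!=\!I_{\mu_\T(x)}$ (from the strict inequalities built into the definition of~$U_I$).

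For Part~(1), assuming regularity: to derive~(c), I would pick any $v\!\in\!\De$, set $I\!=\!I_v$, and take $x\!\in\!\mu_\T^{-1}(v)\!\subset\!\mu_I^{-1}(0)$; freeness of~$\phi_I$ at~$x$ then forces $|I|\!-\!1\!\le\!\fd_v(\De)$. For~(b), given $\De_{ij}\!\neq\!\eset$, I pick $v\!\in\!\De_{ij}$ and $I\!=\!I_v\!\supset\!\{i,j\}$; the sub-circle of $(S^1)^I_\bu$ generated by $e_j\!-\!e_i$ must then act freely on~$\mu_I^{-1}(0)$, which is possible only when $\xi_j\!-\!\xi_i$ is primitive in~$\La_\ft$ (otherwise a nontrivial cyclic subgroup of this sub-circle would fix~$x$). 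For~(a), at a vertex~$v$ of~$\De_i$ one necessarily has $|I_v|\!=\!\fd_v(\De)\!+\!1$ by~(c), and the~$k$ inward primitive facet normals of~$\De_i$ at~$v$ are $\{\nu_1,\ldots,\nu_{k-\fd_v(\De)}\}\cup\{\xi_j\!-\!\xi_i\!:j\!\in\!I_v\!-\!\{i\}\}$, where the~$\nu_\bullet$ are the normals to the facets of~$\De$ through~$v$; freeness of~$\phi_{I_v}$ at~$x\!\in\!\mu_\T^{-1}(v)$ will translate, via the injectivity of the induced torus map on the quotient lattice $\La_{I_v}\!\lra\!\La_\ft/\Z\lr{\nu_1,\ldots,\nu_{k-\fd_v(\De)}}$, into the statement that these $k$ vectors form a $\Z$-basis of~$\La_\ft$, which is exactly the Delzant condition at~$v$.

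For Part~(2), at $x\!\in\!\mu_I^{-1}(0)$ with $v\!=\!\mu_\T(x)$, I would observe that finiteness of the $\phi_I$-stabilizer amounts to injectivity of the Lie algebra map $\nd\psi_I\!:\ft_{I;\bu}\!\lra\!\ft/\tn{Lie}(\Ga_x^\circ)$, dually to the linear independence of the functionals $L_j\!-\!L_i$ ($j\!\in\!I\!-\!\{i\}$) on the linear span of~$\De_v$; the assumption~\eref{DeTrans_e} makes this dimensionally feasible, and genericity of~$(\ep_i)$ places the cutting hyperplanes $\{L_j\!=\!L_i\}$ in general position with respect to every face of~$\De$, producing the required linear independence. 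For Part~(3), with~$X$ toric, I would use that~$\Ga_x$ is connected with primitive lattice $\Z\lr{\nu_1,\ldots,\nu_{k-\fd_v(\De)}}$ in~$\La_\ft$ (by Delzantness of~$\De$), so freeness of~$\phi_I$ at~$x$ reduces to $\{\nu_\bullet\}\cup\{\xi_j\!-\!\xi_i\!:j\!\in\!I\!-\!\{i\}\}$ generating a primitive sublattice of~$\La_\ft$; condition~(a) yields this directly at vertices of~$\De_i$ (the extremal case $|I|\!=\!\fd_v(\De)\!+\!1$), and non-extremal cases follow by restriction to these vertices, with~(b) ensuring primitivity along the codimension-one strata.

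The hard part will be Part~(1)(a): converting freeness of~$\phi_{I_v}$ into the $\Z$-basis property requires delicate lattice-level bookkeeping to handle the kernel of~$\psi_{I_v}$ together with the possibly disconnected component group of~$\Ga_x$. A further subtlety is that at ``old'' vertices of~$\De$ that survive as vertices of~$\De_i$ (necessarily with $I_v\!=\!\{i\}$, so no cutting occurs there), the Delzantness of~$\De_i$ at~$v$ reduces to that of~$\De$ at~$v$, which must be extracted from the local structure of the Hamiltonian $\T$-action at the corresponding isolated fixed point rather than from regularity of the cutting configuration itself.
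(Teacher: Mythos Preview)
Your overall strategy coincides with the paper's: both reduce freeness of the $\phi_I$-action at $x\!\in\!\mu_I^{-1}(0)$ to injectivity of $(S^1)^I_\bu\!\to\!\T/\Ga_x$ and translate between lattice and polytope conditions. The paper packages this into three preparatory lemmas (Lemmas~\ref{CombCond_lmm}--\ref{CombCond_lmm3}); your outline does the same work in place. For Parts~(2) and~(3) the arguments match and are correct; the paper's use of Sard's theorem on~$\mu_I$ for Part~(2) is cleaner than your general-position phrasing but amounts to the same thing.

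The ``further subtlety'' you flag in Part~(1)(a) is a genuine gap, and it cannot be closed. At a vertex~$v$ of~$\De$ with $I_v\!=\!\{i\}$, the torus $(S^1)^{I_v}_\bu$ is trivial, so regularity imposes no condition there; yet quasi-regularity demands that~$\De_i$, hence~$\De$, be Delzant at~$v$. This is \emph{not} automatic for compact connected Hamiltonian $\T$-manifolds: take~$\P^3$ with the effective $(S^1)^2$-action $[z_0\!:\!z_1\!:\!z_2\!:\!z_3]\!\mapsto\![z_0\!:\!s^2tz_1\!:\!st^2z_2\!:\!stz_3]$, whose moment triangle has edge directions $(2,1)$ and $(1,2)$ at the vertex~$(0,0)$, not a $\Z$-basis of~$\Z^2$. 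With $N\!=\!1$ (or any~$N$ leaving that vertex uncut) the tuple is vacuously regular but not quasi-regular. Your proposed fix---extracting Delzantness from the local action at an isolated fixed point---fails here too: the fixed point $[1\!:\!0\!:\!0\!:\!0]$ \emph{is} isolated, but the extremal weights $(2,1),(1,2)$ of its tangent representation still do not form a $\Z$-basis. The paper's own proof shares this gap: its Lemma~\ref{CombCond_lmm2} asserts without justification that~\eref{CombCond_e19} forces smoothness of a vertex of~$\De_i$, which is vacuous precisely when $|I_v|\!=\!1$. The first implication of the proposition is false as stated; it holds only under an extra hypothesis such as Delzantness of~$\De$ itself.
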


\noindent
For $i,j\!\in\![N]$, let $\xi_{ij}\!=\!\xi_i\!-\!\xi_j$.
For each $v\!\in\!\ft^*$, let
$$\ft_v\equiv\big\{\xi\!\in\!\ft\!:\,L_{\xi}(\eta\!-\!v)\!=\!0~\forall\,\eta\!\in\!\De_v\big\}$$
be the annihilator of the vectors contained in~$\De_v$.
By \cite[Corollary~IV.4.13]{Audin}, $\ft_v$ is the $\R$-span of a sublattice of $\La_{\ft}$ and
thus generates a subtorus~$\T_v\subset\T$.
For $I\!\in\!\cP^*(N)$, define
\begin{gather*}
L_{\ft^*;I}\!:\ft^*\lra\ft_{I;\bu}^*=\R^I\big/\big\{(a,\ldots,a)\!\in\!\R^I\!:\,a\!\in\!\R\big\},
\quad \quad
L_{\ft^*;I}(\eta)=\big[\big(L_i(\eta)\big)_{\!i\in I}\big],\\
\cK_{\ft^*;I}=\big\{\eta\!\in\!\ft^*\!:\,
L_{\xi_i}(\eta)\!=\!L_{\xi_j}(\eta)~\forall\,i,j\!\in\!I\big\}.
\end{gather*}
Thus,
\begin{gather}
\label{CombCond_e2b}
\Ann\big(\cK_{\ft^*;I}\big)\equiv
\big\{\xi\!\in\!\ft\!:\,L_{\xi}(\eta)\!=\!0~\forall\,\eta\!\in\!\cK_{\ft^*;I}\big\}
=\Span_{\R}\big\{\xi_{ij}\!:i,j\!\in\!I\big\},\\
\notag
\ker\!\big(\nd_vL_{\ft^*;I}\!:T_v\ft^*\!\lra\!\ft_{I;\bu}^*\big)
=\cK_{\ft^*;I}\subset\ft^*
\qquad\forall~v\!\in\!\ft^*.
\end{gather}
Since the dimension of the space on the right-hand side of~\eref{CombCond_e2b} is 
at most $|I|\!-\!1$,
\BE{CombCond_e2a}
\dim\!\big(T_v\De_v\!\cap\!\cK_{\ft^*;I_v}\big)
\ge \fd_v(\De) -\dim\!\big(\Span_{\R}\big\{\xi_{ij}\!:i,j\!\in\!I_v\big\}\!\big)
\ge \fd_v(\De)\!+\!1\!-\!|I_v| \quad\forall~v\!\in\!\ft^*\,.\EE
For $x\!\in\!X$, let
\BE{Txftx_e}\T_x=\big\{g\!\in\!\T\!:\phi_{\T}(g;x)\!=\!x\big\} \quad\hbox{and}\quad
\ft_x=\Ann\big(\Im\big(\nd_x\mu_{\T}\!:T_xX\lra T_v\ft^*\big)\big)\EE
be the stabilizer of~$x$ and the Lie algebra of~$\T_x$, respectively.

\begin{lmm}\label{CombCond_lmm}
Let $N\!\in\!\Z^+$ and $(\xi_j,\ep_j)_{j\in [N]}\!\in\!(\La_{\ft}\!\times\!\R)^N$.
The  conditions~\eref{DeTrans_e},
the homomorphism
\BE{CombCond_e7} \nd_vL_{\ft^*;I_v}\!: T_v\De_v\lra\ft_{I;\bu}^*\EE
is surjective for all $v\!\in\!\De$,
and the kernel of the homomorphism
\BE{CombCond_e11a}
\T_v\!\times\!(S^1)^{I_v}_{\bu}\lra\T, \qquad
\big(g,(\ne^{\fI\th_i})_{i\in I_v}\big)\lra 
g\prod_{i\in I_v}\!\vr_{\xi_i}\!\big(\ne^{\fI\th_i}\big),\EE
is finite for all $v\!\in\!\De$ are equivalent.
\end{lmm}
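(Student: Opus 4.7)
The plan is to prove the lemma in three stages: a linear-algebra duality identifying the surjectivity of \eref{CombCond_e7} with the finiteness of the kernel of \eref{CombCond_e11a}, a dimension count giving that this surjectivity implies \eref{DeTrans_e}, and a descending induction on~$\fd_v(\De)$ for the main implication \eref{DeTrans_e} $\Rightarrow$ \eref{CombCond_e7}.

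For the duality, I would use $\ft_v = \Ann(T_v\De_v)$ to identify $\ft/\ft_v$ with $(T_v\De_v)^*$. The dual of the homomorphism in \eref{CombCond_e7} is then the linear map $\ft_{I_v;\bu} \to \ft/\ft_v$ sending $(r_i) \mapsto [\sum r_i\xi_i]$. Surjectivity of \eref{CombCond_e7} is equivalent to injectivity of this dual, which in turn is equivalent to injectivity of the combined Lie algebra map $\ft_v \oplus \ft_{I_v;\bu} \to \ft$, $(\xi,r)\mapsto \xi + \sum r_i\xi_i$, underlying \eref{CombCond_e11a}: both conditions amount to $r\in\ft_{I_v;\bu}$ with $\sum r_i\xi_i\in\ft_v$ forcing $r = 0$. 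Since a compact Lie group homomorphism has finite kernel iff its induced Lie algebra map is injective, this yields the equivalence. The dimension count is then immediate: surjectivity forces $\fd_v(\De) = \dim T_v\De_v \ge \dim \ft_{I_v;\bu}^* = |I_v|-1$.

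The crux is \eref{DeTrans_e} $\Rightarrow$ \eref{CombCond_e7} globally. I would proceed by contradiction, picking $v_0 \in \De$ with $\fd_{v_0}(\De)$ minimal among points where \eref{CombCond_e7} fails. If $\fd_{v_0} = 0$, then $T_{v_0}\De_{v_0} = 0$ and failure of surjectivity onto $\ft_{I_{v_0};\bu}^*$ forces $|I_{v_0}|\ge 2$, directly contradicting \eref{DeTrans_e} at the vertex~$v_0$. Otherwise, the dual form of failure yields a nonzero $r\in\ft_{I_{v_0};\bu}$ with $\sum r_i\xi_i \in \ft_{v_0}$, equivalently $\sum_{i\in I_{v_0}} r_i L_i \equiv 0$ on~$\De_{v_0}$. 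The affine subspace $H \equiv \{\eta\in\ft^*: L_{\xi_i}(\eta) = L_{\xi_j}(\eta)\,\forall i,j\in I_{v_0}\}$ then intersects the affine hull of~$\De_{v_0}$ in a subspace of dimension at least $\fd_{v_0}(\De) + 2 - |I_{v_0}|$, which is at least $1$ by \eref{DeTrans_e} at~$v_0$. Moving from~$v_0$ along a line in $H\cap\De_{v_0}$, the first event encountered is either (a) hitting the boundary of~$\De_{v_0}$ at some $v_1$ with $I_{v_1}\supseteq I_{v_0}$ and $\fd_{v_1}<\fd_{v_0}$, or (b) crossing a hyperplane $\{L_i = L_k\}$ for some $k\notin I_{v_0}$ at an interior point~$v'$ of~$\De_{v_0}$ with $|I_{v'}| > |I_{v_0}|$ and $\fd_{v'}=\fd_{v_0}$. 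In case (b), either $|I_{v'}| > \fd_{v_0}+1$ (so \eref{DeTrans_e} already fails at~$v'$), or the zero-padding of~$r$ from $\ft_{I_{v_0};\bu}$ to $\ft_{I_{v'};\bu}$ still witnesses failure of \eref{CombCond_e7} at~$v'$ (since $\De_{v'}=\De_{v_0}$ gives $\ft_{v'}=\ft_{v_0}$), and I replace $v_0$ by~$v'$; the strict increase in $|I_{v_0}|$ under the bound $|I_{v_0}|\le\fd_{v_0}+1$ forces case (a) to occur after finitely many iterations. In case (a), the inclusion $\De_{v_1}\subseteq\De_{v_0}$ gives $\ft_{v_0}\subseteq\ft_{v_1}$, so the zero-padding of~$r$ lies in the kernel of $\ft_{I_{v_1};\bu}\to\ft/\ft_{v_1}$, witnessing failure of \eref{CombCond_e7} at~$v_1$ and contradicting the minimality of~$\fd_{v_0}$.

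The main obstacle will be carefully tracking, through the successive replacements in case (b) and the final case (a), that the zero-padded element of~$\ft_{I_v;\bu}$ genuinely remains a nonzero witness to the failure of \eref{CombCond_e7}. This reduces to the chain of inclusions $\ft_{v_0}\subseteq\ft_{v'}\subseteq\ft_{v_1}$ induced by the reverse chain $\De_{v_1}\subseteq\De_{v'}\subseteq\De_{v_0}$ of faces, together with the fact that the move along~$H$ preserves the relation $\sum r_i L_i\equiv 0$ on each smaller face visited.
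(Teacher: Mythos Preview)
Your duality argument and dimension count match the paper's: both pass through the Lie-algebra condition that $\ft_v\oplus\ft_{I_v;\bu}\to\ft$ be injective, which the paper packages as the pair of equalities~\eref{CombCond_e4a}. The substantive difference is in the direction \eref{DeTrans_e} $\Rightarrow$ \eref{CombCond_e7}. The paper avoids your line walk entirely. For arbitrary $v\in\De_i$ it picks a vertex $v'$ of the minimal face $(\De_i)_v$ of~$\De_i$ containing~$v$, observes that at any vertex of~$\De_i$ one has $T_{v'}\De_{v'}\cap\cK_{\ft^*;I_{v'}}=\{0\}$ and $\fd_{v'}(\De)\le|I_{v'}|-1$ automatically, so that at~$v'$ the inequality~\eref{DeTrans_e} is \emph{equivalent} to~\eref{CombCond_e4a}, and then uses the inclusions $I_{v'}\supset I_v$, $\ft_{v'}\supset\ft_v$ to push~\eref{CombCond_e4a} from~$v'$ down to~$v$. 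This one-step reduction replaces your finitely-iterated (b)/(a) walk, which in effect constructs such a~$v'$ by hand; your argument is correct but longer, and the paper's insight is that the equivalence at vertices of the~$\De_i$ already does all the work.

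One small slip to fix: your $H=\{\eta:L_{\xi_i}(\eta)=L_{\xi_j}(\eta)\,\forall i,j\in I_{v_0}\}$ is the linear subspace~$\cK_{\ft^*;I_{v_0}}$, and~$v_0$ need not lie in it (since $L_i(v_0)=L_j(v_0)$ gives $L_{\xi_i}(v_0)-L_{\xi_j}(v_0)=\ep_i-\ep_j$, not~$0$). You want the affine subspace $\{\eta:L_i(\eta)=L_j(\eta)\,\forall i,j\in I_{v_0}\}=v_0+\cK_{\ft^*;I_{v_0}}$, so that ``moving from~$v_0$ along a line in $H\cap\De_{v_0}$'' makes sense and preserves $I_v\supseteq I_{v_0}$. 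With that correction your dimension count $\dim(T_{v_0}\De_{v_0}\cap\cK_{\ft^*;I_{v_0}})\ge\fd_{v_0}+2-|I_{v_0}|\ge1$ and the subsequent case analysis go through as written.
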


\begin{proof}
We show that each of the three conditions of Lemma~\ref{CombCond_lmm} is equivalent~to
\BE{CombCond_e4a}
\dim\!\big(\Span_{\R}\big\{\xi_{ij}\!:i,j\!\in\!I_v\big\}\!\big)=|I_v|\!-\!1, ~~
\ft_v\cap \Span_{\R}\big\{\xi_{ij}\!:i,j\!\in\!I_v\big\}=\big\{0\big\}
\quad\forall\,v\!\in\!\De\,.\EE
Suppose $v\!\in\!\De$. 
The first equality in~\eref{CombCond_e4a} is equivalent to the second inequality
in~\eref{CombCond_e2a} being an equality; 
the second equality in~\eref{CombCond_e4a} is equivalent to the first inequality
in~\eref{CombCond_e2a} being an equality.
Thus, the two equalities in~\eref{CombCond_e4a} for a fixed $v\!\in\!\De$ are equivalent~to
\BE{CombCond_e6b}\dim\!\big(T_v\De_v\!\cap\!\cK_{\ft^*;I_v}\big)=\fd_v(\De)\!+\!1\!-\!|I_v|\EE
and thus to the surjectivity of the homomorphism~\eref{CombCond_e7}.\\

\noindent
The kernel of the homomorphism~\eref{CombCond_e11a}
is finite if and only if the homomorphism
$$\ft_v\oplus\ft_{I_v;\bu}\lra\ft, \qquad
\big(\xi,(r_i)_{i\in I_v}\big)\lra \xi+\sum_{i\in I_v}\!r_i\xi_i\,,$$
is injective.
By~\eref{ftNnu_e}, the latter is the case if and only if~\eref{CombCond_e4a} holds.\\

\noindent
If $v$ is a vertex of~$\De_i$ for some $i\!\in\![N]$, then  
$$T_v\De_v\!\cap\!\cK_{\ft^*;I_v}=\{0\} \qquad\hbox{and}\qquad
\fd_v(\De)\le |I_v|\!-\!1.$$
The inequality~\eref{DeTrans_e} for a given vertex $v\!\in\!\De_i$ is thus equivalent 
to~\eref{CombCond_e6b} and so to the two equalities in~\eref{CombCond_e4a} for~$v$.
If $v\!\in\!\De_i$ is arbitrary and
$v'\!\in\!(\De_i)_v$ is a vertex of the minimal facet of~$\De_i$ containing~$v$,
then 
$$I_{v'}\supset I_v, \quad   
\Span_{\R}\big\{\xi_{ij}\!:i,j\!\in\!I_{v'}\big\}\supset
\Span_{\R}\big\{\xi_{ij}\!:i,j\!\in\!I_v\big\}, \quad
\ft_{v'}\supset\ft_v.$$
The two equalities in~\eref{CombCond_e4a} with~$v$ replaced by~$v'$ thus 
imply the two equalities in~\eref{CombCond_e4a} themselves.
Since the same is also the case for the inequality in~\eref{DeTrans_e}, 
we conclude that
the conditions~\eref{DeTrans_e} and~\eref{CombCond_e4a} are equivalent.
\end{proof}

\begin{lmm}\label{CombCond_lmm2}
Let $N\!\in\!\Z^+$ and $(\xi_j,\ep_j)_{j\in [N]}\!\in\!(\La_{\ft}\!\times\!\R)^N$.
The tuple $(\xi_j,\ep_j)_{j\in [N]}$ is quasi-regular if and only if 
the homomorphism~\eref{CombCond_e11a} is injective
for all $v\!\in\!\De$.
\end{lmm}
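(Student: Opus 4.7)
The plan is to prove both implications by first reducing to the case that $v$ is a vertex of some $\De_j$, and then analyzing the lattice structure of the Lie-algebra isomorphism $\Phi:\ft_v\oplus\ft_{I_v;\bu}\to\ft$ of Lemma~\ref{CombCond_lmm}. Given $v\in\De$ and $j\in I_v$, I would let $v'$ be a vertex of the minimal face $(\De_j)_v$ of $\De_j$ containing $v$. Since $\De_i\cap\De_j=\{\eta\in\De_j:L_i(\eta)\!=\!L_j(\eta)\}$ is a face of $\De_j$ containing~$v$ for each $i\in I_v$, I obtain $(\De_j)_v\subseteq\bigcap_{i\in I_v}\!\De_i$, whence $I_{v'}\supseteq I_v$ and $\ft_v\subseteq\ft_{v'}$. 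Extending $(\ne^{\fI\th_i})_{i\in I_v}\in(S^1)^{I_v}_\bu$ by $1$'s to an element of $(S^1)^{I_{v'}}_\bu$ preserves $\prod\vr_{\xi_i}(\ne^{\fI\th_i})$ in $\T$, so injectivity of~\eqref{CombCond_e11a} at~$v'$ forces it at~$v$; since $\fd_{v'}(\De)\le\fd_v(\De)$, a similar monotonicity reduces~(iii), while~(i) and~(ii) are intrinsic to the $\De_j$'s and the pairs $(\xi_i,\xi_j)$ and need only be checked at the vertices.

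At a vertex $v$ of $\De_j$, (iii) forces $|I_v|=\fd_v(\De)+1$, so $\Phi$ is a linear isomorphism. Using~\eqref{vrIidfn_e} with $I\!=\!I_v$ and $i\!=\!j$, the homomorphism~\eqref{CombCond_e11a} rewrites as
\begin{equation*}
\big(\exp(\xi),(\ne^{\fI\th_{j'}})_{j'\in I_v-j}\big)\lra
\exp\Big(\xi+\textstyle\sum\limits_{j'\in I_v-j}\th_{j'}\nu_{j'}\Big),
\qquad \nu_{j'}\!:=\!\xi_{j'}\!-\!\xi_j,
\end{equation*}
so its injectivity is equivalent to the lattice identity
$(\La_\ft\cap\ft_v)\oplus\Z\langle\nu_{j'}\rangle_{j'\in I_v-j}=\La_\ft$ inside $\ft$.
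For the forward direction, (ii) makes each $\nu_{j'}$ primitive in~$\La_\ft$ and~(i) makes $\{n_F\}_{F\ni v}\cup\{\nu_{j'}\}$ a $\Z$-basis of~$\La_\ft$, where $n_F$ denotes the primitive inward normal to a facet~$F$ of~$\De$ through~$v$. Since $\{n_F\}\subset\ft_v$ while $\{\nu_{j'}\}$ spans the complementary subspace~$\Phi(\ft_{I_v;\bu})$, the sublattice $\Z\langle n_F\rangle$ is a saturated direct summand of~$\La_\ft$ with $\R$-span~$\ft_v$, hence equals~$\La_\ft\cap\ft_v$, and the displayed lattice identity follows.

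For the backward direction, (iii) is immediate from Lemma~\ref{CombCond_lmm}. If $\xi_j-\xi_i=m\xi'$ for some $m\ge 2$ and $\xi'\in\La_\ft$ primitive, then for any $v\in\De_{ij}$ the element of $\{1\}\times(S^1)^{I_v}_\bu$ specified by $\th_i\!=\!-2\pi/m$, $\th_j\!=\!2\pi/m$, and $\th_k\!=\!0$ otherwise lies in the kernel of~\eqref{CombCond_e11a}, contradicting injectivity and establishing~(ii). Finally for~(i), the lattice identity above at a vertex $v$ of $\De_j$ combined with~(ii) and the equality $\Z\langle n_F\rangle=\La_\ft\cap\ft_v$ makes $\{n_F\}\cup\{\nu_{j'}\}$ a $\Z$-basis of~$\La_\ft$, giving Delzantness of $\De_j$ at~$v$. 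The hard part will be this last equality: at vertices of~$\De_j$ lying in the interior of a face of~$\De$, it asserts that the primitive inward normals to the $\De$-facets through~$v$ generate the full sublattice $\La_\ft\cap\ft_v$, which I would derive from the local normal form for the $\T$-action on $(X,\om,\phi_\T,\mu_\T)$ at a point $x\in\mu_\T^{-1}(v)$---the $n_F$'s appear as the weights of the~$\T_v$-representation on the symplectic slice through~$x$ and saturate the character lattice by integrality of the Hamiltonian structure.
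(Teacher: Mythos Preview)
Your approach matches the paper's: both pivot on the equivalence of injectivity of~\eqref{CombCond_e11a} with the lattice identity~\eqref{CombCond_e19} (which at a vertex becomes your $(\La_\ft\cap\ft_v)\oplus\Z\langle\nu_{j'}\rangle=\La_\ft$), and both reduce to vertices of the~$\De_j$. Your forward direction and your derivations of conditions~(ii) and~(iii) in the backward direction are correct and parallel the paper's argument closely.

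You correctly isolate the crux of the backward direction for~(i) as the identity $\Z\langle n_F\rangle=\La_\ft\cap\ft_v$, but your proposed derivation via the local normal form does not work in the stated generality. The weights of the $\T_v$-representation on the symplectic slice at a point of $\mu_\T^{-1}(v)$ need not span the full character lattice of~$\T_v$, and the moment polytope~$\De$ of a compact connected Hamiltonian $\T$-manifold need not be simple (let alone satisfy a Delzant-type condition) along its faces. Concretely, take $N\!=\!1$: then $(S^1)^{I_v}_\bu$ is trivial and \eqref{CombCond_e11a} is the inclusion $\T_v\hookrightarrow\T$, hence injective for every~$v$; yet quasi-regularity reduces to $\De_1\!=\!\De$ being Delzant, which fails e.g.\ for the octahedral moment polytope of~$Gr(2,4)$ under its maximal-torus action. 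The paper's proof hides the same issue in the bare assertion that \eqref{CombCond_e19} at a vertex of~$\De_i$ ``implies that it is smooth.'' So neither argument establishes the backward implication without an additional hypothesis on~$\De$ (such as the facet normals at each face generating $\La_\ft\cap\ft_v$, which does hold in the toric case relevant to Proposition~\ref{CombCond_prp}).
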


\begin{proof}
Let $v\!\in\!\De$ and $i\!\in\!I_v$.
By~\eref{ftNnu_e}, the injectivity of the homomorphism~\eref{CombCond_e11a}
is equivalent~to
\BE{CombCond_e19}
\La_{\ft}\cap\big(\ft_v\!+\!\Span_{\R}\big\{\xi_{ij}\!:j\!\in\!I_v\big\}\big)
=\La_{\ft_v}\oplus\bigoplus_{j\in I_v-i}\!\!\!\Z\xi_{ij}\,.\EE
If \eref{CombCond_e19} holds, then $\xi_{ij}\!\in\!\La_t$ is a primitive element for~every  
$j\!\in\!I_v\!-\!i$.
If in addition $v$ is a vertex of~$\De_i$, then \eref{CombCond_e19} implies that 
it is smooth.
Along with Lemma~\ref{CombCond_lmm}, this implies that 
the tuple $(\xi_j,\ep_j)_{j\in [N]}$ is quasi-regular.\\

\noindent
Suppose the tuple $(\xi_j,\ep_j)_{j\in [N]}$ is quasi-regular.
Let $i\!\in\![N]$ and  $v\!\in\!\De_i$.
By the proof of Lemma~\ref{CombCond_lmm}, the two equalities in~\eref{CombCond_e4a} hold.
Thus, \eref{CombCond_e19} holds when tensored with~$\R$.
Since $\xi_{ij}\!\in\!\La_t$ is primitive  for~every  $j\!\in\!I_v\!-\!i$
and a vertex $v'\!\in\!(\De_i)_v$ is smooth,
\eref{CombCond_e19} itself holds as~well.
\end{proof}

\begin{lmm}\label{CombCond_lmm3}
Suppose $N\!\in\!\Z^+$, $(\xi_j,\ep_j)_{j\in [N]}\!\in\!(\La_{\ft}\!\times\!\R)^N$
 satisfies~\eref{DeTrans_e},  $v\!\in\!\De$, and  $x\!\in\!\mu^{-1}(v)$.
The homomorphism
\BE{CombCond_e9a}\nd_x\mu_{I_v}\!:T_xX\lra\ft_{I_v;\bu}^*\EE
is surjective if and only~if the kernel of the homomorphism
\BE{CombCond_e11}\T_x\!\times\!(S^1)^{I_v}_{\bu}\lra\T, \qquad
\big(g,(\ne^{\fI\th_i})_{i\in I_v}\big)\lra 
g\prod_{i\in I_v}\!\vr_{\xi_i}\!\big(\ne^{\fI\th_i}\big),\EE
is finite.
\end{lmm}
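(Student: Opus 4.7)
The plan is to translate the statement into linear algebra by exploiting the factorization of $\nd_x\mu_{I_v}$ through~$\nd_x\mu_\T$. Since the $(S^1)^{I_v}_{\bu}$-action on $(X,\om)$ is obtained by composing the $\T$-action~$\phi_\T$ with the homomorphism~\eref{CombCond_e11a}, its moment map~$\mu_{I_v}$ factors, up to an additive constant, as the composition of~$\mu_\T$ with the linear map
$$L_{\ft^*;I_v}\!:\ft^*\lra\ft_{I_v;\bu}^*,\qquad
\eta\lra\big[\big(\lr{\eta,\xi_i}\big)_{i\in I_v}\big],$$
so that $\nd_x\mu_{I_v}\!=\!L_{\ft^*;I_v}\!\circ\!\nd_x\mu_\T$. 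The defining identity $\io_{\xi_{\phi_\T}}\om\!=\!-\nd\lr{\mu_\T(\cdot),\xi}$ of a moment map, together with the non-degeneracy of~$\om$, identifies the image of $\nd_x\mu_\T$ with the annihilator $\Ann(\ft_x)\!\subset\!\ft^*$, where $\ft_x$ is as in~\eref{Txftx_e}. Surjectivity of~$\nd_x\mu_{I_v}$ is therefore equivalent to surjectivity of the restriction $L_{\ft^*;I_v}|_{\Ann(\ft_x)}$ and, upon dualizing, to injectivity of the composition
$$\ft_{I_v;\bu}\xrightarrow{L_{\ft^*;I_v}^*}\ft\lra\ft/\ft_x,\qquad
L_{\ft^*;I_v}^*\big((r_i)_{i\in I_v}\big)=\sum_{i\in I_v}\!r_i\xi_i\,.$$

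Next, I match this condition with the finiteness of the kernel of~\eref{CombCond_e11}. The Lie algebra of that homomorphism is
$$\ft_x\oplus\ft_{I_v;\bu}\lra\ft,\qquad \big(\eta,(r_i)_{i\in I_v}\big)\lra \eta+\sum_{i\in I_v}\!r_i\xi_i\,,$$
and its kernel consists precisely of pairs $(\eta,(r_i))$ with $\eta\!=\!-\!\sum r_i\xi_i$ and this common value lying in~$\ft_x$; projection onto the second factor thus yields an isomorphism from this Lie algebra kernel onto $\ker(L_{\ft^*;I_v}^*\!:\ft_{I_v;\bu}\!\lra\!\ft/\ft_x)$. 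Since~\eref{CombCond_e11} is a homomorphism between compact Lie groups, its kernel is finite if and only if its Lie algebra kernel vanishes, and the chain of equivalences closes.

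I do not anticipate any serious obstacle: the only external input is the standard identification $\Im(\nd_x\mu_\T)\!=\!\Ann(\ft_x)$, and the remaining manipulations parallel the linear-algebra calculation already carried out in the proof of Lemma~\ref{CombCond_lmm}. The hypothesis~\eref{DeTrans_e} is not used in the argument itself; it is inherited from the ambient setup of Proposition~\ref{CombCond_prp}. The membership $x\!\in\!U_{I_v}$, which ensures that $\mu_{I_v}$ is defined at~$x$, follows directly from the definition of~$I_v$ as the set of indices at which $(L_j(v))_{j\in[N]}$ attains its minimum.
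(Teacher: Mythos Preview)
Your proof is correct and takes a genuinely different route from the paper's. The paper reduces both sides of the equivalence to the condition
\[
\ft_x\cap\Span_{\R}\big\{\xi_{ij}\!:i,j\!\in\!I_v\big\}=\{0\},
\]
but to reach this it routes the surjectivity argument through the inclusion $\Im(\nd_x\mu_\T)\!\subset\!T_v\De_v$ and the surjectivity of $\nd_vL_{\ft^*;I_v}|_{T_v\De_v}$, the latter being supplied by Lemma~\ref{CombCond_lmm} and hence by the hypothesis~\eref{DeTrans_e}. You instead invoke the standard identity $\Im(\nd_x\mu_\T)\!=\!\Ann(\ft_x)$ directly and reduce both sides to the injectivity of $\ft_{I_v;\bu}\!\to\!\ft/\ft_x$, bypassing the facet~$\De_v$ entirely. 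Your argument is shorter, and as you correctly note, it shows that~\eref{DeTrans_e} is not actually needed for this particular lemma; the paper's intermediate condition coincides with yours only once the first equality in~\eref{CombCond_e4a} (itself a consequence of~\eref{DeTrans_e}) is in force.
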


\begin{proof}
By the definitions of~$I_v$ and~$\De_i$, $(\mu(x))_i\!\le\!(\mu(x))_j$ for all
$i\!\in\!I_v$ and $j\!\in\![N]$ and the equality holds if and only if $j\!\in\!I_v$.
Thus, $x\!\in\!U_{I_v}$ and 
the homomorphism~\eref{CombCond_e9a} is well-defined. 
We show that each condition of Lemma~\ref{CombCond_lmm3} is equivalent~to
\BE{CombCond_e4d}
\ft_x\cap \Span_{\R}\big\{\xi_{ij}\!:i,j\!\in\!I_v\big\}=\big\{0\big\}\,.\EE
By Lemma~\ref{CombCond_lmm} and its proof,
the homomorphism~\eref{CombCond_e7} is surjective and 
both equalities in~\eref{CombCond_e4a} hold in the present case.\\

\noindent
Since $\mu_{I_v}\!=\!L_{\ft^*;I_v}\!\circ\!\mu_{\T}$,
\BE{CombCond_e8d}\nd_x\mu_{I_v}\!=\!\nd_vL_{\ft^*;I_v}\!\circ\!\nd_x\mu_{\T}\!: 
T_xX\lra T_v\ft^*\lra\ft_{I_v;\bu}^*\,.\EE
By the definition of~$\De_v$,
\BE{CombCond_e3a}  
\Im\big(\nd_x\mu_{\T}\!:T_xX\lra T_v\ft^*\big)\subset T_v\De_v\,.\EE
By the surjectivity of the homomorphism~\eref{CombCond_e7},  
the surjectivity of the homomorphism~\eref{CombCond_e9a} is thus equivalent~to
$$T_v\De_v\subset \Im\big(\nd_x\mu_{\T}\big)+\cK_{\ft^*;I_v} \,.$$
By~\eref{CombCond_e2b} and~\eref{Txftx_e}, this condition is in turn equivalent~to
$$\ft_v\supset \ft_x\cap \Span_{\R}\big\{\xi_{ij}\!:i,j\!\in\!I_v\big\}\,.$$
In light of the second equality in~\eref{CombCond_e4a},  
the last condition is equivalent to~\eref{CombCond_e4d}.\\

\noindent
The finiteness of the kernel of~\eref{CombCond_e11}
is equivalent to the injectivity of the~homomorphism
$$\ft_x\oplus\ft_{I_v;\bu}\lra\ft, \qquad
\big(\xi,(r_i)_{i\in I_v}\big)\lra \xi+\sum_{i\in I_v}\!r_i\xi_i.$$
By~\eref{ftNnu_e} and the first equality in~\eref{CombCond_e4a},  
the latter is equivalent to~\eref{CombCond_e4d}.
\end{proof}

\begin{proof}[{\bf{\emph{Proof of Proposition \ref{CombCond_prp}}}}]
Let  $v\!\in\!\De$ and $x\!\in\!\mu^{-1}(v)$.
Thus, $x\!\in\!\mu_{I_v}^{-1}(0)$ and \hbox{$\T_x\!\supset\!\T_v$}.
Furthermore, the projection 
$$\T_x\!\times\!(S^1)^{I_v}_{\bu}\lra (S^1)^{I_v}_{\bu}$$
restricts to an isomorphism from 
the kernel of the homomorphism~\eref{CombCond_e11}
to the stabilizer of the $\phi_{I_v}$-action on~$x$.
If this stabilizer is trivial, then 
the homomorphisms~\eref{CombCond_e11} and~\eref{CombCond_e11a} are injective.
Since $\mu^{-1}(v)\!\neq\!\eset$ for every $v\!\in\!\De$,
Lemma~\ref{CombCond_lmm2} thus implies the first claim of Proposition~\ref{CombCond_prp}.\\

\noindent
Suppose the tuple $(\xi_j,\ep_j)_{j\in [N]}$ satisfies~\eref{DeTrans_e}
and the homomorphism~\eref{CombCond_e9a} is surjective.
By Lemma~\ref{CombCond_lmm3}, the homomorphism~\eref{CombCond_e11} then has finite kernel and so
the stabilizer of the $\phi_{I_v}$-action on~$x$ is finite.
The homomorphism~\eref{CombCond_e9a} is surjective if 
\hbox{$[(\ve_i)_{i\in I}]\!\in\!\ft_{I;\bu}^*$} 
is a regular value of the smooth map~$\mu_I$ on~$U_I$ for all $I\!\in\!\cP^*(N)$.
By Sard's Theorem \cite[p10]{Milnor}, this is the case 
if the tuple $(\ve_i)_{i\in[N]}$ is generic.
This establishes the second claim of Proposition~\ref{CombCond_prp}.\\  

\noindent
By \eref{CombCond_e8d} and Lemma~\ref{CombCond_lmm}, 
the homomorphism~\eref{CombCond_e9a} is surjective if
the tuple $(\xi_i,\ep_i)_{i\in [N]}$ satisfies~\eref{DeTrans_e}
and the inclusion~\eref{CombCond_e3a} is an  equality.
Suppose  $(X,\om,\phi_{\T},\mu_{\T})$ is a toric symplectic manifold.
By \cite[Corollary~IV.4.14]{Audin}, the inclusion~\eref{CombCond_e3a} is then an  equality.
Furthermore, $\T_x\!=\!\T_v$.
Since the inclusion~\eref{CombCond_e3a} is an  equality, 
this statement is equivalent to the connectedness of~$\T_x$. 
The latter is implied by each toric symplectic manifold being 
the quotient of a subset~$\wt{X}_M^{\tau}$ of~$\C^{k'}$ 
by the restriction of the standard $(S^1)^{k'}$-action 
to an action of a $(k'\!-\!k)$-subtorus $\T'\!\subset\!(S^1)^{k'}$
with $\T\!=\!(S^1)^{k'}/\T'$ and $\phi_{\T}$ being the induced action;
see \cite[Section~2.1]{Popa} for the relevant details. 
Thus, the stabilizer~$\T_x$ of the point $x\!\in\!X$ determined by 
a point $\wt{x}\!\in\!\wt{X}_M^{\tau}$ is the quotient of
the stabilizer $(S^1)^{k'}_{\wt{x}}\!\subset\!(S^1)^{k'}$ of~$\wt{x}$
by its intersection with~$\T'$.
Since  $(S^1)^{k'}_{\wt{x}}$ is connected, so is~$\T_x$.\\

\noindent
Suppose $(X,\om,\phi_{\T},\mu_{\T})$ is a toric symplectic manifold and the tuple 
$(\xi_i,\ep_i)_{i\in [N]}$ is quasi-regular.
By the previous paragraph, the homomorphism~\eref{CombCond_e9a} is then surjective. 
Since $\T_x\!=\!\T_v$,
the triviality of the stabilizer of the $\phi_{I_v}$-action on~$x$ 
is equivalent to the injectivity of the homomorphism~\eref{CombCond_e11a}.
The latter is the case by Lemma~\ref{CombCond_lmm2}.
This establishes the last claim of Proposition~\ref{CombCond_prp}.
\end{proof}

\noindent
The example below provides decompositions of moment polytopes for 
Hamiltonian $S^1$-manifolds $(X,\om,\phi_{\T},\mu_{\T})$ with effective actions.
These decompositions arise from quasi-regular tuples $(\xi_i,\ep_i)_{i\in[2]}$
that are not regular.
Example~\ref{notsmooth_eg} illustrates that 
assuming that the action is Hamiltonian and 
the tuple $(\xi_i,\ep_i)_{i\in[N]}$ is quasi-regular
is not sufficient to overcome either of the two deficiencies of the second statement 
of Proposition~\ref{CombCond_prp} as compared to the third.
On the other hand, our degeneration and symplectic cut construction can be applied whenever 
 the $\phi_I$-action on $\mu_I^{-1}(0)$ has at most finite
stabilizers for all $I\!\in\!\cP^*(N)$.
It would then produce symplectic orbifolds.
Thus, the combinatorics of polytope decompositions in the context of 
Theorem~\ref{toriccut_thm} fit most naturally 
with the category of symplectic orbifolds (rather than just manifolds).

\begin{eg}\label{notsmooth_eg}
Let $\om_{\P^1}$ and $(\phi_{\P^1},\mu_{\P^1})$ be as in Example~\ref{P1cutN1_eg}.
Let
$$(X,\om)=\big(\P^1\!\times\!\P^1,\pi_1^*\om_{\P^1}\!+\!\pi_2^*\om_{\P^1}\big)
\qquad\hbox{and}\qquad \T=S^1.$$
Fix $m_1,m_2\!\in\!\Z^+$ and define
\begin{alignat*}{2}
\phi_{\T}\!:S^1\!\times\!X&\lra X, &\qquad 
\phi_{\T}\big(\ne^{\fI\th};z_1,z_2\big)&=
\big(\phi_{\P^1}(\ne^{\fI m_1\th};z_1\big),\phi_{\P^1}(\ne^{\fI m_2\th};z_2)\big),\\
\mu_{\T}\!:X&\lra\R, &\qquad 
\mu_{\T}(z_1,z_2)&=m_1\mu_{\P^1}(z_1)\!+\!m_2\mu_{\P^1}(z_2)\,.
\end{alignat*}
The tuple $(X,\om,\phi_{\T},\mu_{\T})$ is then  a Hamiltonian $S^1$-manifold.
The associated moment polytope~$\De$ is the interval $[0,m_1\!+\!m_2]$.
Two of the four $\phi_{\T}$-fixed points, $P_{\i0}$ and~$P_{0\i}$, are mapped
to the interior points~$m_1$ and~$m_2$.
A setting for the usual $N\!=\!2$ symplectic cut configuration of~\cite{L}
in this case is obtained by taking
$$\xi\equiv\xi_2\!-\!\xi_1=1\in\Z\!=\!\La_{\R}
\qquad\hbox{and}\qquad \ep=
\ep_1\!-\!\ep_2\in(0,m_1\!+\!m_2\big).$$
The associated decomposition breaks~$\De$ into the intervals 
$\De_1\!=\![0,\ep]$ and $\De_2\!=\![\ep,m_1\!+\!m_2]$.
In this case, $\phi_{12}\!=\!\phi_{\T}$ under a suitable identification $(S^1)^2_{\bu}\!=\!S^1$.
Since $\nd_x\mu_{\T}\!=\!0$  if and only if $x$ is a $\phi_{\T}$-fixed point,
the $\phi_{12}$-action on $\mu_{12}^{-1}(0)\!=\!\mu_{\T}^{-1}(\ep)$ is non-trivial 
if and only~if $\ep\!\neq\!m_1,m_2$.
This illustrates the necessity of the ``generic" assumption in Proposition~\ref{CombCond_prp}.
Since the subgroups $\Z_{m_1},\Z_{m_2}\!\subset\!S^1$ act trivially on $\P^1\!\times\!\{0,\i\}$
and $\{0,\i\}\!\times\!\P^1$, respectively,
the $\phi_{12}$-action on $\mu_{12}^{-1}(0)$ is not free for any 
$\ep\!\in\!(0,m_1\!+\!m_2)$ unless $m_1,m_2\!=\!1$.
However, the $\phi_{\T}$-action on~$X$ is effective if~$m_1$ and~$m_2$ are relatively prime.
\end{eg}

\vspace{.1in}

\vbox{
\noindent
{\it Simons Center for Geometry and Physics, Stony Brook University, Stony Brook, NY 11794\\
mtehrani@scgp.stonybrook.edu}\\

\noindent
{\it Department of Mathematics, Stony Brook University, Stony Brook, NY 11794\\
azinger@math.stonybrook.edu}}

%\clearpage

\end{document}